\newcommand{\tm}{{\sf m}}
\newtheorem{lem}{Lemma}
\newtheorem{defi}[lem]{Definition}
\newtheorem{pro}[lem]{Proposition}
\newtheorem{theo}[lem]{Theorem}
\newtheorem{conj}{Conjecture}
\newtheorem{Ques}{Open question}
\newtheorem{rem}[lem]{Remark}
\renewcommand{\baselinestretch}{1.1}
\newcommand{\LET}{{\sf LastExitTree}}
\newcommand{\FET}{{\sf FirstEntranceTree}}
\def \bls{{\tiny $\blacksquare$ \normalsize }}
\def \degree{{\sf deg}}
\DeclareMathOperator{\argmin}{argmin}
\def \1{\textbf{1}}
\def \Z{\mathbb{Z}}
\def \bC{{\bf C}}
\def \bu{{\bf u}}
\def \bt{{\bf t}}
\def \bT{{\bf T}}
\def \SP{\textsf{SpanningTrees}}
\newcommand{\TTT}[1]{{\sf Subtrees}(#1)}
\newcommand{\GGG}[1]{{\sf Subgraphs}(#1)}
\newcommand{\TT}[2]{{\sf Subtrees}(#1,#2)}
\newcommand{\TTr}[3]{{\sf Subtrees}_{\,#1}^{\bullet}(#2,#3)}
\newcommand{\TTTr}[2]{{\sf Subtrees}_{\,#1}^{\bullet}(#2)}
\newcommand{\TTTrLd}[2]{{\sf Subtrees}_{\,#1}^{\bullet,L,\downarrow}(#2)}
\newcommand{\rhor}[1]{{\rho_{\,#1}^{\bullet}}}
\newcommand{\FF}[2]{{\sf Forests}_{\,#1}(#2)}
\def \ar#1{\overrightarrow{#1}}
\def \bpar#1{\left\{\begin{array}{#1} }
	\def \epar { \end{array}\right.}
\def \app#1#2#3#4#5{\begin{array}{rccl} #1:&#2&\longrightarrow&#3\\ &#4&\longmapsto&#5\end{array}}
\def \N{\mathbb{N}}
\def \R{\mathbb{R}}
\def \bD{{\bf D}}
\def \bK{{\bf K}}
\def \bL{{\bf L}}
\def \bP{{\bf P}}
\def \bT{{\bf T}}
\def \bI{{\bf I}}
\def \bfe{{\bf e}}
\def \bar{\overline}
\def \ba{\begin{align}}
	\def \ea{\end{align}}
\def \be{\begin{eqnarray*}}
	\def \ee{\end{eqnarray*}}
\def \ben{\begin{eqnarray}}
	\def \een{\end{eqnarray}}
\def \beq{\begin{equation}}
	\def \eq{\end{equation}}
\def \build#1#2#3{\mathrel{\mathop{\kern 0pt#1}\limits_{#2}^{#3}}}
\def \ba{{\bf a}}
\def \captionn#1{\begin{center}\begin{minipage}{16cm}\sf\caption{\small \textsf{#1}}\end{minipage}\end{center}}
\def \dd{\xrightarrow[]{(d)}}
\def \equi{\Leftrightarrow}
\def \eref#1{(\ref{#1})}
\def \P{{\mathbb{P}}}
\def \imp{\Rightarrow}
\def \l{\left}
\def \pp{\preccurlyeq}
\def \proba{\xrightarrow[n]{(proba.)}}
\def \r{\right}
\def \sous#1#2{\mathrel{\mathop{\kern 0pt#1}\limits_{#2}}}
\def \sur#1#2{\mathrel{\mathop{\kern 0pt#1}\limits^{#2}}}
\def \eqd{\sur{=}{(d)}}
\def\cro#1{\llbracket#1\rrbracket}
\def \Ex{{\sf Exchange}}
\def \Add{{\sf Add}}
\def \Rem{{\sf Remove}}
\def \pp{\mathsf{p}}
\def \rr{\mathsf{r}}
\def \qq{\mathsf{q}}
\def \uniform{{\sf Uniform}}
\newcommand{\compact}{ \topsep0pt   \itemsep=0pt   \partopsep=0pt   \parsep=0pt}
\newcounter{c}
\def \bir{\begin{itemize}\compact \setcounter{c}{0}}
	\def \itr{\addtocounter{c}{1}\item[($\roman{c}$)]} 
	\def \eir{\end{itemize}\vspace{-2em}~}
\newcounter{d}
\def \bia{\begin{itemize}\compact \setcounter{d}{0}}
	\def \eia{\end{itemize}\vspace{-2em}~}
\newcounter{b}
\def \bi{\begin{itemize}\compact \setcounter{b}{0}}
	\def \ei{\end{itemize}\vspace{-2em}~}
\tikzset{
	treenode/.style = {align=center, inner sep=0pt, text centered, font=\sffamily},
	arn_n/.style = {treenode, circle, white, draw=black,fill=black, text width=0.5em},
	arn_r/.style = {treenode, circle, white, draw=black, fill=red, text width=0.5em},
	arn_x/.style = {treenode, circle, white, draw=black, fill=white, text width=0.5em}
}
\def \ra#1{\overleftarrow{#1}}
\def \Analysis{\underbar{\bf Analysis}}
\def \Drawback{\underbar{\bf Drawbacks}}
\def \TDLA{{\sf TDLA}}
\newcommand{\pass}{\\}
\begin{document}
\newcommand{\Torus}[1]{{\sf Torus}(#1)}
\newcommand{\cv}[1]{|#1|}
\def\ligne{\centerline{------------------------------}\\}
\newcounter{Mod}
\setcounter{Mod}{0}
\renewcommand{\theMod}{\Alph{Mod}}
\newcommand{\Modref}[1]{\ref{#1}}
\newcommand \NewModel[3]{\refstepcounter{Mod}
\begin{mdframed}[topline=false,rightline=false,bottomline=false,linewidth=2pt]
\noindent {\bf Definition of Model \theMod :}  {\sf #1}.\label{#2}~\\#3
\end{mdframed}
}

\newcounter{SMod}
\setcounter{SMod}{0}
\renewcommand{\theSMod}{\Alph{SMod}}
\newcommand{\SModref}[1]{(\ref{#1})}

\newcommand\NewSModel[3]{\refstepcounter{SMod}
\begin{mdframed}[topline=false,rightline=false,bottomline=false,linewidth=2pt]
\noindent {\bf Subtree of tree. Model \theSMod :}  {\sf #1}.\label{#2}~\\#3
\end{mdframed}}

\newcounter{Ker}
\setcounter{Ker}{0}
\renewcommand{\theKer}{\Alph{Ker}}
\newcommand{\Kerref}[1]{(\ref{#1})}

\newcommand \NewKernel[3]
{\refstepcounter{Ker}
\begin{mdframed}[topline=false,rightline=false,bottomline=false,linewidth=2pt]
\noindent{\bf Definition of the kernel $K^{(\theKer)}$:} {\sf #1}\label{#2}.\\#3
\end{mdframed}}

\setlength{\belowcaptionskip}{-1pt}
\setlength{\abovecaptionskip}{-20pt}
\setlength\intextsep{12pt}
 \begin{center}
 \huge\bf
Models of random subtrees of a graph \\
 {\large \bf Luis Fredes$^{\dagger}$ and Jean-Fran\c{c}ois Marckert$^{*}$}
 \rm \\
 
 \large{$^\dagger$Universit\'e Paris-Saclay.\\
 	$^{*}$CNRS, LaBRI, Universit\'e Bordeaux}

 \normalsize  
 \end{center}
 \begin{abstract} Consider a connected graph $G=(E,V)$ with $N=|V|$ vertices.
   The main purpose of this paper is to explore the question of uniform sampling of a subtree of $G$ with $n$ nodes, for some $n\leq N$ (the spanning tree case correspond to $n=N$, and is already deeply studied in the literature).
   We provide new asymptotically exact simulation methods using Markov chains for general connected graphs $G$, and any $n\leq N$. We highlight the case of the uniform subtree of $\Z^2$ with $n$ nodes, containing the origin $(0,0)$ for which Schramm asked several questions. We produce pictures, statistics, and some conjectures.\par
 A second aim of the paper is devoted to surveying other models of random subtrees of a graph, among them, DLA models, the first passage percolation, the uniform spanning tree and the minimum spanning tree. We also provide new models, some statistics, and some conjectures.  \end{abstract}

\subsection*{Acknowledgments} We acknowledge support from ERC 740943 \emph{GeoBrown}.

 \section{Introduction}

 \subsection{Random subtrees of a graph, the motivation}
\label{sec:rs}
The very origin of this work is the reading of Oded Schramm conference paper \cite{OS} of the International {C}ongress of {M}athematicians, Madrid, 2006, where he was one of the plenary speakers. In his Section 2.5, devoted to lattice trees, he raised two questions that motivated us to work in this domain. We take the liberty to copy it, here, verbatim:\par
\centerline{------------------------}
\begin{quotation} 
  {\bf Section 2.5. Lattice trees}. We now present an example of a discrete model where we suspect that perhaps conformal invariance might hold. However, we do not presently have a candidate for the scaling limit. \par
  Fix $n\in \N_+$, and consider the collection of all trees contained in the grid $G$ that contain the origin and have $n$ vertices. Select a tree $T$ from this measure, uniformly at random.\\
  {\bf Problem 2.8.} What is the growth rate of the expected diameter of such a tree? If we rescale the tree so that the expected (or median) diameter is 1, is there a limit for the law of the tree as $n\to+\infty$? What are its geometric and topological properties? Can the limit be determined? \medbreak 
  It would be good to be able to produce some pictures. However, we presently do not know how to sample from this measure.\medbreak
 \noindent {\bf Problem 2.9.} Produce an efficient algorithm which samples lattice trees approximately uniformly, or prove that such an algorithm does not exist \end {quotation}
\centerline{------------------------}
  
Excellent questions for which no real advances have been published during the last 14 years. Nevertheless, some images and statistics concerning lattice trees with a fixed size were already present in the literature in 2006: notably, Rensburg \& Madras \cite{RM} (1992) provided two ergodic Markov chains with uniform invariant measures (see also references therein), as well as Monte Carlo estimation of some parameters. Many more results, often coming from the mathematical physics literature (using sometimes heuristics) were available (Rensburg \& Rechnitzer \cite{RR}, Hsu et al. \cite{Hsu_2005} and Jensen \cite{Jensen} and numerous references therein); see Section \ref{sec:SUBG} for additional details. The question concerning the scaling limits of these objects seems stuck up to now. \par

Motivated by the understanding of the apparent obstruction to the construction of exact simulations for these lattice trees, we started to examine this question as a particular case of a more general question: is it possible to sample  a uniform subtree of a given size of a connected graph? This leads us to provide general Markov chains working on any finite connected graphs, and to produce new models of random trees embedded in a graph, as well as to survey already studied models of random subtrees of a graph.

Before discussing the content of the present paper, let us fix some notation. 

\paragraph{Convention and notation}

A {\it graph} $G$ is a pair $(V,E)$, where $V$ is the finite or countable set of vertices, and $E$ the multiset of edges. Each edge is a set of the form $\{a,b\}$ where $a$ and $b$ are different vertices. The word {\it multiset} means that each edge $\{a,b\}$ has a multiplicity, which is a positive integer.\par
As usual, a subgraph $G'=(V',E')$ of $G=(V,E)$ is a graph satisfying $V'\subset V$ and $E'\subset E$.
\\
We use the standard definition of paths, cycles, connectivity and connected components, induced subgraphs (see e.g.\cite{BondyMurty}).
A {\it tree} is a connected graph $T=(V_T,E_T)$ with no cycle: it satisfies $|E_T|=|V_T|-1$ (where $|S|$ stands for the cardinality of $S$).  A subtree of $G$ is a tree which is also a subgraph of $G$. A subtree $T$ is said to be spanning if $V_T=V$.

We will call $E(G)$ and $V(G)$ the edges and vertices of the undirected graph $G$, respectively and, we  write $\ar{E}(G)$ for the set of oriented edges associated with $E(G)$, which is the set containing for each edge $\{a,b\} \in E(G)$ two oriented copies: $(a,b)$ and $(b,a)$. For an oriented edge $\ar{e}=(e_1,e_2)\in \ar{E}(G)$, we denote simply by $e$ the unoriented version $\{e_1,e_2\}\in E(G)$.\par
A {\it rooted tree} is a pair $(T,r)$, where  $T$ is a tree and $r\in V_T$ is a distinguished vertex, called the root. It is often convenient to consider that the edges of a rooted tree $(t,r)$ are oriented toward the root $r$. A rooted tree can be thought as the genealogical tree of a population, with ancestor $r$.  The leaves are the nodes of $T$ having no incoming edges,  that is, that have no children. The set of leaves is denoted $\partial T$.
\par
 For a finite connected graph $G$ and some positive integer $n \leq |V|$, the notation  $\TT{G}{n}$  stands for the \textbf{set of subtrees} of $G$ with $n$ vertices. For a vertex $r\in V$, let $\TTr{r}{G}{n}$ be the subset of $\TT{G}{n}$ of trees which contains $r$ (they can be seen as being rooted at  $r$). We also define the set $\TTT{G} = \cup_n\TT{G}{n}$ of all subtrees of $G$, and $\TTTr{r}{G}= \cup_n\TTr{r}{G}{n}$ the set of those rooted at $r$.
 
For any finite set $S$, the uniform distribution on $S$ is denoted $\uniform(S)$.

If $G$ has several connected components, $\TT{G}{n}$ is the union of the sets of subtrees with size $n$ of each component. Hence, it can be assumed, and this is what we will do, that all the graphs $G$ considered in the paper are connected.

\begin{rem} Most of the models presented in the paper can be defined on multigraphs (in which multiple edges are allowed) as well as loops, up to small extra-cost. For the sake of clarity, we focus only on the case of simple graphs.
\end{rem}

\paragraph{Content of the paper}
Schramm's question  is a particular case of the following more general question:  Let $G=(V,E)$ be a finite connected graph.\medbreak

\noindent{\bf Question $[\star]$:}  Is there an efficient way to sample $\uniform(\TT{G}{n})$ or $\uniform(\TTr{r}{G}{n})$? \medbreak

Indeed, consider the discrete torus
\[\Torus{N}:= (\Z/N\Z)^2\] seen as a graph, with edges between pair of nodes of the type $(x,y)$ and $(x,y +1 \mod N)$, and between  $(x,y)$ and $(x+1 \mod N,y )$. Schramm's question about a way to sample $\uniform\l(\TTr{(0,0)}{\Z^2}{n}\r)$ is equivalent to finding a way to sample  $\uniform(\TTr{(0,0)}{{\sf Torus}(n)}{n})$, since the graphs $\Z^2$ and the finite graph $\Torus{n}$ coincide locally in a $n-1$ neighbourhood of their origin. \medbreak
  
  Trying to solve {\bf Question $[\star]$} on a general graph leads to investigate a lot of methods allowing one to sample random trees embedded in a graph, for example, Markov chain simulations, combinatorial methods, acceptance/rejection methods relying on simple to sample models, to design new models, to proceed to partial ``evaporation'' of uniform spanning-trees, etc.

  \medbreak

  The paper is organized as follows:
  
  \begin{itemize}[topsep=0pt, leftmargin=*,noitemsep] 
  	\item In Section \ref{sec:SC} we give a small list of simple graphs on which the simulation of uniform subtrees of a given size is easy, or well-known.
  	\item In Section \ref{sec:STC}, we recall some facts concerning the spanning-tree case $n=|V|$, for which efficient algorithms are known, with many recent developments. 
  	The problem to sample $\uniform(\TT{G}{n})$ for $n\leq |V|$ can be seen as a generalization of the uniform spanning-tree case so that, it can be useful to expose further this particular case. Moreover, a natural strategy discussed in this paper to sample according to  $\uniform(\TT{G}{n})$ consists in trying to extract a subtree of a uniform spanning-tree (instead of extracting this tree directly from the graph).
  	\item Section \ref{sec:enum} presents the combinatorics of the set of subtrees of a graph (mainly, Tutte polynomial like approaches), and then, applies these considerations to the uniform sampling in  $\TT{G}{n}$. In practice, these approaches can be applied  to small graphs only, due to the complexity cost of the methods involved.
  	\item Section \ref{sec:MURT} explores Markov chains taking their values in $\TT{G}{n}$. We propose three new different models of ergodic Markov chains whose invariant distribution is the uniform distribution (two of them, are new). We insist on the fact that these chains can be defined on any connected graph (not only on lattices).
  	\item In Section \ref{sec:SUBG} we focus on the grid case and on Oded Schramm questions: using one of the Markov chains of Section \ref{sec:MURT}, we made approximate simulations of uniform subtrees of the grid with $n$ vertices (for $n$ up to some thousands). 
We provide pictures, statistics and conjectures. Since the
trees are drawn in the plane, there are two main topologies to define scaling limits:\\
-- firstly, the Hausdorff metric topology, in which case, trees are seen as rescaled compact subsets of the plane, and\\
-- secondly,  the Gromov Hausdorff topology (in which case, the graph distance is rescaled).\\
      The empirical results we have, support the idea that a limiting distribution exists for rescaled trees in both cases, under suitable normalization. However, if the limit for the Gromov-Hausdorff topology is likely to be a random continuous tree, it seems that it is not the case for the Hausdorff distance: the limiting objects seem to have empty interior (no space filling phenomena), but portions of the drawn simulated objects form patterns close to macroscopic loops, so that it is tempting to conjecture that the limiting object is not a tree.  \it Intuitive and partial justifications \rm  about the fact that the stationary regime has been reached in our simulations are given in Section \ref{sec:simu},  Fig. \ref{fig:MC} and  \ref{fig:MC2} and videos at \cite{FM}.
  	\item In Section \ref{sec:SST}, we provide several Markov chains with state space $\TTT{G}$, the set of subtrees of a graph (without fixing the size of the subtrees). The stationary distribution is uniform conditional to the size of the sampled tree, and the random size has an explicit ``tunable'' distribution.
  	\item In Section \ref{sec:survey}, we survey many models -- different from the uniform distribution -- of random subtrees with $n$ nodes of a graph; for most of them we provide simulation pictures, description of the distribution and sometimes open questions.
  	\begin{itemize}[topsep=0pt, leftmargin=*,noitemsep]
  		\item[\bls] in Section \ref{sec:PT}, we introduce a new model of random subtree with $n$ nodes: the pioneer tree, which coincides with the tree formed by the first steps of Aldous--Broder algorithm, 
  		\item[\bls] in Section \ref{sec:qdegppzr} we present a principle showing that it is impossible to construct a uniform element of $\TT{G}{n}$ using ``the last steps'' of a simple Markov chain (and similar constructions),
  		\item[\bls] in Section \ref{Ver:notfull}, we present models on $\TT{G}{n}$ inspired by Wilson's cycle popping algorithm,
  		\item[\bls] in Section \ref{sec:biased}, we give a model of distinguished connected component in a size biased forest,
  		\item[\bls] in Section \ref{sec:dqskpd}, we discuss two ways to extract a random subtree with $n$ nodes of a UST,
  		\item[\bls]  in Section \ref{sec:dla}, we provide a model motivated by directed limited aggregation (DLA) and which is defined on any graph (and coincides with the original model on $\Z^2$);
                  \item[\bls] in Section \ref{sec:dqffsd}, a model motivated by the internal DLA,
  		\item[\bls] in Section \ref{sec:MST}, we propose several models of random trees defined on weighted graphs: among them, a model uses Prim's algorithm, one Kruskal's and another uses first passage percolation. 
  	\end{itemize}
  \item Finally, in Section \ref{sec:exact_samp}, we investigate the case of random subtrees of a tree. 
  \begin{itemize}[topsep=0pt, leftmargin=*,noitemsep]
  	\item[\bls] In Section \ref{sec:egu}, we give an exact sampling method of a uniform subtree of a tree (a coupling from the past method),
  	\item[\bls] in Section \ref{sec:LEV} we propose several models of extractions of a subtree of size $n$ relying on some models of leaf-evaporation.
  \end{itemize}
\end{itemize}

\subsection{Simple cases and other questions}
\label{sec:SC}
 Sampling uniformly in $\TT{G}{n}$ is easy for some families of graphs $G$. Among others :\\
$\star$ If $G=K_N$ the complete graph on $N$ vertices, then a uniform element $T$  in $\TTr{1}{K_N}{n}$ is a uniform labelled tree on $n$ vertices $\{x_1,\cdots,x_n\}$ where this set is itself a uniform subset with $n$ elements of $\{1,\cdots,N\}$ containing 1. Hence, up to a relabelling of the vertices, $T$ is a uniform Cayley tree of size $n$, also called a uniform labelled tree. These trees are among the simplest and most studied model of random trees in the literature (with a very long history going back to Cayley \cite{Cayley} in 1889, see also Moon \cite{Moon}): they are moreover easy to sample, for example, using Prüfer sequences \cite{prufer_neuer_1918}, Neville code \cite{neville_1953} (see additional elements and codes in  Caminati et al. \cite{CAMINITI200797}), a  bijection with parking sequences (see Chassaing \& Marckert \cite{MR1814521}, Chassaing \& Louchard \cite{MR1913079}), a relation with additive coalescence (see Aldous \& Pitman \cite{MR1675063}, \cite{MR1913079}, Marckert \& Wang \cite{MR3912100}), or as a uniform spanning tree of the complete graph, see Section \ref{sec:STC}, and Aldous \cite{Al90}. Their asymptotic behaviour when $n\to+\infty$ is well known; they converge in distribution, after rescaling of the graph distance by $\sqrt{n}$ to the so called {\sl continuum random tree} (also called Brownian tree or Aldous' continuum random tree):  see  Aldous \cite{aldous1997}, Pitman \cite{Pitman} (and additional  combinatorial properties),  Marckert \& Mokkadem \cite{Mar-Mokka} and Duquesne \& Le Gall \cite{Duq-Leg} for additional information.\\
$\star$ If $G$ is the cycle $\Z/N\Z$, a path (the graph with vertices $1$ to $N$, with edges between $i$ and $i+1$), the set of subtrees of size $n$ coincide with the set of length $n-1$ intervals, and their simulations are trivial. To some extent, the same can be said for regular graph such as $\{0,1\}\times\{1,\cdots,N\}$ or  $\{0,1,\dots,k\}\times\{1,\cdots,N\}$, for $k$ fixed (with complexity growing in $k$: transfer matrices  allow to count the number of subtrees with a given first column, see for example da Silva et al.  \cite{da_Silva_2021} and references therein, and this allows one to successively sample the uniform random subtree column by column) or any family of graphs on which some simple combinatorial decompositions can be performed easily, then one may find some ad hoc methods to sample $\uniform(\TT{G}{n})$.\\
$\star$ If $(T(m),r)$ is the  infinite regular $m$-ary tree with root $r$ (the only node with degree $m$), then sampling uniformly in $\TTr{r}{T(m)}{n}$ is also a simple task, since each element of $\TTr{r}{T(m)}{n}$, can be seen as the set of internal nodes of a (non embedded planar) uniform $m$-ary tree with $n$ internal nodes (that is $1+nm$ nodes): sampling such a uniform tree is an easy task with several known methods, linear in the tree size (or with cost $n\log n$ depending on the cost model \footnote{The cost in terms of elementary operations performed on the data basis; it depends on the representation of the data, and need to be defined before talking of the cost of an algorithm}), since it is a model of ``simple trees'', which can also be seen as a Galton-Watson tree conditioned on the size (see e.g. Devroye \cite{LucD} for an overview of random generation of Galton-Watson trees conditioned by the size and Marckert \cite{marckert2021growing}, for a new ad hoc method for $m$ ary trees; in the binary tree case, additional methods are available, among other Rémy algorithm \cite{MR803997} which is an efficient method, with many properties, see e.g. Marchal \cite{MR2042386} and Evans et al. \cite{MR3601650}).\par In \cite{MR2060629}, Luczak \& Winkler provide a way to grow a sequence of trees $(t_n)$, such that $(t_n)$ is increasing for the inclusion partial order, and such that moreover, for each $n$, $t_n$ is uniform in $\TTr{r}{T(m)}{n}$.  \\
$\star$ Another line of research is the study of the uniform random subtree of some families of random graphs. It turns out that in some cases, the random generation is simple:\par
-- as shown by Fredes and Sepulveda \cite{FS20}: the random generation of a uniform subtree $t$ of size $m$ in a random rooted quadrangulation with $n$ faces can be done for any $(n,m)$ with $m<n+1$ in a reasonable time. This comes from the existence of a one-to-one correspondence between, on the one hand, quadrangulations marked by a distinguished subtree, and on the other hand, a pair formed by a quadrangulations with a simple boundary together with a planar tree (this bijection also can be extended to other models of planar maps, for example, with different restrictions on faces or vertices degrees). \par
-- In the case Erdös-Rényi graphs $G(n,p)$, it is known that the giant connected components have a phase transition for $p$ being approximatively $1/n$. Aldous \cite{aldous1997} established that for $p=p_\lambda(n)=1/n+\lambda/n^{4/3}$ (with $\lambda$ fixed), the size of the largest connected component, divided by $n^{2/3}$ converges in distribution; this is true also for the sizes of the $k$ largest components, and true also as a process indexed by $\lambda$ (see in \cite{BroutinMarckert}). In fact, as proved by Addario-Berry et al. \cite{MR2892951}, these connected components, seen as random graphs, have a scaling limit (when the graph distance is normalised by $n^{1/3}$). Moreover, the excesses\footnote{the excess of a connected graph is the minimal number of edges needed to be removed to turn the graph into a tree} of these components are well understood:  with a positive probability (bounded from below, when $n\to +\infty$, for a fixed $\lambda$), these connected components are trees. These results are  somehow the starting point to the paper Addario-Berry et al. in \cite{ABBG}, in which is established that the minimum spanning tree of the complete graph $K_n$ (with any reasonable models of random weights), possesses a scaling limit, after normalisation by $n^{1/3}$.

\section{The spanning-tree case}
\label{sec:STC}

Given a finite connected graph $G=(V,E)$, there are  several kinds of approaches to sample a UST of $G$, with many recent developments (a recent survey can be found in Schild \cite{A_Schild}).

\textbf{-- Random walk approaches:} Two famous algorithms, recalled in the two following sections are Aldous--Broder algorithm (Broder \cite{Bro89}, Aldous\cite{Al90}, see also Hu et al. \cite{HLT} and Fredes \& Marckert \cite{LFJFM}  for a variant) and Wilson's algorithm \cite{Wil96} (see also Lyons \& Peres \cite[Section 4]{lyons_peres_2017}, Járai \cite{Jarai09}).
Their expected running time for undirected graphs are $O(\tau_c)$ and $O(\tau)$ respectively, where $\tau_c$ and $\tau$ are the mean cover time\footnote{The mean cover time $\tau_c$ is here the maximum expected time to visit all the vertices of the graph, where the maximum is taken over all starting points.} and mean hitting time\footnote{The mean hitting time  is defined as $\tau=\sum_{i,j}\pi(i)\pi(j)E_{i,j}$, where $\pi$ is the invariant distribution and $E_{i,j}$ is the mean time starting from $i$ to reach $j$.} of the simple random walk in $G$, respectively. Wilson's algorithm is the fastest of the two since the mean hitting time is always smaller than the cover time. \par
Wilson and Aldous--Broder algorithms permit also the generation of trees with a probability proportional to the product of the edge weights, as stated below, in \Cref{theo:AB} (in the positive weighted edge models, where the edges of the initial graphs possess some positive weights, which can be seen as conductances).

It is tempting to try to tune these algorithms to sample uniformly in $\TT{n}{G}$, that is with a given size $n$ for some $n <|V|$. Some of these modifications  will be discussed in the paper, but none of them allows one to sample uniformly in $\TT{n}{G}$ (when $n\leq |V|$);  a kind of meta argument will be developed in Theorem \ref{theo:nolocal} to explain why it is in general not possible to obtain the uniform distribution using random walks when $n\ll |V|$.\par

\noindent -- \textbf{Laplacian methods.}
If the graph is small, Tutte's formula (see e.g. Tutte \cite{Tutte54},  Bernardi \cite{OB}, Welsh \cite{Welsh} and  Section \ref{sec:enum} for additional details) or more efficiently, Kirchhoff matrix tree theorem (Kirchhoff \cite{Kirchhoff}, Chaiken \& Kleitman \cite{MR480115}, Zeilberger \cite{DZ}) can be used to design some generation algorithms. The Laplacian matrix of a graph $G=(V=\{1,\cdots,n\},E)$ is the matrix
\[{\sf Lap}(G):=\begin{bmatrix} {\sf deg}_G(i)\1_{i=j} - A_{i,j} \end{bmatrix}_{1\leq i,j\leq n}\] where $A_{i,j}$ is the number of edges between $i$ and $j$ (a variant using weighted edges can be used instead, in which case $A_{i,j}$ is the weight of the edge $(i,j)$, and the diagonal term ${\sf deg}_G(i)\1_{i,j}$ has to be replaced by $\sum_{j} A_{i,j}$). The famous matrix tree theorem asserts that the number of (unrooted) spanning trees of $G$ is
\begin{align}\label{MT}|\{ {\sf Spanning\ trees\ of }(G)\}|=|\det({\sf Lap}(G)^{\star})|,
\end{align} 
where ${\sf Lap}(G)^{\star}$ is obtained from ${\sf Lap}(G)$ by the suppression of a row and a column  This formula also gives the cardinality of the set of spanning trees rooted at some fixed vertex $r$.\par
In the weighted case, removing the row and column $r$ in the Laplacian matrix gives the sum over the weighted rooted trees at $r$, more formally,
\[|\det({\sf Lap}(G)^{(r)})|=\sum_{(t,r): t \textsf{ spanning }} W(t,r)\]
where $W(t,r)=\prod_{(u,v)\in {\sf Edges}(t,r)} w_{(u,v)}$ with $w_{(a,b)}$ the weight of the oriented edge $(a,b)$ and where in the rooted tree $(t,r)$, each edge $(u,v)$ is oriented toward the root $r$.

This theorem can be used to determine the probability of presence of a given edge of $G$ in a UST, which can be taken into account recursively for the complete random generation (see \Cref{sec:usucf}).\\
 This fact is used and discussed in Colbourn et al. \cite{Colbourn}, who designed an algorithm with time cost $O(|V|^3)$ to sample a uniform spanning tree of $G$ (improving on Guénoche \cite{Guenoche} and Kulkarni \cite{Kulkarni}). In  \cite{Colbourn2}, Colbourn et al. provide a method running according to the cost of the best-known matrix multiplication (which is $O(|V|^{\omega})$ for $\omega< 2.373$).

\noindent--\textbf{Hybrid methods.} In the very last years, the previous results have been improved by mixing random walk methods with computation methods relying on Laplacians, connections with electrical networks, with the aim to be able to provide some shortcuts to the random walks.
Kelner \&   M\polhk{a}dry \cite{KM} provide an algorithm with time cost $\tilde{O}(|E| \sqrt{|V|}\log (1/\delta))$ (the $\tilde{O}$, meaning ``up to polylog factors'') to sample a tree within a multiplicative $(1+\delta)$ of a uniform spanning tree, result improved by M\polhk{a}dry et al. \cite{Madry} (time cost $\tilde{O}(|E|^{4/3})$, by Durfee et al. \cite{Durfee} (time cost $\tilde{O}(|V|^{4/3}|E|^{1/2}+|V|^2)$, for the more general case of edge-weighted trees). Finally, very recently, Schild \cite{Schild} (long version in \cite{A_Schild}) provides an algorithm with time cost $|E|^{1+o(1)}\beta^{o(1)}$ (in the general weighted graph case, with max-to-min ratio $\beta$).

We refer the reader to Schild \cite{A_Schild}, Durfee et al. \cite{Durfee} for the complete history on these lines of research.
\begin{figure}[h!]
  \centerline{\includegraphics[width = 9cm]{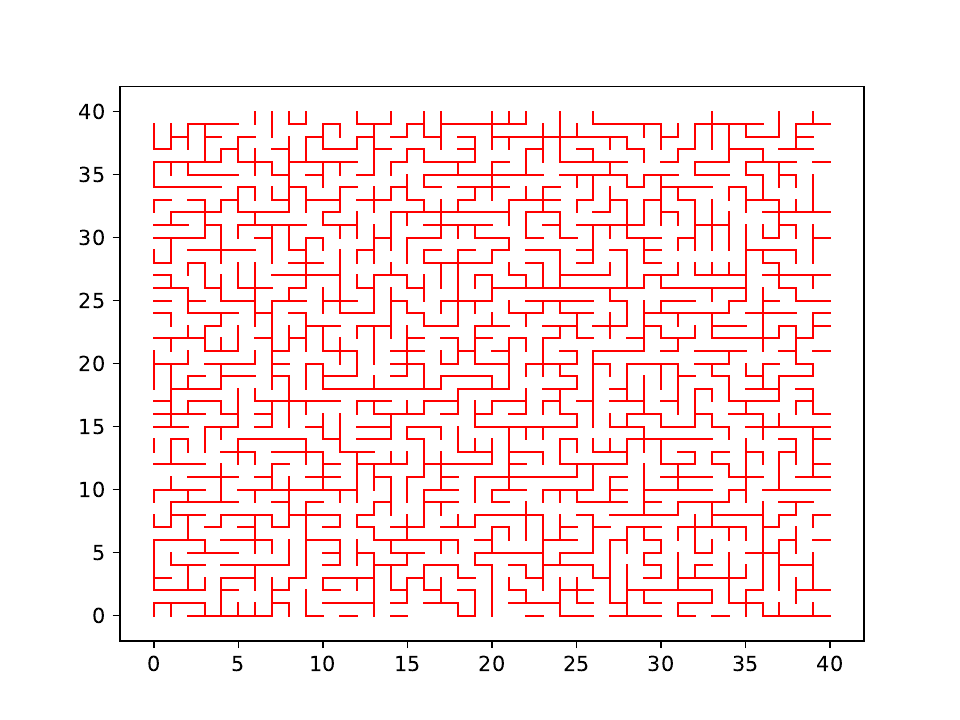}}
  \captionn{ \label{FIG:simu}Simulation of a UST of the graph $(\Z/40\Z)^2$ using Wilson's algorithm (on this picture, identify the right and left sides, and of the top and bottom sides to get the actual spanning-tree). }
 \end{figure}
 
\subsection{Aldous--Broder algorithm}
\label{sec:ABA}

\paragraph{Reversible transition matrices} A Markov chain with transition matrix  $M$ on $E\times E$ is said to be reversible with respect to a distribution $\rho$, if it satisfies the \textbf{detailed balance equations}:
\begin{align}\label{reveqs}
	\rho_iM_{i,j}=\rho_jM_{j,i} \textrm{ for any } i,j\in E.
\end{align}

In this case $\rho$ is invariant for this Markov chain.

We say that a transition matrix $M=(M_{a,b},a,b \in V)$ is \textit{positive}  on a connected graph $G=(V,E)$, if $\{a,b\}\in E \equi M_{a,b}>0$. 
Denote by $\rho=(\rho_v,v\in V)$ the unique stationary distribution of this transition matrix.
Consider $W=(W_k, k\geq 0)$ a Markov chain with transition matrix $M$. Set
\be
\tau_k(W) = \inf\{j, |\{W_0,\cdots,W_j\}|=k\},~~~1\leq k \leq |V|\ee
the first time $k$ different points have been visited: hence $\tau_1(W)=0$, and the cover time is $\tau_{|V|}(W)$ (we will write $\tau_k$ instead of $\tau_k(W)$ when it is clear from the context).

\begin{defi}\label{defi:dqsd} Denote by $\FET(W_0,\cdots,W_{\tau_{|V|}(W)})$ the rooted spanning-tree\footnote{The first entrance tree is associated to a path, random or not.}  with root $W_0$ and whose $|V|-1$ edges are given by the oriented edge $(W_{\tau_k},W_{-1+\tau_k})$ for $2\leq k \leq \tau_{|V|}$.
\end{defi}
\begin{theo}\label{theo:AB}[Aldous \cite{Al90} and Broder \cite{Bro89}] If $M$ is positive and reversible with invariant distribution $\rho$, then
\ben\label{eq:qsdq17}
\P\l[\FET\l(W_0,\cdots,W_{\tau_{|V|}}\r)=(t,r)~|~W_0=r\r]= {\sf Const.}\left(\prod_{e\in E(t,r)} M_e\right)/\rho(r). \een
\end{theo}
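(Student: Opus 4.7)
The standard proof relies on a time-reversal identity, and I would organize it in three steps.

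\emph{Step 1 (time-reversal identity).} I would first introduce the companion notion of a last-exit tree $\LET$: for a covering path $w_0,\dots,w_T$, set $\sigma_v := \max\{k : w_k = v\}$, and let $\LET$ be the tree rooted at $w_T$ whose oriented edges are $(w_{\sigma_v}, w_{\sigma_v+1})$ for $v \ne w_T$. The key pathwise identity is
\[
\FET(w_0,\dots,w_T) \;=\; \LET(w_T,\dots,w_0),
\]
since the first visit to $v$ in forward time coincides with the last visit to $v$ in reversed time, and reversing the orientation of the corresponding edge makes it point toward the common root $r = w_0$. So it suffices to compute the law of $\LET$ for the time-reversed walk.

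\emph{Step 2 (last-exit computation).} Fix a target rooted tree $(t,r)$ and let $f(v)$ be the parent of $v$ in $t$. The event $\{\LET = (t,r)\}$ constrains only the last step taken out of each vertex $v \neq r$: it must go to $f(v)$. Applying the strong Markov property successively at these last-exit times (or equivalently, summing transition weights over all compatible trajectories and identifying the resulting expression as a sum of products of kernel entries), the probability factorises as
\[
\Big(\prod_{v \ne r} M_{v,f(v)}\Big) \cdot C(r) \;=\; \Big(\prod_{e \in E(t,r)} M_e\Big) \cdot C(r),
\]
where the constant $C(r)$ depends only on the root.

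\emph{Step 3 (reversibility and normalisation).} Reversibility enters twice. First, the reversed walk conditioned on its endpoint is again Markov with kernel $M$, which allows me to replace the reversed walk by a copy of the forward walk for the purposes of computing probabilities. Second, converting the forward chain started at $r$ into a reversed chain ending at $r$ produces the $1/\rho(r)$ factor via the reversibility relation $M_{a,b}\rho(a) = M_{b,a}\rho(b)$, which telescopes along any trajectory. The main obstacle is the bookkeeping around the deterministic initial condition $W_0 = r$, which breaks exact stationarity: the cleanest workaround is either to run $W$ in stationary regime and condition at the end (so that the $1/\rho(r)$ appears from Bayes' rule), or to verify that the right-hand side of \eqref{eq:qsdq17} already defines a probability distribution using the Markov chain tree theorem $\rho(r) \propto \sum_{t} \prod_{e \in E(t,r)} M_e$, and then to match it to the law of $\FET$ by a single explicit path computation.
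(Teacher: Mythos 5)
The paper does not actually prove Theorem~\ref{theo:AB}: it states it, orients the edges toward the root, and refers the reader to \cite{Bro89, Al90, lyons_peres_2017, Jarai09}, also noting that the authors' own (``completely different'', combinatorial) proof appears in \cite{LFJFM}. So there is no in-paper proof to compare against line by line. That said, your overall route is the classical Aldous--Broder argument, and the paper itself deploys exactly your Step~1 later on: the identity $\FET(w_0,\dots,w_\tm)=\LET(w_\tm,\dots,w_0)$ is precisely \eref{eq:qsdqs}, and the surrounding proof of Proposition~\ref{pro:croi1} describes the associated tree-valued Markov chain as ``the argument at the core of Aldous--Broder proof.'' So Steps~1 and~3 are in the spirit of what the paper regards as the standard proof.

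There is, however, a genuine gap in Step~2, and it sits exactly where the real work is. Last-exit times $\sigma_v=\max\{k: w_k=v\}$ are \emph{not} stopping times (they look into the future), so ``applying the strong Markov property successively at these last-exit times'' is not a valid move. The fallback you offer in parentheses --- ``summing transition weights over all compatible trajectories and identifying the resulting expression as a sum of products of kernel entries'' --- is a restatement of the claim, not a proof: the whole subtlety of Aldous--Broder is precisely that the contribution of the non-last-exit steps, which a priori depends on the combinatorics of the excursion structure and hence on $t$, in fact collapses to a constant $C(r)$ depending only on the root. The standard way to close this gap (Aldous's, and the one the paper uses for Proposition~\ref{pro:croi1}) is to work with the stationary tree-valued process $n\mapsto \LET(W_i,\,i\le n)$ for the bi-infinite chain, identify its one-step transition (add $(W_n,W_{n+1})$, delete the old outgoing edge at $W_{n+1}$, re-root at $W_{n+1}$), and verify directly that the measure $\pi(t,r)\propto \rho(r)\prod_{e\in E(t,r)} M_e$ is invariant for it; Step~3 then falls out by conditioning on the root, together with the Markov chain tree theorem to identify the normalisation. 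Without some argument of this type --- or a bijective cancellation over trajectories --- the factorisation in Step~2 is asserted rather than established.
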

Here, and elsewhere, for any rooted spanning-tree $(t,r)$, the edges $E(t,r)$ of $(t,r)$ are oriented toward the root $r$ (so that if $e=(e_1,e_2)$, $e_2$ is the parent of $e_1$, and $M_e:=M_{e_1,e_2}$).\\
Proofs can be found in \cite{Bro89, Al90,lyons_peres_2017,Jarai09}. The original proof relies on the so-called ``tree Markov chain'': this is a Markov chain whose state space is the set ${\sf Spanning\ trees}(G)$, and whose evolution is defined using a step of a random walk with transition matrix $M$ on $V$.\\
As a consequence, if $M$ is the transition matrix corresponding to the simple random walk on $G$, $M_{a,b}=\1_{\{a,b\}\in E}/\degree(a)$, the invariant distribution $\rho_v$ is proportional to $\degree_G(v)$, so that \eref{eq:qsdq17} is independent of $t$ and $\FET(W_0,\cdots,W_{\tau_{|V|}})$ is a UST, rooted at $W_0$ (for the non-rooted case, any choice of distribution of $W_0$ is fine: just project on non-rooted trees).\\

\noindent In fact, the reversibility condition in Theorem \ref{theo:AB} can be dropped, but the conclusion of the theorem has to be adapted (see Hu et al. \cite{HLT}, and Fredes \& Marckert \cite{LFJFM}). 
For a positive transition matrix $M$ on $G$, there exists a unique invariant distribution $\rho$.  Define $\ra{M}$ by
 \ben\label{eq:dqsd22}
 \ra{M}_{x,y}:= \rho_{y} M_{y,x} / \rho_x, \textrm{ for all }(x,y)\in V^2,
 \een
so that $\ra{M}$ is simply the transition matrix of the time-reversal of a Markov chain with transition matrix $M$ under its invariant distribution. 
\begin{theo}[\cite{HLT}, \cite{LFJFM}]\label{the:kgyqsd} If $M$ is positive on $G$, and $W$ is a Markov chain with transition matrix $M$ and invariant distribution $\rho$, then for any rooted spanning-tree $(t,r)$
  \ben\label{eq:qsdq33}
  \P\l[\FET\l(W_0,\cdots,W_{\tau_{|V|}}\r)=(t,r)~|~W_0=r\r]= {\sf Const.}\left(\prod_{e\in E(t,r)} \ra{M_e}\right) / \rho(r).\een
\end{theo}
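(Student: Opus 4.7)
The plan is to reduce to a dual statement about the \emph{LastExitTree} of a Markov chain with kernel $\ra M$, via time reversal. For a path $\pi = (\pi_0,\ldots,\pi_n)$ visiting every vertex of $G$, set $\sigma_v(\pi) := \max\{j:\pi_j=v\}$ and define $\LET(\pi)$ as the spanning tree rooted at $\pi_n$ with oriented edges $(\pi_{\sigma_v},\pi_{\sigma_v+1})$ for each $v \neq \pi_n$. A first purely combinatorial identity is
\[
\FET(\pi) \;=\; \LET(\ra\pi),\qquad \ra\pi:=(\pi_n,\pi_{n-1},\ldots,\pi_0),
\]
valid for every path $\pi$ covering $V$: since $\sigma_v(\ra\pi) = n-\tau_v(\pi)$, the outgoing edge at $v$ in $\LET(\ra\pi)$ is $(\ra\pi_{n-\tau_v},\ra\pi_{n-\tau_v+1}) = (\pi_{\tau_v},\pi_{\tau_v-1})$, coinciding with the outgoing edge at $v$ in $\FET(\pi)$; both trees are rooted at $\pi_0$.

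Second, iterating \eref{eq:dqsd22} in the form $\rho(x)M_{x,y}=\rho(y)\ra M_{y,x}$ along a path from $r$ to $v$ telescopes to
\[
\prod_{i=0}^{n-1} M_{\pi_i,\pi_{i+1}} \;=\; \frac{\rho(v)}{\rho(r)}\prod_{i=0}^{n-1}\ra M_{\ra\pi_i,\ra\pi_{i+1}}.
\]
Applied to $W$ stopped at $\tau_{|V|}$, this exhibits $\ra W$, conditionally on $W_0=r$, as a stationary $\ra M$-Markov chain $Y$ conditioned on its terminal vertex being $r$, with the factor $1/\rho(r)$ appearing in the conditioning as $1/\P[\,Y_{\text{end}}=r\,] = 1/\rho(r)$.

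Combining these two ingredients reduces the theorem to the LET analogue of Theorem~\ref{theo:AB} for the kernel $\ra M$: that the LET of a stationary $\ra M$-chain, conditioned on terminating at $r$ after covering $V$, has distribution proportional to $\prod_{e\in E(t,r)}\ra M_e$. This is the substantive step, and the main difficulty, since the reversibility trick used in the classical proof of Theorem~\ref{theo:AB} (which identifies $W$ with its own time reversal in stationarity) is no longer available. I would establish this LET formula by a direct path decomposition: slice each realization of $\{\LET=(t,r)\}$ into the excursions attached to the successive last-exit vertices, and resum these excursion contributions via the Markov Chain Tree Theorem -- which holds without reversibility -- to recover the announced product. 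Alternatively, one can adapt Wilson's cycle-popping argument to the non-reversible setting, where it is known to produce a random spanning tree rooted at the terminal vertex with the correct distribution $\propto\prod_e\ra M_e$.
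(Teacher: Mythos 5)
Your overall strategy---establishing $\FET(\pi)=\LET(\ra{\pi})$ and transferring the computation by time reversal to the kernel $\ra{M}$---is the same one used in the paper (see the sketch of Proposition~\ref{pro:croi1}) and in the cited reference \cite{LFJFM}, to which the paper defers the actual proof of this theorem. Your verification of the path-reversal identity and the telescoping weight identity are both correct. One bookkeeping caveat: $\tau_{|V|}$ is a random stopping time, so describing the reversed path as ``a stationary $\ra{M}$-chain conditioned on its terminal vertex being $r$'' is loose; the clean formulation, used in the sketch of Proposition~\ref{pro:croi1}, is to take a bi-infinite stationary chain $(W_k,k\in\Z)$ and identify $\FET(W_0,\dots,W_{\tau_{|V|}})$ with $\LET(W_{-k},k\geq 0)$, the last-exit tree of the past of the time-reversed $\ra{M}$-chain.

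The genuine gap is exactly the step you flag as substantive, and neither of your two proposed completions works as stated. Invoking the Markov Chain Tree Theorem to ``resum excursion contributions'' reads the implication backward: MCTT gives the stationary law \emph{on vertices} as a ratio of spanning-tree weights, whereas what you need is the law of the $\LET$ \emph{on rooted trees}, a strictly finer statement from which MCTT follows by summing over $t$; one cannot extract the finer statement from its marginal without further work, and you supply none. ``Adapting Wilson's cycle-popping'' is likewise not an argument: cycle popping builds a tree out of i.i.d.\ stacks of outgoing edges and coincides with the $\LET$ of a single trajectory only in distribution, and only once both theorems are already known, so appealing to it here is circular. What is actually required is to recognize $k\mapsto\LET(Y_i,i\le k)$ as a Markov chain on rooted spanning trees (add the edge $(Y_k,Y_{k+1})$, delete the previous outgoing edge of $Y_{k+1}$, re-root at $Y_{k+1}$; compare the kernel $K^{\Kerref{EOE}}$ in Section~\ref{sec:PT}) and then verify directly, by a balance or combinatorial computation, that the measure $\propto\prod_{e}\ra{M}_e$ on rooted spanning trees is stationary for it. That verification is the content of the theorem and is absent from your proposal.
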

This theorem implies Aldous--Broder result since in the reversible case, $\ra{M}=M$.

\subsection{Wilson's algorithm}

We refer to Schramm \cite{SC}, Lawler \cite{Lawler1999}, Marchal \cite{Marchal}  and Viennot \cite[Prop.6.3]{VX} for more information concerning loop erased random walks (and to Schramm \cite{SC}, Lawler \cite{Law2005}, Lawler et al. \cite{LSW04} for conformal invariant scaling limit considerations, in the lattice case, in which deep links between scaling limits of loop erased random walks and scaling limits of uniform spanning trees are discussed). 
Let $M$ be a positive transition matrix on a connected graph $G=(V,E)$ and $r\in V$ a distinguished node.
For any starting point $v\in V$ and non-empty subset $S$ of $V$, we denote by ${\sf LERW}_M[v,S]$ the distribution of a $M$-loop erased random walk starting at $v$ and killed at its hitting time of $S$ (meaning that before erasure, the random walk is a Markov chain with transition matrix $M$). 

Wilson's algorithm can be stated as follows: Consider an ordering of the vertices $(v_1=r,v_2,\dots,v_{|V|})$ of $V$, and set ${\bf T}_1$ as the initial tree reduced to the point $v_1=r$.
For any $2\leq i\leq N$, consider a loop erased random walk $L_i$ with distribution ${\sf LERW}_M[v_i,{\bf T}_{i-1}]$, starting at $v_i$ and stopped at the vertex set of the current tree ${\bf T}_{i-1}$. 
The tree ${\bf T}_{i}$ is the tree having as set of edges those of ${\bf T}_{i-1}$ union the set of steps of $L_i$ (meaning that if $L_i=(a_0,\cdots,a_m)$, the new edges are the $(a_j,a_{j+1})$). If $v_i$ is already in  ${\bf T}_{i-1}$, there is no new edges. Denote by ${\sf WilsonTree}_r$ the final tree ${\bf T}_{|V|}$. We have
\begin{pro}[\cite{Wil96}] For any positive transition matrix $M$, for any rooted spanning-tree $(t,x)$ of $G$,
  \ben\label{eq:Wilson}
  \P({\sf WilsonTree}_r=(t,x))= {\sf Const}.\1_{x=r} \prod_{e \in E(t,r)} M_e.
  \een
\end{pro}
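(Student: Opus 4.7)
The plan is to follow Wilson's original cycle popping proof. First I would introduce, for each non-root vertex $v \in V \setminus \{r\}$, an independent i.i.d. stack $(X_v^{(1)}, X_v^{(2)}, \ldots)$ where $X_v^{(k)}$ is sampled from $M_{v,\cdot}$; think of the $k$-th element as an arrow from $v$ to $X_v^{(k)}$ carrying the ``color'' $k$. At any moment, every non-root vertex $v$ points via its top arrow to some neighbour; this \emph{top configuration} defines a functional digraph in which each non-root has out-degree $1$. Such a digraph is a union of a spanning forest oriented toward $r$ together with some directed cycles. A cycle appearing in the current top configuration is called \emph{poppable}; popping it means incrementing by $1$ the stack index of each of its vertices, thereby exposing new top arrows. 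The procedure terminates when no cycle is left, at which point the top configuration is a spanning tree rooted at $r$.

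Next I would invoke Wilson's combinatorial key lemma: the multiset of colored cycles popped before reaching a spanning tree, as well as the final spanning tree itself, depend only on the stacks and not on the popping order. The proof is a short local-confluence argument: if two poppable cycles $\gamma_1,\gamma_2$ coexist they are necessarily vertex-disjoint (otherwise a shared vertex would have two distinct top arrows), so popping one does not affect the poppability of the other; a diamond argument then makes all popping schedules confluent. I would then argue that Wilson's algorithm is simply one valid popping schedule: when the LERW started at $v_i$ revisits a vertex, the loop that is erased is exactly the cycle appearing at the top and that loop-erasure step is a cycle pop; when the walk hits ${\bf T}_{i-1}$ the newly added edges are the current top arrows along the trajectory. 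Consequently $ {\sf WilsonTree}_r$ equals the deterministic function of the stacks produced by the (order-independent) popping procedure, and is, in particular, always a spanning tree rooted at $r$, explaining the factor $\1_{x=r}$.

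Finally I would compute probabilities. Fix a rooted spanning tree $(t,r)$ and a finite multiset $\mathcal{C}$ of colored cycles that, if popped in some admissible order, leaves $(t,r)$ as top configuration. The event that the stacks produce exactly $\mathcal{C}$ as popped multiset and $(t,r)$ as final top configuration prescribes finitely many stack entries and therefore has probability
$$\Big(\prod_{\ar{a}\in E(t,r)} M_{\ar{a}}\Big)\Big(\prod_{\ar{a}\in \mathcal{C}} M_{\ar{a}}\Big).$$
Summing over all admissible $\mathcal{C}$ yields a constant $Z(M)$; the key point is that the set of admissible $\mathcal{C}$ does not depend on $t$. Indeed, by the order-independence lemma, whether a colored cycle $\gamma$ is among the cycles popped in a run that ends at $(t,r)$ is equivalent to $\gamma$ being popped in \emph{any} run (before the top becomes any tree), which is a property of the stacks alone; hence the sum factorizes as a graph-dependent but $t$-independent quantity. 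This gives $\P({\sf WilsonTree}_r=(t,r)) = Z(M)\prod_{e\in E(t,r)} M_e$, which is the claim. The main obstacle is this very last step: rigorously identifying the $\mathcal{C}$-sum as independent of $t$ and finite. Finiteness comes from the fact that, because $G$ is finite and $M$ is positive, the popping procedure a.s.\ terminates in finite time; $t$-independence is precisely the content of Wilson's order-independence lemma, which is therefore the pivotal combinatorial input of the whole argument.
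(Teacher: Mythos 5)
Your proposal is correct and follows the classical cycle--popping proof of Wilson and Propp--Wilson. Note that the paper itself does not reproduce a proof of this Proposition: it simply refers the reader to Wilson, Propp--Wilson, J\'arai and Lawler, and then \emph{describes} the cycle--popping construction as a companion point of view, so you are effectively supplying the argument that the paper delegates to the references.

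One place where your sketch is looser than the references: the step ``a diamond argument then makes all popping schedules confluent.'' Vertex--disjointness of coexisting poppable cycles and the resulting one--step diamond property do give uniqueness of the \emph{final} configuration, but to conclude that the \emph{multiset of popped colored cycles} is schedule--independent (which is what your probability computation uses), one also needs the ``hungry cycle'' observation: if a colored cycle $\hat\gamma$ is poppable at some moment in one schedule, then any halting schedule must eventually pop $\hat\gamma$, because until some vertex of $\hat\gamma$ is popped its top arrows are unchanged and $\hat\gamma$ persists as a cycle, contradicting termination; the popping of that vertex is necessarily the popping of $\hat\gamma$ itself, again by vertex--disjointness. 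Together with the commutation of disjoint pops this yields, by induction, the full order--independence lemma. With that small amount of extra care your argument, including the final factorization $\P({\sf WilsonTree}_r=(t,r))=Z(M)\prod_{e\in E(t,r)}M_e$ and the $\1_{x=r}$ factor, is complete and matches the cited source.
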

For proofs, see Wilson \cite{Wil96}, Propp \& Wilson \cite{PW98}, Járai \cite{Jarai09}, or Lawler \cite{Lawler1999}.

\par
When $M_{a,b}=\frac{1}{\degree(a)}$, then $\P({\sf WilsonTree}=(t,x))={\sf Const}.\1_{x=r}/  \prod_{v \neq r} \degree(v)$, which again, does not depend on the tree $t$, so that again, ${\sf WilsonTree}_r$ is a UST rooted at $r$.

This construction admits a companion description called cycle popping (detailed in \cite{Wil96}, \cite{PW98}, \cite{Jarai09}) which is more suitable for generalizations (see Section \ref{Ver:notfull}). \\
{\bf Cycle popping algorithm.}
Consider for each vertex $v$, different from $r$, a random outgoing edge $\ar{\bf e}_v$, independent of the others, such that $\P(\ar{\bf e}_v= (v,w)) =M_{v,w}$, and call such orientation ${\bf O}=(\ar{\bf e}_v, v\in V\setminus\{r\})$.  
It is simple to check that if the set of oriented edges in ${\bf O}$  forms a tree, it will be a spanning-tree rooted at $r$, and the probability that this spanning-tree equals $(t,r)$ is $\prod_{e \in E(t,r)} M_e$.
When the edges in ${\bf O}$ does not form a spanning-tree, the connected component ${\bf t}(r)$ of $r$ is a tree and all other connected components contain a (unique) oriented cycle. 
The cycle popping algorithm \cite[Sec. 6]{PW98} consists in choosing a cycle and re-sampling the outgoing edges of all the vertices it contains; this operation is repeated until the resulting orientation does not contain any cycle, so that it corresponds to a tree ${\bf T}^{\star}$.  Wilson \cite{Wil96} proved that ${\bf T}^{\star}$ has also the distribution given in \eref{eq:Wilson}; better than that, he explains how the construction using the LERW is just a way to view/order the cycle poppings.

As suggested by Theorem \ref{the:kgyqsd}, if a coupling between Aldous--Broder and Wilson algorithms could be found, then probably Wilson's algorithm should be run using the transition matrix $\ra{M}$ instead of $M$.
\begin{Ques} It is possible to couple Wilson and Aldous--Broder constructions so that they depend on the same trajectories (and give the same results)?
\end{Ques}

\paragraph{Mixing the UST question with the configuration model?}
The following question is open to our knowledge and seems particularly interesting: it is the question of the sampling of a UST with prescribed degrees.
\begin{Ques}\label{Qu:cond} Given a connected graph $G=(V,E)$ and some positive integers $(d_u,u \in V)$ associated with the vertices of $V$, find an algorithm that produces a UST ${\bf t}$ of $G$ conditioned on the event $\{\degree_{\bf t}(u)= d_u,  u\in V\}$ when there exists such a spanning-tree.
\end{Ques}
The existence of a spanning-tree satisfying  $\{\degree_{\bf t}(u)= d_u,  u\in V\}$ can be decided by exhaustive approach or using the matrix tree theorem as shown in \eqref{MT} (set $u_iu_j$ as the weight of the edge $(i,j)$, where the $u_j$ are formal monomials; then extract the coefficient of $\prod_{j} u_j^{d_{j}}$ in the determinant of the Laplacian matrix of the graph, with the first row and column, removed: this coefficient gives the number of such spanning-trees). The problem of the uniform generation of a Hamiltonian path (a path that visits each node exactly once) is equivalent to that of a spanning-tree whose nodes have all degree 2, except for the extremal nodes that have  degree 1. There are no efficient algorithm for this task, since even the decision problem of existence of a Hamiltonian path  is NP-complete (Karp \cite{Karp}).
The previous discussion implies that deciding the existence of a spanning tree with some prescribed degree sequence is NP-complete, which implies, a priori, that the answer to Question \ref{Qu:cond} is difficult without additional hypothesis. Indeed, take the example of the complete graph $K_n$; a uniform spanning tree rooted at 1, in which one sets that exactly $n_k$ nodes must have $k$ children for a fixed sequence $(n_i, 0\leq i\leq n-1)$, can be simulated by taking a uniform permutation of the sequence $0^{n_0}1^{n_1}...n^{n_{n-1}}$ (made of the concatenation of $n_0$ zeroes, $n_1$ ones, $n_2$ twos, ...);  this gives a sequence $(X_0,\cdots,X_{n-1})$; the rotation principle (Otter \cite{MR30716}), then asserts that there is a single $a\in \Z/n\Z$ (easy to compute, \cite[Section 2.5]{marckert2021growing}) such that $X^{(a)}:=(X_a, X_{a+1 \mod n}, \cdots, X_{n-1+a\mod n})$ is the sequence of node degrees of a planar tree, traversed in the lexicographical order. Now, put label 1 at the root, and for $i$ going from 2 to $n$, put label $\sigma_{i-1}$ to the $i$th node sorted according to the lexicographical order, where $\sigma$ is a uniform random permutation $\{2,\cdots,n\}$.
It is a simple exercice to show that this method provides the uniform distribution on the set of labeled trees with root 1, respecting the degree sequence given.
  Even the scaling limit of this model is known, under some hypothesis on the limiting proportion $p_i$ of nodes of degree $i$, see Broutin and Marckert \cite{MR3188597}.

\paragraph{Uniform spanning tree of infinite lattices}
Some results exist concerning the asymptotic behaviour of UST of the grid, either locally, or after rescaling. Since the paper is rather devoted to more general random subtrees, we just give here few pointers.  
Taking a UST on a graph as $[-n,n]^d$ (with edges between points with integer coordinates at Euclidean distance 1), and letting $n\to+\infty$, Pemantle \cite{MR1127715}  showed that the limit is a tree on $\mathbb{Z}^d$ for $d\leq 4$ (and a forest for $d>5$ with infinitely many components). See Benjamini et al. \cite{benjamini2001uniform} for extension to general graphs.  \par
Another line of research concerns the asymptotics of uniform spanning tree after rescaling, and the conformal invariance of the limit (see Schramm \cite{SC}, Lawler et al. \cite{MR2044671}).\par
The weak limit of the UST on the torus $(\Z/n\Z)^d$ is shown to be, after an appropriate rescaling,  the continuum random tree (which is the natural limit of the UST of the complete graph after a scaling by $\sqrt{n}$). It has been proved by Peres \& Revelle \cite{arxiv.math/0410430} (for $d\geq 5$)  and by Schweinsberg \cite{MR2496437} for $d=4$.

\section{The combinatorial approach to sample $\uniform(\TT{G}{n})$}

\label{sec:enum}

When $G$ is finite, $\uniform(\TT{G}{n})$ can be sampled if one knows a way to sample uniformly in $\TTr{r}{G}{n}$ for all $r\in V$ (that is when a root is fixed) and if $|\TTr{r}{G}{n}|$ is known for each $r$ (or if they are known to be equal for some reasons, for example, if a group acts transitively on the graph).
Indeed, since the trees of $\TT{G}{n}$ have the same number of nodes, it suffices to first pick a random node ${\bf r}$ according to the unique probability distribution proportional to  $(|\TTr{r}{G}{n}|, r \in V)$, and then, conditionally on ${\bf r}=r$, to pick a tree uniformly in  $\TTr{r}{G}{n}$. 

It turns out that the sequence $(|\TT{G}{n}|,n\geq 1)$ can be computed using a decomposition similar to that used when deriving Tutte's formula (Tutte \cite{Tutte54}, Bernardi \cite{OB}). The first part of what follows and which concerns a Tutte polynomial for subtrees of a graph, is present \textit{mutatis mutandis} in \cite[Prop. 4.4.]{Chin18}, for unrooted subtrees.

Apart from their own interest,  these algebraic considerations  bring some additional insight, and possibly, potential methods to sample $\uniform(\TT{G}{n})$: in general, the cost of the computation of $(|\TT{G}{n}|,n\geq 1)$ is significant, and can be done only on small graphs (or  particular ones); more elements on the complexity of these costs are discussed below.

Tutte recursion produces loops, multiple edges, and may disconnect the graph (if we allow the deletion of bridges, which is the case here). In this section, we then consider multigraphs $G=(V,E)$, possibly disconnected, having possibly some loops (edges of the form  $\{a,b\}=\{a\}$). Of course,  the number of subtrees of a graph having some loops is unchanged by their removal. Since we deal with rooted subtrees of $G$, any part of the graph disconnected from the root of the tree can be ignored.

Consider a multigraph $G=(V,E)$, and $e$ an edge (possibly not in $E$). Recall the two classical operations, contraction and suppression of edges:\medbreak
\noindent$\bullet$ The graph $G\setminus e$ obtained from  {\bf the suppression of $e$}, is the multigraph $G'=(V',E')$ coinciding with $G=(V,E)$ except that a copy of the edge $e$ is suppressed from $E$ if any, and $G'=G$ otherwise,\\
$\bullet$ The graph $G.e$ obtained from {\bf the contraction of $e$}, is the multigraph $G'=(V',E')$ defined as follows: if $e$ is a loop, then $V'=V$ and $E'$ is obtained from $E$ by removing 1 to the multiplicity of the edge $e$; and if $e$ is not a loop, say $e=\{a,b\}$, we define $V'=V\setminus \{b\}$, and $E'$ from $E$, by replacing every occurrence of $b$ in an edge $e''$ of $E$ by $a$.

Consider the following polynomial 
\be
{\bf T}_r(G)=\sum_{t \in \TTTr{r}{G}} x^{|E(t)|}
\ee
which is the generating function of the sequence $(\TTr{r}{G}{n},n\geq 1)$, with size function, the number of edges.

If the connected component of $r$ in $G$ has a single vertex (for example, if $G=(\{r\}, \{r,r\}^k)$ for some $k\geq 0$), then ${\bf T}_r(G)=1$.
Notice that if an edge $e=\{a,b\}$ is not included in the connected component of $r$, or, if $e$ is a loop, then
\be
{\bf T}_r(G)= {\bf T}_r(G\setminus e)={\bf T}_r(G.e).
\ee
\begin{pro}\label{pro2} Let $G=(V,E)$ be a multigraph and $r\in V$. For any edge $e\in E$ adjacent to $r$,
  \ben\label{eq:Tcondrec}
  {\bf T}_r(G)=x{\bf T}_r(G. e)+ {\bf T}_r(G \setminus e).
  \een
\end{pro}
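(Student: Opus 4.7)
The plan is a standard deletion--contraction argument: I would partition $\TTTr{r}{G}$ according to whether the distinguished edge $e$ appears in the subtree, and identify the two pieces with the subtree sets of $G \setminus e$ and $G.e$ respectively.

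Write $e = \{r, b\}$ with $b \neq r$ (the loop case is excluded by the preceding remarks, since loops are handled by the observation ${\bf T}_r(G) = {\bf T}_r(G \setminus e) = {\bf T}_r(G.e)$). Set
\[
A = \{t \in \TTTr{r}{G} : e \notin E(t)\}, \qquad B = \{t \in \TTTr{r}{G} : e \in E(t)\},
\]
so that $\TTTr{r}{G} = A \sqcup B$. For $A$, I would simply observe that a rooted subtree of $G$ that does not use the edge $e$ is exactly a rooted subtree of the multigraph $G \setminus e$ (the containment $G \setminus e \subseteq G$ and the non-use of $e$ together force equality of the underlying edge sets), so $A = \TTTr{r}{G \setminus e}$ and the $A$-contribution to ${\bf T}_r(G)$ is ${\bf T}_r(G \setminus e)$.

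For $B$, I would define a map $\Phi : B \to \TTTr{r}{G.e}$ by $\Phi(t) = t.e$, that is, by contracting the edge $e$ inside the subtree $t$. Since $e \in E(t)$ and $t$ is a tree, the contraction $t.e$ is again a tree, with $|V(t)| - 1$ vertices and $|E(t)| - 1$ edges, it contains the merged vertex (which I still call $r$), and all its remaining edges are inherited from $E(t) \setminus \{e\} \subseteq E(G) \setminus \{e\}$, hence are edges of $G.e$. To check $\Phi$ is a bijection I would construct its inverse: given $t' \in \TTTr{r}{G.e}$, reintroduce the vertex $b$, add back the edge $e$, and reattach to $b$ each edge of $t'$ that, in the multigraph $G$, was originally incident to $b$ (the multigraph convention keeps every edge of $G.e$ tagged with its original endpoints in $G$, so this is unambiguous). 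The resulting subgraph of $G$ is connected, acyclic, and contains $r$ and $e$, so it lies in $B$, and the two constructions are plainly inverse to each other.

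The edge count satisfies $|E(\Phi(t))| = |E(t)| - 1$, so
\[
\sum_{t \in B} x^{|E(t)|} = x \sum_{t' \in \TTTr{r}{G.e}} x^{|E(t')|} = x\, {\bf T}_r(G.e),
\]
and adding the two contributions yields \eqref{eq:Tcondrec}. The only genuinely delicate point is the well-definedness of the uncontraction step on multigraphs (one must ensure that each parallel edge carries enough information to know whether it came from $r$ or from $b$ in $G$); everything else is bookkeeping. No other subtlety arises, since $e$ being adjacent to $r$ guarantees that $r$ survives in $G.e$ as the merged vertex and remains a valid root.
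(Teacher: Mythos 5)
Your argument is correct and is exactly the deletion--contraction decomposition the paper uses; the paper's own proof is the single sentence ``Any tree counted in the left hand side either uses $e$ or not,'' so you have simply spelled out the bijections that line implicitly invokes. No gap, and no difference in approach.
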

\begin{proof}
Any tree counted in the left-hand side either contains $e$ or not. 
\end{proof}

\begin{rem}\label{trivialG}
Removing or contracting edges adjacent to $r$ reduces the number of edges, so that \eref{eq:Tcondrec} indeed defines ${\bf T}_r(G)$ (using eventually ${\bf T}_r(G')=1$ when $V(G')=\{r\}$).
\end{rem}

This formula is very similar to Tutte's formula, which has been a key tool for the development of algebraic graph theory. However, the computation of ${\bf T}_r(G)$ using \eref{eq:Tcondrec}
is at least linear in the number of subtrees, since each expansion in \eref{eq:Tcondrec} can be seen as describing a subtree edge per edge: a contracted edge is in the subtree, while a deleted one, is not. 

Formula \eref{eq:Tcondrec} can be used to compute the first values of ${\bf T}_r(G)$ for $G=\TTTr{r}{\Torus{N}}$ for $N$ from 1 to 4:
\be
&&1\\
&&32\,{x}^{3}+12\,{x}^{2}+4\,{x}+1\\
&&11664\,{x}^{8}+9408\,{x}^{7}+4074\,{x}^{6}+1308\,{x}^{5}+345\,{x}^{4}+
80\,{x}^{3}+18\,{x}^{2}+4\,x+1
\\
&&42467328\,{x}^{15}+56597760\,{x}^{14}+39892832\,{x}^{13}+19618560\,{x}
^{12}+7588872\,{x}^{11}+2461360\,{x}^{10}
\\
&&+698700\,{x}^{9}+178848\,{x}^
{8}+42496\,{x}^{7}+9534\,{x}^{6}+2052\,{x}^{5}+425\,{x}^{4}+88\,{x}^{3
}+18\,{x}^{2}+4\,x+1. \ee
After that, the computer costs become an obstacle.
\begin{rem} Kirchhoff matrix tree theorem \cite{Kirchhoff} can also be used to enumerate the number of subtrees of size $n$ of a given graph $G$, by considering one by one all the  induced subgraphs with $n$ vertices of $G$, and by summing their number of spanning-trees. It gives $\binom{|V|}{n}$ different graphs, for which a determinant of  size $(n-1)\times(n-1)$ has to be computed. This cannot be used in practice when $\binom{|V|}{n}$ is large.
\end{rem}
Counting the number of subtrees of a graph is a \#P-complete problem as proved by Jerrum  \cite{JERRUM1994111} (see also Jaeger et al. \cite{jaeger_vertigan_welsh_1990}), so that, in principle, these complete enumeration methods can be done only on small graphs.

See Chin et al. \cite[Prop. 3.1.]{Chin18} for some explicit polynomials in different classes of graphs.

This can be generalized to forests. A graph $F=(V_F,E_F)$ is said to be a forest if its connected components are trees. Given,  $r_1,\cdots,r_k$ distinct elements of $V$ (for $k\geq 1$), we denote by  $\FF{r_1,\cdots,r_k}{G}$ the set of forests composed of $k$ non intersecting trees, where for each $i\in\cro{1,k}$, $r_i \in t_i$.

Define the multivariate generating function of forests in $\FF{u_1,\cdots,u_k}{G}$
\ben \label{eq:Tcondrec2}
{\bf F}_{u\cro{1,k}}(G)= \sum_{(t_1,\cdots,t_k)\in \FF{u\cro{1,k}}{G}} \prod_{j=1}^k x_j^{|E(t_j)|},
\een
 counted according to the size of its connected components.
Following the same idea in \Cref{pro2} we obtain the following proposition.
\begin{pro} For any edge $e$ with only one endpoint $u_j$ in $u\cro{1,k}$, we have
\ben\label{eq:Fcondrec}
{\bf F}_{u\cro{1,k}}(G)={\bf F}_{u\cro{1,k}}(G \setminus e)+x_j{\bf F}_{u\cro{1,k}}(G. e).
\een
\end{pro}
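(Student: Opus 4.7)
The proof is a direct generalization of Proposition~\ref{pro2}. The plan is to partition $\FF{u\cro{1,k}}{G}$ according to whether the distinguished edge $e$ belongs to the forest or not, and then identify each class with a generating function on a smaller graph.

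Write $e=\{u_j,b\}$ with $b\notin\{u_1,\dots,u_k\}$, which is allowed by the hypothesis that $e$ has only one endpoint in $u\cro{1,k}$. For $F=\{t_1,\dots,t_k\}\in\FF{u\cro{1,k}}{G}$, since the trees are disjoint and each $t_i$ is the unique tree containing $u_i$, if $e\in E(F)$ then necessarily $e\in E(t_j)$, and in particular $b\in V(t_j)$.

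The first step is the ``$e$ not used'' part. The map $F\mapsto F$ is a weight-preserving bijection between forests in $\FF{u\cro{1,k}}{G}$ that avoid $e$ and forests in $\FF{u\cro{1,k}}{G\setminus e}$. This contributes ${\bf F}_{u\cro{1,k}}(G\setminus e)$ to the right-hand side.

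The second step is the ``$e$ used'' part. Given $F=\{t_1,\dots,t_k\}$ containing $e$, define $F.e=\{t_1,\dots,t_{j-1},t_j.e,t_{j+1},\dots,t_k\}$, where $t_j.e$ is the tree obtained by contracting $e$ in $t_j$ (merging $b$ into $u_j$). Because $b\in V(t_j)$ and $b$ appears in no other $t_i$, the resulting collection of trees remains pairwise vertex-disjoint, still covers the required roots in $V(G.e)$, and lies in $\FF{u\cro{1,k}}{G.e}$. Conversely, any $F'\in\FF{u\cro{1,k}}{G.e}$ uniquely lifts back by ``splitting'' $u_j$ into $u_j$ and $b$ along $e$: the edges of $G.e$ that came from edges of $G$ with endpoint $b$ are restored, and $e$ itself is added to the tree containing $u_j$. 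Since $|E(t_j.e)|=|E(t_j)|-1$ while $|E(t_i.e)|=|E(t_i)|$ for $i\neq j$, the multiweight is multiplied by $x_j$ in passing from $G.e$ back to $G$. This contributes $x_j\,{\bf F}_{u\cro{1,k}}(G.e)$.

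Summing the two contributions yields \eqref{eq:Fcondrec}. The only step requiring any real care is the well-definedness of the contraction bijection: one must check that contracting $e$ neither merges two distinct trees of the forest (ruled out because $b$ is not a root and belongs to exactly one $t_i$, namely $t_j$), nor creates a cycle (impossible since $t_j$ is a tree and contraction of a tree edge produces a tree), and that the inverse operation is unambiguous (which it is, since $u_j$ in $G.e$ comes from the identified pair $\{u_j,b\}$ along the specific edge $e$).
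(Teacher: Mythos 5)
Your proof is correct and follows exactly the same deletion--contraction idea the paper invokes (the paper offers no separate proof here, merely pointing to the one-line argument for Proposition~\ref{pro2}); you have simply supplied the routine but worthwhile verification that the contraction map and its inverse splitting are weight-preserving bijections and that disjointness of the $t_i$'s is maintained.
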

Very related to these considerations, is the problem of counting of forests of a graph, a forest being just a subset of the edge set, with no cycle (compared to what is said above, it corresponds to the non rooted case, somehow). The number of forests is equals to the specialization $P(2,1)$ of the standard Tutte polynomial; the generating function of forests counted according to the number of edges can also be expressed in terms of the standard Tutte polynomial (see Welsh \& Merino \cite[Formula (18) p.1135]{W-M}). However, note that the general complexity in the evaluation of the Tutte polynomial is $\#P$-hard, even its evaluation $P(2,1)$ (Jaeger et al. \cite{jaeger_vertigan_welsh_1990}); however, in the case of dense graphs, Annan \cite{Annan} provides a ``fully polynomial randomized approximation scheme'' allowing to compute the number of forests, up to a factor $1+\varepsilon$, which in principle, permits approximate uniform generation of these objects (Jerrum et al. \cite{JERRUM1986169}).

Again, the number of forests with some prescribed roots (or their total weights in the weighted case)  can be computed using variant of the matrix tree theorem (see e.g. Chaiken \& Kleitman \cite{MR480115}).

\subsection{Uniform sampling using counting formulae.}
\label{sec:usucf}
The expansion formula \eref{eq:Tcondrec} (or \eref{eq:Tcondrec2}) provides a natural decomposition of the set of subtrees of $G$ containing a given edge $e$ or not. To sample a random tree $\mathcal{T}$ under $\uniform(\TTr{r}{G}{n})$:\\
          -- choose an edge $e$ adjacent to $r$,\\
          -- compute $|\TTr{r}{G\setminus e}{n}|$ and $|\TTr{r}{G.e}{n-1}|$ (using the Tutte recursion),\\
          -- with probability $|\TTr{r}{G\setminus e}{n}|/|\TTr{r}{G}{n}|$, the tree $\mathcal{T}$  is chosen uniformly in $\TTr{r}{G\setminus e}{n}$, otherwise define $\mathcal{T}$ as the tree having as edge set $\{e\}$ union the edge set of a uniform random tree taken in $\TTr{r}{G.e}{n-1}$.
          
        This procedure can be modified to sample in the whole universe $\TTTr{r}{G}$ with probability proportional to $x^{|E(t)|}$ for some fixed $x>0$ (à la Boltzmann)         i.e. $\P(\mathcal{T} = t) = x^{|E(T)|}/\textbf{T}_r(G)$.
        In this case, it suffices to retain $e$ as an edge of the final returned subtree with probability  $x{\bf T}_r(G.e)/{\bf T}_r(G)$, and to go on the construction in $G.e$, or to decide that $e$ is not in the returned subtree with the complementary probability, and to go on the construction in $G\setminus e$. Notice that as we consider/discard edges in the construction of the tree, the consecutive products telescope up to the point where one has $x^{|E(t)|}\textbf{T}_r(G')/\textbf{T}_r(G)$, where $G'$ satisfies $\textbf{T}_r(G')=1$ as explained in \Cref{trivialG}. 

 In this case, when conditioning on the size being $n$, the sample is uniform in $\TTr{r}{G}{n}$. 
        For more on this method see \cite[Section 3]{Du04}.
See Jerrum et al. \cite{JERRUM1986169} for more general facts, concerning the links between the problem of counting and random generation of combinatorial structures. 

The probability of presence of a bunch of edges $e_1,\cdots,e_k$ of $E$ in the random spanning tree can also be determined using the fact that the edge set is a determinantal process, see Burton \& Pemantle \cite{Burton-Pemantle} and Lyons \& Peres \cite[Section 4]{lyons_peres_2017}. The original proof by Kirchhoff uses considerations coming from electrical networks; this method can also be used to prove the negative correlation of presence of two given edge $e$ and $e'$ in  a random (weighted) spanning tree; see also Chap.4 in Lyons \& Peres book \cite{lyons_peres_2017}. More generally then variables $1_e \in T$ for $e\in E(G)$ are negatively associated, as a consequence of the fact that weighted spanning trees edges $(1_e \in T,e \in E)$ form a determinantal process (Burton \& Pemantle \cite{Burton-Pemantle}).

\section{Generation of uniform random trees using Markov chains}
\label{sec:MURT}

\subsection{Algorithmic considerations}

 A graph $G$ can be represented in various ways in a computer. For example, if $E$ is not too large we can use $V=\{1,\cdots,n\}$ and a triangular array $(m_{\{a,b\}}(E), 1\leq a<b\leq n)$ where $m_{\{a,b\}}$ is the multiplicity of the edge $\{a,b\}$ in $E$.
For regular graphs as $\Torus{N}$ or as the complete graph, the edges do not need to be stored, since they can be recovered online.\par
Explicit programming of Markov chains $(X_i,i\geq 0)$ taking their values in $\TTr{r}{G}{n}$, will often imply that, to construct $X_{i+1}$, some (set of) edges and (set of) vertices will be removed or added to $X_i$. In many cases, a ``sub-routine'' devoted to checking if these modifications give a tree is needed to finally accept or reject a modification of $X_i$, and then, to define  $X_{i+1}$.

\paragraph{Checking the tree property is feasible, and has a cost.}

There are some classical algorithms devoted to checking if a subgraph $g$ of a given graph $G$ is a tree: in practice, they have a non-negligible cost (however, at most linear in the size of $g$ if one neglects the access cost to the data). \\
$\bullet$ In all generality, if $g$ is given ``from scratch'', checking if this graph is a tree can be done by performing the breadth-first or depth-first traversal \cite[Sec. 22.2 and 22.3]{Cor09}. \\
$\bullet$  If $g$ has been obtained from a tree by the addition of a single edge and the removal of another one, then checking the tree property can be done as follows: if the edge from $a$ to $b$ has been removed, do the breadth first search from $a$ and check if $b$ is still accessible.\\
$\bullet$ When possible, it is preferable to work with rooted trees instead of unrooted ones. For the canonical orientation in which edges are directed toward the root, all nodes but the root have exactly one outgoing edge (and so, the identity of the edge endpoint can be stored in a 1D array). Assume that we want to add an oriented edge   $(u,v)$ (taken in $\vec{E}$) in the tree and remove say an edge $(a, b)$. Adding $(u,v)$ in $(t,r)$ may:\\
-- either make of $u$ a new leaf, in which case it is easy to see if removing $(a,b)$ preserves the tree property (in words, $a$ or $b$ must be a leaf, and $v$ must be different from $a$);\\
-- or, adding $(u, v)$  produces a (non-oriented) cycle. In this case, $u$ will have two outgoing edges that can be followed  to find the cycle efficiently. From here, it is easy to check if the edge $(a,b)$ is on this cycle, which is a necessary and sufficient condition for the preservation of the tree property upon removal of $(a,b)$ (if the root $r$ is involved in the modifications, the possible choice of a new root may provide some additional details to deal with). The orientations of the edges lying on the cycle have to be modified to get the right orientation of the resulting rooted tree.

\subsection{Three ergodic Markov chains converging to $\uniform(\TT{G}{n})$}
\label{sec:RV}

In what follows we will make use of the following property: if a transition matrix $K$ is symmetric, i.e. $K_{i,j}=K_{j,i}$ for all $i,j\in E$, then the Markov chain is reversible and the uniform measure on $E$ is invariant. 

We present here some dynamics on trees, where each tree being implicitly defined by its edge set. 
In the sequel $t$ and $t'$ are two trees taken in $\TT{G}{n}$, for some $n\geq 2$, and $G=(V,E)$ is a connected graph. The number of edges of both $t$ and $t'$ is $n-1$.

We introduce \textbf{the edge-exchange map} for $G=(V,E)$, as the map defined as
\[\app{\Ex}{\TT{G}{n}\times E\times E}{\TT{G}{n}}{(t,e,e')}{t'=\Ex(t,e,e')}\]
where:\\
$\bullet$ $t'$ is defined from $E(t')=(E(t)\cup\{e\}) \setminus \{e'\}$ if this set of edges defines a tree,\\
$\bullet$ $t'=t$ otherwise. \medbreak

\NewKernel{Exchange the status of two edges of $G$}{labela}{
Suppose $X_0 \in \TT{G}{n}$ is given. To get $X_1\sim K^{\Kerref{labela}}(X_0,.)$, just set $X_1\eqd \Ex(X_0,{\bf e}_1,{\bf e}_2)$ where ${\bf e}_1$ and ${\bf e}_2$ are two edges taken uniformly and independently in $E$.}
\Analysis: The chain is clearly aperiodic, irreducible and symmetric; $\uniform(\TT{G}{n})$ is its unique invariant distribution and ergodicity is ensured by the Perron-Frobeni\"us theorem.\\
\Drawback: If $|E|$ is big compared to $n$, most of the transitions will leave $t$ unchanged, which results in a very long mixing time. When $t$ is changed, checking the tree property is expensive for large $n$.

\NewKernel{Exchange the status of two edges adjacent to the current tree}{labelc}{ Assume that $X_0=t \in \TT{G}{n}$. To get $X_1\sim K^{\Kerref{labelc}}(X_0,.)$, construct two edges  $\overrightarrow{{\bf e}_1}=(\mathbf{u},\mathbf{u}')$ and $\overrightarrow{{\bf e}_2}= (\mathbf{v},\mathbf{v}')$ such that $(\mathbf{u},\mathbf{v})$ are two i.i.d. uniform random nodes of $t$, $\mathbf{u}'$ and $\mathbf{v}'$ are respectively, a uniform neighbour of  $\mathbf{u}$ and of $\mathbf{v}$ (independent). \\
  If ($\mathbf{v}'$ is a  leaf and $\mathbf{u}'$ is outside $t$) then set $X_1=\Ex(t,{{\bf e}_1},{\bf e}_2)$.} 
Rensburg \& Madras \cite{RM} gave this algorithm (Algorithm A in their paper) for lattice trees (and here, we made a small modification to take into account the non-constancy of the node degrees).\\
\Analysis: A simple check shows that this kernel is also aperiodic and irreducible. The probability of a transition from $t$ to $t'\neq t$ is $1/(n^2\, \degree_G(\mathbf{u})\, \degree_G(\mathbf{v}))$ if it can be attained from $\Ex$. Observe that the tree obtained $t'$ has also $n$ nodes, and still $\mathbf{u}$ and $\mathbf{v}$ are some of them. We then get the same probability from $t'$ to choose $(\mathbf{v},\mathbf{u})$ (instead of $(\mathbf{u},\mathbf{v})$) and then $(\mathbf{v}',\mathbf{u}')$ as neighbours from what we see that $K^{\Kerref{labelc}}(t,t')=K^{\Kerref{labelc}}(t',t)$ and therefore its unique invariant is $\uniform(\TT{G}{n})$.\\
\Drawback: Checking the tree property is expensive for large $n$.

\begin{rem}
 Variants are available for all these transition matrices. For example, in $K^{\Kerref{labela}}$ one can consider $(\vec{e}_1, \vec{e}_2)$ drawn from many symmetric distribution with full support over $\vec{E}(G)^2$. In $K^{\Kerref{labelc}}$ one can take $(\mathbf{u},\mathbf{v})$ chosen with any symmetric distribution with full support over $V(G)^2$. 
\end{rem}

\subsection*{The fastest Markov chain}
Using $K^{\Kerref{labela}}$, when $n$ is a bit large, it is unlikely that both edges belong to the same cycle, so that $K^{\Kerref{labela}}$ is slow to mix, because, it mainly changes the ``peripheral edges''. 
$K^{\Kerref{labelc}}$ is somehow worst, since modifications exchange leaves and perimeter edges. \par
The main idea of the next kernel is the following: when the first added edge creates a cycle, then force the second edge to be in this cycle!

On a general graph, there is basically a single way to design such a reversible kernel, when, on a regular graph (on which the degree vertices are constant), several methods can be proposed.

For any simple cycle $(c_0,\cdots,c_{m-1})$ where $c_{i}$ and $c_{i+1 \mod m}$ are neighbours for $i\in\{0,\cdots,m-1\}$, denote by $p_c$ the following distribution on the set of (non-oriented) edges of $c$:
\ben\label{eq:PC}
p_c(\{c_i,c_{i+1 \mod m}\})= \alpha_c\l(\frac{1}{\degree_G(c_i)}+\frac{1}{\degree_G(c_{i+1 \mod m})}\r)
\een
where $\alpha_c$ is the single constant making of $p_c$ a probability on the set of  edges of $c$.
The distribution $p_c$ does not depend on the cyclic order chosen on $c$, nor on the orientation of $c$.

\NewKernel{If the added edge forms a cycle, then break the cycle}{labeldd}{
  Assume that $X_0=t$, to get $X_1\sim K^{\Kerref{labeldd}}(t,.)$ do the following. Take the random oriented edge  $\overrightarrow{\bf e} = (\textbf{u},\textbf{u}')$, where $\textbf{u}$ is a uniform vertex of $t$, and conditional on $\textbf{u}$, $\textbf{u}'$ is uniform among the neighbours of $\textbf{u}$.

\noindent$\bullet$ If the addition of $\ar{\bf e}=({\bf u},{\bf u'})$ to $t$ creates a new leaf, then pick a second independent edge $\ar{\bf e}'=({\bf v},{\bf v'})$ (with the same law as $\ar{\bf e}$). If $\ar{\bf e}'$ is a leaf of $t$ and the removal of $\ar{\bf e}'$ in $t \cup \ar{\bf e}$ produces a tree $t'$ then take $X_1=t'$ else take $X_1=t$.\\
$\bullet$ otherwise 
${\bf u'}$ already belongs to $t$ so that adding $\ar{\bf e}$ creates a cycle $c=(c_0,\cdots,c_{m-1})$ (taken in an arbitrary cyclic order, where $m$ is the cycle length). Take $e$ an unoriented edge of $c$ according to $p_c$ defined in \eref{eq:PC}; we then define $X_1$ as the tree obtained by the addition of the edge ${\bf e}$ followed by the removal of $e$.}

\noindent\Analysis: It is irreducible and aperiodic. The chain is reversible: the (not-so) delicate point to check, is when the addition of $\ar{\bf e}$ creates a cycle.
In this case, the probability that the unoriented edge ${\bf e}$ (to be added) is $\{u,v\}$ is
\[q_t(\{u,v\}):=\frac{1}{|t|} \l(\frac{1}{\degree_G(u)}+\frac{1}{\degree_G(v)}\r)\]
since this occurs if $({\bf u},{\bf u}')$ is $(u,v)$ or $(v,u)$. If adding this edge creates a cycle $c$, then an edge $\{u',v'\}$ of the cycle, will be removed with probability $p_c(\{u',v'\})$, so that globally, the probability to insert $\{u,v\}$ and then to remove $\{u',v'\}$ is $q_t(\{u,v\}) p_c(\{u',v'\})$. Now, the probability to instead, insert $\{u',v'\}$ and then remove $\{u,v\}$ is $q_t(\{u',v'\}) p_c(\{u,v\})=q_t(\{u,v\}) p_c(\{u',v'\})$ (the main point is that the same cycle is then created).\\
\Drawback: Checking the tree property in the rooted case is fast. Again, we did not succeed to provide a coupling from the past for this Markov chain, nor to get some bounds on the mixing time. Simulations show that it is much faster than the other transition matrices in practice.

\begin{rem}[Possible modifications on  regular graphs]
On a regular graph, since $q_t(\{u,v\})$ is constant, many distribution $p'_c(.)$ can be considered instead of $p_c(.)$. For example, one can take a distribution $p_{c',{\bf e}}(.)$ on $c$ depending on the position of ${\bf e}$ on this cycle (for example, $p'_{c,{\bf e}}(.)$ can weight the edges of $c$ according to their distance to ${\bf e}$ on $c$).
\end{rem}

\section{Simulations of uniform subtree of the grid with a given size}
\label{sec:SUBG}
The torus $\Torus{N}$ as well as $\mathbb{Z}^2$ are highly regular graphs for which some methods of generation and of exact enumerations can be designed for this particular case (as well as for $D$-dimensional analogue). We provide some references after \Cref{conj:rthgfqds} (below).

\subsection{Subtrees of the torus up to translation}

\label{sec:STT}

We say that $t$ and $t'$ in $\TT{\Torus{N}}{n}$ are $N$-equivalent if they are equal up to a translation in $\Torus{N}$, and let $\overline{\TT{\Torus{N}}{n}}$ be the set of equivalent classes.
The push-forward measure of $\uniform(\TT{\Torus{N}}{n})$ by the canonical projection $\pi_N$ is $\uniform(\overline{\TT{\Torus{N}}{n}})$  since all classes have  cardinality $N^2$.
  Since the diameter of any tree with $n$ nodes is smaller than $n-1$, the previous discussion shows that the uniform distribution on $\TT{\Torus{N}}{n}$ and on  $\TT{\Torus{N'}}{n}$ can be identified up to random uniform translation, if $N$ and $N'$ are both bigger or equal than $n$. When one wants to sample uniformly in $\TT{\Torus{N}}{n}$ it is then reasonable to work in $\TT{\Torus{n}}{n}$ (the smallest valid torus), or to work up to translation. Indeed,  when one works under the kernel $K^{\Kerref{labeldd}}$, the mixing time of the chain depends on the size of the torus since the larger is the torus, the longer it takes to forget ``not only the shape of the initial tree'', but also its position.

Observe also that sampling in $\TTr{(0,0)}{\Torus{n}}{n}$ and in $\TT{\Torus{n}}{n}$ are basically equivalent, since it is easy to sample one, from the other.

\subsection{Some pictures}
We programmed and ran the chain $K^{\Kerref{labeldd}}$. We made some statistics and videos to show the power and limits of this kernel; in few words, it can be used to sample a random tree with a distribution close to $\uniform(\TTr{(0,0)}{\Torus{n}}{n}$, for $n$ up to say 8000 nodes in few minutes, and $n=10000$ in few hours using a program written in C on a standard computer, starting from any distribution.

Our program starts from a rectangle tree, see Fig. \ref{fig:rec} which is a highly structured tree; we tried many Markov transition matrices with this kind of starting point and only efficient Markov chains ``forget'' the initial distribution in a reasonable time.
\begin{figure}[h!]
	\centerline{\includegraphics[scale=1.2]{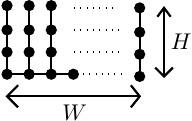}}
 \captionn{A rectangle-tree with width $W$, and height $H$. \label{fig:rec}} 
\end{figure}

\begin{figure}[h!]
  \begin{center}
    \includegraphics[width = 5cm]{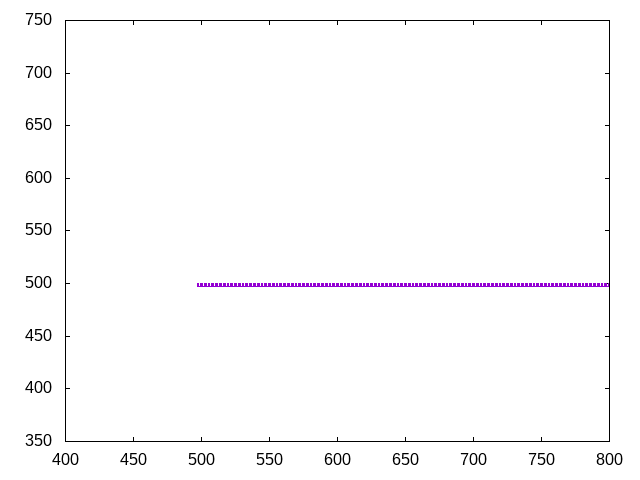}
    \includegraphics[width = 5cm]{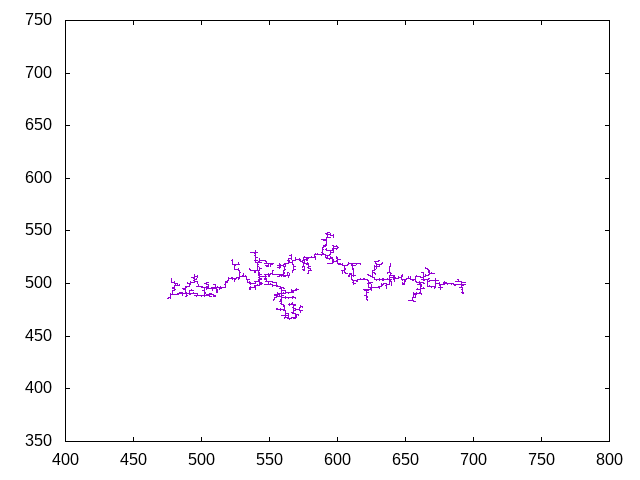}
    \includegraphics[width = 5cm]{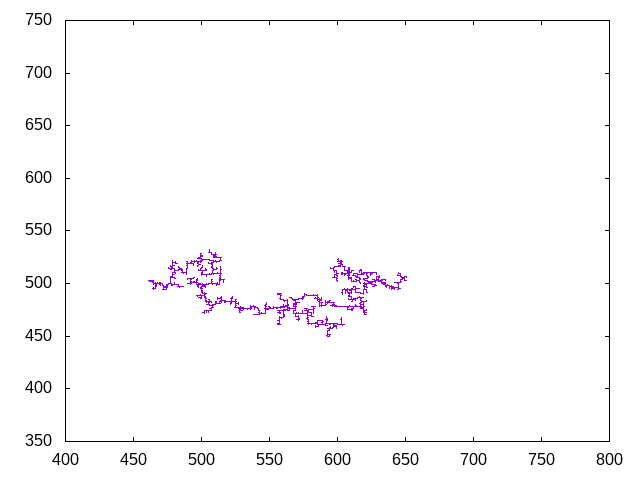}\\
     \includegraphics[width = 5cm]{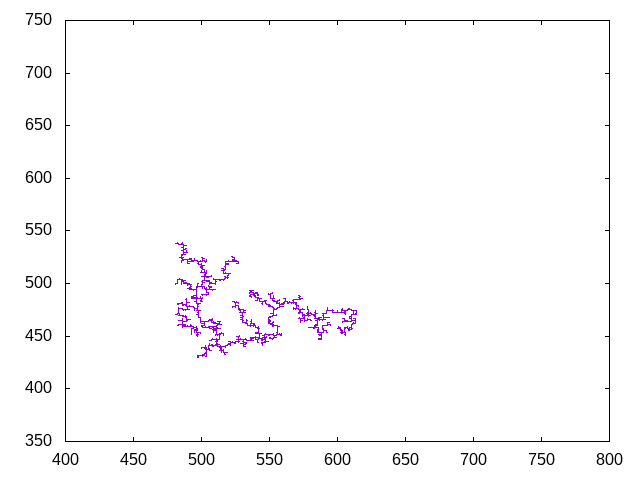}
    \includegraphics[width = 5cm]{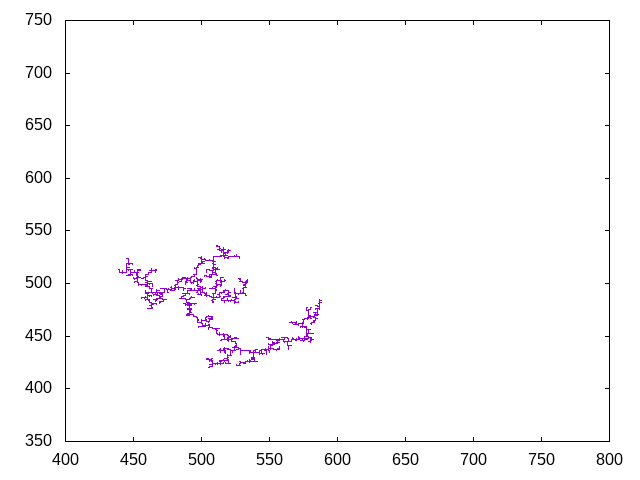}
    \includegraphics[width = 5cm]{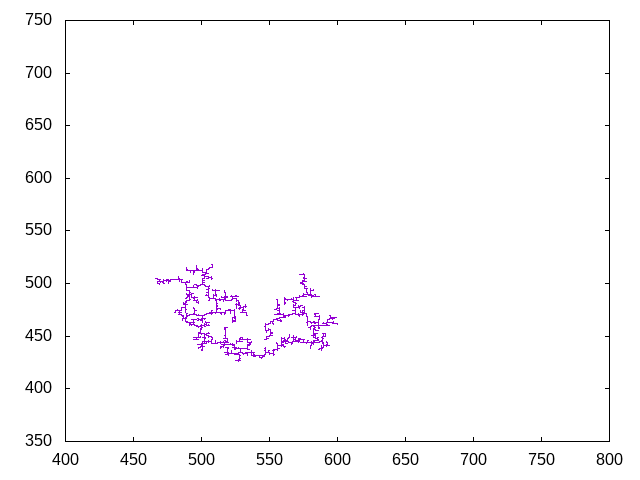}\\
    \includegraphics[width = 8cm]{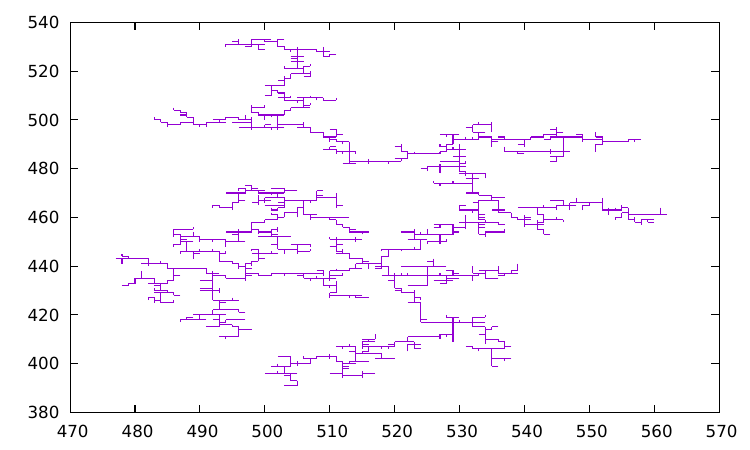}
    \end{center}
    \captionn{\label{fig:MC}Markov chain started from a rectangle tree $400\times 4$, with 1600 nodes, run on $\Torus{1000}$, and observed at time $k\times$ 200 millions, for the $k$th picture. The total execution time is around 1 minute. The last tree is the result after 1.6G iterations. A film with 800 images of the 1.6G steps of the chain (2M steps between successive images) is available at \cite{FM}. }
  \end{figure}

  \begin{figure}[h!]
  \begin{center}
    \includegraphics[width = 5cm]{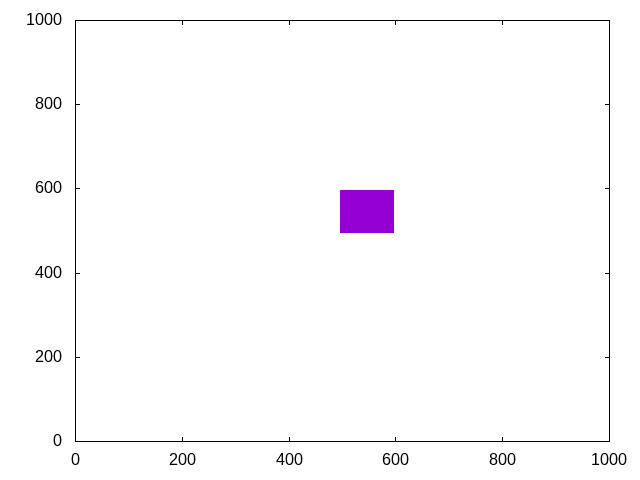}
    \includegraphics[width = 5cm]{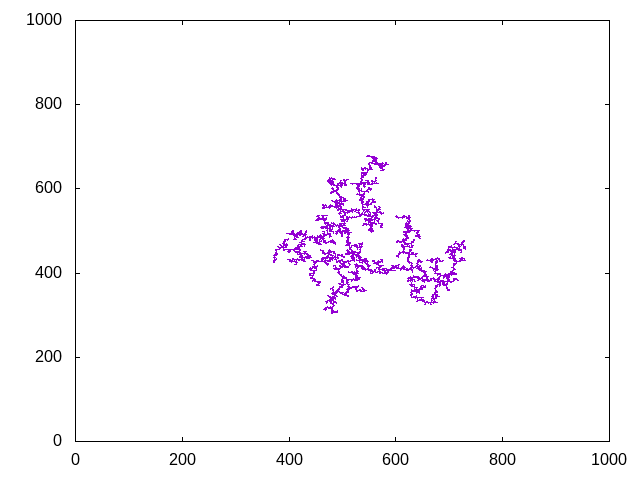}
    \includegraphics[width = 5cm]{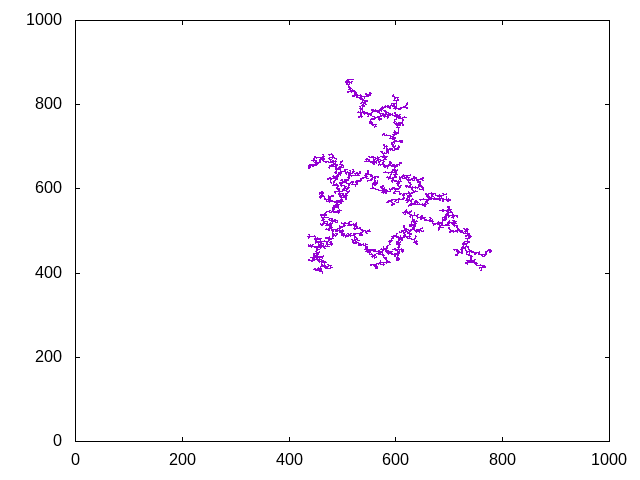}\\
     \includegraphics[width = 5cm]{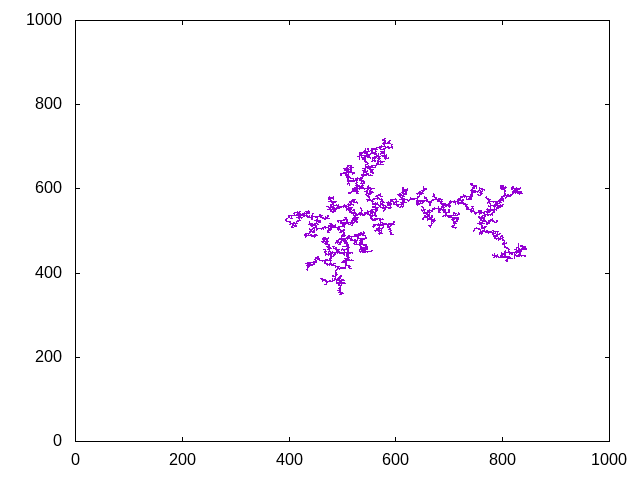}
    \includegraphics[width = 5cm]{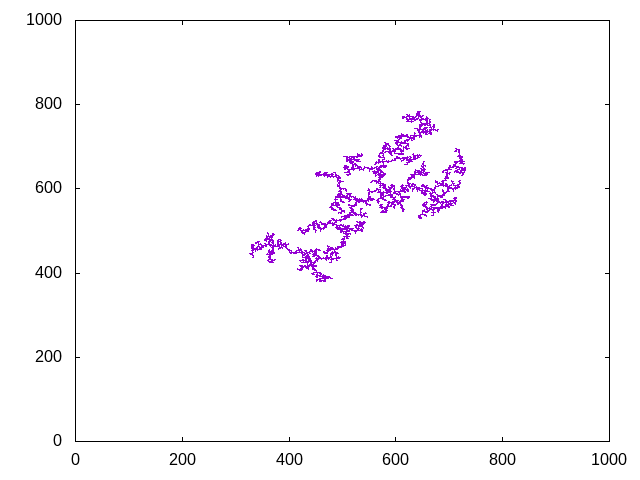}
    \includegraphics[width = 5cm]{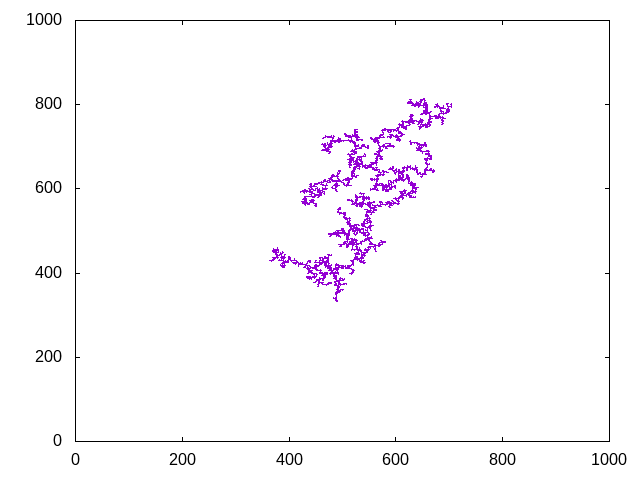}\\
    \includegraphics[width = 8cm]{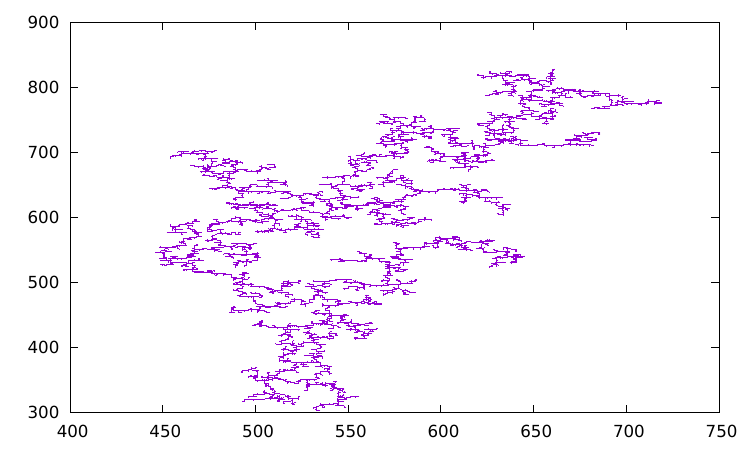}
    \end{center}
    \captionn{\label{fig:MC2} Simulation as in Fig. \ref{fig:MC} starting from a rectangle tree $100\times 100$, ran on $\Torus{1000}$, and with $10^{10}$ steps of the chain between successive pictures. The last tree is the value of the chain after $80G$ steps. The total execution time is around 1 hour. A film with 800 images of the $80G$ steps of the chain ($10^8$ steps between successive images) is available at \cite{FM}. In this picture, macroscopic portion of the pictures are really close to loops.}
  \end{figure}
  
\subsection{Statistics and conjectures}\label{sc:SC}
For any tree $t$ in  $\TT{ {\sf Torus}(N(n))}{n}$, define the \textbf{Euclidean width and height} $w(t)$ and $h(t)$ as respectively the number of columns and rows of the torus containing at least one vertex of $t$. 
The second variable of interest is the random graph distance $ {\bf D}( t) = d_{ t}({\bf u},{\bf v})$ between two i.i.d. uniform nodes ${\bf u}$ and ${\bf v}$ of a (deterministic or random) tree ${t}$.\\
The proportion of nodes in $t$ with degree $j$ is
\[ q_j(t) = {|\{u \in t~: {\sf Degree}(u)=j\}|}\,/\,{|V(t)|}.\]
We conjecture the following (recall the discussion at the beginning of Section \ref{sec:STT}).
\begin{conj}\label{conj:rthgfqds}
  For ${\bf t}_n$ taken uniformly in $\TT{ {\sf Torus}(n)}{n}$ 
  \bir
  \itr there exists $\alpha \in [0.63,0.67]$ such that,
    ${\l(w({\bf t}_n),h({\bf t}_n)\r)}\,/\,{n^{\alpha}}\dd({\bf w},{\bf h})$, where ${\bf w}$ and ${\bf h}$ are almost surely non zero.
\itr there exists $\beta \in [3/4 -0.01,3/4+0.01]$ such that, 
  ${{\bf D}({\bf t}_n)}\,/\,{n^{\beta}}\dd {\bf D}$ 
where ${\bf D}$ is a real random variable, almost surely non-zero.
\itr \label{degree} $(q_j({\bf t}_n), 1\leq j \leq 4)\proba (q_1,\cdots,q_4)$ a constant vector satisfying $q_1\in [0.2585 \pm 0.001]$,  $q_2\in [0.506 \pm 0.001]$, $q_3\in [0.214\pm 0.001]$, $q_4\in [0.02185 \pm 0.001]$.
\eir 
\end{conj}
Rensburg \& Madras \cite{RM} (1992) proposed mainly two Markov chains to produce lattice trees; the first one is (Algorithm A) and coincides with kernel $K^{\Kerref{labelc}}$.\par
Their second algorithm \textbf{(Algorithm B)}, which produces an irreducible and reversible Markov chain, is valid on lattices ($\Z^d$, for $d\geq 2$) consists in the following stages: choose uniformly an edge $e$ in the current tree $X_0=t$ and try to do the following:\\
  -- remove $e$,\\
  -- apply a randomly chosen element of the octohedral group to the smallest connected component,\\
  -- choose one random node $u$ and $v$ uniformly on each of the connected components, (assuming that $u$ is on the smallest connected one)\\
  -- translate $u$ to $u'$ (together with the smallest connect component) such that $u'$ is a uniform neighbour of $v$. Add the edge $u'v$.\\
  If the resulting graph is a tree $t'$, then set $X_1=t'$ else set $X_1=t$.\medskip
  
  Using Monte Carlo methods, they estimated the order of the radius of gyration (which is the mean Euclidean distance between two points taken uniformly in the tree) to $n^{a}$ where $a=0.6374$, and the longest graph distance between two points at $0.7358$. Below table 9 in their paper, Rensburg \& Madras \cite{RM} (1992) provide a survey of the results available at this time concerning simulation of lattice trees, as well as ``guesses'' using methods of statistical physics of the value of $a$.

Jensen \cite[Section 3]{Jensen} (2000) using exact enumerations of ``lattice trees'' up to size 42, conjectured that the order of the radius of gyration of $\bt_n$ is $n^{a}$ with  $a=0.64115(5)$. This conjecture is built using some exact partial generating functions (relying on the  exact enumerations up to size 42) together with some regularity assumptions on the generating functions. It is reasonable to conjecture that $a$ and $\alpha$ (of our conjecture) are equal. Jensen \cite[Section 3]{Jensen} produces also some exact values of the number of elements in  $\TTr{(0,0)}{\Z^2}{n}$ for $n\leq 42$ and conjectured that $\log(\TTr{(0,0)}{\Z^2}{n})/\log(n) \to 3.795254...$.

Rensburg \& Rechnitzer \cite{RR} (2003), using Monte Carlo method estimated the metric exponent to $\nu=0.6437\pm 0.0035$, and the longest path exponent (for the graph distance) to $\rho = 0.74000\pm 0.00062$ (one can conjecture that $\rho$ and $\beta$ are equal).

 Hsu et al. \cite{Hsu_2005} (2005) (see also references therein) discuss a simulation of lattice trees (and branching polymers) constructed on the pruned-enriched Rosenbluth method (PERM) (in dimension $2\leq d \leq 9$). They estimated $\nu$ at 0.6412(5) (many more statistics are studied; they provide an important survey on the result available at this time).

   Finally, we would like to mention, that Botet and Jullien \cite{Bot} in 1985, discussed a model of diffusion-limited aggregation with disaggregation; it was an attempt to define a Markov chain on a DLA like cluster (see Section \ref{sec:dla} for definitions and statistics), having the DLA distribution as invariant distribution. To be precise, their Markov chain $(X_t,t\geq 0)$ is a tree valued Markov chain, with state space $\TTr{(0,0)}{\Z^2}{n}$, and their hope was that the vertex set $V(X_t)$ of $X_t$, would be distributed as the DLA, when $X_t$ was taken under its invariant distribution. \par
   They noticed that the Markov chain they defined does not reach this aim, since the mean gyration radius is around $n^{c}$ with $c\simeq 0.65$, which is not compatible with the DLA statistics, but, as explained just above, this value is compatible with the statistic presented above for a uniform element of $\TTr{(0,0)}{\Z^2}{n}$. \par
   This has possibly been unnoticed, but a very small (time) modification of their Markov chain admits indeed, the uniform distribution on  $\TTr{(0,0)}{\Z^2}{n}$ as invariant distribution.\par
   The Markov kernel of their chain is defined as follows; assume that at time $t$, the current state $X_t$ is a tree $T$ of $\TTr{(0,0)}{\Z^2}{n}$. In order to define $X_{t+1}$, proceed as follows: choose a leaf $v$ of $T$, uniformly at random (the root is never considered as a leaf). Then, erase $v$ and its incident edge from $T$, and starts a random walk $(W(k),k\geq 0)$, starting at $v$, and stopped at its hitting time $\tau$ of $V(T \setminus\{v\})$ (the tree $T$ deprived of $v$). To define $X_{t+1}$, remove $v$ from $T$ and its incident edge, and add the edge corresponding to the last step of this walk, $w=W(\tau-1)\to w' =W(\tau)$.\par
   This chain is not reversible, because $X_t$ and $X_{t+1}$ may have a different number of leaves. However, we may propose the following modification: if instead of choosing a leaf, one chooses a uniform node $v\in T$, and decide to set $X_{t+1}=X_t$ if $v$ is not a leaf, then the Markov chain is reversible (on a regular graph), so that it preserves the uniform distribution on $\TTr{(0,0)}{\Z^2}{n}$. \par
   The invariant distribution of Botet and Jullien \cite{Bot} is then the probability distribution on the set $\TTr{(0,0)}{\Z^2}{n}$ giving to each tree $t$ a probability proportional to $|\partial t|$ (number of leaves, different from the root), since it stays a time 1 on each configuration (before launching a random walk), when the modification we propose, stays a mean time $n/|\partial t|$ on a tree with $|\partial t|$ leaves, before starting the random walk.\par
 However, Botet and Jullien \cite{Bot} Markov chain is slow compared to the three ones presented at the beginning of \Cref{sec:RV}, since these three avoid to performing random walks to choose the new destination of a moving edge.
\begin{conj}\label{conj:etgf} Consider $\bt_n$ a uniform subtree of $\Z^2$ with $n$ nodes, containing $(0,0)$. Denote by ${\sf Drawing}(\bt_n)$ the drawing to $\bt_n$ in the plane (the vertices are points, the edges are segments). There exist $\alpha \in [0.63,0.67]$ and $\beta \in [3/4 -0.01,3/4+0.01]$ such that
  \bir \itr The sequence of compact sets ${\sf Drawing}(\bt_n)/n^{\alpha}$ converges in distribution, for the Hausdorff metric topology on compact sets of the plane, to a non-trivial path connected random compact set $K$ of $\R^2$, with empty interior. Moreover, $K$ is almost surely not a tree: almost surely, there exist some pairs of points $(x,y)$ with two different injective paths from $x$ to $y$ (meaning the set of points of these paths are different).
  \itr The sequence of trees $(\bt_n,d_{\bt_n}/n^{\beta})$ seen as a sequence of compact spaces equipped with their graph distance normalized by $n^{\beta}$ converges in distribution, for the Gromov-Hausdorff distance to a (non-trivial) continuum random tree.\eir 
\end{conj}
\begin{rem} The simulation of ``approximately uniform'' lattice trees with $n$ nodes (and $n$ large) shows the ``appearance'' of macroscopic cycles. The word ``appearance'' is here to express the fact that there is no cycle, since the drawing of a tree has no cycle but the normalization needed to draw the tree creates this appearance (see Fig. \ref{fig:MC2}) (more precisely, it seems that, for $\varepsilon>0$ small,  the drawing of a large tree, normalized by $n^{\alpha}$, is at Hausdorff distance $\leq \varepsilon$ to a compact set having some cycles with a significant perimeter $\gg 2\varepsilon$. This is the reason for \Cref{conj:etgf}$(i)$. If it is indeed the case, infinitely many cycles are likely to be present.
\end{rem}

\begin{rem}
The conjectured limiting proportions of nodes of each degree (iii) are different from those of the UST in $\Z^2$ (see \cite[P. 112]{lyons_peres_2017}). 
\end{rem}

 Now, we add that in large dimension (notably in the case $D>8$), the asymptotic behaviour of lattice trees is well understood (see e.g Hara \& Slade \cite{MR1063208}, Derbez \& Slade \cite{MR1620301}, Slade \cite{Slade}, Holmes \cite{MR2399294,Holmes_2016}, Cabezas et al. \cite{CFHP} and references therein).

\subsection{Simulations}\label{sec:simu}
We made thousands of simulations of this chain (on a multicore PC), each of them running for many steps; 
\ben
\begin{array}{|c|c|c|c|c|}
	\hline
	\textrm{Tree size}  & 1000 & 2500 & 5000 & 8100\pass\hline
	\textrm{Number of simulations}  & 5039 & 5486 & 6111 & 5232\pass\hline
	\textrm{Initial rectangle tree shape}  & 40\times 25 & 50\times 50 & 50\times 100 & 90 \times 90\pass\hline
	\textrm{Nb Steps of the chain}  & 150M & 1G & 25G & 200G\pass\hline
\end{array}
\een
hence, we made 5486 simulations of  trees of size 2500 starting initially with a rectangle tree $50\times 50$, 1G steps of the Markov chain for each tree simulated.
These numbers of steps were decided ``empirically'': starting from a rectangle tree, for example, with size $1000\times 1$ or $40\times 25$, and performing hundreds of simulations with $s$ steps, suffices to compare some statistics as the width and the height, which are asymptotically the same (independently of the initial tree)~: in case of discordance of these statistics, $s$ must be taken larger. The videos (available at \cite{FM}) give some clues that the mixing time should have been reached (if one considers the trees up to translation), even if these simulations do not constitute a formal proof, of course.

To make the estimates associated to the width, both the width and the height of each tree has been used (two numbers by simulations), and for the graph distance, for each tree, 10 independent pairs of vertices $[(u_{2i-1},u_{2i}),1\leq i \leq 10]$ were chosen to compute the graph distance $d_{t}(u_{2i-1},u_{2i})$, where $u_{2i-1},u_{2i}$ are independent and uniform in the vertex set of the tree $t$; this provides 10 numbers for each tree. These 10 values are dependent, as are the width and the height. \\
Now, for each of the sampled trees, the exact number of nodes of each degree has been computed, which provides for each tree a proportion vector $(q_i(t), 1\leq i \leq t)$. 
\paragraph{Distance statistics}
\ben
\begin{array}{|c|c|c|c|c|}
	\hline
	\textrm{Number of nodes}  & 1000 & 2500 & 5000 & 8100\pass\hline
	\textrm{Empirical mean of the width}  & 96.41 & 173.58 & 273.63 & 372.25\pass\hline
	\textrm{Empirical median of the width}  & 95.00 & 171.00 & 269.00 & 367.00\pass\hline
	\textrm{Empirical mean of $d({\bf u},{\bf v})$}  & 95.68 & 189.60 & 317.92 & 457.48\pass\hline
	\textrm{Empirical median of $d({\bf u},{\bf v})$}  & 88.00 & 176.00 & 293.00 & 421.00\pass\hline
\end{array}
\een

Suppose that a sequence of real random variables $(Y_n)$ satisfies $Y_n/n^{\gamma}\dd Z$ for some $\gamma>0$ and non-trivial $Z$, then it is expected that for $n$ and $m$ both large, ${\sf median}(Y_n)/{\sf median}(Y_m)$ should be close to $(n/m)^{\gamma}$.
Assuming that we have a sample from i.i.d. copies of $Y_n$,  $(Y^{(i)}_n,1\leq i \leq N)$, then we can define the empirical mean $\widehat{Y_n}= (Y_n^{(1)}+\cdots+Y_n^{(N)})/N$, and the empirical median ($\widehat{\sf median}(Y_n)=\inf\{x : |\{j:Y_n^{(j)}\leq x\}|\geq N/2$).
This provides the following estimator for $\gamma$, where samples for two different values of $n$ and $m$ are needed:
\ben
{\sf Est}_{\sf median}(\gamma)= \log\l(\widehat{\sf median}(X_n)/ \widehat{\sf median}(X_{m}) \r) /\log(n/m).
\een
By the same method, a second estimator using the empirical mean is
\ben
{\sf Est}_{\sf mean}(\gamma) = \log\l(\widehat{X_n}/\widehat{X_m} \r) /\log(n/m).
\een
Finally, we introduce a last estimator of the exponent $\gamma$ using the  9 empirical deciles $({\sf Dec}_i(Y_n),1\leq i \leq 9)$ where ${\sf Dec}_i(Y_n)=\min\{x :  |\{j:Y_n^{(j)}\leq x\}|\geq N i/10\}$.  We then take $\gamma$ as the values that minimises the $L^2$ distance between the vectors $m^x({\sf Dec}_i(Y_n),1\leq i \leq 9)$ and $n^{x}({\sf Dec}_i(Y_m),1\leq i \leq 9)$:
\[{\sf Est}_{\sf best\ fit\ decile}(\gamma)=\argmin\l( x\mapsto \sum_{i=1}^9 \l|{\sf Dec}_i(Y_n)m^x-{\sf Dec}_i(Y_m)n^x    \r|^2\r), \]
(for $x\in [1/2,1]$)
which we expect to be better than the median, since it takes into account the other deciles.\footnote{the estimator $\argmin\l( x\mapsto \sum_{i=1}^9 \l|{\sf Dec}_i(Y_n)/n^x-{\sf Dec}_i(Y_m)/m^x    \r|^2\r)$ is not good, since it is often reached for $x=1$, for which all the terms inside the absolute value are small.}

Using $(n,m)$ gives the following estimate:

\ben
\begin{array}{|c|c|c|c|c|}
	\hline
	\textrm{$(n,m)$}  & (1000, 2500) & (2500, 5000) & (5000, 8100)\pass\hline
	\textrm{Estimation of $\alpha$ (mean) }  & 0.642 & 0.657 & 0.638\pass\hline
	\textrm{Estimation of $\alpha$ (median) }  & 0.641 & 0.654 & 0.644\pass\hline
	\textrm{Best fit decile $\alpha$  }  & 0.640 & 0.656 & 0.635\pass\hline
	\textrm{Estimation of $\beta$ (mean) }  & 0.746 & 0.746 & 0.754\pass\hline
	\textrm{Estimation of $\beta$ (median) }  & 0.756 & 0.735 & 0.751\pass\hline
	\textrm{Best fit decile $\beta$}  & 0.744 & 0.748 & 0.753\pass\hline
\end{array}
\een

\begin{figure}[h!] \centerline{\includegraphics[width = 6cm]{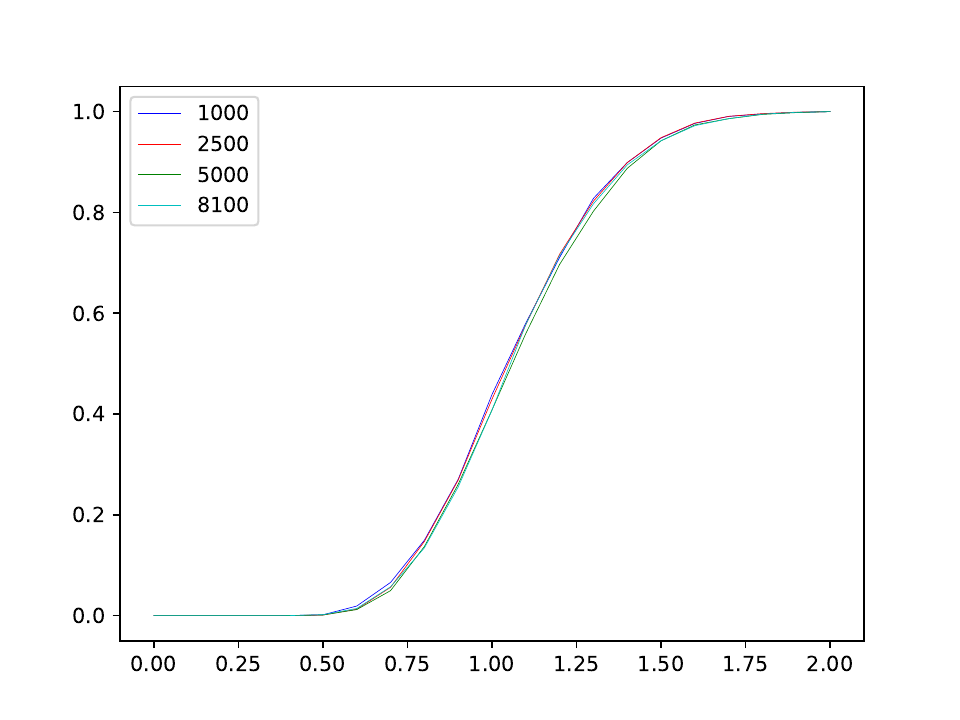}\includegraphics[width = 6cm]{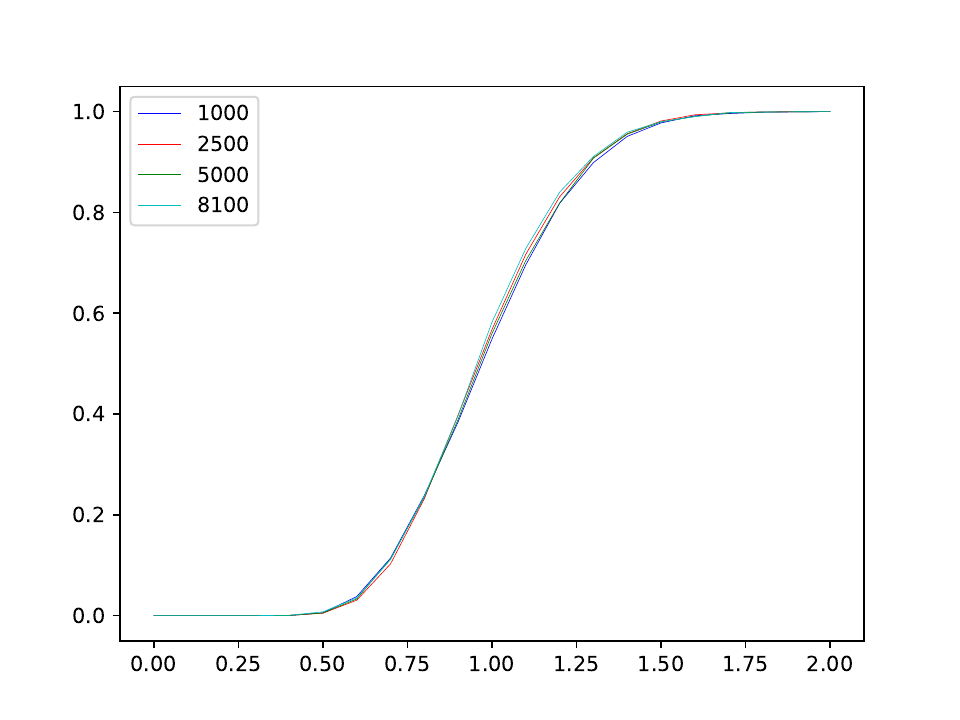}\includegraphics[width = 6cm]{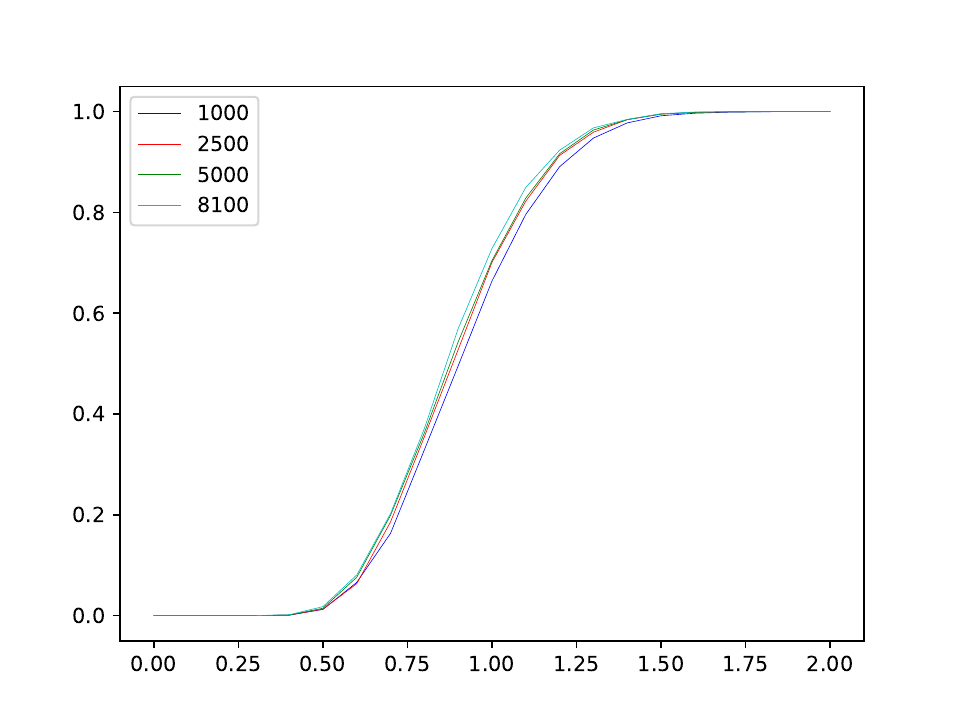}} 
  \centerline{\includegraphics[width = 6cm]{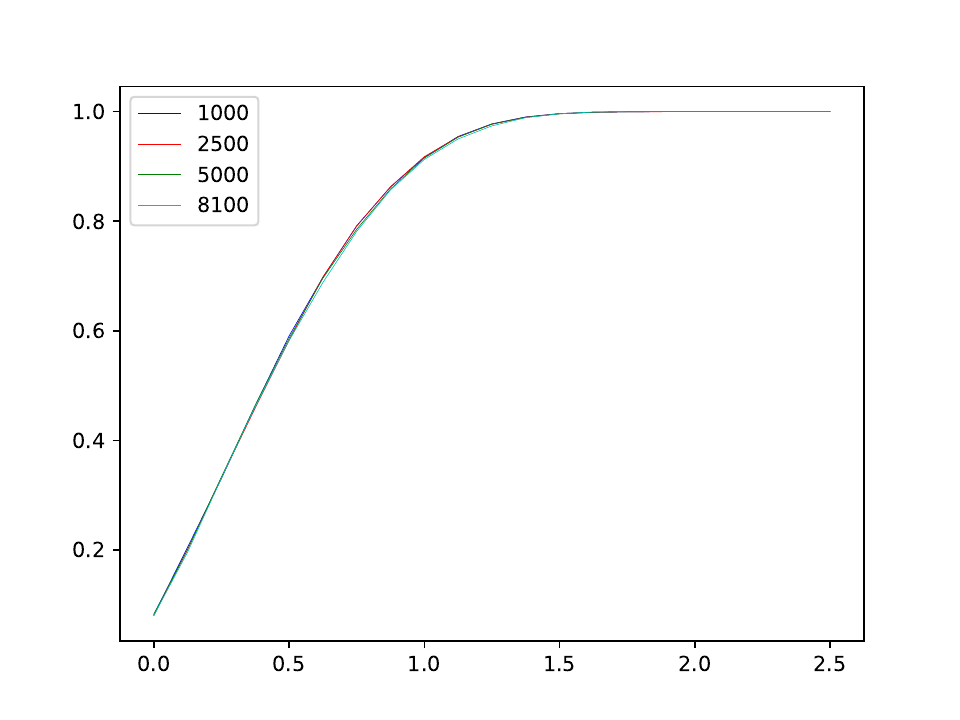}\includegraphics[width = 6cm]{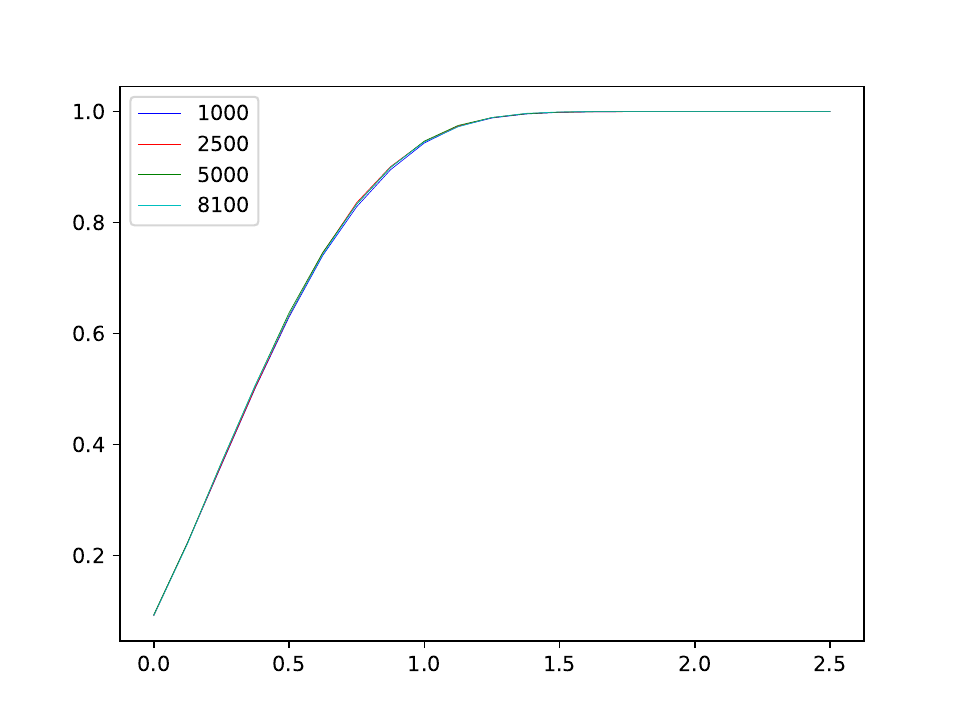}\includegraphics[width = 6cm]{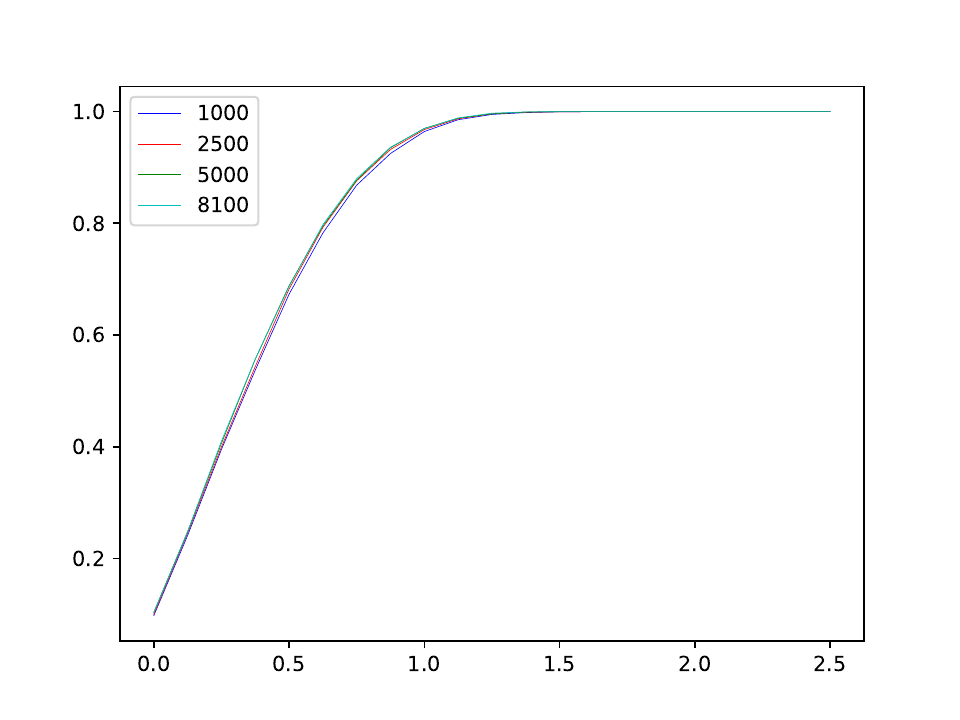}}
  \captionn{\label{fig:dqdqtf}On the first line, (interpolated) empirical cumulative function of $w(t_n)/n^{\alpha}$ for $\alpha$ being respectively 0.64, 0.65 and 0.66. On the second, (interpolated) empirical cumulative distribution function of $d_{t_n}({\bf u},{\bf v})/n^{\beta}$ for $\beta$ being respectively 0.74, 0.75 and 0.76. The 4 curves are so close that they are almost indistinguishable (they are of course far from each other for other exponents) }
\end{figure}

  \begin{rem} Given the results of the estimates and the similarity of empirical cumulative functions of $d_{t_n}({\bf u},{\bf v})/n^{\beta}$ as presented in Fig. \ref{fig:dqdqtf} it is tempting to conjecture that $\beta=3/4$ (but notice that in Rensburg \& Rechnitzer \cite{RR} estimates $\beta$ to be 0.74000. For $\alpha$, we thought that it could be $2/3$ and we used a lot of computer work to produce large trees (of size $8100$) to test this, but finally larger sizes did not change much the outcome and it seems that $\alpha$ should be smaller than $2/3$ (again in \cite{RR}, $\alpha$ is estimated to be 0.6437..
    \end{rem}
  
\paragraph{Degree statistics}

For a sample from $X=(X_1,\cdots,X_n)$ denote by $m(X)$ the empirical mean and sample variance: $m(X)=(X_1+\cdots+X_n)/n$, $s^2(X)= (\sum_{i=1}^n (X_i-m(X))^2)/(n-1)$.
\ben
\begin{array}{|c|c|c|c|c|}
	\hline
	\textrm{Nb of nodes}  & 1000 & 2500 & 5000 & 8100\pass\hline
	m(q_1) & 0.25918 & 0.25858 & 0.25819 & 0.25807\pass\hline
	s(q_1) & 7.416E-05 & 3.155E-05 & 1.476E-05 & 9.092E-06\pass\hline
	m(q_2) & 0.50543 & 0.50550 & 0.50585 & 0.50594\pass\hline
	s(q_2) & 2.507E-04 & 1.072E-04 & 5.067E-05 & 3.085E-05\pass\hline
	m(q_3) & 0.21361 & 0.21408 & 0.21412 & 0.21414\pass\hline
	s(q_3) & 8.471E-05 & 3.474E-05 & 1.752E-05 & 1.036E-05\pass\hline
	m(q_4) & 0.02179 & 0.02185 & 0.02184 & 0.02185\pass\hline
	s(q_4) & 1.882E-05 & 7.398E-06 & 3.705E-06 & 2.260E-06\pass\hline
\end{array}
\een

Observe that the standard deviation is small and seems to go fast to zero.

\section{Relaxation of the subtree sizes:  Transition matrices on $\TTT{G}$}

\label{sec:SST}
Here we will study some Markov chains having some explicit invariant distributions typically with support $\TTT{G}=\cup_n \TT{G}{n}$ (recall Section  \ref{sec:rs}). In \Cref{subtreetree} we will turn our attention to the case where $G$ is itself a tree, in which case a coupling from the past is possible.

\subsection{Mechanisms}

We define two versions of the functions $\Rem$ aiming at removing an edge $e$ of a tree $t$ depending on whether we are dealing with rooted trees or not. 
For an oriented edge $\ar{e}$, we denote by $e$ its unoriented version.  

\noindent{\bf Unrooted version of the $\Rem$ function:}
\[\app{\Rem}{\TTT{G}\times \ar{E}}{\TTT{G}}{(t,\ar{e})}{t'=\Rem(t,\ar{e})}\]
$\bullet$ if $E(t)=\{e\}$ and $\ar{e}=(v_1,v_2)$ then $t' = \{v_1\}$, the tree reduced to the single node $\{v_1\}$,\\
  $\bullet$ else (if $|E(t)|>1$), if $E(t)\setminus\{e\}$ is the edge set of a tree $t^\star$, set $t'=t^{\star}$,\\
  $\bullet$ otherwise, $t'=t$.\\
We stress the fact that the direction of $\ar{e}$ is used only when $t$ has a single edge.\\
\medbreak
\noindent{\bf Rooted version $\Rem_r$:} it aims at removing an edge in a rooted tree $(t,r)\in\TTTr{r}{G}$, while preserving $r$. Here, since the tree is rooted at $r$, $r$ is never considered as a leaf.
If $(t,r)\in \TTTr{r}{G}$ and $e=\{e_1,e_2\}\in E(t),$ then up to renaming the vertices, one may suppose that $e_1$ is the parent of $e_2$ in $(t,r)$ (is closer to $r$):\\
-- if $e_2$ is not a leaf, then do nothing, and set $t'=t$,\\
  -- if $e_2$ is a leaf, then $t'$ is the tree with vertex set $V(t')=V(t)\setminus \{e_2\}$ and edge set $E(t')=E(t)\setminus\{e\}$ (so that the root $r$ is preserved).
\\
Define the function $\Add$ as
\[\app{\Add}{\TTT{G}\times E}{\GGG{G}}{(t,e)}{g=\Add(t,e)}\]
where the graph $g$ has set of edges $E(g)=E(t) \cup\{e\}$ if $e$ is adjacent to $t$, and $g=t$ otherwise. Hence $g$ is connected and may have at most one cycle, and in this case, this cycle contains $e$.\medbreak

When the function $\Add$ has been used, a correction of the obtained graph is sometimes needed if one needs to output a tree (the cycle has to be destroyed as done for the kernel $K^{\Kerref{labeldd}}$).
  
\subsection{Two ergodic Markov chains with computable invariant distribution on $\TTT{G}$}
\label{sec:twoergo}
We present here a  reminiscent of the discrete time birth and death process, which is a general model of Markov chains $(Y_j,j\geq 0)$ taking their values in $\mathbb{N}$, and whose transition matrices are parameterized by a sequence of triplets $[(a_k,b_k,c_k),k\geq 0]$ as follows:
\[ ~\P(X_1= k+1 ~| X_0=k)=a_k,~~\P(X_1= k ~| X_0=k)=b_k,~~ \P(X_1= k-1 ~| X_0=k)=c_k,~\] with  $c_0=0$. It is known (see e.g. Karlin \& McGregor \cite{KmG,KmG2}, or \cite{LF-JFM3}), that such a chain is positive recurrent if $\sum_k \prod_{j=1}^k \frac{a_{j-1}}{c_j}<+\infty$ in which case the invariant distribution is proportional to $\pi_k=\prod_{j=1}^k \frac{a_{j-1}}{c_j}$.
~ \\
Consider a sequence of triplets  $[(\pp_i,\qq_i,\rr_i), 1\leq i \leq |V|]$, indexed by the possible subtree sizes of $G=(V,E)$, which will be used to try to ``add'', ``do nothing'' and ``remove'' one edge of the current tree. As above, for all $i$, $ \pp_i+\qq_i+\rr_i =1$. 
  For the moment we  assume that 
 \be\bpar{ccl}
 \rr_{i}&>&0,\textrm{  for all }i\in \cro{2,|V|},\label{eq:14}\\
  \pp_{i} &>& 0,\textrm{  for all }i\in \cro{1,|V|-1}.\label{eq:15}
   \epar\ee
We will need to consider a ``cycle breaking strategy'' as introduced in the definition of $K^{\Kerref{labeldd}}$ in Section \ref{sec:RV} (recall the definition of $p_c$ in \eref{eq:PC}).
\NewKernel{}{labelf}{Assume $X_0=t\in\TTT{G}$ (with any size). To define $X_1\sim K^{\Kerref{labelf}}(t,.)$, proceed as follows. Pick independently, a random oriented edge $\ar{\bf e} \sim \uniform(\ar{E}(G))$, and ``a random choice ${\bf c}$'' where
  \[\P({\bf c}=+1)= \pp_{|t|},~~ \P({\bf c}=0)=\qq_{|t|},~~ \P({\bf c}=-1)=\rr_{|t|},\]
  which will be the respective probability to ``try'' to add ${\bf e}$, to do nothing, and to remove $\ar{\bf e}$. Do\\ 
 $\bullet$ if ${\bf c}=+1$ then ``try to add {\bf e}'': consider  $g=\Add(t,{\bf e})$. If $g$ is a tree, set $X_1=g$. If $g$ has a cycle $c$, then pick an edge $e$ with probability $p_c$. Define $X_1$ as the tree obtained by the addition of ${\bf e}$ to $t$ followed by the removal of the edge $e$. \\
 $\bullet$ if ${\bf c}=0$, do nothing, and set $X_1=t$,\\ 
 $\bullet$ if ${\bf c}=-1$, then  ``try to remove   $\ar{\bf e}$'': set $X_1 = \Rem(t,\ar{\bf e})$.}
 {\Analysis:}
  $K^{\Kerref{labelf}}$ is  aperiodic and irreducible.
If $t'$ and $t$ have the same number of edges and $t'\neq t$, then, one can pass from $t$ to $t'$ by picking first ${\bf c}=+1$, followed by a transition which is, conditional to this value, the same as for  $K^{\Kerref{labeldd}}$. The proof of $\P( X_1=t'~|~X_0=t)= \P( X_1=t~|~X_0=t')$ for two trees $t$ and $t'$ of the same size is then the same as that of the reversibility of the kernel $K^{\Kerref{labeldd}}$ (the proof is given below the description of  $K^{\Kerref{labeldd}}$).
\par
 Consider $t\in\TTT{G}$ such that $3\leq \cv{t}<|V|$ and suppose that $e\in E(t)$ such that one endpoint of $e$ is a leaf in $t$. Therefore, the transition matrix satisfies
\ben\label{eq:dqfqd1}
K^{\Kerref{labelf}}_{t,t\setminus\{e\}} & = (1/|E|)\,\rr_{\cv{t}},\quad\quad
K^{\Kerref{labelf}}_{t\setminus\{e\},t}  = (1/|E|)\,\pp_{\cv{t}-1}
\een
and again the case $|t|=2$ provides a slight complication, in which case,
\ben\label{eq:dqfqd2}
K^{\Kerref{labelf}}_{t,t\setminus\{e\}} & = (1/(2|E|))\,\rr_{\cv{t}},\quad\quad
K^{\Kerref{labelf}}_{t\setminus\{e\},t}  = (1/|E|)\,\pp_{\cv{t}-1}.
\een
\begin{pro}\label{pro:qsdz} The Markov chain with kernel $K^{\Kerref{labelf}}$ is reversible and its unique invariant measure $\rho$ on $\TTT{G}$ gives the same weight $\nu_n := \nu_n(G)$ to each element of $\TT{G}{n}$, for all $1\leq n \leq |V|$, that is $\rho_t= \nu_{\cv{t}}$, for all $t\in \TTT{G}$. The sequence  $(\nu_k,1\leq k \leq |V|)$ satisfies
\ben\label{inv:eq}
\nu_{m} =2\nu_{1}\,\prod_{i=2}^{m}\left( \frac{\pp_{i-1}}{\rr_{i}} \right),~~\textrm{ for all } m \in\cro{2,|V|}.
\een
and
\ben\label{eq:sffq22}
\sum_{n=1}^{|V|}\, \nu_n |\TT{G}{n}|=1.
\een
Hence, if ${\bf t}\sim \rho$, ${\cal L}( \bt ~|~ \cv{\bt}=n)$ is the uniform distribution on $\TT{G}{n}$.
\end{pro}
\begin{rem}\label{eq:nu} In the Proposition, the sequence $(\nu_i)$ depends on $G$, and then, it should have been written $(\nu_i(G))$ to make this dependence clearer.
  \end{rem}

\begin{proof} First, by Perron-Frobeniüs, there is a unique invariant measure. Therefore, it is enough to show that the only measure $\rho$ on $\TTT{G}$, described in the proposition, satisfies the detailed balance equations \eref{reveqs}.    For $t$ and $t'=t\setminus\{e\}$ and $\cv{t}\geq 3$,
  \ben\label{eq:dqsdqr}
  \nu_{\cv{t}}\, K^{\Kerref{labelf}}(t,t\setminus\{e\})=  \nu_{\cv{t\setminus\{e\}}}\, K^{\Kerref{labelf}}(t\setminus\{e\},t).\een
  From \eref{eq:dqfqd1} one sees that $\nu_{\cv{t}}=\nu_{\cv{t}-1} \pp_{\cv{t}-1}/\rr_t$ when $\cv{t}\geq 3$. Plugging \eref{eq:dqfqd2} in \eref{eq:dqsdqr}, in the case where $\cv{t}=2$, gives: 
  \[\nu_2  (1/(2|E|))\,\rr_{2} = \nu_1 (1/|E|) \pp_1 \equi \nu_2 = 2\nu_1 \frac{\pp_1}{\rr_2}. \]
\end{proof}
\begin{rem}
  \bls Tuning the sequence $(\pp,\qq,\rr)$ allows one to favour a tree size, or an approximate tree size.\\
  \bls If $\qq_i=0$, $\rr_i=\pp_i=1/2$ for all $i$, then
  $\nu_{|t|}= \frac{1}{1+\1_{\cv{t}=1}}$ so that the distribution is uniform on  $\TTT{G}$ (except for the tree reduced to a single node that has a different weight).
\end{rem}
\paragraph{A variant with a fixed root.}
One can turn $K^{\Kerref{labelf}}$ into a kernel $K^{\Kerref{labelf}}_r$ of a Markov chain taking its values in $\TTTr{r}{G}$ where $r$ is a fixed vertex of $V$. This version will play an important role for the exact sampling of a uniform subtree of a tree in Section \ref{sec:exact_samp}.
\par We define  $K^{\Kerref{labelf}}_r$ by emphasizing its differences with $K^{\Kerref{labelf}}$: to preserve $r$, use $\Rem_r$ instead of $\Rem$, and instead of taking directed edges $\ar{\bf e}$ in $\ar{E}(G)$, we consider the unoriented ones ${\bf e}$ in $E(G)$. In this case, one can prove the following proposition by adapting the proof of \Cref{pro:qsdz}.
\begin{pro}\label{pro:qsdz2}
The Markov chain with kernel $K^{\Kerref{labelf}}_r$ is reversible and its unique invariant measure $\rhor{r}$ on $\TTTr{r}{G}$ gives the same weight $\nu_n$ to each element of $\TTr{r}{G}{n}$, for all $1\leq n \leq |V|$, that is $\rho_t= \nu_{\cv{t}}$, for all $t\in \TTT{G}$. The sequence  $(\nu_k,1\leq k \leq |V|)$ satisfies
\ben\label{inv:eq2}
\nu_{m} =\nu_{1}\prod_{i=2}^{m}\left( \frac{\pp_{i-1}}{\rr_{i}} \right),~~\textrm{ for all } m \in\cro{2,|V|}.
\een
and
\ben\label{eq:sffqtt}
\sum_{n=1}^{|V|}\, \nu_n |\TTr{r}{G}{n}|=1.
\een
Hence, if ${\bf t}\sim \rho$, ${\cal L}( \bt ~|~ \cv{\bt}=n)$ is the uniform distribution on $\TTr{r}{G}{n}$.
\end{pro}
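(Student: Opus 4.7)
The plan is to follow the same strategy as in the proof of Proposition \ref{pro:qsdz}, which establishes the analogous statement for the unrooted kernel $K^{\Kerref{labelf}}$. First, Perron--Frobenius (via irreducibility and aperiodicity of $K^{\Kerref{labelf}}_r$, which are inherited from $K^{\Kerref{labelf}}$ once one checks that any rooted subtree of $G$ can still be reached from any other through leaf additions/removals that preserve $r$) guarantees a unique invariant distribution. Hence it suffices to exhibit a probability measure $\rhor{r}$ satisfying the detailed balance equations \eref{reveqs} and this measure will automatically be the invariant one.

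Next, I would decompose the detailed balance check by the two types of admissible non-trivial transitions:
\begin{itemize}\compact
\item[\bls] \emph{Edge-swap transitions} between trees $t,t'\in\TTTr{r}{G}$ of the same size with $E(t)\setminus\{e\}=E(t')\setminus\{e'\}$. As in the proof of Proposition \ref{pro:qsdz}, this case only involves the ${\bf c}=+1$ branch: from $t$ one must pick ${\bf e}=e'$ uniformly among the $|E|$ unoriented edges, then break the cycle with $\BC{(g,e')}(\{e\})$; and symmetrically from $t'$. Since the candidate measure will assign equal weight $\nu_{|t|}$ to $t$ and $t'$, detailed balance reduces to $\BC{(g,e')}(\{e\})=\BC{(g,e)}(\{e'\})$, which is exactly the reversibility assumption \eref{eq:rev}.
\item[\bls] \emph{Size-changing transitions} between $t$ of size $m\geq 2$ and $t\setminus\{e\}$, where the edge $e$ has a leaf endpoint $v\neq r$. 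The crucial observation is that we now use unoriented edges and $\Rem_r$ (which never removes $r$), so we get the symmetric formulas
\[K^{\Kerref{labelf}}_r(t,t\setminus\{e\}) = (1/|E|)\,\rr_{m},\qquad K^{\Kerref{labelf}}_r(t\setminus\{e\},t) = (1/|E|)\,\pp_{m-1},\]
for \emph{every} $m\geq 2$, without the factor-of-$2$ anomaly from \eref{eq:dqfqd2}, since a single unoriented edge incident to $r$ is picked with the same probability $1/|E|$ whether $m=2$ or $m\geq 3$.
\end{itemize}

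Plugging these into \eref{reveqs} gives the recursion $\nu_m = \nu_{m-1}\,\pp_{m-1}/\rr_m$ uniformly for $m\geq 2$, which iterates to \eref{inv:eq2}. Finally, \eref{eq:sffqtt} is just the normalization that makes $\rhor{r}$ a probability measure on $\TTTr{r}{G}$, and since $\rhor{r}_t$ depends on $t$ only through $\cv{t}$, the conditional law of $\bt\sim\rhor{r}$ given $\cv{\bt}=n$ is uniform on $\TTr{r}{G}{n}$. The main conceptual point (and the only one that differs non-trivially from Proposition \ref{pro:qsdz}) is tracking the $m=2$ case: the proof hinges on the fact that using unoriented edges together with $\Rem_r$ removes the degeneracy that forced the extra $2\nu_1$ factor in \eref{inv:eq}, which is why the product formula \eref{inv:eq2} has no such factor.
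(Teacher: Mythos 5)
Your proposal is correct and matches the paper's intent: the paper gives no written proof for Proposition \ref{pro:qsdz2}, stating only that it follows by adapting the proof of Proposition \ref{pro:qsdz}, and your argument is precisely that adaptation. You correctly identify the key point that the switch to unoriented edges together with $\Rem_r$ makes the size-changing transition probabilities equal to $(1/|E|)\rr_m$ and $(1/|E|)\pp_{m-1}$ uniformly for all $m\geq 2$, eliminating the $m=2$ anomaly responsible for the extra factor $2$ in \eref{inv:eq}, while the edge-swap case is handled exactly as before via the reversibility of $\BC{(\cdot)}$.
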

Compared to \eref{inv:eq}, in \eref{inv:eq2} the factor 2 has been suppressed.

\subsection{A fast kernel with computable invariant distribution for regular graphs}

We propose in this part a kernel having a computable invariant distribution when all the vertices of $G=(V,E)$ have the same degree $D$.
This kernel is almost the same as the previous one ($K^{\Kerref{labelf}}$), its analysis is the same, but it mixes much  faster: the idea is to pick edges adjacent to the current tree, instead of uniform edges in $E(G)$.\medbreak
 
\NewKernel{A fast kernel for regular graphs}{labelg}{Keep the same definition as for the kernel $K^{\Kerref{labelf}}$, except for the choice of the random edge $\ar{\bf e}$, do the following instead.  Assume that  $X_0=t$, pick uniformly at random node $\mathbf{u}$ in $V(t)$, and then a random edge $\ar{\bf e}=(\mathbf{u},u')$ uniformly in the set of adjacent edges of $\mathbf{u}$ (so that $\mathbf{u}$ is the origin of this edge). }
\Analysis: Transition between trees with the same size is done as in $K^{\Kerref{labelf}}$. And it is direct to check that for any $t$ such that $|t|\geq 3$, and $e$ an edge such that $t\setminus \{e\}$ is a tree (with one node less)
\be
K^{\Kerref{labelg}}_{t,t\setminus \{e\}} &=& \frac{1}{\cv{t}}\l(\frac{1}{\degree_G(\mathbf{u})}+\frac{1}{\degree_G(u')}\r) \rr_{\cv{t}}\\
K^{\Kerref{labelg}}_{t\setminus \{e\},t} &=& \frac{1}{(\cv{t}-1)} \frac{1}{\degree_G(\mathbf{u})} \pp_{\cv{t}-1}
\ee
again if $\cv{t}=2$, in this case if $t'=\Rem(t,(\mathbf{u},u'))$ is the tree $t'$ reduced to $\mathbf{u}$, so that
\be
K^{\Kerref{labelg}}_{t,t'} &=& \frac{1}{\cv{t}\,\degree_G(\mathbf{u})}\rr_{\cv{t}}=\,\frac{\rr_{|2|}}{2\,\degree_G(\mathbf{u})}\, \\
K^{\Kerref{labelg}}_{t',t} &=& \frac{1}{\cv{t'}\,\degree_G(\mathbf{u})}\, \pp_{\cv{t'}}=\frac{ \pp_1}{\degree_G(\mathbf{u})},
\ee
since, in this transition the directed edge $(\mathbf{u},u')$ needs to have the right direction.
\begin{pro}\label{pro:qfter}
  If the degree of all nodes in $G$ is the same, then the Markov chain with transition matrix $K^{\Kerref{labelg}}$ is reversible and its unique invariant measure $\rho$ on $\TTT{G}$ gives the same weight $\nu_n$ to each element of $\TT{G}{n}$, for all $1\leq n \leq |V|$, that is $\rho_t= \nu_{|t|}$, for all $t\in \TTT{G}$. The sequence  $(\nu_k,1\leq k \leq |V|)$ satisfies
\ben\label{inv:eqqqq}
\nu_m  =  2\nu_{1} \prod_{i=2}^{m}\left( \frac{ \pp_{i-1}/(i-1)}{2\,\rr_{i}/i} \right),~~ \textrm{ for } 2\leq m \leq |V|\een
and
\ben\label{eq:sffq}
\sum_{n=1}^{|V|}\, \nu_n |\TT{G}{n}|=1.
\een
Hence, if ${\bf t}\sim \pi$, ${\cal L}( \bt ~|~ |\bt|=n)$ is the uniform distribution on $\TT{G}{n}$.
\end{pro}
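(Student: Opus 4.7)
The plan is to follow the same strategy used for Proposition \ref{pro:qsdz}: establish that the chain is aperiodic and irreducible on the finite state space $\TTT{G}$ so that Perron--Frobenius yields a unique invariant distribution, and then verify that the measure $\rho_t=\nu_{|t|}$ (with $\nu_m$ as in \eref{inv:eqqqq}) satisfies the detailed balance equations \eref{reveqs}. Since the aperiodicity/irreducibility is immediate (as was already noted for $K^{\Kerref{labelf}}$, and the only difference of $K^{\Kerref{labelg}}$ is the proposal mechanism), the whole content is in checking detailed balance for each of the three kinds of non-trivial transitions that $K^{\Kerref{labelg}}$ can perform.

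\textbf{Same-size transitions.} Consider $t\neq t'$ with $|t|=|t'|$ and satisfying \eref{eq:symE}, so that $t'$ is obtained from $t$ by adding an edge $e'$ (creating a cycle) and then removing an edge $e$ of that cycle via $\BC$. To transition from $t$ to $t'$ under $K^{\Kerref{labelg}}$, the chosen oriented edge must be $e'=\{u,u'\}$ with both $u,u'\in V(t)$; this happens with probability $\frac{1}{|t|}\!\left(\frac{1}{D}+\frac{1}{D}\right)=\frac{2}{|t|D}$, after which we multiply by $\pp_{|t|}$ and by $\BC(g,e')(\{e\})$. Using $|t|=|t'|$ and the reversibility property \eref{eq:rev} of $\BC$, the same expression is obtained in the reverse direction, so detailed balance at fixed size holds \emph{without any constraint} on $\nu$.

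\textbf{Edge-removing transitions, $|t|\ge 3$.} Here the formulas displayed just before the proposition give
\be
K^{\Kerref{labelg}}(t,t\setminus\{e\})=\frac{2}{|t|D}\,\rr_{|t|},\qquad
K^{\Kerref{labelg}}(t\setminus\{e\},t)=\frac{1}{(|t|-1)D}\,\pp_{|t|-1},
\ee
where I used regularity $\degree_G(u)=\degree_G(u')=D$. Detailed balance $\nu_{|t|}K^{\Kerref{labelg}}(t,t\setminus\{e\})=\nu_{|t|-1}K^{\Kerref{labelg}}(t\setminus\{e\},t)$ then forces
$\nu_m / \nu_{m-1} = \frac{\pp_{m-1}/(m-1)}{2\rr_m/m}$ for $m\ge 3$, which is exactly the step in the telescoping product \eref{inv:eqqqq}.

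\textbf{Base case $|t|=2$.} This is the main subtlety: when $|t|=2$, the removal outcome depends on the orientation of the sampled edge, so only one of the two oriented edges leads to the target singleton $t'=\{u\}$. From the formulas just before the proposition one reads $K^{\Kerref{labelg}}(t,t')=\rr_2/(2D)$ while $K^{\Kerref{labelg}}(t',t)=\pp_1/D$; hence detailed balance yields $\nu_2=2\nu_1\pp_1/\rr_2$. This agrees with setting $m=2$ in \eref{inv:eqqqq} since $2\,\frac{\pp_1/1}{2\rr_2/2}=2\pp_1/\rr_2$, and this is where the prefactor $2$ in \eref{inv:eqqqq} comes from (as in Proposition \ref{pro:qsdz}).

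Finally, \eref{eq:sffq} is just the statement that $\rho$ is a probability measure: since $\rho$ assigns weight $\nu_n$ to every element of $\TT{G}{n}$, summing over all sizes gives the normalization, and this determines $\nu_1$. The conditional law of $\bt\sim\rho$ given $|\bt|=n$ is uniform on $\TT{G}{n}$ directly from $\rho_t=\nu_{|t|}$. I expect the only place where regularity is used is in the same-size check (to turn $\frac{1}{\degree_G(u)}+\frac{1}{\degree_G(u')}$ into a symmetric quantity depending only on $|t|$), and in making the size-decreasing formula reduce to the clean expression above; the main risk of error is the base-case bookkeeping that produces the factor $2$.
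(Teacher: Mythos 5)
Your proof is correct and follows the same approach the paper implies (the paper leaves Proposition \ref{pro:qfter} without a written proof, but the transition formulas in the preceding ``Analysis'' paragraph plus the proof of Proposition \ref{pro:qsdz} indicate exactly the argument you carried out). You correctly identified that regularity is needed to make the same-size transitions symmetric and to reduce $\frac{1}{\degree_G(\mathbf{u})}+\frac{1}{\degree_G(u')}$ to a quantity depending only on $|t|$, and you handled the base case $|t|=2$ where orientation matters, which is precisely the origin of the factor $2$.
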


\begin{rem} \label{rem:rel} Recall that the transition matrices $K^{\Kerref{labelf}}$ and $K^{\Kerref{labelg}}$ are defined using $[(\pp_i,\qq_i,\rr_i), 1\leq i \leq |V|]$.  The conditions $(\pp_i>0,1\leq i<|V|)$ and $(\rr_{i}>0, 2\leq i \leq |V|)$ are imposed so that they ensure the irreducibility of these chains on $\TTT{G}$.
  Now, assume that one takes $X_0$ according to some distribution $\nu$ with support in $\cup_{n \in \cro{n_1,n_2}} \TT{G}{n}$ where $1 \leq n_1 < n_2 \leq |V|$. Assume that $\rr_{n_1}=0$ and $\pp_{n_2}=0$, and $\rr_{k}>0$ for $k\in \cro{n_1+1,n_2}$,   $\pp_{k}>0$ for $k\in \cro{n_1,n_2-1}$. In this case, the Markov chain under consideration is irreducible in  $\cup_{n \in \cro{n_1,n_2}} \TT{G}{n}$ (exercise left to the reader). In this case we have the same result for the distribution of the invariant measure as in Proposition \ref{pro:qfter} between $1$ and $n_2$ (instead of $|V|$) when $n_1=1$, and if $n_1>1$, the invariant distribution is given by
\ben\label{inv:eqdqsd}
\nu_m  = \nu_{n_1} \prod_{i=2}^{m}\left( \frac{\pp_{i-1}/(i-1)}{ 2\;\rr_{i}/i} \right)\textrm{ for }m \in\cro{n_1+1,n_2}.
\een
When $n_1<n_2$, the irreducibility of the chain 
and \eref{eq:sffq} is easily adapted to the present case. \\
If $n_1=n_2$, then one can see that the vertex set $V(X_0)$ of the initial tree $X_0$ cannot change: for each $i$, $V(X_i)=V(X_0)$, so that this model is a Markov chain taking its value in the spanning trees of $V(X_0)$ (this setting is treated in Section \ref{sec:STC}).
\end{rem}
 
\color{black}

\section{Survey of models of random subtrees of a graph}
\label{sec:survey}

In this section, we present many distributions (with simulations methods) far from the uniform distribution, but which are interesting on their own (and, marginally, can  be used to design simulation of the uniform distribution by reject for small graphs, or small values of $n$).

\subsection{The pioneer tree}
\label{sec:PT}

We introduce the pioneer tree which is a new random tree model. 
Recall the definition of $\FET(W_0,\cdots,W_{\tau_{|V|}})$ given in \eref{defi:dqsd}. The pioneer tree aims to generalize Aldous--Broder construction: instead of taking all the first entrance edges to all nodes (for a $M$-Markov chain under its stationary regime), which provides a tree with weight $\prod_{e} \ra{M}_e$ as stated in Theorem \ref{the:kgyqsd}, just keep the $n$ first ones. We take the same setting as in Section \ref{sec:ABA}: $G$ is a connected graph, $M$ a positive Markov transition matrix on $G$, and $W$ is a $M$-Markov chain (we drop the condition of reversibility). 

The aim of this section is to present this model, and to show that it shares, as the uniform spanning-tree model does, a strong link with a tree valued Markov chain.\par

\NewModel{The pioneer random tree}{PRT}
{The $n$ pioneer tree ${\sf PRT}_n(W_i, 0\leq i \leq \tau_n$) is the rooted edge-labelled tree $(\FET(W_i, 0\leq i \leq \tau_n), L_n)$, where $L_n$ gives the label $k-1$ to the edge $(W_{\tau_k},W_{-1+\tau_k})$, for all $2\leq k \leq n$.}
Hence, the vertex set of ${\sf PRT}_n(W_i, 0\leq i \leq \tau_n)$ is  $\{W_0,\cdots,W_{\tau_n}\}$, the first $n$ vertices visited by $W$.
\begin{defi}
  Denote by  $\TTTrLd{r}{G,n}$ the set of rooted edge-labelled trees $((t,r),\ell)$  such that $(t,r)$ belongs to $\TTr{r}{G}{n}$, and such that the $n-1$ labels associated with the edges form the set $\{1,\cdots,n-1\}$ and are decreasing on any injective path from a leaf to the root $r$.\footnote{An injective path $w=(w_0,\cdots,w_m)$ is a path such that $i,j \in\cro{0,m}$, $i\neq j \imp w_i\neq w_j$. } 
\end{defi}
A simple consequence of the construction is the following fact:
\begin{lem}  The pioneer tree ${\sf PRT}_n(W_0,\cdots,W_{\tau_n})$ belongs to $\TTTrLd{W_0}{G,n}$ and
\ben\label{eq:edqd}
{\sf PRT}_n \subset {\sf PRT}_{n+1},~~~\textrm{ for any } 1\leq n \leq |V|-1.
\een
Hence for all $n$, ${\sf PRT}_n$ is an edge-labelled subtree of the global spanning-tree  ${\sf PRT}_{|V|}$ equipped with its edge-labels.
\end{lem}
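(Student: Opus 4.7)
The plan is to unwind the definitions and check the conditions defining $\TTTrLd{W_0}{G,n}$ one by one, then observe that the construction is monotone in $n$.

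First, I would pin down the vertex and edge sets: by construction the vertex set of ${\sf PRT}_n$ is $\{W_{\tau_1},W_{\tau_2},\dots,W_{\tau_n}\}$ with $\tau_1 = 0$, and these are $n$ pairwise distinct vertices by the very definition of $\tau_k$. The key observation is that for each $k\in\cro{2,n}$, the vertex $W_{\tau_k}$ is fresh (visited for the first time) while $W_{\tau_k-1}$ was already visited, hence $W_{\tau_k-1}\in\{W_{\tau_1},\dots,W_{\tau_{k-1}}\}$. Consequently the edge $(W_{\tau_k},W_{\tau_k-1})$ attaches a new leaf to the graph already built from the first $k-1$ edges. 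By induction this produces a connected graph on $n$ vertices with $n-1$ edges, i.e.\ a tree, and since $W_0$ has no incoming edge in the construction, it is naturally rooted at $W_0$.

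Next I would verify the labelling conditions. The $n-1$ labels assigned are precisely $1,2,\dots,n-1$ by definition, so they form the required set. To check the monotonicity condition, fix any non-root vertex $W_{\tau_k}$ with $k\ge 2$; its parent is $W_{\tau_k-1}=W_{\tau_j}$ for some $j\in\cro{1,k-1}$, and the edge between them carries label $k-1$. If $j\ge 2$, the next edge on the path from $W_{\tau_k}$ toward the root (going from $W_{\tau_j}$ to its parent) carries label $j-1<k-1$; if $j=1$ then $W_{\tau_j}=W_0$ and the path is finished. Iterating, any injective path from any leaf down to the root carries a strictly decreasing sequence of labels, which is exactly the condition defining $\TTTrLd{W_0}{G,n}$.

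Finally, passing from $n$ to $n+1$ only appends one new edge $(W_{\tau_{n+1}},W_{\tau_{n+1}-1})$ with the new label $n$, without touching any previously chosen vertex, edge, or label. This gives the inclusion ${\sf PRT}_n\subset {\sf PRT}_{n+1}$ immediately, and by iteration ${\sf PRT}_n$ sits inside the edge-labelled global tree ${\sf PRT}_{|V|}$.

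There is really no hard step here; the only thing that is not purely syntactic is the observation that $W_{\tau_k-1}$ is necessarily a previously visited vertex, which is simply a restatement of the definition of $\tau_k$ as the first time $k$ distinct vertices have been seen.
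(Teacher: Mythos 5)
Your proof is correct. The paper states this lemma as ``a simple consequence of the construction'' without supplying a proof, and your write-up spells out exactly the intended argument: the first-entrance edge $(W_{\tau_k},W_{\tau_k-1})$ always attaches the fresh vertex $W_{\tau_k}$ to an already-visited vertex $W_{\tau_j}$ with $j<k$, so labels $k-1$ drop to $j-1$ along the path to the root, and incrementing $n$ to $n+1$ merely appends one new labelled edge without disturbing the existing ones.
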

In the same way as Aldous--Broder $\FET(W_0,\cdots,W_{\tau_{|V|}})$  can be seen as the state, at time 0, of a spanning-tree valued Markov chain started at time $-\infty$ (this is the argument at the core of Aldous and Broder proofs), for any $n$, the pioneer tree has a very similar property, for the following Markov chain taking its values in $\cup_{r\in V}\TTTrLd{r}{G,n}$: again, $n$ is any number in $\cro{1,|V|}$, so that the following construction includes the spanning-tree case, but not only.

\paragraph{A Markov chain on pioneer trees driven by a random walk: erase the oldest edge}

\NewKernel{Add a random step and erase the oldest edge}{EOE}{Assume that at time 0, $((T_0,R_0),L_0)$  is an element of $\TTTrLd{R_0}{G,n}$, whose tree $T_0$ is rooted at $R_0$. Under the kernel $K^{\Kerref{EOE}}$, $((T_1,R_1),L_1)$ is defined as follows:\\
   $\bullet$ First, $\P(R_1=v ~|~R_0=u) = \ra{M}_{u,v}$, which means that the roots $(R_k, k\geq 0)$ performs a Markov chain with transition matrix $\ra{M}$ on $G$. \\
  $\bullet$ Consider the oriented edge $e=(R_0,R_1)$ of $G$; $R_1$ will be the new root of the new tree $T_1$.
  \begin{itemize}\compact
   \item[(a)]  If $R_1=R_0$ (possible if there is a loop): in this case set $((T_1,R_1),L_1)=((T_0,R_0),L_0)$, 
  \item[(b)] If $R_1$ is already in $T_0$, then adding the edge $e=(R_0,R_1)$ in $T_0$ creates a cycle (possibly, the small cycle $R_0\to R_1 \to R_0$). To get $T_1$, add $e$ to $T_0$, label $e$ temporarily 0,  record ${\bf m}$ the {\bf maximal} label on the created cycle, and remove the edge with label ${\bf m}$; finally, orient the remaining edges of the cycle toward $R_0$
  \item[(c)] else, $R_1$ was not in $T_0$ so that if one adds the edge $e=(R_0,R_1)$ to $T_0$, then $R_1$ is a new node. To get $T_1$, add the edge $e$ to $T_0$, label $e$ temporarily 0 and remove the edge $e'$ adjacent to the leaf with {\bf maximal} label (the label ${\bf m}$ of $e'$ is  $n-1$). 
  \end{itemize}
 To define  $L_{1}$ in both cases, keep the labels of all edges of $L_0$ that are  $>{\bf m}$, and add 1 to all the other labels (those in $\cro{0,{\bf m}-1}$,  including the new one labelled temporarily 0).}

This chain is a generalization of Aldous-Broder tree Markov chain, but here, in order to keep track of the chronological order of the edges, additional labels are needed. Observe that the performed random walk is done according to the time reversal transition matrix  $\ra{M}$.
\begin{pro}\label{pro:croi1}
 \bir
  \itr The labels $L_1$ are different and decreasing on each path toward the root, and then so that $K^{\Kerref{EOE}}$ defines indeed a transition matrix on $\cup_{r\in V}\TTTrLd{r}{G,n}$.
  \itr If $(X^{(n)}_j,j\geq 0)$ is a Markov chain on $\cup_{r\in V}\TTTrLd{r}{G,n}$ with kernel $K^{\Kerref{EOE}}$, then for $X^{(n-1)}_j$ be the labelled tree obtained by removing the edge with largest label in $X^{(n)}_j
  $, the process $(X^{(n-1)}_j,j\geq 0)$ is a Markov chain on $\cup_{r\in V}\TTTrLd{r}{G,n-1}$ with kernel $K^{\Kerref{EOE}}$.
  \eir
\end{pro}

\begin{proof}[Sketch of proof]
  Giving all the details would be too long. We give the main ideas only.\\
  $(i)$ The proof is done by inspection of both cases $(b)$ and $(c)$ in the definition of $K^{\Kerref{EOE}}$.\\
  $(ii)$ Suppose that $((t_n,\ell_n),r)$ and $((t_{n+1},\ell_{n+1}),r)$ are two edge labelled trees with $n$ and $n+1$ nodes, such that $((t_n,\ell_n),r)$ is obtained from $((t_{n+1},\ell_{n+1}),r)$ by the suppression of the edge with greatest label $n$ (we write ${\sf Proj}((t_{n+1},\ell_{n+1}),r)=(t_n,\ell_n),r)$). When taking a step under the kernel $K^{\Kerref{EOE}}$, a new edge $(r,r')$ is added, $r'$ becomes the new root: this addition gives different possible situations for $t_n$ and for $t_{n+1}$: \\
  -- (A) $r'$ is not in $t_{n+1}$ (nor in $t_n$),\\
  -- (B) $r'$ is in $t_{n+1}$ but not in $t_n$.\\
  In case $(A)$,  after applying $(c)$ of  definition of $K^{\Kerref{EOE}}$, both obtained trees $((t'_n,\ell_n'),r')$ and  $((t'_{n+1},\ell_{n+1}'),r')$ satisfy  ${\sf Proj}((t_{n+1}',\ell_{n+1}'),r)=(t_n',\ell_n'),r)$).\\
  In the case $(B)$, the cycle obtained by adding $(r,r')$ to $t_{n+1}$ contains necessarily the edge with greatest label of $t_{n+1}$ (otherwise a cycle would have been created also by adding $(r,r')$ to $t_n$). From here the conclusion is simple.
\end{proof}
Analogously as Aldous-Broder tree Markov chain preserves the distribution specified in \eref{the:kgyqsd}, the Markov chain with kernel $K^{\Kerref{EOE}}$ has the property to leave the pioneer tree distribution invariant. 
\begin{pro}\label{pro:croi2}
  The Markov chain with kernel $K^{\Kerref{EOE}}$ is ergodic on  $\cup_{r\in V}\TTTrLd{r}{G,n}$, and its invariant distribution is the distribution of the pioneer ${\sf PRT}_n(W_i, 0\leq i \leq \tau_n)$ for $W_0$ following the invariant distribution $\rho$ of $M$ (with full support on $\cup_{r\in V}\TTTrLd{r}{G,n}$).
 \end{pro}
Hence, several points can be noticed: the consistency of the trees $({\sf PRT}_n,1\leq n \leq |V|)$, the fact that a labelling is needed to construct this coupling, the fact that Aldous and Broder scheme to study the $\FET$ can be applied here again using a time-reversal chain under its stationary distribution, and also the fact that, forgetting their labels, all of them are subtrees of the original Aldous--Broder spanning tree. 
\begin{proof}
\noindent   The main idea consists in introducing a time-reversal (as in Aldous and Broder argument), and a second family of trees that we call $\LET$. 

Any finite path $(z_0,\cdots,z_\tm)$ on $G$ can be used to define a rooted tree
$\LET(z_0,\cdots,z_\tm)$, rooted at $z_\tm$ as follows: first $\LET(z_0)$ is the tree reduced to its root $z_0$; from $k=0$ to ${\tm-1}$, construct $\LET(z_0,\cdots,z_{k+1})$ from  $\LET(z_0,\cdots,z_{k})$ by the suppression of the outgoing edge from $z_{k+1}$ (if any), by the addition of the edge $(z_k,z_{k+1})$ and by setting the root at $z_{k+1}$. The set of nodes of $\LET(z_0,\cdots,z_{\tm})$ is $\{z_0,\cdots,z_\tm\}$; if one denotes by
\[\nu_k=\max\{j : |\{z_j,\cdots,z_\tm\}|=k\},\]
the last time $k$ nodes remain to be visited ``in the future'', then, for any $k\in \{1,\cdots,|\{z_0,\cdots,z_\tm\}|\}$, $\nu_k$ is the date of visit of a node for the last time; hence, the tree $\LET(z_0,\cdots,z_\tm)$ has for edges
\ben
(z_{\nu_k}, z_{1+\nu_k}), \textrm{ for } k=|\{z_0,\cdots,z_\tm\}| \textrm{ to }2.
\een
In  Definition \ref{defi:dqsd}, $\FET$ is associated with a covering path; this definition can be extended to any path, covering or not.
It is immediate to check, that, for any path $(w_0,\dots, w_\tm)$ on $G$,
  \ben\label{eq:qsdqs}
  \FET(w_0,\cdots,w_\tm)=\LET(w_\tm,\cdots,w_0).
  \een

\begin{figure}[h!]
  \centerline{\includegraphics[width = 8cm]{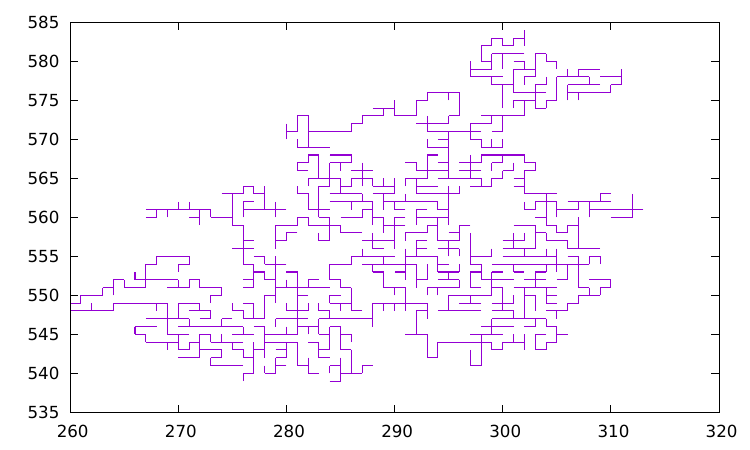}\includegraphics[width = 8cm]{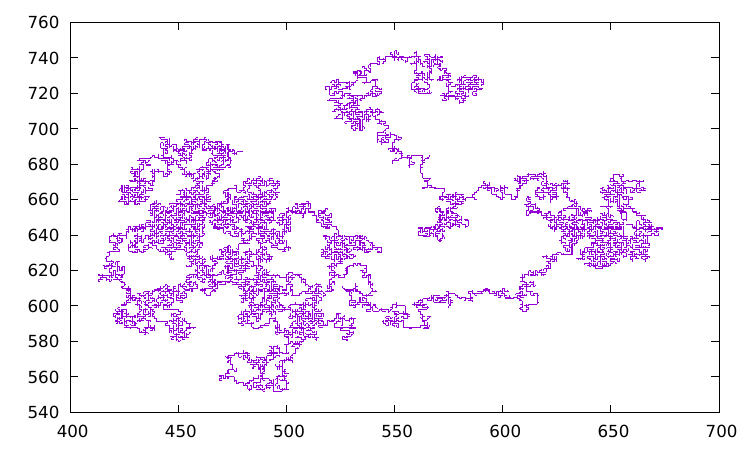}}
  \captionn{ \label{FIG:simuRO} Simulation of  $K^{\Kerref{EOE}}$ on $\Torus{1000}$, of a tree with 1000 and then 10000 edges (in the first case, 25M steps are done starting from a rectangle tree 40$\times 25$, in the second case 200M steps  starting from a rectangle tree 100$\times 100$.}
 \end{figure}

Assume now that $(X_k,k\in Z)$ is a $M$ Markov chain and $(Y_k, k \in \mathbb{Z})$ is a $\ra{M}$ Markov chain, both of them taken under their invariant distribution. \par
We start with the spanning tree case. There are three main ideas:\\
\bls \textbf{Construction of $\LET$ following the ``erase the oldest'' dynamic.}\par $k\mapsto \LET(Y_i,i\leq k)$ is a Markov process such that from time $k$ to $k+1$ a new edge $(Y_k, Y_{k+1})$ is added, and the outgoing edge $e$ from $Y_{k+1}$, if any, is suppressed; and in such a case before suppression, the addition of $(Y_k,Y_{k+1})$ created a cycle $C$. By induction on $k$ one can prove that the edge creation timestamps give an increasing labelling on any injective path to the root. We claim that the edge $e$ was the ``oldest'' edge of $C$. This statement is meaningful since the date of creation of each edge is $\sigma(Y_i, i \leq k)$ measurable: each edge is the last exit edge to a node. Therefore, the further from the root is an edge on the $\LET$, the smaller creation timestamp it has and therefore the older it is. Hence, the edge $(Y_k,Y_{k+1})$ creates a cycle with a path going to $Y_k$, which is then a branch in the tree, so that the outgoing edge from $Y_{k+1}$ is indeed the oldest in the cycle.
Hence, up to the labels, the tree in the ``erase the oldest edge'' dynamics is the same as  $k\mapsto \LET(Y_i,i\leq k)$.\\
\bls \textbf{Adding the ``right'' labels to the analysis.}\par Label the edges of $\LET(Y_i, i\leq k)$ by $\ell_k$ the relative order in $\cro{1,|\{Y_0,\dots, Y_k\}|-1}$ of their creation timestamps as in the preceding part, this is an increasing labelling on any injective path towards the root. We produce a reverse labelling $\ell_k^\downarrow$ of $\ell_k$ as follows\\
\[\ell_k^{\downarrow}(e)=|\{Y_0,\dots, Y_k\}|-j,~~ \textrm{ for } j \in \cro{1,|\{Y_0,\dots, Y_k\}|-1}.\]
Under $\ell_k^\downarrow$, the bigger the label, the smaller its timestamp is and therefore the older the edge.\\
Now, in the spanning tree case the chain ``erase the oldest chain'' and $k\mapsto (\LET(Y_i, i\leq k),\ell_k^{\downarrow})$ (from $k$ large enough) can be identified under their stationary regime (this can be seen more easily by the time-reversal argument that follows).\\
\bls \textbf{time-reversal application to obtain {\sf pioneer} from $\LET$ + labels: }\par
The combinatorial property \eref{eq:qsdqs} allows one to see that if $X$ is a $M$ Markov chain and $Y$ a $\ra{M}$ Markov chain under their common stationary distribution $\rho$
\[{\sf PRT}(X_i, 0\leq i \leq \tau_{V})\eqd (\LET(Y_i, i\leq 0),\ell_0^{\downarrow}).\]
To complete the proof for $n<|V|$, it suffices to use \eref{eq:edqd} and its counterpart for $(\LET(Y_i, i\leq 0),\ell_0^{\downarrow})$: in words, keeping from the spanning tree process the $n-1$ edges with the smallest labels, provides on the left-hand ${\sf PRT}(X_i, 0\leq i \leq \tau_{n})$, and in the right hand the tree $(\LET(Y_i, i\leq 0),\ell_0^{\downarrow})_{e: \ell_0^{\downarrow}(e)<n}$ restricted to the $n-1$ edges with smallest labels; the coupling argument given also allows one to compare the process $k\mapsto(\LET(Y_i, i\leq 0),\ell_0^{\downarrow})_{e: \ell_k(e)<n}$ with the ``erase the oldest edge'' chain is still valid.
\end{proof}
The distribution of the vertices of the tree $\{W_0,\cdots,W_{\tau_n}\}$ is the track of the Markov chain till it visits $n$ different points. It is possible to give some combinatorial formulas for the distribution of this support, but they are not enlightening. For the asymptotics on some graphs (as on $\Z^2$ or $\Torus{N}$), Brownian limit of $(W_k,k\geq 0)$ suitably normalized shows that from a probabilistic perspective, the question is the following.
\begin{Ques} Describe the distribution of $\FET(W_0,\cdots,W_{\tau_n})$ conditionally on vertex set $\{W_0,\cdots,W_{\tau_n}\}$.
  \end{Ques}
For more information on the combinatorics behind this model, we send the reader to \cite{LFJFM}.

\subsubsection{Erase the youngest edge, a degenerate variant of the kernel $K^{\Kerref{EOE}}$}
\label{sec:hdgfds}

It seems natural to ask if erasing the youngest edge gives an exploitable model, to define this mechanism just replace {\bf maximal} by {\bf minimal} in the description of the ``erase the oldest edge transition matrix'' $K^{\Kerref{EOE}}$. This process tends to destroy almost all leaves and to provide a poor model of random trees, even if, as a model of weakly branching ``self avoiding random walk'', it could be thrilling to study (see Fig. \ref{FIG:simuRY}).
\begin{figure}[htbp]
  \centerline{\includegraphics[width = 8cm]{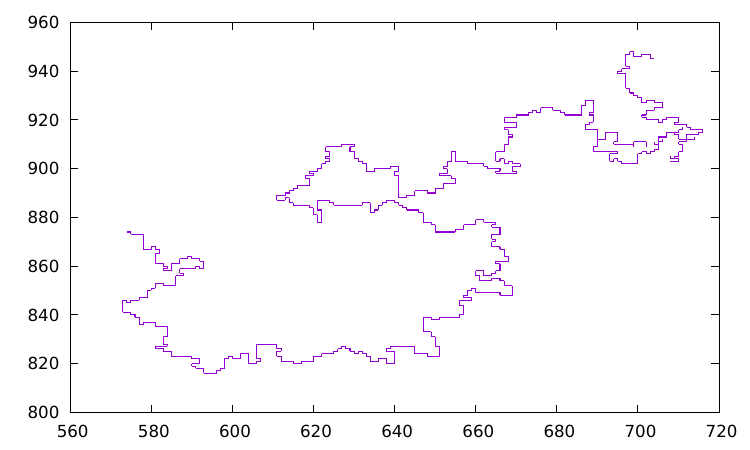}\includegraphics[width = 8cm]{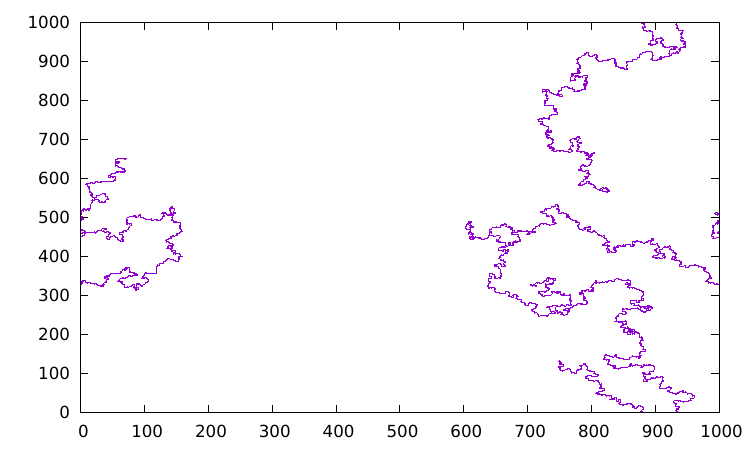}}
  \captionn{ \label{FIG:simuRY} Remove youngest edge, simulation on $\Torus{1000}$, of a tree with 1000 and then 10000 edges (in the first case, 1M steps starting from a rectangle tree 40$\times 25$, in the second case 10M steps  starting from a rectangle tree 100$\times 100$.}
 \end{figure}

\subsection{No ``local construction'' of a uniform element of $\TT{G}{n}$}\label{sec:qdegppzr}

In this section, we present a generic argument allowing one to prove that it is not possible to sample a uniform element of $\TT{G}{n}$ using few steps a random walk, when $n$ is small compared to $|V|$.
This argument can be used to reject many constructions one may imagine.
\begin{theo}\label{theo:nolocal} Consider a simple random walk $W=(W_k,k \in  \mathbb{Z})$ on a graph $G=(V,E)$ under its invariant distribution (meaning that knowing $W_i$, $W_{i+1}$ is uniform among the neighbours of $W_i$).   Denote by $\ar{\tau_n}:=\inf\{k\geq 0: \#\{W_0,\cdots,W_k\}=n\}$ the first time the random walk visits $n$ points, or, ``the same thing'', backward,  $\ra{\tau_n}:=\max\{k \leq 0: \#\{W_{k},\cdots,W_0\}=n\}$.   
In general, there does not exist any map $F$ taking its values on the set of trees with $n$ nodes, such that
  $F(W_0,\cdots,W_{\ar{\tau_n}})$ is uniform on $\TT{n}{G}$ or on $\TTr{r}{n}{G}$ (with $r$ random or not), and such that the vertex set of $F(W_0,\cdots,W_{\ar{\tau_n}})$ is included in $\{W_0,\cdots,W_{\ar{\tau_n}}\}$. The same statement holds for $F(W_{\ra{\tau_n}},\cdots,W_0)$ instead.
\end{theo}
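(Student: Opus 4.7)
The plan is to reduce the question to a comparison of two marginal distributions on $n$-subsets of $V$, and then to exhibit a concrete small graph where the two marginals disagree.

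\textbf{Step 1 (Reduction to vertex-set marginals).} Any deterministic $n$-tree map $F$ reading only the trajectory $(W_0,\dots,W_{\ar{\tau_n}})$ can output vertices only from $\{W_0,\dots,W_{\ar{\tau_n}}\}$; since both sets have cardinality $n$ by definition of $\ar{\tau_n}$, the vertex set of $F(W_0,\dots,W_{\ar{\tau_n}})$ must equal $\{W_0,\dots,W_{\ar{\tau_n}}\}$. Therefore, if $F(W_0,\dots,W_{\ar{\tau_n}}) \sim \uniform(\TT{G}{n})$, then the law of the random set $\{W_0,\dots,W_{\ar{\tau_n}}\}$ coincides with the projection of $\uniform(\TT{G}{n})$ onto vertex sets, which, by the matrix--tree theorem, assigns weight $\kappa(G[V'])/|\TT{G}{n}|$ to each $V' \subset V$ of size $n$, where $\kappa(H)$ counts the spanning trees of $H$.

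\textbf{Step 2 (A counterexample).} I would take $G = K_4 \setminus \{3,4\}$ (the complete graph on $\{1,2,3,4\}$ with the edge $\{3,4\}$ removed) and $n=3$. The four $3$-subsets of $V$ induce, respectively, two triangles ($\{1,2,3\}$ and $\{1,2,4\}$, each with $3$ spanning trees) and two paths ($\{1,3,4\}$ and $\{2,3,4\}$, each with $1$ spanning tree), giving $|\TT{G}{3}| = 8$ and uniform vertex-set marginal $(3/8,\,3/8,\,1/8,\,1/8)$.

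\textbf{Step 3 (SRW marginal).} The stationary measure is $\rho = (3/10,\,3/10,\,2/10,\,2/10)$. For each starting vertex $v$ and each $3$-subset $V' \ni v$, I would compute $\P(W \text{ visits all of } V' \text{ before any vertex outside } V' \mid W_0 = v)$ by a small linear system of first-passage probabilities, and sum over $v$ weighted by $\rho$. A routine calculation yields the SRW marginal $(19/50,\,19/50,\,3/25,\,3/25)$; already $3/25 = 0.120 \neq 0.125 = 1/8$, so the two marginals differ and no such $F$ can exist. The reversed-walk version follows immediately from reversibility of the simple random walk under $\rho$, which gives $(W_{\ra{\tau_n}},\dots,W_0) \eqd (W_0,\dots,W_{\ar{\tau_n}})$.

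The main obstacle is formalising Step 1 --- making precise that a map $F$ reading only the walk trajectory cannot inject vertices outside $\{W_0,\dots,W_{\ar{\tau_n}}\}$; once this is settled the remainder is elementary. The underlying heuristic, already emphasised before the theorem statement, is that the stationary distribution of the walk is essentially governed by \emph{sums} of degrees over the visited set while the matrix--tree weight is governed by \emph{products} of degrees, and a single asymmetric edge removal is enough to create an inconsistency.
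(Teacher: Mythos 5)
Your proposal is correct, and it shares the paper's underlying obstruction --- comparing the law of the set of visited vertices with the vertex-set marginal induced by the matrix--tree theorem --- but the execution is genuinely different. The paper constructs an infinite family (a long line grafted onto a long ladder) and runs an asymptotic argument: the stationary measure allocates a constant fraction of its mass to vertices in the line, yet the number of $n$-subtrees living in the line is polynomial in $n$ while it is exponential in the ladder, so the two marginals are wildly apart for every fixed $n$ once the graph is large enough. This buys robustness: the mismatch is so extreme that it survives even if the walk is observed for much longer than $\tau_n$, and it shows the obstruction persists for every $n$. Your route instead fixes a single tiny graph ($K_4$ minus an edge, $n=3$) and computes both marginals exactly: $(3/8,3/8,1/8,1/8)$ for the uniform subtree versus $(19/50,19/50,3/25,3/25)$ for the walk (I re-derived your numbers and they check out). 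This buys concreteness, full verifiability, and the observation that the obstruction already appears on a 4-vertex graph; the price is that you only certify one pair $(G,n)$ rather than the whole asymptotic family, so you don't get the stronger "even with a much longer observation window" conclusion that the paper's construction delivers.

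Two small points to tighten. First, both you and the paper rely on the convention that an ``$n$-tree map'' must output a tree whose vertex set is exactly the visited set $\{W_0,\dots,W_{\ar{\tau_n}}\}$ (as is the case for $\FET$); you are right to flag this as the one place needing a precise definition, and the paper uses the same implicit convention, so there is no gap relative to what is actually being proved. Second, your assertion $(W_{\ra{\tau_n}},\dots,W_0)\eqd(W_0,\dots,W_{\ar{\tau_n}})$ is slightly off as stated (what stationarity plus reversibility give directly is $(W_0,W_{-1},\dots,W_{\ra{\tau_n}})\eqd(W_0,W_1,\dots,W_{\ar{\tau_n}})$, i.e. the backward trajectory read from $W_0$ matches the forward one); this is harmless here because the only thing the argument uses is equality in law of the \emph{visited sets} $\{W_{\ra{\tau_n}},\dots,W_0\}$ and $\{W_0,\dots,W_{\ar{\tau_n}}\}$, which does follow, but the sentence is worth rephrasing so it does not overclaim.
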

  \begin{rem}
    \bir
    \itr The ``In general'' in the statement is important.  Aldous--Broder theorem asserts that when $n=|V|$ the map $F$ exists: it is $\FET$! The proof of \Cref{theo:nolocal} consists in exhibiting a family of graphs on which, for $n$ small compared to $|V|$, it is not possible to extract from  $(W_0,\cdots,W_{a_n})$ a uniform  element of $\TT{n}{G}$, even for $a_n$ large compared to $\tau_n$, as long as $a_n$ is negligible in front of $|V|$
    \itr The hypothesis that the vertex set of the resulting tree is included in the trace of $\{W_0,\cdots,W_{\ar{\tau_n}}\}$ is needed since, without this condition, the randomness of the trajectory could be used in a ``non-natural way'' to sample a uniform element of $\TT{n}{G}$. 
    
    For example, a path with size $k$ on ${\sf Torus}(n)$ provides a uniform random word of $\{0,1,2,3\}^k$ (the possible directions of each step numbered from 0 to 3), and this word can be used to sample in a set with a smaller size (using reject, if needed), for example in  $\TT{j}{{\sf Torus}(n)}$ for any $j$ such that $|\TT{j}{{\sf Torus}(n)}|\leq 4^k$ (an algorithm which would associate a tree to a word would be needed). However, the produced tree would be far to be included in the track of the chain. This is what we want to avoid here. 
\eir
\end{rem}
\begin{proof}
The main idea is the following: a simple random walk has a simple stationary distribution $\rho$ which is $\rho_u =\degree(u)/\sum_{v\in V} \degree(v)$. Hence, a simple random walk taken under its invariant distribution, is localized in a graph ``proportionally to the degree of the starting node''.
The probability that a uniform tree in $\TT{G}{n}$ has vertex set $V'\subset V$ is proportional to the number of spanning trees in ${\sf Induced_G}(V')$, which roughly, can be thought to depend on the product of the nodes degree in ${\sf Induced_G}(V')$ rather than their sums.  Hence, the distribution of the support $V'$ has somehow nothing to do with $\rho$. \par
For the non convinced reader, let us take an example of graph in which this phenomenon is evident. Take the graph on the set of vertices $\{1,..,n^3\}$ whose edges are described by the fact, that the graph induced by $\{1,\cdots,n\}$ is the complete graph $K_n$, and the vertices $(n,n+1,...,n^3)$ forms a path (going from vertex $n$ to $n+1$ to $\cdots$ to  $n^3$).

The invariant distribution $\rho$ of the simple random walk on this graph is $\rho_i=(n-1)c_n$ for each vertex $i \in\{1,\cdots,n-1\}$, $\rho_n=nc_n$, for $i\in \{n+1,\cdots,n^3-1\}$, $\rho_i=2c_n$ and $\rho_{n^3}=c_n$, for $c_n=1/(2n^3+n^2-3n)$. Hence, the starting point of the random walk will be in $\{2n+1,\cdots, n^3\}$ with probability close to 1, so that a random walk stopped when it touched $n$ points, starting under this invariant distribution will see only the vertices of the path with probability going to 1 when $n\to\infty$.

But, the total number of spanning trees of the graph $K_n$ is $n^{n-1}$ (all of these trees have size $n$) which is far greater than the number of size $n$ subtrees of the path which is $O(n^3)$.
\end{proof}

\subsection{A model inspired by Wilson's algorithm}
\label{Ver:notfull}

\NewModel{The connected component of $r$ in the model with one outgoing edge per node in $V\setminus r$}{ZM}
{Let  $r\in V$ be a distinguished vertex; consider $(\bfe_u, u \in V \setminus\{r\})$ a family of independent random directed edges, where  $\bfe_u=(u,\bf{u'})$ and $\bf{u'}$  is a uniform neighbour of $u$. Denote by ${\bf t}(r)$ the connected component of $r$: it is a tree rooted at $r$.}

For a general graph $G$, the support of the distribution of ${\bf t}(r)$ is (included but) different of $\TTTr{r}{G}$. For example, if $G=\Torus{n}$, each connected component of the complement of ${\bf t}(r)$ contains oriented cycles, and then, these components cannot be reduced to a single vertex (see a simulation in Fig. \ref{fig:qsdqs}).

Given a tree $t$, recall $V_p(t)=\{ w\in V: d_t(w,V(t))=1\}$ the set of perimeter sites of $t$. For each $w\in V_p(t)$, let $p_t(w)= |\{ (w,u) \in E, u \not\in t\}|/\degree_G(w)$ the probability that the outgoing edge from $w$ does not touch $t$.
    For any $t\in \TTTr{r}{G}$
\ben\label{eq:qqfegsdqs}
\P({\bf t}(r)=t)=\Big(\prod_{v\in t\atop{v\neq r}}\frac{1}{\degree_G(v)} \Big)\Big(\prod_{w\in V_p(t)}p_t(w)\Big).
\een

\begin{figure}[h!]
	\centerline{\includegraphics[width=7cm]{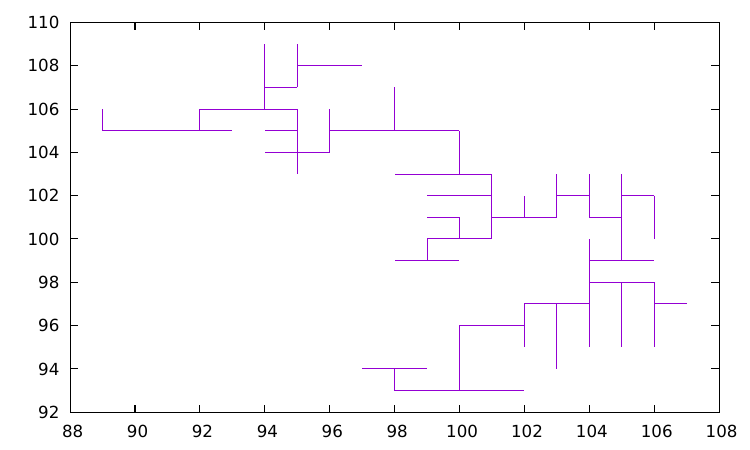}}
	\captionn{\label{fig:qsdqs} Simulation  of \textbf{Model \Modref{ZM}} on $\Torus{200}$,  3536949 simulations were needed to get a tree of size at least 100. In fact, by chance, the tree had exactly size 100. It seems that a mean of around 5 millions simulations are needed to get this size at least. Simulating big trees by this method seems out of reach.)}
      \end{figure}
      
\noindent To get a model having full support in  $\TTTr{r}{G}$, it suffices to modify a bit the model to allow nodes to have zero outgoing edge:

\NewModel{At most one outgoing edge per node}{AM}
{Take a parameter $q\in(0,1]$ and consider a collection  $\l[B_v(q),v \in V\setminus\{r\}\r]$ of i.i.d. Bernoulli$(q)$ random variables to label the vertices. Consider for each vertex $u$ in $V\setminus \{r\}$ with $B_u(q)=1$ a uniform random outgoing edge $\bfe_u=(u,\bf{u'})$, independent of the others (defined as in \textbf{Model \Modref{ZM}}).  Again take ${\bf t}_q(r)$ the connected component of $r$.}

It is simple to see that for $t\in \TTTr{r}{G}$,
\be
\P({\bf t}_q(r)=t)=\Big(\prod_{v\in t\atop{v\neq r}}\frac{q}{\degree_G(v)} \Big)\Big(\prod_{w\in V_p(t)}(1-q)+qp_t(w)\Big).
\ee
When $q=1$ we recover the model \eref{eq:qqfegsdqs} above, but for $q\in(0,1)$, the support of the random variable ${\bf t}_q(r)$ is the complete set $\TTTr{r}{G}$.
However, in practice, on $\Torus{n}$, this model produces  very small trees, even smaller than in the case $q=1$ for which getting a large tree is rare (see Fig. \ref{fig:qsdqs}).

\subsection{Subtree of a size biased forest}
\label{sec:biased}
Recall the definition of forest given in Section \ref{sec:enum}. In the literature, the term ``spanning forest'' is often used to denote a collection of trees $(t_1,\cdots,t_k)$, each of them spanning a connected component $(c_1,\cdots,c_k)$ of a graph (having $k$ connected components). Here, the underlying graph $G=(V,E)$ is connected, and we call spanning forest, a subgraph of $G$ with no cycle, spanning $V$, or equivalently, a collection of subtrees of $G$ whose vertex sets form a partition of $V$. Any total order on $V$ can be used to order the trees $t_1,\cdots,t_k$ in any spanning forest, for example, by sorting the subtrees according to their least vertex; denote by $t <t'$ the corresponding order between disjoint trees; in the sequel the set of spanning forest
  \[{\sf Spanning Forests(G)}:=\cup_k \{(t_1,\cdots,t_k) \textrm{ spanning forest }, t_1 < \cdots <t_{k}\}.\]  A distribution on ${\sf Spanning Forests(G)}$ is said to be size biased, if for ${\bf f}$ taken under this distribution
$`P({\bf f}=(f_1,\cdots,f_k))$ is proportional to $\prod_{j=1}^k |f_j|$ for any $k$, and $(f_1,\cdots,f_k)\in{\sf Spanning Forest(G) }$ (and zero, otherwise): roughly, this distribution favours the multiplicity of components of small sizes $\geq 2$. \par
The size bias is equivalent to the rooted case model, in which each tree is rooted at one of its vertex, since the number of possible roots of a given tree is given by its size.
To build a size biased spanning forest of the graph $G$, it suffices to add a point to the vertex set, that is to take $V'=V \cup \{z\}$, and to add an edge between $z$ and all the elements of $V$, that is to define $E'=E \cup \{\{z,v\},v \in V\}$. Set $G'=(V',E')$.

\NewModel{The tree containing $r$ in a size biased forest}{SBF}
{Let ${\bf T'}$ be a UST of $G'=(V',E')$, and consider the spanning forest ${\bf f}=({\bf f}_1,\cdots,{\bf f}_k)$ (for some $k\geq 1$), with vertex set $V$ and edge set $E({\bf f})=E({\bf T'})\cap E$, that is, the edges of ${\bf T'}$ not adjacent to $v$.}
The forest ${\bf f}$ is a size biased spanning forest since each ${\bf f}_i$ can be connected by $|{\bf f}_i|$ different edges to $z$.   
Let $\bt$ be the connected component of ${\bf f}$ containing $r$. For all $t\in\TTTr{r}{G}$,
\[`P({\bf t}=t) = |t|\times\frac{\l|\SP( G'\setminus t)\r|}{| \SP(G')|} \] where $| \SP(G')|$ is the number of spanning trees of $G'$, and $|\SP( G'\setminus t)|$ the number of spanning trees of $G'$ deprived of all the vertices of $t$.

Notice that here $|\SP(G'\setminus t)|$ can be computed using the matrix tree theorem and then, if a bound on  $|\SP(G'\setminus t)|$ is known for all subtrees $t$ of size $n$, a rejection method can be used to sample a uniform  element of $\TTr{r}{G}{n}$.\\
\noindent{\Analysis:} The computation of a uniform spanning tree of $G'$ is fast, and can be done on huge graphs. \\
{\Drawback:} This distribution can not be used in general to sample uniformly in $\TTr{r}{G}{n}$; indeed, the rejection method here is unlikely to work if the desired size $n$ is far from 0 and $|V|$: in most graphs $G$, it produces some huge ratios between the weights $\l|\SP( G'\setminus t)\r|$ and $\l|\SP( G'\setminus t')\r|$ for $t,t'\in \TTTr{r}{G,n}$. Besides, the evaluation of $\l|\SP( G'\setminus t)\r|$  by the matrix tree theorem produces also some difficulties if the graph is large, since manipulation of huge integers is an issue. 

\paragraph{\underline{Variant}}
A method to favour larger components is to use Wilson's algorithm with some random walks $(W^b_i,i\geq 0)$ less likely to visit  $z$. When $W^b_i=z$, the node $W^b_{i+1}$ is uniform on $V$; otherwise, if $W^b_i=v \in V$, then $W^b_{i+1}=z$ with probability $p$, and with probability $1-p$, $W^b_{i+1}$ is a uniform neighbour of $v$ in $G$. This construction induces a distribution on $\SP(G')$ proportional to
\[ p^{{\sf Indegree}(z)} \prod_{u\neq r, f(u)\neq z} \frac{1}{\degree_G(u)}\]
where ${\sf Indegree}(z)$ counts the number of steps with destination $z$ 
in the construction and $f(u)$ denotes the father of $u$ in the final spanning tree of $G'$. This is valid when $z$ is not chosen as the first point in Wilson's algorithm (otherwise some minor adaptations are needed). Hence, for a $d$-regular graph $G$, this is proportional to
$ \l(d\,p\r)^{{\sf Indegree}(z)}$.
\begin{lem} Assume that $G$ is $d$-regular, let ${\bf T}''$ be the spanning tree of $G'$ constructed by the variant presented above, and the spanning forest ${\bf f}=({\bf f}_1,\cdots,{\bf f}_k)$ of $G$ (for some $k\geq 1$), with vertex set $V$ and edge set $E({\bf f})=E({\bf T''})\cap E$.
  For any spanning forest $f=(f_1,\cdots,f_{D+1})$ of $G$,  $\P({\bf f}=f~|~\{{\sf Indegree}(z)=D\})$ is proportional to $\prod_{j=1}^{D+1}|f_j|$.
\end{lem}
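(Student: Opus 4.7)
The plan is to combine the Wilson weight formula for $\mathbf{T}''$ with a combinatorial count of the rooted spanning trees of $G'$ whose $G$-edges form a prescribed forest $f$. First, I would apply the proportionality recalled just before the lemma to the biased kernel: since the only transitions of the walk $(W^b_i)$ are $v \to z$ with probability $p$, $v \to u$ with probability $(1-p)/d$ for $u$ a $G$-neighbour of $v$ in the $d$-regular graph $G$, and $z \to w$ with probability $1/|V|$, the probability of any rooted spanning tree $(T, r)$ of $G'$ with $r \in V$ produced by Wilson's algorithm is
\[
\P(\mathbf{T}'' = T) \;\propto\; p^{{\sf Indegree}(z)} \left(\frac{1-p}{d}\right)^{|V|-1-{\sf Indegree}(z)} \cdot \frac{1}{|V|}.
\]
Conditioning on $\{{\sf Indegree}(z) = D\}$ collapses this weight to a constant $C_D$ independent of $T$, so $\mathbf{T}''$ is conditionally uniform over the set of spanning trees of $G'$ rooted at $r$ with exactly $D$ edges pointing to $z$.

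Second, I would fix a spanning forest $f = (f_1, \ldots, f_{D+1})$ of $G$ and count the rooted spanning trees $(T, r)$ of $G'$ satisfying $E(T) \cap E = E(f)$ and ${\sf Indegree}(z) = D$. Let $i^\star$ denote the index of the component of $f$ containing $r$. Such a $T$ is determined uniquely by two independent choices: (a)~for each non-root component $f_i$ (i.e.\ $i \neq i^\star$), a single vertex of $f_i$ whose parent in $T$ is $z$---this vertex plays the role of an ``apex'', orients the edges of $f_i$ toward itself, and contributes the directed edge apex $\to z$; and (b)~a vertex $w \in f_{i^\star}$ serving as the target of the unique outgoing edge $z \to w$, after which the edges of $f_{i^\star}$ are canonically oriented toward $r$. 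One checks conversely that every admissible rooted tree is obtained exactly once this way, so the number of valid $T$ equals $|f_{i^\star}| \prod_{i \neq i^\star} |f_i| = \prod_{j=1}^{D+1} |f_j|$.

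Multiplying the per-tree conditional weight by this count yields
\[
\P(\mathbf{f} = f \mid \{{\sf Indegree}(z) = D\}) \;=\; C_D \cdot \prod_{j=1}^{D+1} |f_j|,
\]
which is exactly the announced proportionality. The step demanding the most care is the bijective count in the second paragraph: one must simultaneously verify that the ``one apex per non-root component, plus one target for $z$ in the root component'' recipe always produces a bona fide spanning tree of $G'$ (no cycle appears and connectivity to $r$ is preserved), and that the final formula $\prod_{j} |f_j|$ is genuinely symmetric in the components of $f$---that is, the special role played by $f_{i^\star}$ disappears in the end, reflecting the fact that the root $r$ could a priori fall into any component.
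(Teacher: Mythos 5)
Your proof is correct, and it reconstructs the argument the paper has in mind but does not spell out (the lemma is stated without proof; the tree-weight formula is given just above it, and the bijective count is the same one the paper invokes informally for \textbf{Model \Modref{SBF}} in the unbiased case, "since each ${\bf f}_i$ can be connected by $|{\bf f}_i|$ different edges to $z$"). Both steps are sound: (i) the Wilson weight of a rooted spanning tree of $G'$ rooted at $r\in V$ under the biased kernel depends on the tree only through ${\sf Indegree}(z)$ when $G$ is $d$-regular, so conditioning on $\{{\sf Indegree}(z)=D\}$ gives the uniform distribution on the corresponding set of rooted trees; (ii) the preimage of a fixed spanning forest $f=(f_1,\dots,f_{D+1})$ under $T\mapsto (V,E(T)\cap E)$ within that set is counted exactly by choosing one vertex per component to be adjacent to $z$, yielding $\prod_{j=1}^{D+1}|f_j|$. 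Your sanity check at the end is apt: the component containing $r$ appears to play a distinguished role (its chosen vertex is the parent of $z$ rather than a child), but since it still contributes a factor $|f_{i^\star}|$ the final product is symmetric, as it must be. One small point worth making explicit, which you gesture at but do not fully state: the condition $\{{\sf Indegree}(z)=D\}$ is \emph{equivalent} to $\deg_{\mathbf{T}''}(z)=D+1$ (since $z\neq r$ has exactly one out-edge), which is in turn equivalent to $\mathbf{f}$ having exactly $D+1$ components; this ensures that every rooted tree satisfying the conditioning does map to some spanning forest with $D+1$ parts and that each of $z$'s $D+1$ neighbours lies in a distinct component, making your bijection well-defined in both directions.
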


In practice, on the graph $\Torus{N}$, it is possible to adjust $p$ so that the probability of the event $\{{\sf Indegree}(z)=1\}$ is far from 0; by acceptance/rejection, it then gives a procedure to simulate ${\bf f}$ with ${\sf Indegree}(z)=1$, in words, a spanning forest containing two trees. It is also possible to condition by $\{{\sf Indegree}(z)=1$,$|{\bf t}(r)|=n \}$  (see Fig. \ref{fig:SBF}).\par
For a tree $\bt$ rooted at $r$ with diameter $d<N$ on  $\Torus{N}$, call canonical embedding of $({\bf t},r)$, denoted ${\sf Canonical}(\bt)$, the tree in $\Z^2$, rooted at 0,  obtained by taking the translated tree ${\bf t}-r$, and projected in $\mathbb{Z}^2$ (in the only reasonable way which preserves the orientation of the edges). 
\begin{conj}Conditionally on $|{\bf t}(r)|= n$, the rescaled vertex sets, ${\sf Canonical}({\bf t}^{<n}(r))/\sqrt{n}$, converges in distribution for the Hausdorff metric on compact subsets of $\R^2$ to a limiting compact set $K$, with Lebesgue measure 1, simply connected. 
\end{conj}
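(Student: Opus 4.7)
The plan is to exploit the construction of $\mathbf{t}(r)$ via the UST of $G'=\Torus{N}\cup\{z\}$: if $\mathbf{T}'$ is a UST of $G'$ viewed as rooted at $z$, then $\mathbf{t}(r)$ is exactly the maximal subtree of $\mathbf{T}'$ hanging off the child $c$ of $z$ that is an ancestor of $r$. I would first sample $\mathbf{T}'$ using Wilson's algorithm started from $r$: the first branch is a LERW from $r$ stopped the first time it uses an edge $\{v,z\}$, and the subsequent branches are LERWs from unvisited vertices stopped on hitting the current cluster. From the perspective of the walker on $\Torus{N}$, the possibility of jumping to $z$ at each step plays the role of an exponential killing at rate tuned by $p$, so the law of $\mathbf{t}(r)$ is naturally the law of a ``massive LERW tree'' rooted at $r$.

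The second step is to rescale space by $\sqrt{n}$ and import the SLE/CLE scaling limit results for the UST on $\mathbb{Z}^2$. The Schramm--Sheffield--Wilson theorem identifies individual branches of the UST with SLE$_2$ and the loops in the associated configuration with CLE$_2$. A massive analogue of this convergence, in the spirit of near-critical scaling limits, would produce a candidate limit: a massive CLE$_2$ in which the killing is calibrated so that the component containing $r$ has macroscopic area of order $1$. Tightness of the rescaled vertex set $\mathsf{Canonical}(\mathbf{t}(r))/\sqrt{n}$ in the Hausdorff topology should follow from uniform one-point and two-point estimates on the range of killed LERW, themselves consequences of Green's function bounds for the random walk on $\Torus{N}$ with the apex vertex $z$.

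The most delicate part is identifying $K$ and establishing the three geometric properties. For $\mathrm{Leb}(K)=1$, one would show that the empirical vertex measure $n^{-1}\sum_{v\in\mathbf{t}(r)}\delta_{v/\sqrt{n}}$ converges to the restriction of Lebesgue measure to $K$; this is a density statement and requires a uniform upper and lower bound on the number of tree-vertices in mesoscopic boxes, using that each lattice site has uniformly positive probability of lying in the range of the cluster. Simple-connectedness reduces to ruling out mesoscopic holes in $\mathbf{t}(r)$ on scale $\sqrt{n}$: this follows from the macroscopic CLE$_2$ picture together with the fact that the CLE$_2$ gasket has full Lebesgue measure and each component is simply connected. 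Finally the identification of $\partial K$ as a conditioned massive CLE$_2$ loop would come from convergence of the innermost tree-boundary surrounding $r$.

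The principal obstacle is the passage from the standard non-massive Schramm--Sheffield--Wilson convergence to the massive setting, under the extra conditioning $\{|\mathbf{t}(r)|=n\}$. One needs (i) a quenched scaling limit of the UST on $\Torus{N}$ with apex vertex playing the role of killing, at the mesoscopic scale $\sqrt{n}$, and (ii) a Radon--Nikodym tilt implementing the size conditioning, whose density is the ratio of cluster-size weights. Neither ingredient is, to our knowledge, available off the shelf in the literature, so realising the program would demand substantive new input on massive SLE/CLE and on the combinatorial structure of size-biased UST subtrees; this is the same kind of difficulty that keeps Schramm's original Problem 2.8 open.
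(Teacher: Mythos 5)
This statement is a \emph{conjecture} in the paper: the authors state it without proof, and no proof is intended or supplied anywhere in the text. There is therefore no ``paper proof'' against which to compare your write-up. What you have produced is a research roadmap, not a proof, and you are explicit about that yourself in your final paragraph, so there is no misrepresentation of what has been achieved.

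That said, a few observations on the roadmap. You correctly identify the structural mechanism behind the model: $\mathbf{t}(r)$ is the subtree below the apex $z$ in a spanning tree of $G'$, and under the variant with parameter $p$ the probability of stepping to $z$ acts as a killing rate, making $\mathbf{t}(r)$ a ``massive LERW tree.'' This is consistent with the authors' own phrasing, since they explicitly conjecture the boundary to be ``a massive version of $CLE_2$'': you are reading the intent of the conjecture correctly. Two points deserve care if you were to push this further. First, the conjecture is stated inside the \emph{Variant} paragraph, where the relevant object is sampled conditionally on $\{\mathsf{Indegree}(z)=1\}$ as well as on the size; your proposal only discusses the size conditioning, and the indegree-$1$ conditioning is not innocuous (it forces the spanning forest to have exactly two components, which changes the tilt). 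Second, your argument for $\mathrm{Leb}(K)=1$ via convergence of the empirical vertex measure assumes asymptotic equidistribution of vertices, which is plausible but itself a nontrivial density statement; as the simulations in the paper suggest the limiting set is ``thick,'' this is the right target, but it is logically independent from the $CLE_2$ boundary claim and would need its own estimates. Your diagnosis of the two missing ingredients (a massive-SLE/CLE scaling limit for the UST of $\Torus{N}$ with apex, and a Radon--Nikodym tilt implementing the size conditioning) is accurate and is precisely why the authors leave this as a conjecture.
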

One could further conjecture that the contour process possesses a limiting distribution, probably having some common features with $SLE_8$ (the contour has to be thought as a path that turns around the tree at constant speed, at distance equals to the lattice mesh divided by 3, so that it is a close curve that characterizes the tree).  However, the fact that the imposed condition provides an object with area 1, and since this property is not conformal invariant, the connections with SLE seems not trivial, and the conjecture difficult to state. The interface of $K$ seems to have also to be a SLE type trajectory, which seems to be simple, and could be conjectured to, still at the limit, surrounds a domain with area 1. Again, this area condition implies that even stating a conjecture is not a simple task.

Another link, maybe a bit more speculative, would concern some possible relations with the massive version of $SLE_2$.  The global construction of our tree has some similarities with the model of Makarov \& Smirnov \cite{MaSm} who studied loop-erased random walk with killing: at each time the walk has a small probability proportional to $m^2$ to be absorbed by a cemetery point; such a random walk  conditioned to start and finish at some given points of a domain, converges towards $SLE_2^{(m)}$, a massive version of $SLE_2$. This construction is similar to the construction of the current variant, which uses loop erase random walks that can reach, at each step, the additional point $z$ with a small probability. It may then be expected that some asymptotic characteristics of our model could be related to $SLE_2^{(m)}$ (for example, the limit path from a vertex conditioned to be in the tree, to the root).

\begin{figure}[h!]
\begin{center}\includegraphics[width= 8cm]{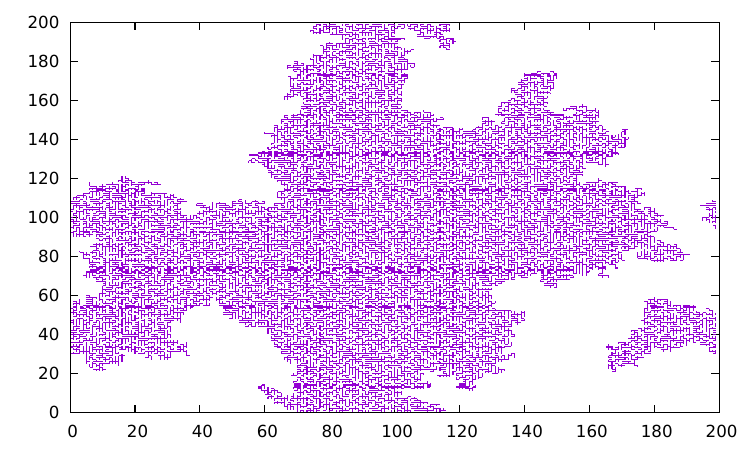} \includegraphics[width=8cm]{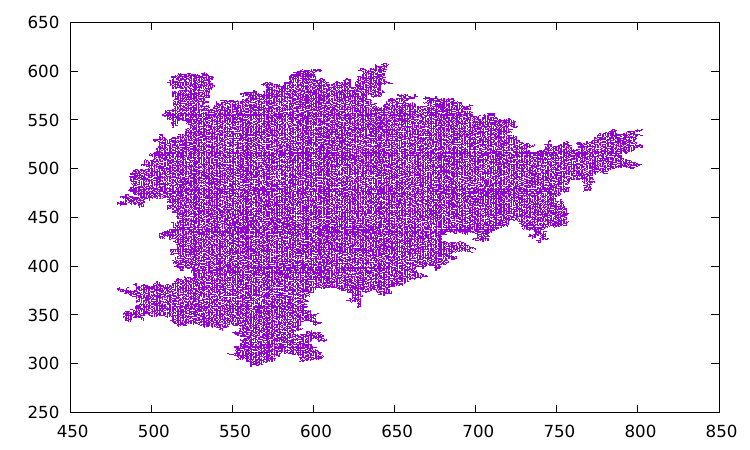}
\end{center}
\captionn{\label{fig:SBF} Left: Simulation with $p= 10^{-5}$ on the $\Torus{200}$, conditioned on $D=1$, and by the fact that the tree attached to $r$ ``the center of the torus'' has size between $[19000,21000]$, that is approximately half of the total size (240 simulations were needed, the output size of ${\bf t}(r)$ is 20852). Right: $p = 2\times 10^{-7}$,  on the $\Torus{1000}$, conditioned on $D=1$, and $|{\bf t}(r)|\in [45000,55000]$ (2553 simulations were needed before satisfying these conditions, with output $|{\bf t}(r)|= 52106$).  }
\end{figure}
\subsection{Subtree extraction of the uniform spanning tree}
\label{sec:dqskpd}
A method that seems promising to obtain an element of $\TT{G}{n}$ with a prescribed distribution, is a two steps procedure: first, sample a UST $\bt$ of $G$, and then, extract by a second (random) procedure, a subtree $\bt'$ of $\bt$. \\
 S. Wagner \cite{Wagner2019}, gives a lower bound on the probability that a randomly chosen uniformly in $\TTT{G}$, is spanning (depending on a linear lower bound on the minimum degree of the nodes).
Chin et al. \cite{Chin18} obtained that if ${\bt}(G)$ is a uniform random unrooted tree in  $\TTT{G}$, then
\[\P( {\bt}(K_n) \text{ is spanning }) \to e^{-1/e},~~\P({\bt}(K_{n,n}) \text{ is spanning }) \to e^{-1/e^2}.\]

It turns out that getting a uniform element $\bt'$ of $\TT{G}{n}$ by such a two step procedure seems really difficult except when $n$ is very small (and maybe, an obstruction comes from the fact that the edges of the uniform spanning trees form a determinantal process, as shown by Burton \& Pemantle \cite{Burton-Pemantle}).

However, extraction of subtrees of UST allows us to obtain some interesting models; we review some of them here, but additional ways to extract random subtrees from a tree are examined in Section \ref{sec:exact_samp}.

 \subsubsection{Uniform random subtree of the UST}
 
  In Section \ref{sec:egu}, we will provide an algorithm to sample a uniform subtree of a given tree (or uniform conditionally on the size, with some adjustable parameter to favour a given mean size); it is tempting to use these algorithms on a uniform spanning tree ${\bf UST}$ of $G$.
  Here we make explicit the distribution of $\bt^{(n)}\sim \uniform(\TT{{\bf UST}}{n})$, whose support is $\TT{G}{n}$. Since any tree of size $n$ is a subtree of at least one spanning tree (see Simulation in Fig. \ref{fig:simduksd}).
  For all $t\in \TT{G}{n}$
\[\P(\bt^{(n)}=t)  \propto \sum_{T \in \SP(G)} \frac{ 1_{t \in \TTTr{v}{T}}}{|\TTTr{v}{T}|}.\] 
\begin{figure}[h]
	\centerline{\includegraphics[width = 8cm]{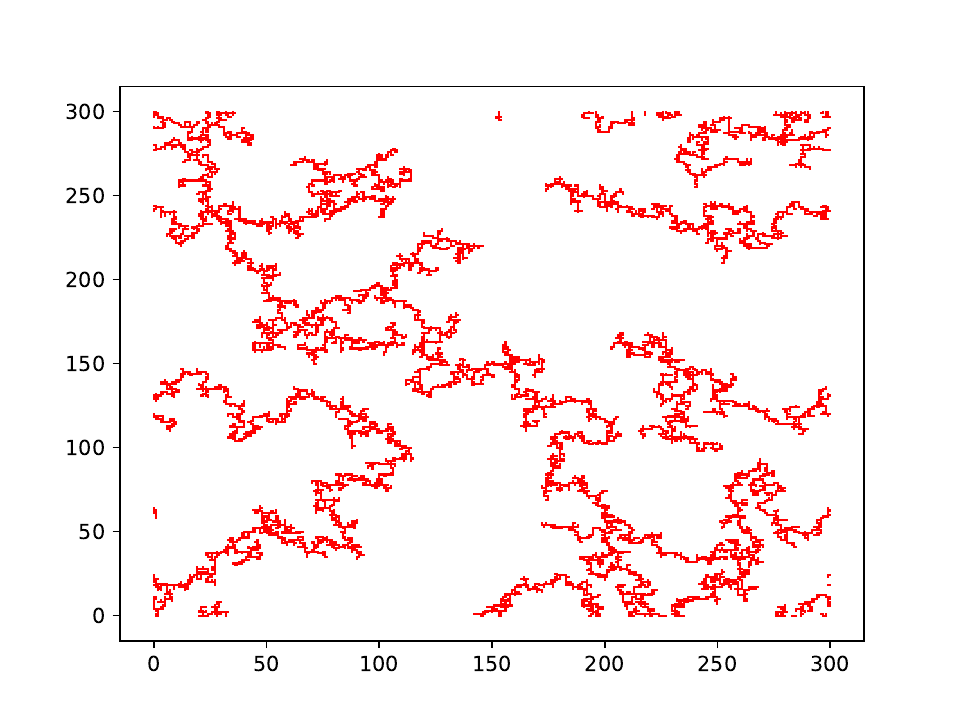}\includegraphics[width = 8cm]{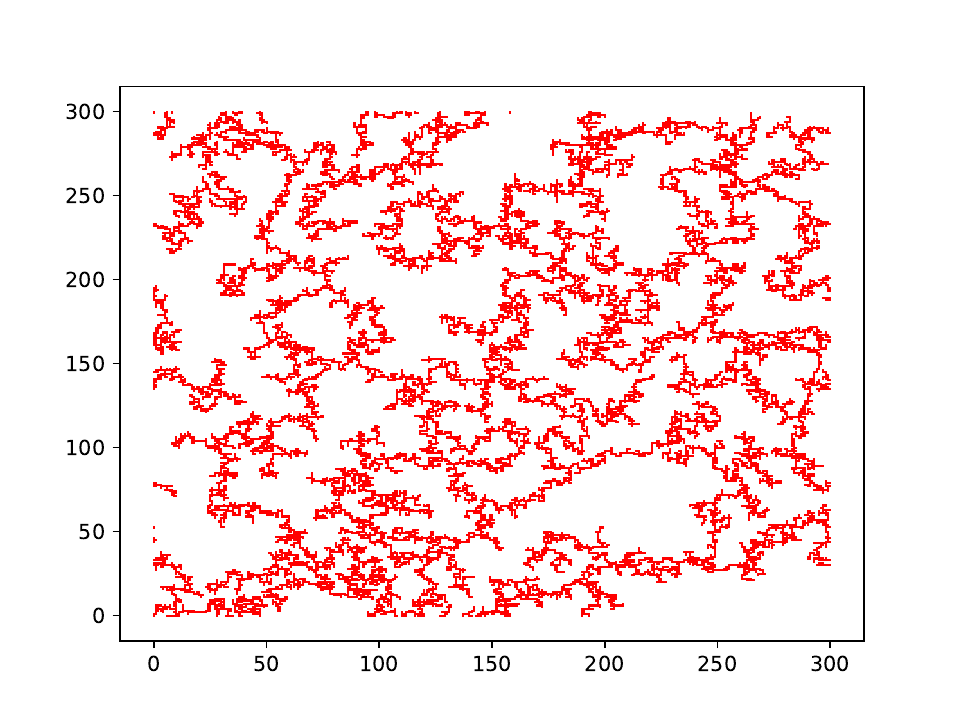}}
	\captionn{\label{fig:simduksd} Simulation of a uniform subtree of a rooted UST of $\Torus{300}$. In each picture, the UST is sampled using Wilson's Algorithm. \textbf{Left:} $\pp_i= 0.35 = 1-\rr_i$ for all $i\in \cro{2,|T|-1}$, the output is a tree of size $9589$ . \textbf{Right:} $\pp_i= 0.36= 1-\rr_i$ for all $i\in \cro{2,|T|-1}$, the output is a tree of size $18626$. \label{FIG:simu2}}
\end{figure}

\subsubsection{Model of evaporation of the edges of a UST}
\label{edgeremoval}

 Take a rooted UST $({\bf T},r)$ of $G$, with as usual its edges directed toward $r$. Consider a sequence $({\bf u}_i,i\geq 1)$ of i.i.d. uniform nodes on $V\setminus \{r\}$. Define the sequence of forests $({\bf F}_i: i\geq 0)$ by, ${\bf F}_0=\{{\bf T}\}$,  and for $i\geq 1$, ${\bf F}_i$ is obtained from the removal of the outgoing edge of ${\bf u}_i$ from  ${\bf F}_{i-1}$ (which increases the number of trees by $1$ if this edge is removed). Let ${\bf t}_i(r)$ be the connected component of $r$ in ${\bf F}_i$, and set ${\bf t}^{<n}(r)$ be the first element in the sequence $({\bf t}_i(r): i\in \N)$ such that ${\bf t}_i(r)<n$, a target size.
\begin{figure}[h]
 \centerline{ \includegraphics[scale=0.71]{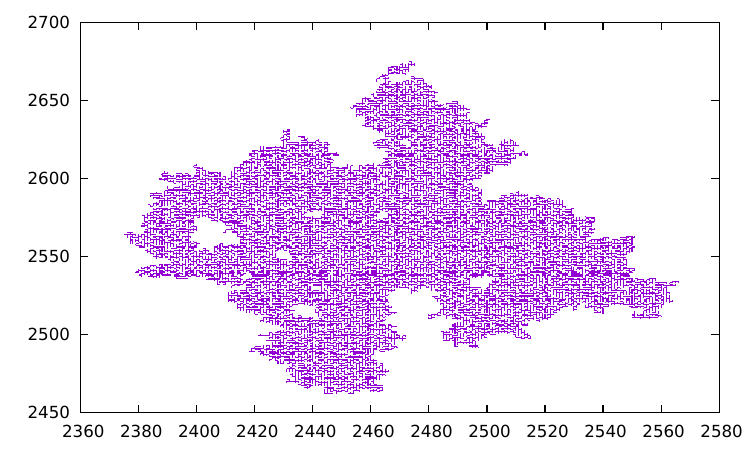}}
\captionn{Simulation of a UST on $\Torus{5000}$ seen as directed toward its root; removal of edges is done until ${\bf t}^{<n}$ has a size smaller than $20000$. In this example, 43707 removals were needed and the size of ${\bf t}^{<n}$ is 18159.} 
\end{figure}\par
In the literature, the removal of a single random edge $e$ of a tree $T$ gives rise to two connected components, and the connected component which does not contain the root is called a fringe subtree; it has been studied for numerous models of random (non embedded) trees (see e.g. Aldous \cite{Aldous_fringe}, Holmgren \& Janson \cite{H-J} and references therein). 
Here when a single uniform edge is removed, we are interested in the connected component which contains the root $r$, that we call ${\bf t}_1(r)$; therefore, for a fixed $t\in \cup_{n=1}^{|V|-1}\TTTr{r}{G,n}$
\[\P({\bf t}_1(r)=t) = \frac{|\SP({\sf Induced}_G(V\setminus V(t))|\times |\{ \{u,v\}\in E, u \in t, v \notin t\}|}{|\SP(G)|}.\]
This comes from the fact that, before the edge removal, both connected components were connected by one of the edges between them in $G$, and the connected component not containing the root is any spanning tree of ${\sf Induced}_G(V\setminus V(t))$. In general, ${\bf t}_1(r)$ is not uniform, nor uniform conditionally on its size (one notable exception, is when $G$ is the complete graph $K_n$).
\begin{conj} Let $(N(n))$ be a sequence of integers such that $\limsup n/N(n) <1$. Suppose $G=G_{N(n)}=\Torus{N(n)}$.
  The rescaled vertex sets, ${\sf Canonical}({\bf t}^{<n}(r))/\sqrt{n}$, converges in distribution for the Hausdorff metric on compact subsets of $\R^2$ to a limiting compact set $K$, with Lebesgue measure 1, simply connected.  
\end{conj}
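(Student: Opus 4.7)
My approach would proceed in three stages. First, I localize the problem: since $\limsup n/N(n)<1$, one expects the Euclidean diameter of $\bt^{<n}(r)$ to be of order $\sqrt{n}\ll N(n)$, so the UST on $\Torus{N(n)}$ can be coupled with the UST on $\Z^2$ in a large neighbourhood of $r$ using Wilson's algorithm (running LERW paths and stopping them inside a ball of radius $o(N(n))$). Under this coupling the canonical embedding of $\bt^{<n}(r)$ coincides with the natural $\Z^2$-embedding, reducing the statement to the analogous one for the UST on $\Z^2$ rooted at $0$.

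Second, I reformulate the evaporation procedure. Orienting each edge of the UST $\bT$ toward $0$, the component of $0$ after $i$ edge removals is exactly
\[
\bt_i(0)=\{v: \text{the $\bT$-path from $v$ to $0$ avoids } \{\bu_1,\dots,\bu_i\}\}.
\]
Defining the cutting time $T(v)=\inf\{i\ge 1: \bu_i \text{ lies on the $\bT$-path from } v \text{ to } 0\}$, one has $v\in\bt_i(0)\iff T(v)>i$, so $\bt^{<n}(r)$ is a super-level set of the random field $T(\cdot)$.

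Third, I would invoke the scaling-limit theory of the 2D UST. By Lawler--Schramm--Werner, LERW on $\Z^2$ has Hausdorff dimension $5/4$, so the $\bT$-path from a vertex at Euclidean distance $\ell$ from $0$ has $\Theta(\ell^{5/4})$ vertices; hence $\P(v\in\bt_k(0))\approx \exp(-k\ell^{5/4}/N(n)^2)$. Integrating this surviving density over $\R^2$ yields a component of total size $\sim (N(n)^2/k)^{8/5}$, so the right number of picks to obtain size $n$ is $k\sim N(n)^2\, n^{-5/8}$; the resulting support has radius $\Theta(\sqrt n)$, consistent with the rescaling by $\sqrt n$. Passing to the joint scaling limit of the pair (UST, i.i.d.\ uniform marks), using the measured metric-space convergence of Barlow--Masson and its subsequent refinements, would identify $K$ as the ``continuum component of the origin'' in the marked scaling limit.

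The hardest part of the programme will be this last identification, and in particular establishing the Lebesgue-measure-$1$ and simple-connectedness properties of $K$. The area claim should follow from $|\bt^{<n}(r)|\sim n$ combined with the $\sqrt n$ rescaling, once one shows that the limit does not concentrate on a fractal set of dimension strictly less than $2$ (which the density heuristic above strongly suggests but does not prove). Simple connectedness is the most delicate point: a macroscopic ``hole'' in $\bt^{<n}(r)$ would correspond to a cut-off subtree of $\bT$ completely surrounded by vertices that remain connected to $0$, and ruling this out asymptotically presumably requires combining planarity of the UST with careful control of the diameters of cut-off subtrees near the boundary of $\bt^{<n}(r)$. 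I expect this topological input to be the genuine obstacle, as it is not readily accessible from the currently available UST scaling-limit results.
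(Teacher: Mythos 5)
The statement you are trying to prove is explicitly a \emph{conjecture} in the paper (Section~6.5.2); the authors give no proof, only a brief heuristic remark after it (``we conjecture that there is no loss of area at the limit, because, in the simulations it can be seen that most of the edge removals discard very small parts of the connected component of $r$'') and a connection to the $SLE_8$ interface in the spanning case. So there is no proof of record to compare against, and you should be aware that what you are being asked for here is a research programme, not a write-up.

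With that understood, your plan is a sensible and well-motivated sketch. The reformulation in Step~2 (``$v$ survives the first $i$ removals iff the $\bT$-path from $v$ to $0$ misses the first $i$ marks'') is correct for the model of the paper, and your Step~3 heuristics land on the right exponents: with LERW dimension $5/4$, a vertex at distance $\ell$ survives $k$ uniform picks with probability roughly $\exp(-k\,\ell^{5/4}/N^2)$, giving a surviving mass of order $(N^2/k)^{8/5}$ and hence $k\sim N^2\,n^{-5/8}$ to reach size $n$, at which scale the surviving region has Euclidean diameter $\Theta(\sqrt n)$ — all consistent with the $\sqrt n$ normalization in the conjecture. You also correctly single out the two genuinely hard points: the Lebesgue-measure-$1$ claim and simple connectedness of the limit.

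A few places where the plan needs tightening. First, your Step~1 reduction ``to the UST on $\Z^2$'' cannot be carried out literally because the marks $\bu_i$ are i.i.d.\ uniform on $V(\Torus{N(n)})$, and there is no uniform distribution on $\Z^2$. What can be done is a \emph{local} coupling: couple the UST on $\Torus{N(n)}$ with the UST on $\Z^2$ in a ball of radius $\gg\sqrt n$ around $r$ (e.g.\ via Wilson started at $r$, using the fact that LERW from nearby points coalesces quickly), and simultaneously couple the marking process with a Poisson process on $\Z^2\times\R_+$ of intensity $1\times\text{Leb}$, rescaled in time by $1/N^2$. You effectively do this implicitly in Step~3 (the $k/N^2$ factor survives the ``reduction''), but the passage to $\Z^2$ should be stated as local-limit-of-the-pair $(\text{UST},\text{marks})$ rather than a global reduction. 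Second, no existing scaling-limit theorem for the $2$D UST gives the joint convergence of the pair $(\text{UST},\text{marks along paths to the root})$ at the level of metric-measure spaces with enough control to identify the limiting level set $K$; invoking Barlow--Masson and ``its subsequent refinements'' is optimistic. Third, for the area claim you must rule out loss of mass at the boundary: the expected \emph{number} of surviving vertices is $\sim n$, but one must show that the rescaled counting measure converges to a (non-random-total-mass-$1$) measure absolutely continuous with respect to Lebesgue, which requires second-moment/concentration estimates on the surviving set, not just the first-moment computation you give. Finally, on simple connectedness: the intuitive obstruction you describe (a macroscopic ``hole'') is not actually excluded by planarity of the UST alone, because a removed subtree \emph{can} be enclosed by a loop of surviving vertices — planarity of the tree does not prevent this; the relevant input is rather that, conditionally on a marked vertex $\bu$ being deep inside the surviving region, the dangling subtree hanging below $\bu$ has with high probability small diameter and is bordered by other removed subtrees, so that the hole it leaves is microscopic. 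That needs quantitative control on the diameter of the subtree below a typical vertex in the UST, which again is not off-the-shelf. All in all: correct heuristics, correctly identified obstacles, but this is not a proof and the remaining steps are genuinely open.
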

The condition  $\limsup n/N(n) <1$ ensures that the diameter of  ${\bf t}^{<n}$ is smaller than ``the torus side''.

We conjecture that there is no loss of area at the limit, because, in the simulations it can be seen that most of the edge removals discard very small parts of the connected component of $r$.\par
The spanning case corresponds to the case where no edge is removed, which is equivalent to $\frac{n}{N(n)^2}=1$. In this case, on the analogue configuration on a square $\cro{1,n}\times\cro{1,n}$ of the square lattice the scaling limit of the interface is described by $SLE_8$ (See \cite{LSW04}).
  Due to the area constraint, which is not preserved by conformal transformations, if a scaling limit exists it would not be conformal invariant.

\subsection{DLA type model} \label{sec:dla}
The DLA has been introduced by Witten \& Sander \cite{W-S}, in 1983; very little is known about it, see e.g. Eberz-Wagner \cite{EW}.

The common definition of the DLA on the lattice $\Z^2$ is as follows: at time 0, the set of occupied vertices is $S_0:=\{(0,0)\}$. Then some particles are launched, successively, and performs a simple random walk on $\Z^2$, meaning that each step is equally likely, $(-1,0), (1,0),(0,-1),(0,+1)$, independently of all steps of all random walks. When the $i$th particle reaches a vertex $x_i$ which is at distance 1 (for the $L^1$ distance) to the set of occupied vertex $S_{i-1}$,  it is somehow frozen in that position, and one sets $S_i:=S_{i-1} \cup \{x_i\}$.
The cluster obtained $S_n$, depends on the launching points of the random walks. The DLA is the cluster $S_n$ obtained by letting the launching  points go to $+\infty$.

In what follows, we propose a small variation of this construction (which is already present in the literature, see e.g. \cite{Bot}), which can be defined on any graph $G=(V,E)$, and which allows one to define a new model of random subtree of $G$.

We call this model $\TDLA$, where the prefix {\sf T} stands for tree.
\begin{rem} In the lattice case, the vertex set of our $\TDLA$ is not distributed as the DLA, because the stopping rule of the random walk we adopt is not exactly the same.
\end{rem}

To define it, consider a sequence of simple random walks $(W^{k}=(W^k_i,i\geq 0), k \in \N)$ starting from $W_0^{k} = \infty$ for all $k\in \N$.  
The TDLA is a sequence of subtrees $({\bf tdla}_i,i\geq 0)$, where ${\bf tdla}_i=(\bD_i,E(\bD_i))$, which is defined recursively as follows. Set $\bD_0=\{r\}$, $E(\bD_0)=\varnothing$. Assume ${\bf tdla}_{k-1}=(\bD_{k-1}, E(\bD_{k-1}))$ has been defined for $k-1\geq 0$.
 Instead of waiting for $W_0^{k}$ to be at distance 1 from the vertex set $\bD_{k-1}$ of the current tree ${\bf dla}_{k-1}$, wait till the hitting time
of this vertex set \[\tau_k=\inf\l\{m : W_m^k \in \bD_{k-1}\r\},\]
so that $\bfe_k=(W_{\tau_k-1}^k,W_{\tau_k}^k)$ is the step allowing to reach ${\bf dla}_{k-1}$.

~\medskip
This construction can be performed on any graph at the price of two modifications: replace $(0,0)$ by a marked vertex, and the starting point $+\infty$ of the random walks, by another choice of distribution, for the launching points.
\NewModel{The (finite graph) DLA tree}{DLT}
{On a finite connected graph $G=(V,E)$ with $r\in V$, the TDLA sequence $(\TDLA_r(j,G), 1\leq j \leq |V|)$ is defined as explained above for $({\bf dla}_k,k\geq 0)$ with two simple modifications: the random walks  are independent simple random walks on $G$ which start at i.i.d. points $(w_k, k \in \N)$ chosen uniformly in $V$, and if a random walk $W^{k}$ has its starting point $W^k_0$ in the current tree $\TDLA_r(k-1,G)$, then  (do nothing and) set $\TDLA_r(k-1,G)=\TDLA_r(k,G)$.}
\begin{figure}[h!]
	\centerline{\includegraphics[height=7cm,width=7cm]{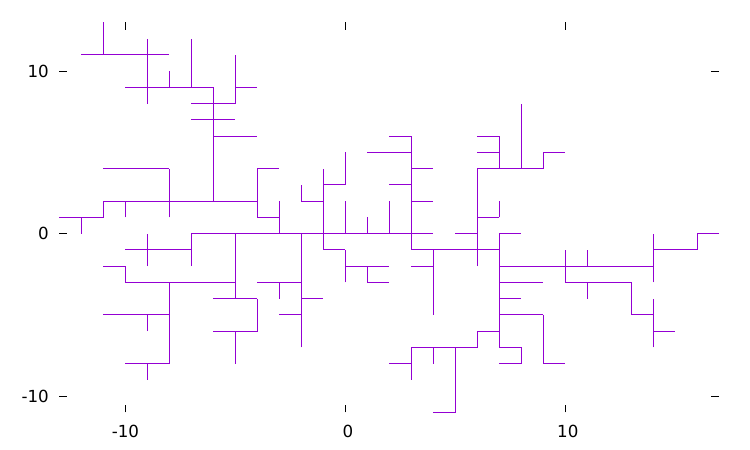} \includegraphics[height=7cm,width=7cm]{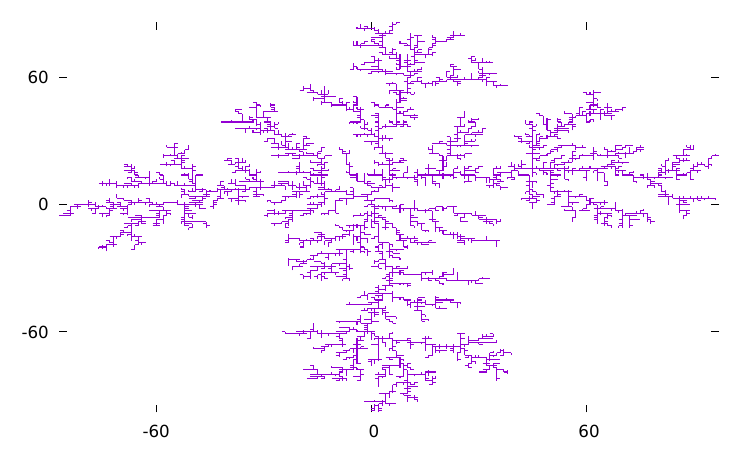} }
	\captionn{DLA tree with $k=250$ and $k=5000$ vertices, built on $\Torus{1000}$. The initial particle is at $(0,0)$.}
\end{figure}
This way of defining the TDLA seems efficient to us in the sense that it allows us to define the TDLA on all graphs: for example, on the complete graph, it allows us to construct uniform increasing trees (the edges from any node to the root are increasing, and the node labels are exchangeable).
The standard TDLA would be defined on $\Z^2$ using random walks starting from $\infty$ as explained above.
\begin{conj}\label{eq:qfdq} There exists $C \in (1/2,1)$ such that, for any $c>C$, \[D_{\sf var}\l(\TDLA_{0}(n,\Torus{n^c}),{\bf dla}_n\r)\to_{n\to+\infty} 0\]
 where $D_{Var}$ is the total variation distance.
\end{conj}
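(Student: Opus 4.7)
The plan is to build a step-by-step coupling between the two constructions, driven by the fact that when the torus is much larger than the current cluster, the hitting distribution of a walk started from a uniform point of $\Torus{n^c}$ on the current cluster is essentially the harmonic measure from infinity on $\Z^2$, which is precisely the law governing the growth of ${\bf dla}_n$.

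First, I would invoke the classical Kesten upper bound for DLA on $\Z^2$: there exists $\gamma < 1$ (Kesten's theorem gives $\gamma = 2/3$) and constants $c_0,C_0 > 0$ such that $\P({\rm diam}({\bf dla}_n) \geq C_0 n^{\gamma}) \leq e^{-c_0 n^{\delta}}$ for some $\delta>0$. Set $C := \gamma$. Fix $c > C$ and pick $\eta > 0$ with $\gamma + \eta < c$. The event $\Omega_n := \{{\rm diam}({\bf dla}_k) \leq n^{\gamma+\eta} \text{ for all } k \leq n\}$ then has probability $1 - o(1)$, and one may work on this event.

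Next, I would couple inductively. Assume at step $k-1$ both processes have produced the same cluster $D \subset \Z^2$ with $\mathrm{diam}(D) \leq n^{\gamma+\eta}$, embedded in $\Torus{n^c}$ via the canonical inclusion (valid because $n^{\gamma+\eta} \ll n^c$). The next edge added by ${\bf dla}_n$ is determined by the entrance edge $(W^{\infty}_{\tau-1},W^{\infty}_{\tau})$ of a simple random walk from infinity hitting $D$; on the torus, by a walk started at a uniform point. The key estimate is a total-variation bound
\[
\bigl\| \mu^{\Torus{n^c}}_D - \mu^{\infty}_D \bigr\|_{\rm TV} \leq \varphi(n),
\]
where $\mu^{\infty}_D$ is the entrance-edge distribution on $D$ from infinity in $\Z^2$, $\mu^{\Torus{n^c}}_D$ its analogue started from uniform on $\Torus{n^c}$, and $\varphi(n) = o(1/n)$. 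Granted such a bound, the maximal coupling at each step succeeds with probability $1 - \varphi(n)$; a union bound over $n$ steps, combined with $\Omega_n$, yields $D_{\sf var}(\TDLA_0(n,\Torus{n^c}),{\bf dla}_n) \leq n\varphi(n) + \P(\Omega_n^c) \to 0$.

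The main obstacle is establishing $\varphi(n) = o(1/n)$. This reduces to comparing, for any finite set $D$ of diameter at most $n^{\gamma+\eta}$ located near the origin, the harmonic measure of $\partial D$ from infinity in $\Z^2$ with the hitting distribution of $D$ by a walk started from the uniform distribution on $\Torus{n^c}$. The standard route is two-step: (i) on $\Z^2$, use the potential-kernel asymptotics $a(x) = \tfrac{2}{\pi}\log|x| + \kappa + O(|x|^{-2})$ (Fukai--Uchiyama) to show that the hitting distribution of $D$ from a uniform point on the circle of radius $R = n^{c}/4$ differs from the harmonic measure from infinity by $O((\log R)^{-1} \cdot R^{-\alpha})$ for some $\alpha>0$, provided $\mathrm{diam}(D)/R \to 0$; (ii) couple the torus walk with the $\Z^2$-walk up to the first time either exits the ball of radius $R$, and use mixing of the torus walk on time scale $n^{2c}\log n$ together with a regeneration argument to show that the torus walk effectively "restarts" at an approximately uniform point of the large annulus between $D$ and the torus diameter, whose hitting distribution on $D$ agrees with (i) up to $O(R^{-\alpha'})$. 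Choosing $c > \gamma$ and $\eta$ small makes this error polynomially small, so $n\varphi(n) \to 0$. The delicate quantitative input is step (i), where one must track how the $\log$ corrections in two-dimensional potential theory interact with $\mathrm{diam}(D) \leq n^{\gamma+\eta}$; this is where the restriction $c > C$ is essential, and where the bulk of the technical work lies.
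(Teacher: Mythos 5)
This statement is a \emph{conjecture} in the paper, not a theorem; the authors offer only a heuristic (``the $n$ starting points go to $\infty$ with $N$, and the topology of the graph far from $0$ should not play an important role'') and no proof. So there is no argument of theirs to compare yours against. Your plan is the natural formalisation of that heuristic: couple step by step, matching the entrance-edge law on the torus with the harmonic-measure-from-infinity law on $\Z^2$, and use Kesten's diameter bound to keep the cluster small compared to the torus.

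That said, the plan as written contains a genuine gap, and it is a quantitative one that cuts against the conjectured value of $C$. You assert $\varphi(n)=o(1/n)$ without establishing it, and the standard two-dimensional potential-theory estimate goes the other way. For a set $D\subset\Z^2$ of diameter $d$ and a walk started at distance $R\gg d$, the hitting distribution on $D$ differs from the harmonic measure from infinity by total variation of order $d/R$ (up to logarithms); the torus started from uniform is effectively $R\asymp N=n^{c}$. At step $k\leq n$ the cluster has diameter $d_k\lesssim k^{2/3}$ by Kesten, so the maximal coupling at step $k$ fails with probability $\asymp k^{2/3}/n^{c}$, and the union bound over $n$ steps gives a total failure probability of order $\sum_{k\leq n}k^{2/3}/n^{c}\asymp n^{5/3-c}$. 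This tends to zero only for $c>5/3$, whereas the conjecture claims $C\in(1/2,1)$. So the step-by-step/union-bound scheme you describe, even if carried out in full, would at best prove the conjecture with $C=5/3$, not $C<1$; you set $C:=\gamma=2/3$ for the containment condition, but nothing in the plan justifies that the coupling error is compatible with that choice. To reach $C<1$ you would need either a sharper torus-vs-$\Z^2$ comparison (showing the per-step entrance-law error decays strictly faster than $d/N$, which is plausible given the mixing/regeneration you invoke in step~(ii) but would have to be proved, with an explicit exponent), or a fundamentally different argument that avoids paying a union bound over all $n$ growth steps.

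Two secondary issues are worth flagging. First, in the paper's definition of $\TDLA_r(\cdot,G)$, a walker that starts inside the current cluster contributes a ``do nothing'' step. If $\TDLA_0(n,\Torus{n^{c}})$ means the tree after $n$ walkers are launched, then the expected number of wasted walkers is of order $\sum_{k\leq n}k/n^{2c}\asymp n^{2-2c}$, which diverges for $c<1$; the two trees then typically have different sizes and the TV distance cannot vanish under that reading. Your plan implicitly adopts the other reading (the tree with $n$ vertices), which is surely the intended one, but then the entrance law you must compare is the \emph{conditional} one (walker started uniformly outside $D$), a minor but necessary adjustment. Second, harmonic measure from infinity in recurrent dimension two is itself a nontrivial object; it exists, but the $O(d/R)$-type estimates carry logarithmic corrections that must be tracked if one wants to push $c$ down, precisely where your plan defers to ``the bulk of the technical work.''
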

The natural model of TDLA on $\Z^2$ (with particles coming from $\infty$) appears to be a kind of limit of  $\TDLA_0(n,\Torus{N})$ when $N\to +\infty$  (or of $\TDLA_0(n,[-N,N]^2)$), with the initial particle placed at 0, since, for $N\to+\infty$, the $n$ starting points of the $n$ random walks goes to $+\infty$ with $N$, and the topology of the graph far from 0 should not play an important role. \\
If one works on the square $[0,N]^2$ with an initial point at $r=(0,0)$, performing a reflected simple random walk on the square, then, one gets an object which has a (single) diagonal symmetry in distribution (see Fig. \ref{fig:squareDLA}):
\begin{figure}[h!]
  \centerline{\includegraphics[height=7cm,width=7cm]{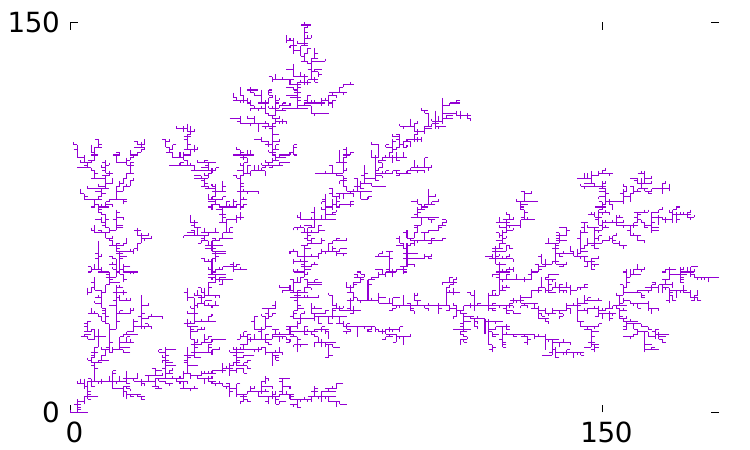}}
  \captionn{\label{fig:squareDLA} Corner DLA tree $\TDLA_{0}(5000,[0,999]^2)$ with initial vertex at $(0,0)$ (that is defined on the square $[0,999]^2$, with root at $(0,0)$ and 5000 vertices).}
\end{figure}
{We call this DLA, \textbf{the corner DLA}. The initial vertex is at a corner, and there are two parameters: the square side, and the number of particles.

Of course, as everyone who has seen these kinds of pictures, it is tempting to conjecture that for a sequence $N(n)\to\infty$, there exists a sequence $a(n)\to\infty$ such that
\be
\frac{\TDLA_0(n,\Torus{N(n)}}{a(n)}\dd \TDLA^{\infty}\ee
for the Hausdorff metric topology on compact subset of $\R^2$, where $\TDLA^{\infty}$ is a.s. a (non-trivial) continuum random tree embedded in $\R^2$ (that is a connected subset of $\R^2$, where between any two points $x,y\in \TDLA^{\infty}$, there is a single injective path $\gamma$ (up to the time parametrization), such that $\gamma(0)=x$, $\gamma(1)=y$, and $\gamma\in[0,1]\in \TDLA^{\infty}$). As can be guessed from Fig. \ref{fig:squareDLA}, a convergence can still be conjectured for $\TDLA_{0}(n,[0,N(n)]^2)/a(n)$ (probably for the same normalization) to another continuum random tree $\TDLA_{0}^{\infty}$.
  \begin{rem} One finds in the literature many random growth models of DLA type, aims at modelling various physical, electrical, biological or chemical real phenomenons. Many of them provides tree like structures embedded in $\R^2$ or $\R^3$. In a lot of cases, aggregations of new particles depend on the complete current structure, and their study are the most often, if not always, complex. We refer to  Vicsek \cite{Vic} for an overview of these questions, results and simulations.
\end{rem}
\subsubsection{A few statistics on square DLA}
We made some simulations and statistics to try to guess the critical exponents in the case of square DLA starting with a single vertex in a corner.
\ben
\begin{array}{|c|c|c|c|c|c|}
	\hline
	\textrm{Tree size}  & 5000 & 6000 & 7000 & 8000\pass\hline
	\textrm{Number of simulations}  & 13671 & 13659 & 13645 & 13635\pass\hline
\end{array}
\een

Again, we made two types of distance statistics, as in Section \ref{sc:SC}:  the \textbf{Euclidean width and height} $w(t)$ and $h(t)$ (number of vertical resp. horizontal row occupied), and random graph distance $ {\bf D}( t) = d_{ t}({\bf c},{\bf v})$ between this time, the ``root corner'' and a random node in the tree. To make the statistics, for each simulation, we used both values $w(t)$ and $h(t)$, and sample 10 random nodes ${\bf v}$ for each DLA. We use the same methods as in  Section \ref{sc:SC} to evaluate the more plausible values of $\alpha$ and $\beta$ for which
$w(t_n)/n^{\alpha}$ and $ d_{ t}({\bf c},{\bf v})/n^{\beta}$ would converge in distribution, given our samples. The square size is the same for all simulations ($1000\times 1000$).
\ben
\begin{array}{|c|c|c|c|c|c|}
	\hline
	\textrm{Number of nodes}  & 5000 & 6000 & 7000 & 8000\pass\hline
	\textrm{Empirical mean of the width}  & 170.93 & 190.31 & 208.36 & 225.31\pass\hline
	\textrm{Empirical median of the width}  & 171.00 & 190.00 & 208.00 & 225.00\pass\hline
	\textrm{Empirical mean of $d({\bf c},{\bf v})$}  & 160.62 & 178.69 & 195.43 & 211.48\pass\hline
	\textrm{Empirical median of $d({\bf c},{\bf v})$}  & 166.00 & 185.00 & 202.00 & 218.00\pass\hline
\end{array}
\een

\ben
\begin{array}{|c|c|c|c|c|}
	\hline
	\textrm{$(n,m)$}  & (5000, 6000) & (6000, 7000) & (7000, 8000)\pass\hline
	\textrm{Estimation of $\alpha$ (mean) }  & 0.589 & 0.588 & 0.586\pass\hline
	\textrm{Estimation of $\alpha$ (median) }  & 0.578 & 0.587 & 0.588\pass\hline
	\textrm{Best fit decile $\alpha$  }  & 0.579 & 0.581 & 0.590\pass\hline
	\textrm{Estimation of $\beta$ (mean) }  & 0.585 & 0.581 & 0.591\pass\hline
	\textrm{Estimation of $\beta$ (median) }  & 0.594 & 0.570 & 0.571\pass\hline
	\textrm{Best fit decile $\beta$}  & 0.581 & 0.586 & 0.586\pass\hline
\end{array}
\een

\begin{figure}[h!] \centerline{\includegraphics[width = 6cm]{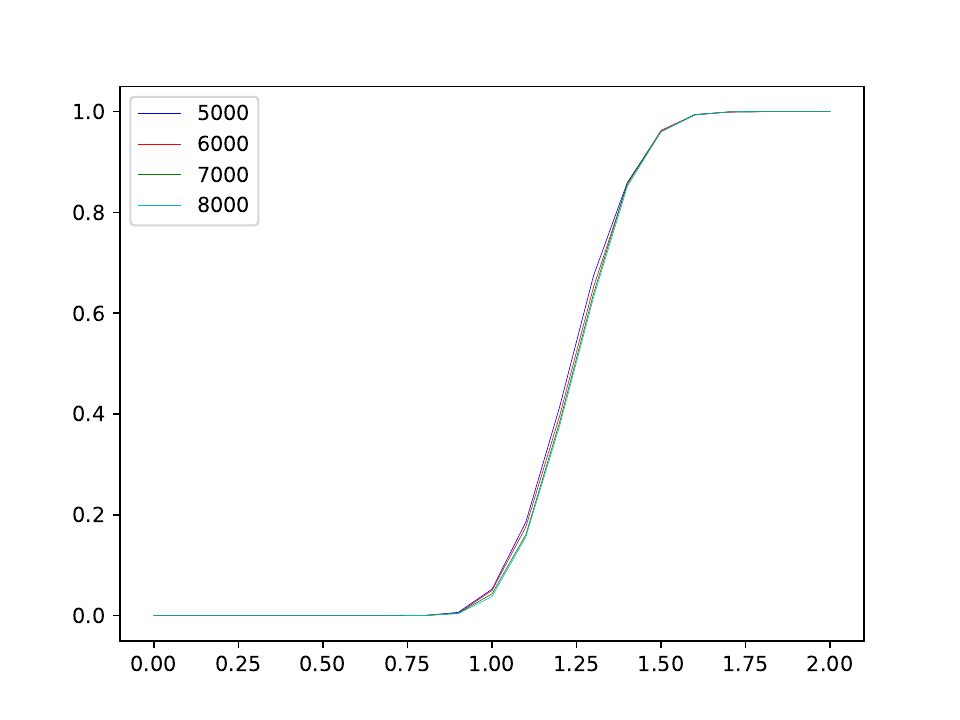}\includegraphics[width = 6cm]{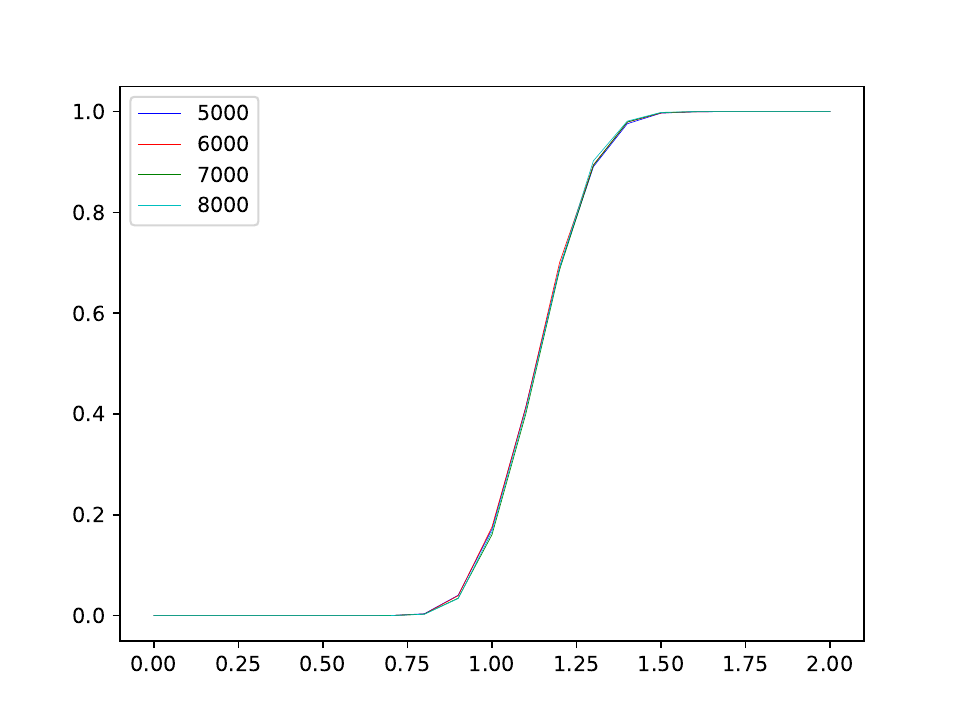}\includegraphics[width = 6cm]{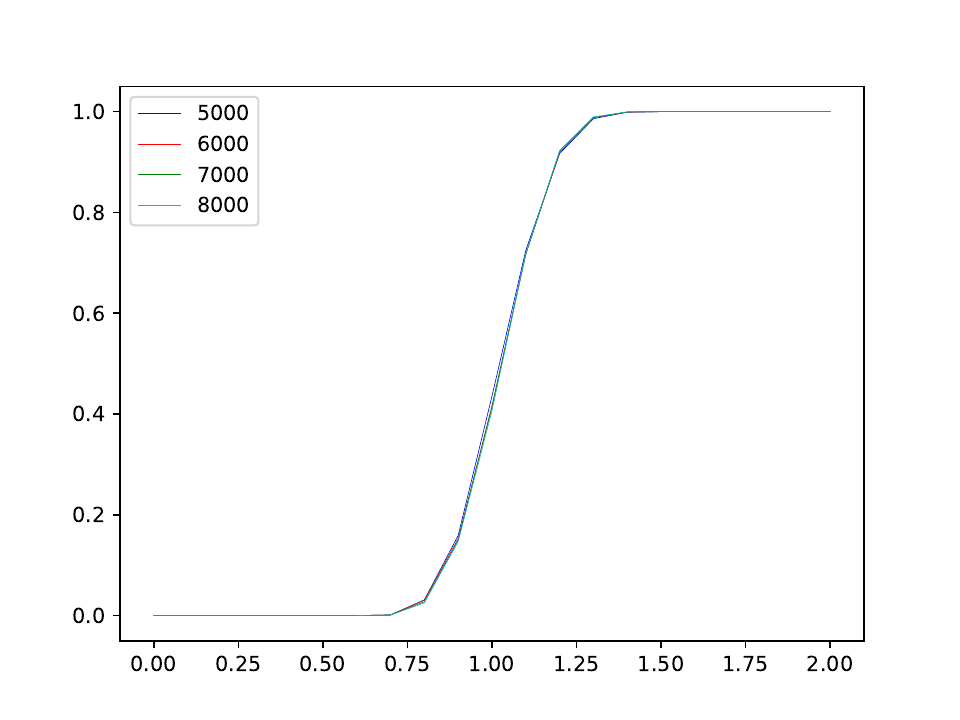}} 
  \centerline{\includegraphics[width = 6cm]{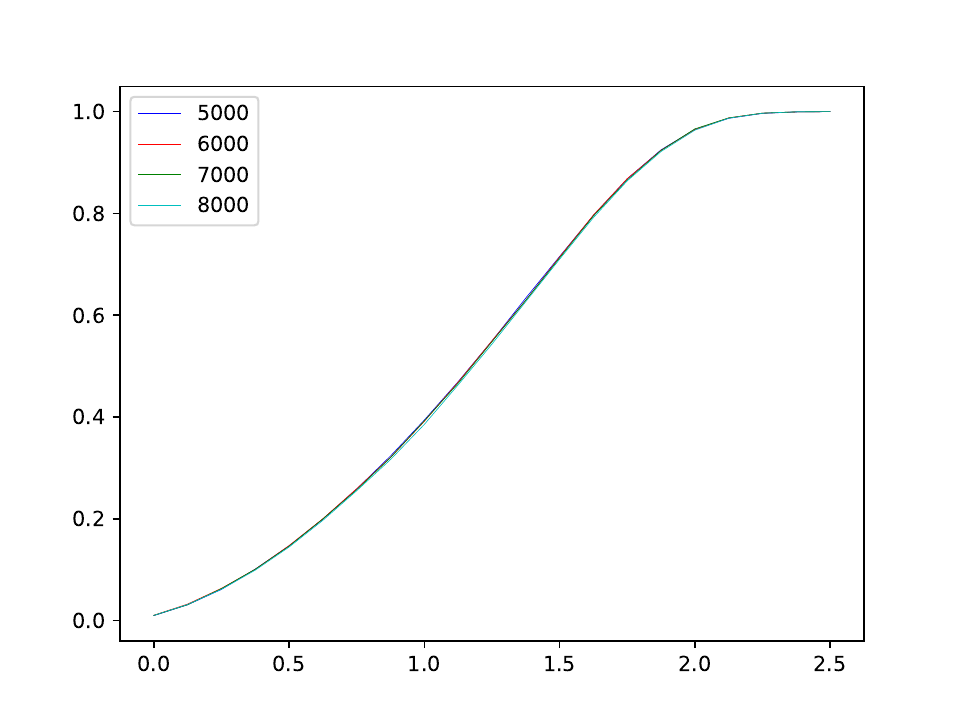}\includegraphics[width = 6cm]{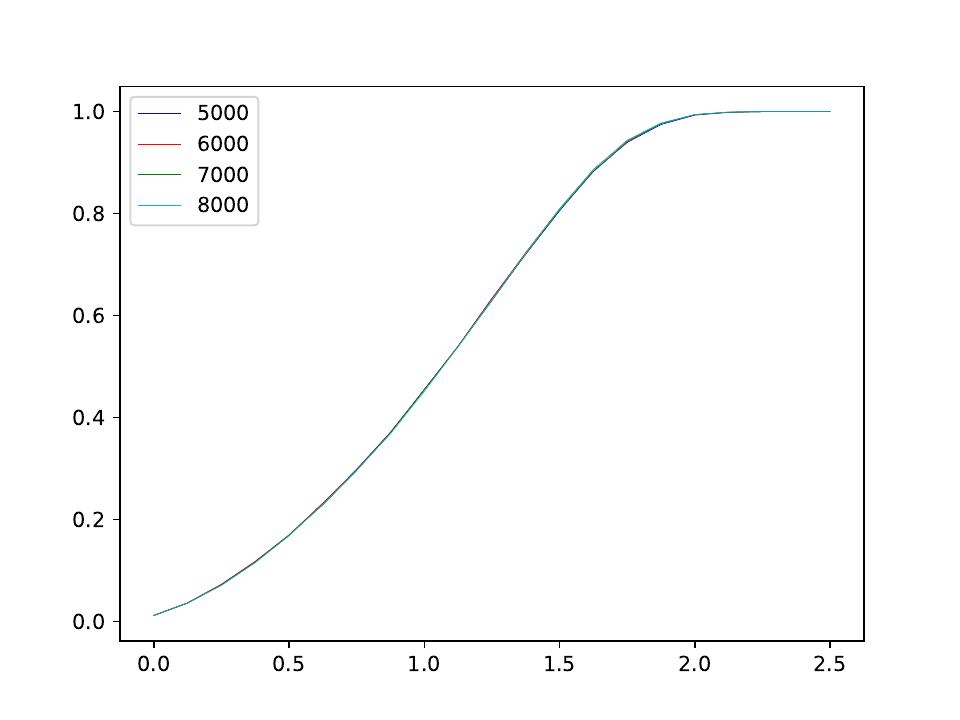}\includegraphics[width = 6cm]{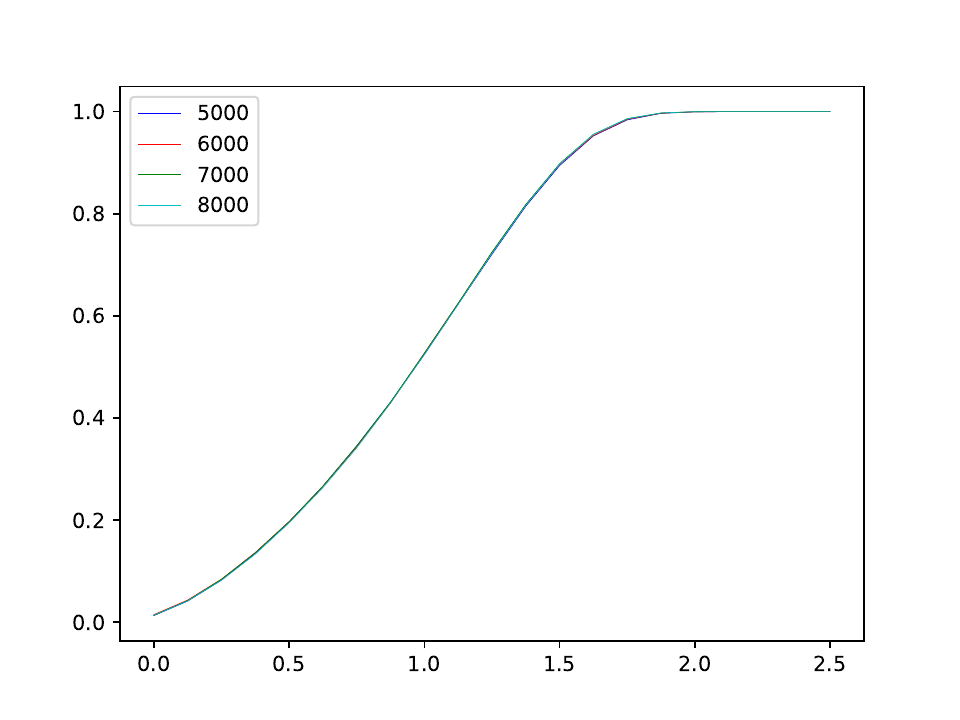}}
  \captionn{\label{fig:dladqdqtf66}On the first line, (interpolated) empirical cumulative function of $w(t_n)/n^{\alpha}$ for $\alpha$ being respectively 0.57, 0.58 and 0.59. On the second, (interpolated)  empirical cumulative function of $d_{t_n}({\bf c},{\bf v})/n^{\beta}$ for $\beta$ being respectively 0.57, 0.58 and 0.59. }
\end{figure}

\begin{conj}In the case of $\TDLA_{0}(n,[0,+\infty)^2)$, both  $w(t_n)/n^{\alpha}$ and $d_{t_n}({\bf c},{\bf v})/n^{\beta}$ converge in distribution for $\alpha=\beta$, for a certain $\alpha\in[0.55,0.61]$.
\end{conj}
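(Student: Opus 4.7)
The honest starting point is that this conjecture is essentially a restricted version of one of the most famous open problems in statistical mechanics, so any plan has to import or prove analogues of results that are not currently available. I would nevertheless try to factor the problem into three sub-tasks and work on the ones that are in principle within reach.

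First, I would try to establish coupling and tightness. The finite-square model $\TDLA_0(n,[0,N]^2)$ can be compared to $\TDLA_0(n,[0,+\infty)^2)$ by a soft argument: as long as no random walk used in the first $n$ particle-attachments reaches the far boundary, the two constructions produce the same tree. Estimating the probability of that event via standard hitting estimates for the reflected walk on $[0,N]^2$ gives coupling error $\to 0$ provided $N=N(n)$ grows faster than any plausible bound on the diameter. Combined with Kesten's upper bound for DLA in the half-plane (adapted from his $\Z^2$ argument, which gives diameter $O(n^{2/3})$), one obtains an upper bound $n^{\alpha}\leq n^{2/3}$. The hard direction is a matching lower bound of order $n^{\alpha}$ with $\alpha>1/2$, since the trivial ball packing only gives diameter $\gtrsim n^{1/2}$. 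I would try a Kesten-style ``electrostatic'' argument on the quadrant, using that harmonic measure from infinity on a subset of $[0,+\infty)^2$ rooted at the corner can be controlled by the harmonic measure seen from a quarter-annulus.

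Second, for the equality $\alpha=\beta$, the key fact to establish is that the graph distance $d_t({\bf c},{\bf v})$ in the tree is comparable, up to poly-log factors, to the Euclidean distance $|{\bf v}-{\bf c}|$. Since the DLA tree is by construction a subtree of the grid, $d_t\geq |\cdot-{\bf c}|_1$, so one direction is free. The reverse inequality is the substantive content: one must exclude long filaments that travel much further along the tree than in the plane. Morally, this is a statement that arms of the DLA do not ``fold back'' a macroscopic amount before reaching the root. A possible route is to build, using ${\bf c}$-centered dyadic shells, a multi-scale renewal structure showing that a uniform random vertex sees, at each scale, a geodesic in the tree that makes definite Euclidean progress with positive probability; iterating gives $d_t({\bf c},{\bf v})\asymp |{\bf v}-{\bf c}|$, which (combined with tightness of $w(t_n)/n^\alpha$) forces $\beta=\alpha$.

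Third, I would attempt to identify the scaling limit. Tightness of $\TDLA_0(n,[0,+\infty)^2)/n^\alpha$ for the Hausdorff topology follows from the first step; the limit, if it exists, is a random compact connected subset of the closed quadrant attached to the origin. Subsequential uniqueness is the really painful point: one would need a characterization of the limit (e.g.\ via a dynamic description as growth driven by the Loewner equation with harmonic measure, analogous to the conjectural picture of Hastings--Levitov for on-lattice DLA). This is where I expect everything to halt.

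The main obstacle is therefore exactly the one that has blocked DLA for four decades: producing a rigorous lower bound on the diameter that beats the trivial $n^{1/2}$. Without that, even the weaker statement ``there is a single exponent $\alpha\in[0.55,0.61]$ describing both $w(t_n)$ and $d_{t_n}({\bf c},{\bf v})$'' is out of reach; conditional on that bound plus the multi-scale geodesic argument sketched above, the equality $\alpha=\beta$ and tightness on $[1/2,2/3]$ would follow, but pinning $\alpha$ to the numerical window $[0.55,0.61]$ would require a genuinely new idea about the effective dimension of DLA.
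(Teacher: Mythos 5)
This statement is a \emph{conjecture} in the paper, and the paper offers no proof of it. What the paper does offer is numerical evidence: thousands of simulations of the square DLA at sizes $5000$--$8000$, together with estimates of the exponents via ratios of medians, means and deciles (which land around $\alpha\approx\beta\approx 0.58$), and a pointer to the heuristic prediction of Lawler that $\alpha=(d+1)/(d^2+1)=3/5$ in dimension $d=2$. The window $[0.55,0.61]$ in the conjecture is simply calibrated to accommodate these empirical values together with the $3/5$ heuristic; it is not derived from any argument. So there is no ``paper proof'' to compare your sketch against.

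As a research sketch, your proposal is sensible and honest. You correctly identify the central obstruction: the only rigorous bounds on two-dimensional DLA are Kesten's upper bound $O(n^{2/3})$ on the diameter and the trivial $\Omega(n^{1/2})$ packing lower bound, and nothing is known to close the gap, let alone to produce convergence in distribution. Two caveats worth flagging. First, the coupling step (comparing $\TDLA_0(n,[0,N]^2)$ to $\TDLA_0(n,[0,+\infty)^2)$) is indeed soft for the model as defined on a large square or torus, but the object in the conjecture is the quadrant with reflected walks; adapting Kesten's argument to a reflecting boundary is plausible but nontrivial and you should not present it as given. Second, your argument for $\alpha=\beta$ rests on the assertion that tree geodesics in DLA are comparable to Euclidean distance to the root up to polylogs. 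That is itself a substantive unproved statement about the geometry of DLA arms, and it is at least conceivable that arms have an intrinsic fractal exponent strictly greater than $1$, in which case $\beta>\alpha$. The simulations in the paper happen to be consistent with $\alpha=\beta$, but that coincidence is an empirical observation, not a consequence of the ``no folding back'' heuristic. You acknowledge in your last paragraph that nothing here amounts to a proof, which is the correct assessment.
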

Since the points start inside the square, they are more likely to start ``inside'' the current cluster, which implies, probably, that our simulations produce results a bit smaller than the expected limiting values. In Lawler \cite[Sec. 2.6]{Lawl} it is discussed that $\alpha$ is conjectured to be $(d+1)/(d^2+1)$ in dimension $d$.
A version of the DLA on the upper half plane is defined and studied in Procaccia \& Zhang \cite{PZ}.

\subsection{Internal DLA}
\label{sec:dqffsd}

This model has been introduced by Diaconis \& Fulton \cite{DF} and it is defined as follows.  Consider a sequence of i.i.d. simple random walks $(W^{k}, k \in \N)$, all of them starting at the same vertex $W_0^{k} = r$ for $k\in \N$.
The internal DLA is a sequence of clusters of vertices $(\bI_i)_{i=0}^{\infty}$ defined as follows. Set $\bI_0=\{r\}$. Assume $\bI_k$ has been defined and define $\bI_{k+1}=\bI_k\cup \{\bu_{k+1}\}$, where $\bu_{k+1}$ is the first point in the complement of $\bI_{k}$ hit by the random walk $W^{(k+1)}$ (that is let $\tau_{k+1}=\inf\{ m : W_m^{k+1}\notin \bI_{k}\}$, then $\bu_{k+1}=W_{\tau_{k+1}}^{k+1}$) .

\NewModel{The internal DLA tree}{IDT}
{Use the random walks defined above. Define the sequence of trees $(\bT_k, k \in \N)$ as follows: $\bT_0$ is the tree reduced to its root $r$. To define $\bT_{k+1}$ from $\bT_{k}$, add the edge $\bfe_{k+1}=(w_{\tau_{k+1}-1}^{k+1},W_{\tau_{k+1}}^{k+1})$ corresponding to the step of the random walk $W_{k+1}$ reaching a node in the complement of the vertex set of $\bT_k$. Again $\bT_k$ is a tree with $k+1$ vertices (simulations on Fig. \ref{fig:tidla}).}

\begin{figure}[htbp]
  \centerline{\includegraphics[width=6.5cm,height=6.5cm]{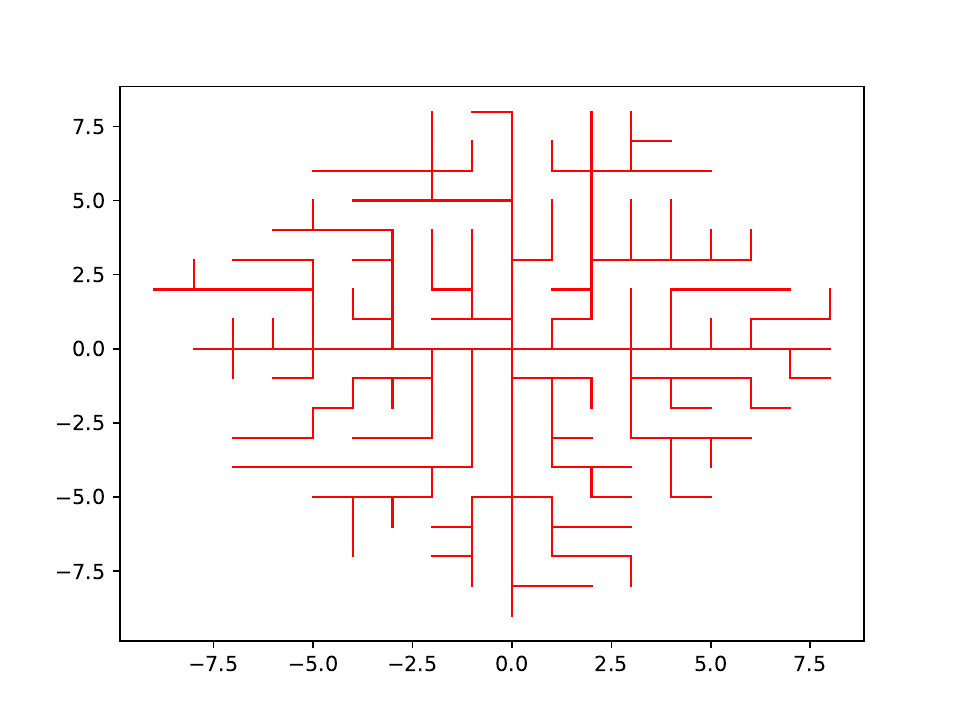} \includegraphics[width=6.5cm,height=6.5cm]{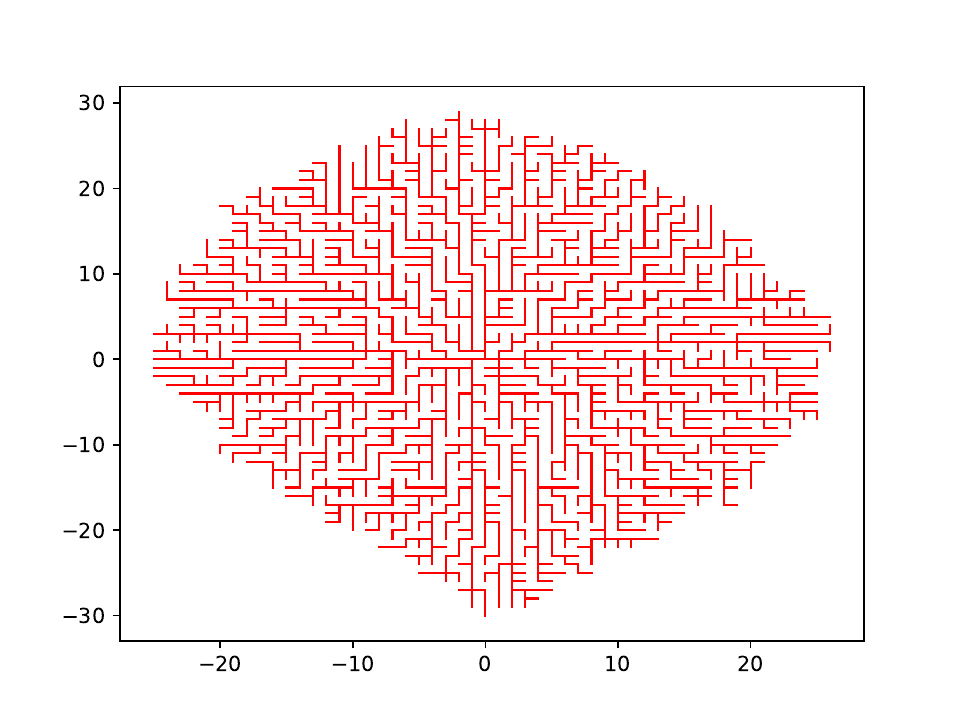} }
  \captionn{\label{fig:tidla} Tree Internal DLA with $k=200$ and $k=2000$ vertices, $v= (0,0)$.}
  \end{figure}
  Much information is known on the cluster (see Lawler et al. \cite{LBG} for a limit shape theorem, Levine and Sheffield \cite{JLS1} for the fluctuations in 2D (see also Lawler \cite{L10.1214}), Jerison et al. \cite{MR3161315}, and Jerison et al. \cite{JLS} in larger dimension).

\subsection{Constructions on weighted graphs}
\label{sec:MST}

In this part, we assume some i.i.d. weights  $\bC=({\bf C}_e :e\in E(G))$ associated with edges picked according to a non-atomic measure $\mu$ on $(0,+\infty)$.
The induced random order $\sigma$ of the edges is the (a.s. well defined) permutation satisfying
\ben\label{eq:sig}
e_{\sigma(1)} < \cdots < e_{\sigma(|E|)}.
\een

The two first models given in this part are built using Prim's \cite{Prim} and Kruskal's \cite{Krus} algorithms which extract the minimum spanning tree (MST) of a weighted graph $(G,\bC)$. In fact, the MST is a function of the induced random order $\sigma$ (this is a consequence of Prim's, Kruskal's and also Bor\.{u}vka's algorithm  \cite{Boru}; we refer to Nes\u{e}tr\u{\i}l et al. \cite{MR1825599}, Graham and Hell \cite{MR783327} for historical notes on this problem).

\subsubsection{Prim's component of the origin} 

\NewModel{The Prim component of a vertex}{PrimMod}{Build a sequence of trees $(\bP_j, j=1,\cdots,N)$ where $\bP_j$ is a tree with $j$ nodes, as follows: first, take $\bP_1=r$ a fixed node. Assume that $\bP_j$ has been built and set $\bP_{j+1}$ as the tree $\bP_j$ together with the edge $e$ of minimal weight between a node of $\bP_j$ and a node out of $\bP_j$. }
Since the weights are chosen according to an atomless measure, the sequence $(\bP_j, j=1,\cdots,N)$ is a.s. well defined.
\begin{figure}[h!]
  \centerline{\includegraphics[width = 10cm]{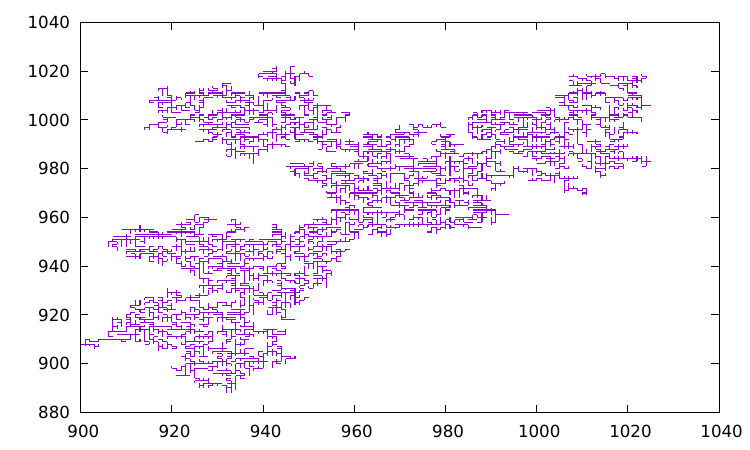}}
   \captionn{Prim's algorithm applied on $\Torus{2000}$ with i.i.d. weights ${\bf C}_e  \sim {\sf Uniform}(0,1)$, stopped when the tree reached $5000$ vertices. \label{FIG:Prim}}
 \end{figure}

\begin{Ques} Take a connected weighted graph $(G,\bC)$ and $r$ a fixed vertex of $V$. What is the distribution of ${\bf P}_n$? Of the process $({\bf P}_n)$?
\end{Ques}

When $K_n$ is the complete graph on $n$ vertices, the minimum spanning tree rescaled by $n^{1/3}$ converges in the Gromov-Hausdorff sense to a binary continuous random tree (Addario-Berry et al. \cite{ABBG}). For the moment, not much is known on the limiting tree. The analysis of this case relies on the fact that the connected components of  $({\bf P}_n)$ are related to the multiplicative coalescent (see Aldous \cite{aldous1997}, Broutin \& Marckert \cite{BroutinMarckert}).
\\
\indent Under some hypothesis, the total length of the edges in a minimal spanning tree admits a deterministic limit: it is $\zeta(3)$ (notably, as shown by Frieze \cite{Frieze}, when $G$ is the complete graph, and the edge weights are uniformly distributed on $[0,1]$), and a similar result occurs for regular enough weight distributions (Steele \cite{Steele}). Additional related results are numerous (see eg. Cooper et al. \cite{Cooper} and Janson \cite{Janson2,Janson3}).
\begin{rem}The probability $\P({\bf P}_n=t)$ is proportional to the number of induced permutation orders $\sigma$ giving $t$. It is possible to find a description of these permutations $\sigma$, by fixing first the relative order of the edges of $t$ (their Prim order); this provides a way to describe the $\sigma_e$ of the perimeter edges $e$ of $t$; however, the formula $\P({\bf P}_n=t)$ thus obtained, has a summation form which seems to be intractable.
\end{rem}

\subsubsection{Kruskal's algorithm:} Define a sequence of graphs $(\bK_j, j\geq 1)$. Take $\bK_1=(V,\varnothing)$ the graph with no edges and vertex set $V$, and $\bK_i=(V,E_i)$ the graph with edge set $E_i=E_{i-1}\cup \{e_{\sigma(i)}\}$ if this set of edges does not contain any cycle, or $E_{i}= E_{i-1}$ otherwise. Stop the construction at the MST $\bK$, which is the first graph $\bK_i$ containing $N-1$ edges.

At any fixed step $m$, $\bK_m$ is a forest; as time passes by, its connected components merge. In particular the connected component containing a given fixed vertex $r\in V$ has a non-decreasing size.

\NewModel{The Kruskal's component of a vertex}{Krusk}{Stop the construction in Kruskal's algorithm when the cluster containing $r$ has at least $n$ edges. Denote by $\bK_{{\sf size}\geq n}(r)$ the tree obtained (see Fig. \ref{fig:kru}).}
For any $t\in \TTTr{r}{G}$ with $|t|\geq n$, let $I_t:={\sf Induced}_G(\{u: d_G(u,t)\leq 1\})$ be the induced subgraph of $G$, formed by the nodes at distance $\leq 1$ from $t$. Each edge $e\in E(I_t)$ is either an edge of the tree,  a perimeter edge of $t$ (meaning that $\{e\} \cup E(t)$ is the edge set of a tree) or a ``cyclic edge'' meaning that $\{e\}\cup E(t)$ is the edge set of a graph with a (unique) cycle, denoted $C_t(e)$. Denote by $P_t$ the set of perimeter edges and ${\sf In}_t$ the set of cyclic edges.
\begin{pro} For any tree $t \in \cup_{k\geq n} \TTr{r}{G}k$
  \be
  \P(\bK_{{\sf size}\geq n}(r)=t)&=&\l|S_t\r| / |E(I_t)|!
 \ee
 where $S_t$ is the subset of the symmetric group ${\cal S}(E(t))$ composed of the permutations $\sigma$ that satisfy the following properties:\\
 $(a)$ $\sigma_e \geq \sigma_f, \forall f \in C_t(e), \forall e\in{\sf In}_t$,\\
 $(b)$ $\min\{\sigma_f ,f \in P_t\}\geq \max\{\sigma_e, e\in E(t)\}$.
 \\
 $(c)$ if one removes the edge $e$ of $t$ with largest label $\sigma(e)$, the connected component of $t$ containing the root has size $<n$.
\end{pro}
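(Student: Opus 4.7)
The plan is to reduce the computation to counting permutations. Since the weights $\bC$ are i.i.d.\ with an atomless law $\mu$, the induced order $\sigma$ from \eqref{eq:sig} is a.s.\ well-defined and uniform on $\mathcal{S}(E(G))$. Kruskal's procedure depends on the weights only through $\sigma$, so $\bK_{{\sf size}\geq n}(r)$ is a deterministic function of $\sigma$, and it suffices to count the permutations that produce $t$ (with the convention, needed for the conditions (a)--(c) to make sense, that $S_t$ is a subset of $\mathcal{S}(E(I_t))$ rather than $\mathcal{S}(E(t))$). The first step is a locality statement: the event $\{\bK_{{\sf size}\geq n}(r)=t\}$ depends on $\sigma$ only through the relative order it induces on $E(I_t)$. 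The reason is that any edge $e=\{u,v\} \in E(G)\setminus E(I_t)$ has no endpoint in $V(t)$ (otherwise the other endpoint would lie in $V(t)\cup N(V(t))=V(I_t)$, contradicting $e\notin E(I_t)$). Consequently, as long as the component of $r$ in Kruskal's current forest is contained in $V(t)$, only edges of $E(I_t)$ can alter it, and a short coupling argument shows that any two full permutations with the same $E(I_t)$-restriction give the same outcome for this event.

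Next I would show that the relative orders on $E(I_t)$ achieving $\bK_{{\sf size}\geq n}(r)=t$ are exactly those in $S_t$. \emph{Necessity} is proven by contradiction in three cases. If (b) fails for some perimeter edge $f=\{u,v\}$ with $u\in V(t)$, $v\notin V(t)$, then at the time $f$ is processed the current forest contains only some $E(t)$-edges together with edges entirely outside $V(t)$; no $u$--$v$ path can exist in such a forest, hence $f$ is accepted, and the final component of $r$ contains $v\notin V(t)$, a contradiction. If (a) fails for a cyclic edge $e\in {\sf In}_t$ because some $f\in C_t(e)$ satisfies $\sigma_f > \sigma_e$, then the unique $t$-path between the endpoints of $e$ is still incomplete when $e$ is processed; $e$ is therefore accepted and enters the component of $r$, again corrupting the output. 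If (c) fails, the $r$-containing subtree of $t\setminus \{e^\star\}$ (where $e^\star$ is the $\sigma$-heaviest $E(t)$-edge) already has size $\geq n$, so $\tau$ occurs strictly before $e^\star$ is added and the produced tree is a strict subtree of $t$. \emph{Sufficiency} is the mirror verification: under (a)--(c), every $E(t)$-edge is accepted (since the only potential obstructions, namely cyclic edges closing a cycle in $t$, are placed later by (a)), every cyclic edge is rejected on arrival (its $t$-cycle is already present), perimeter edges are postponed past the time the last $E(t)$-edge is added by (b), and (c) forces $\tau$ to coincide exactly with that time, at which instant the component of $r$ is precisely $V(t)$ with edge set $E(t)$.

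Finally, by exchangeability of the edge weights, the restriction $\sigma|_{E(I_t)}$ is uniform on $\mathcal{S}(E(I_t))$, and the two previous steps yield
\[
\P(\bK_{{\sf size}\geq n}(r)=t) \;=\; \frac{|S_t|}{|E(I_t)|!}.
\]
The main obstacle I expect is the locality step: one has to argue carefully that boundary edges of $I_t$ joining two vertices outside $V(t)$, as well as all edges of $E(G)\setminus E(I_t)$, remain inert with respect to the component of $r$ up to time $\tau$. This is done by induction on Kruskal's time parameter, the inductive invariant being that the component of $r$ stays inside $V(t)$; once this is established, translating the dynamics into the combinatorial conditions (a), (b), (c) becomes a routine book-keeping.
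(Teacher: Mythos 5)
Your proof follows the paper's proof step for step (locality of the event to the relative order on $E(I_t)$, characterization of the favorable orders as exactly $S_t$, then exchangeability to conclude), just with more of the book-keeping spelled out; the paper's version condenses the locality step to ``it is simple to see that'' and gives one-line justifications for conditions (a), (b), (c). You are also correct that $S_t$ must be read as a subset of $\mathcal{S}(E(I_t))$ rather than $\mathcal{S}(E(t))$ as written in the statement, which is what the conditions and the formula $|S_t|/|E(I_t)|!$ require.
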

\begin{proof} It is simple to see that the realisation of the event $\{\bK_{{\sf size}\geq n}(r)=t\}$ depends only on the relative order of $\sigma$ on $E(I_t)$, which is uniform by symmetry.
  Now, by definition, removing the last edge added at $t$ must leave the connected component attached to the origin with a size $<n$ (condition $(c)$). Now, condition $(b)$ is needed: without it, some perimeter edges of $t$ would have been added before $t$ is completed. Condition $(a)$ translates the condition that an edge is not added if it forms a cycle: the other edges need to have been added before.
  \end{proof}
A variant consists in rejecting $\bK_{{\sf size}\geq n}(r)$ as long as its size is not exactly $n$ (meaning that we reassign weights to all edges). Denote by $\bK_{{\sf size}=n}(r)$ the result obtained. \\
\begin{figure}[h!]
  \centerline{
    \includegraphics[width = 7cm]{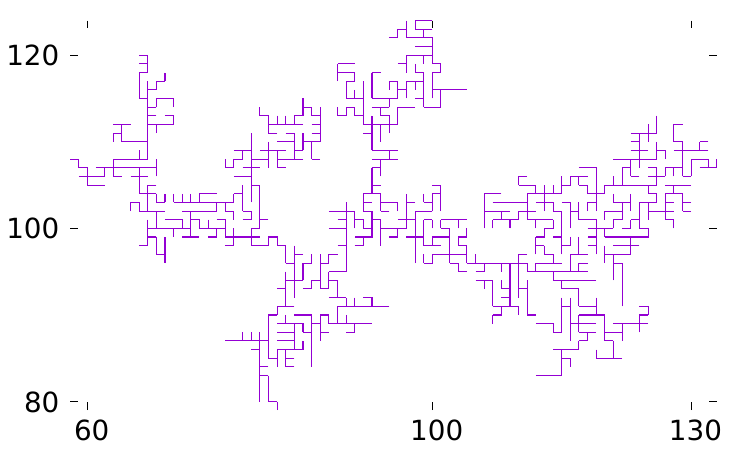}~~~~ \includegraphics[width = 7cm]{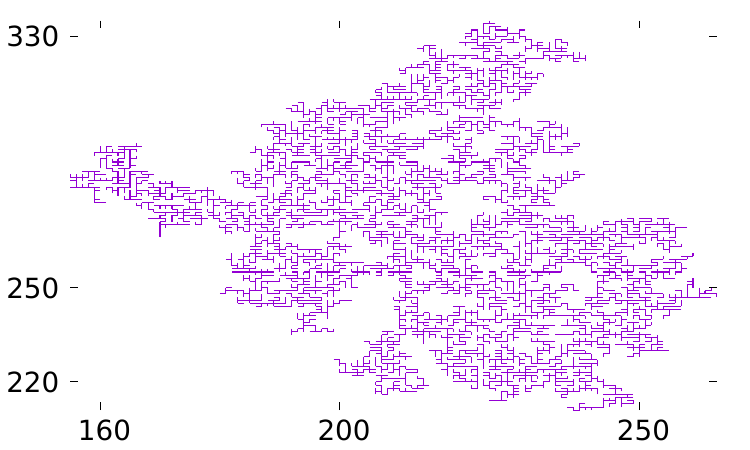}}
  \captionn{\label{fig:kru}Kruskal's algorithm on $\Torus{200}$ with a uniform random order of the edges, stopped when the tree containing the vertex $v=(100,100)$ has size in $[1000,1000(1+0.03)]$ (the output size is 1001). 20 simulations were needed before success (the size can jump over this interval during the construction process). The second construction is done on the $\Torus{500}$, and the algorithm is stopped when a tree with size in $[5000,5000 (1+0.01)]$ containing $v=(250,250)$ is obtained by the same method with uniform random order of the edges (four simulations were needed, with output size equals 5007).}
\end{figure}

\subsubsection{A few statistics on the Kruskal's trees}
We made some statistics to try to guess critical exponents in the case of the Kruskal's trees on the $\Torus{N}$. To get an efficient way to test the creation of cycles, we turned the Kruskal's forest into a forest of rooted trees as follows: at the beginning, all nodes are roots of trees reduced to a single vertex. Each time unit, a uniform vertex $u_k$ and a uniform direction $d_k$ (north, est, west, south) are chosen independently of the other choices. Let $v_k$ be the vertex at distance 1 from $u_k$ on the torus, such that $(u_k,v_k)$ has direction $d_k$. The oriented edge $(u_k,v_k)$ is then added to the ``forest'' if it does not create a cycle. In this case, in the rooted tree $(t,r)$ that contained $u_k$, the edges from $r$ to $u_k$ are oriented toward $u_k$ so that the new root of the new tree after this merging, is the root of the tree that contained $v_k$ beforehand. Hence, the component we are interested in is the rooted tree that contains a given node, chosen before the starting of the simulation. Since the diameter of a tree is at most twice the largest distance to the root, we expect the critical exponent to be independent of the choice of the root.

We fix a value $n$, and to not lose too much waiting time for a realisation of a tree with size exactly $n$, we wait till $|\bK_{{\sf size}\geq n}|\leq n(1+0.03)$, so that finally, this amounts to conditioning by $n\leq |\bK_{{\sf size}\geq n}|\leq n(1+0.03)$. All the simulations are done on $\Torus{700}$ which is, in practice large enough so that none of the thousands simulations we did got a width or a height of this size.
\ben
\begin{array}{|c|c|c|c|c|c|}
	\hline
	\textrm{Tree size}  & 4000 & 6000 & 8000 & 10000\pass\hline
	\textrm{Number of simulations}  & 10758 & 10679 & 10446 & 10365\pass\hline
\end{array}
\een
Again, we made two types of distance statistics, as in Section \ref{sc:SC}:  the \textbf{Euclidean width and height} $w(t)$ and $h(t)$ (number of vertical resp. horizontal row occupied), and random graph distance $ {\bf D}( t) = d_{ t}({\bf r},{\bf v})$ between the root of the tree and  a random node in the tree. To make the statistics, for each simulation, we used both values $w(t)$ and $h(t)$, and sample 10 random nodes ${\bf v}$ for each such tree. 
We use the same methods as in  Section \ref{sc:SC} to evaluate the more plausible values of $\alpha$ and $\beta$ for which
$w(t_n)/n^{\alpha}$ and $ d_{ t_n}({\bf r},{\bf v})/n^\beta$ would converge in distribution, in view of our samples.
\ben
\begin{array}{|c|c|c|c|c|c|}
	\hline
	\textrm{Number of nodes}  & 4000 & 6000 & 8000 & 10000\pass\hline
	\textrm{Empirical mean of the width}  & 112.96 & 140.07 & 163.51 & 183.95\pass\hline
	\textrm{Empirical median of the width}  & 111.00 & 138.00 & 161.00 & 180.00\pass\hline
	\textrm{Empirical mean of $d({\bf r},{\bf v})$}  & 174.77 & 225.08 & 269.44 & 311.91\pass\hline
	\textrm{Empirical median of $d({\bf r},{\bf v})$}  & 165.00 & 213.00 & 254.00 & 295.00\pass\hline
\end{array}
\een

\ben
\begin{array}{|c|c|c|c|c|}
	\hline
	\textrm{$(n,m)$}  & (4000, 6000) & (6000, 8000) & (8000, 10000)\pass\hline
	\textrm{Estimation of $\alpha$ (mean) }  & 0.530 & 0.538 & 0.528\pass\hline
	\textrm{Estimation of $\alpha$ (median) }  & 0.537 & 0.536 & 0.500\pass\hline
	\textrm{Best fit decile $\alpha$  }  & 0.528 & 0.538 & 0.516\pass\hline
	\textrm{Estimation of $\beta$ (mean) }  & 0.624 & 0.625 & 0.656\pass\hline
	\textrm{Estimation of $\beta$ (median) }  & 0.630 & 0.612 & 0.671\pass\hline
	\textrm{Best fit decile $\beta$}  & 0.623 & 0.628 & 0.653\pass\hline
\end{array}
\een

\begin{figure}[h!] \centerline{\includegraphics[width = 6cm]{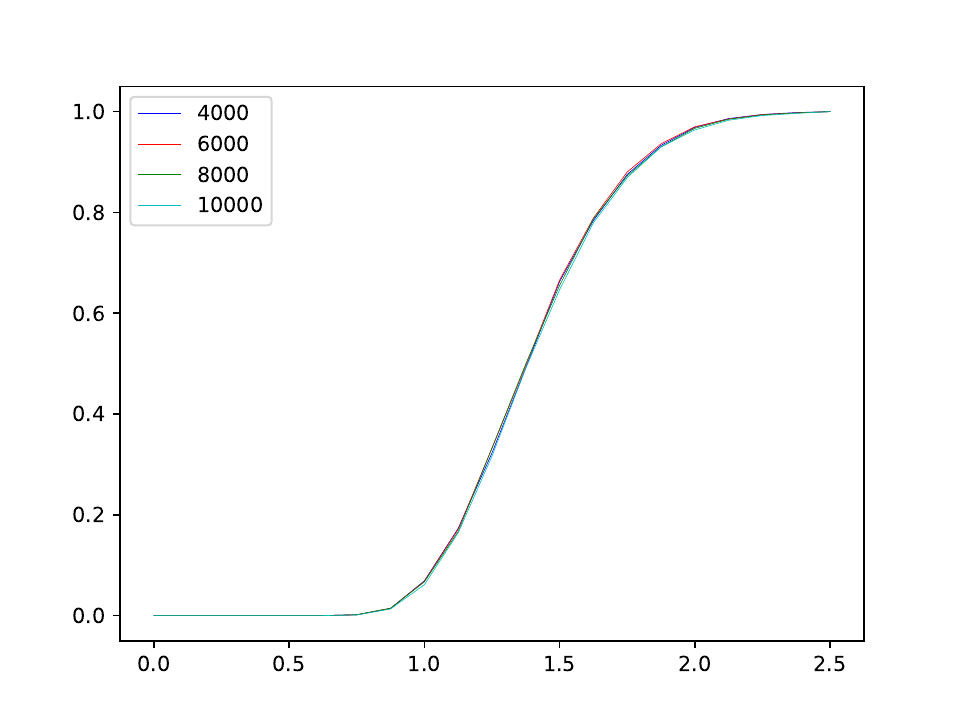}\includegraphics[width = 6cm]{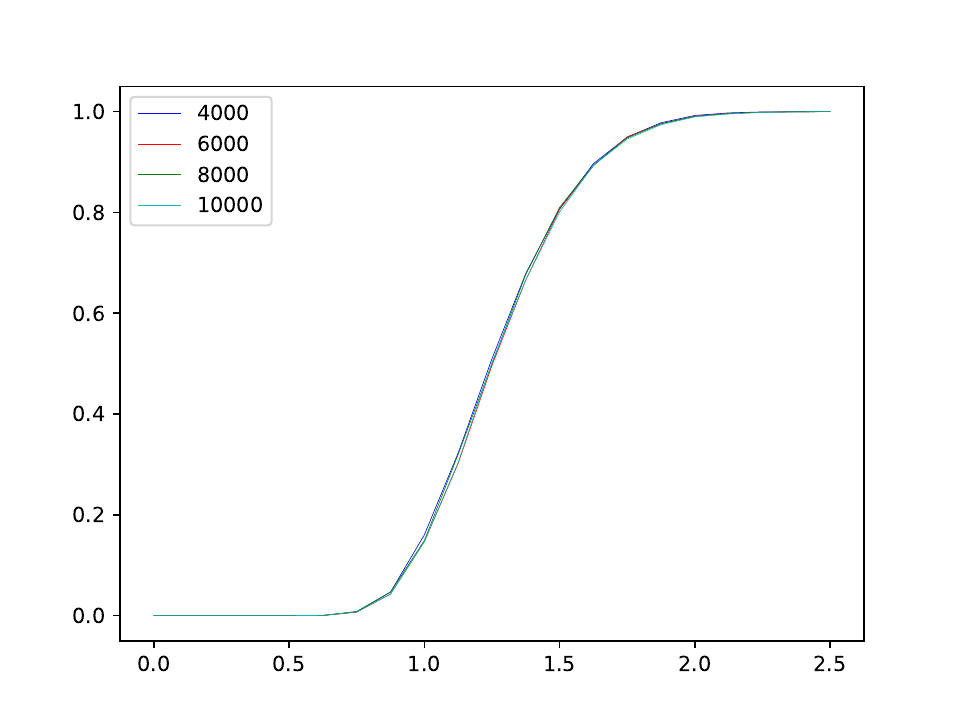}\includegraphics[width = 6cm]{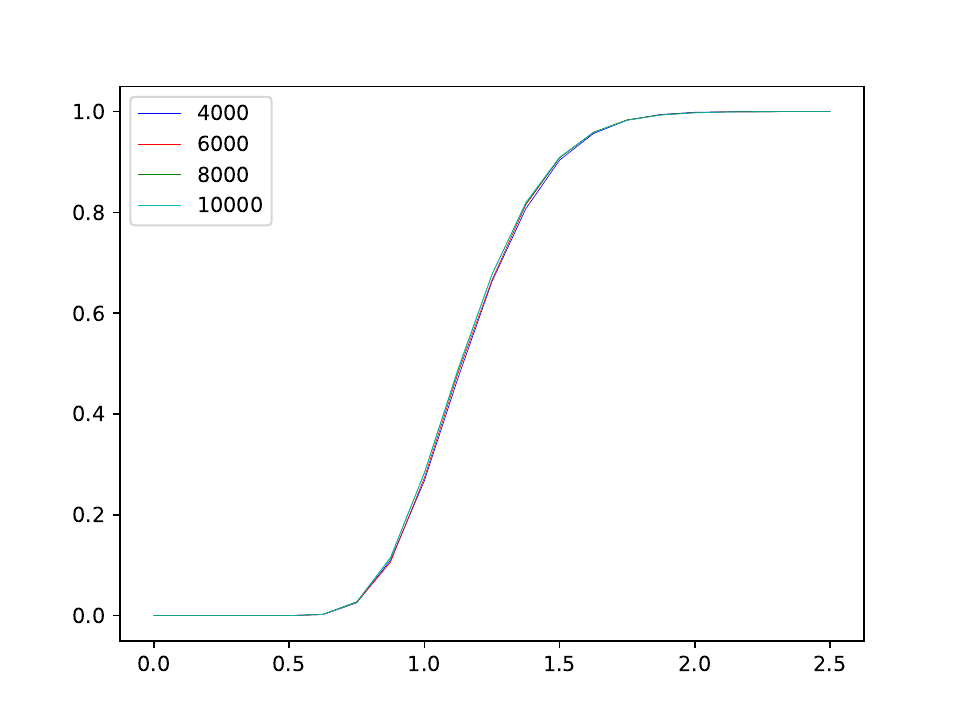}}
  \centerline{\includegraphics[width = 6cm]{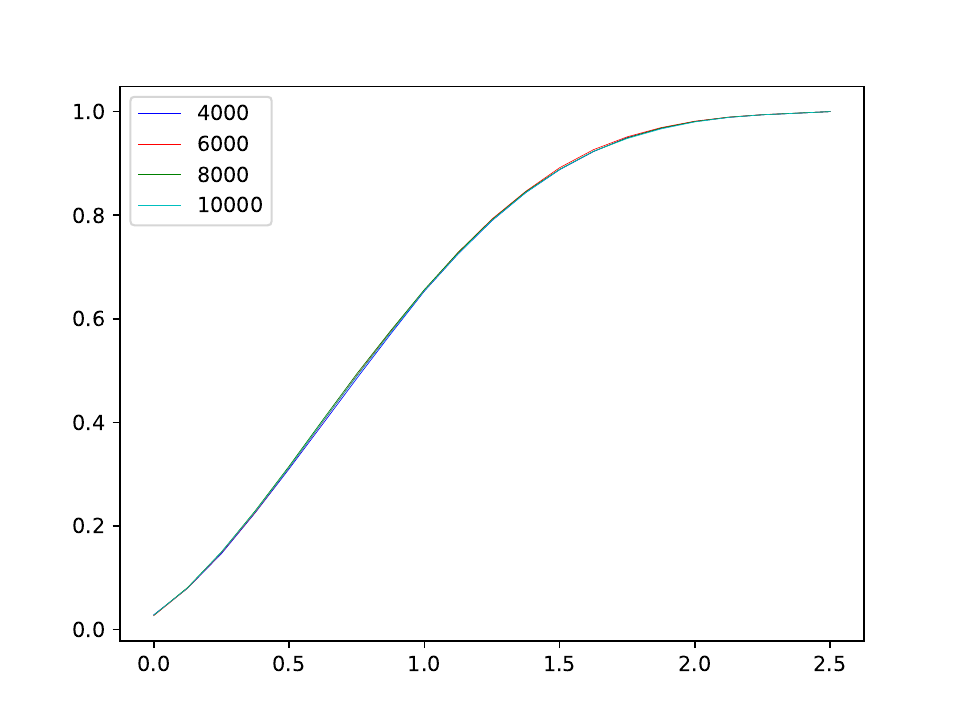}\includegraphics[width = 6cm]{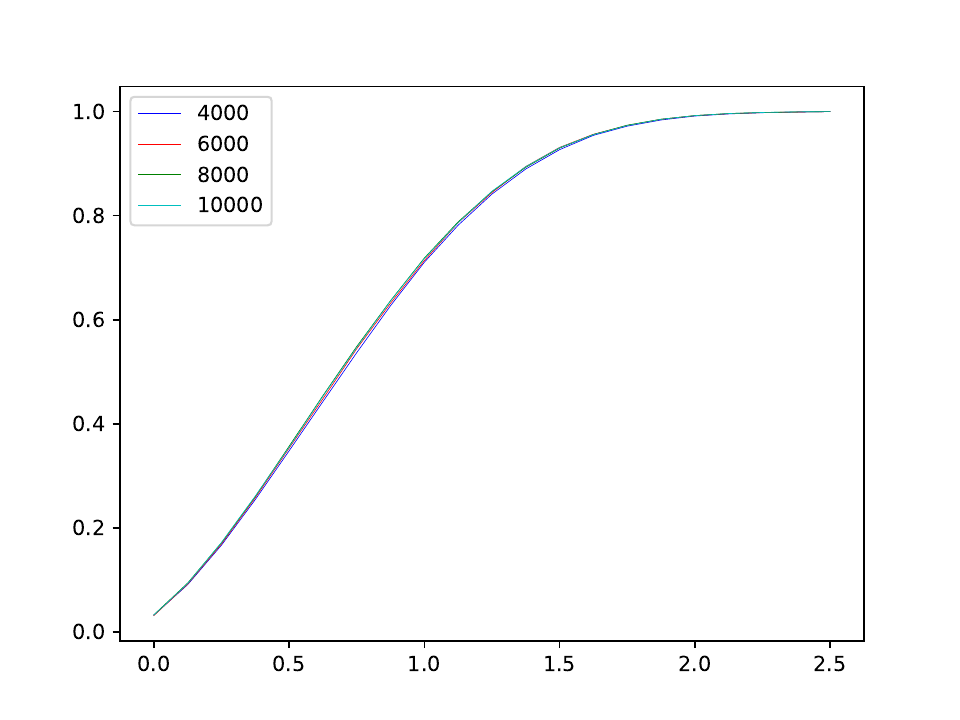}\includegraphics[width = 6cm]{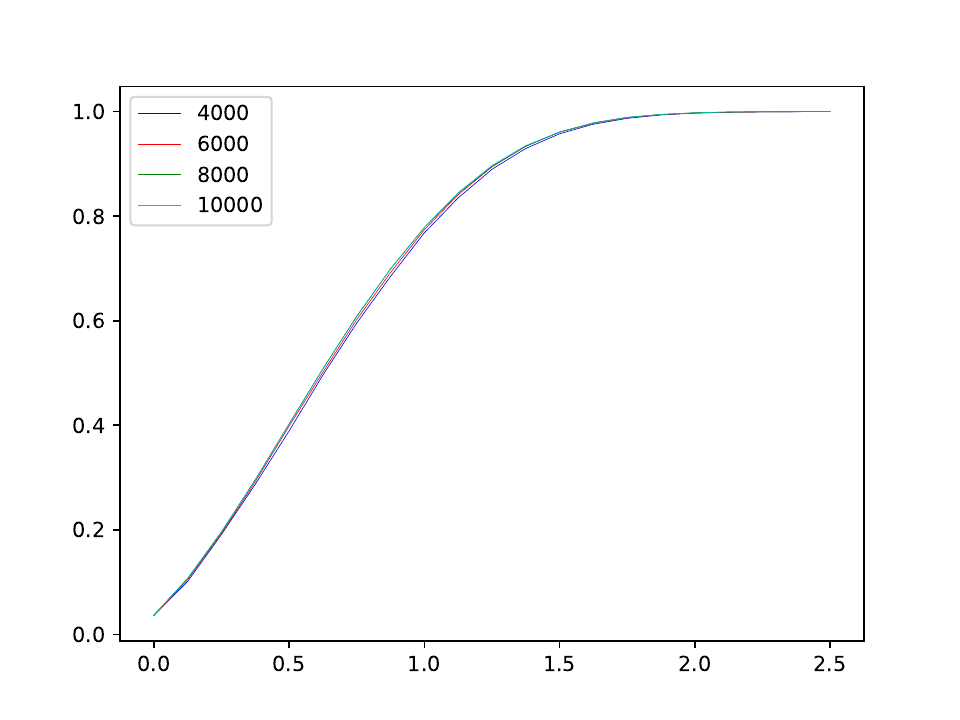}}
  \captionn{\label{fig:dladqdqtf}On the top, (interpolated) empirical cumulative function of $w(t_n)/n^{\alpha}$ for $\alpha$ being respectively 0.52, 0.53 and 0.54. On the bottom, (interpolated)  empirical cumulative function of $d_{t_n}({\bf r},{\bf v})/n^{\beta}$ for $\beta$ being respectively 0.63, 0.64 and 0.65. }
\end{figure}
\begin{conj}  $w(t_n)/n^{\alpha}$  converges in distribution for some $\alpha\in[0.50,0.55]$.
  \end{conj}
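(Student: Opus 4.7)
The plan is to interpret the Kruskal construction as a disguised realisation of (near-critical) Bernoulli bond percolation and then to invoke the two-dimensional percolation exponents. First I would observe that the vertex set $V(\bK_k(r))$ of the connected component of $r$ after the first $k$ edges have been considered by Kruskal's algorithm coincides with the vertex set of the connected component of $r$ in the random subgraph $(V,\{e:\sigma_e\leq k\})$. Indeed, two vertices belong to the same Kruskal component iff they are joined by a path of edges of rank $\leq k$, which is precisely the percolation cluster condition; the skipped ``cycle'' edges do not alter the vertex set, only the spanning tree structure inside each cluster. Since the ranks are induced by i.i.d.~uniform weights, conditionally on $k$ the set $\{e:\sigma_e\leq k\}$ is a uniform $k$-subset of $E$, itself a small perturbation of Bernoulli percolation at density $p=k/|E|$.

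Next, I would analyse the stopping rule $|\bK_{{\sf size}\geq n}(r)|\geq n$. On $\Torus{N(n)}$ with $N(n)$ large enough that the cluster does not wrap around (the regime of the simulations), this selects a random density $p^*$ of open edges for which the cluster of $r$ first attains macroscopic size $n$. Standard near-critical percolation estimates on $\Z^2$ then say that $p^*$ concentrates at the critical value $p_c=1/2$, with $|p^*-p_c|=O(n^{-4/91})$, since the correlation length satisfies $\xi(p)\sim|p-p_c|^{-\nu}$ with $\nu=4/3$ and clusters of size $n$ live on scale $n^{1/d_f}$ with $d_f=91/48$.

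Given this near-critical identification, the width is controlled by the one-arm exponent: conditionally on having size $\geq n$, a two-dimensional critical cluster fits into a box of side $\Theta(n^{1/d_f})=\Theta(n^{48/91})$ (via the Fortuin--Kasteleyn/Kesten arm estimates, see Smirnov--Werner). This pinpoints the conjectured exponent as $\alpha=48/91\approx 0.5275$, comfortably inside $[0.50,0.55]$, and makes $w(\bK_{{\sf size}\geq n}(r))/n^{48/91}$ tight. To upgrade tightness to genuine convergence in distribution, I would appeal to conformal invariance of critical 2D percolation: for site percolation on the triangular lattice, Smirnov's theorem yields a CLE$_6$ scaling limit of cluster boundaries, and the width of a cluster conditioned on its volume is then a measurable functional of CLE$_6$ which produces the limiting random variable $\textbf{w}$.

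The hard part will be two-fold. First, the rigorous conformal invariance underpinning the last step is known only for site percolation on the triangular lattice; on bond percolation on $\Z^2$ (which is the model of the conjecture), universality is believed but unproven, so a full proof is likely out of reach with current tools. Second, even on the triangular lattice, one must analyse percolation \emph{conditioned on having a cluster of size exactly $\sim n$ through a designated vertex $r$}; this is a delicate quenched/annealed conditioning that is more subtle than the usual incipient-infinite-cluster constructions of Kesten, and one would need to show that the law of the rescaled cluster under this conditioning admits a non-degenerate scaling limit. Minor additional care is needed to handle the torus geometry, the fact that Kruskal skips cycle-creating edges (irrelevant for vertex sets but slightly shifts $p^*$), and the soft stopping $n\leq|\bK|\leq n(1+0.03)$ used in the simulations, but these should be routine perturbations of the main argument.
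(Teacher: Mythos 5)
This is a conjecture in the paper, not a theorem: the authors offer no proof, only simulation evidence (estimated exponents clustering around $0.50$–$0.54$ for tree sizes up to $10000$), and they even remark that the simulations for the graph-distance exponent $\beta$ are inconclusive. So there is no ``paper's proof'' for your argument to be compared against.

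Your heuristic is, however, exactly the right way to think about this model, and it supplies a conceptual explanation the paper lacks. The identification of the vertex set of the Kruskal component of $r$ at stage $k$ with the percolation cluster of $r$ in $(V,\{e:\sigma_e\le k\})$ is correct (the skipped cycle-closing edges only change the internal tree structure, not the cluster). Treating the stopping rule as a near-critical conditioning and reading off the width from the two-dimensional fractal dimension $d_f=91/48$ gives $\alpha=48/91\approx 0.5275$, which sits squarely inside the interval $[0.50,0.55]$ that the simulations suggest, and inside the empirical estimates $\approx 0.52$–$0.54$ in the table; this is a genuine sharpening of the conjecture and a worthwhile observation. You also correctly flag the two obstructions that keep this from being a proof: conformal invariance (hence the CLE$_6$ scaling limit) is proved only for site percolation on the triangular lattice, not for bond percolation on $\Z^2$, and the conditioning on the cluster of a fixed vertex having volume $\sim n$ is a delicate large-deviation event that is not covered by existing IIC-type constructions.

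One quantitative slip worth fixing: you state $|p^*-p_c|=O(n^{-4/91})$. A cluster of volume $n$ has diameter $R\sim n^{48/91}$, and the near-critical scaling window at scale $R$ is $|p-p_c|\sim R^{-1/\nu}$ with $\nu=4/3$, giving $|p^*-p_c|\sim n^{-36/91}$, not $n^{-4/91}$. This does not affect the width exponent $48/91$, but it should be corrected if you write this up. One could also note that the simulated graph-distance exponent $\beta$, which the paper finds harder to pin down, should in this picture be governed by the shortest-path (chemical-distance) fractal dimension $d_{\min}$ of critical 2D percolation, via $\beta = d_{\min}/d_f$; since $d_{\min}$ is not known exactly (numerically $d_{\min}\approx 1.13$, giving $\beta\approx 0.596$), this is consistent with the authors' difficulty in estimating $\beta$ and even with their suspicion of a logarithmic correction.
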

The simulations suggest that either $\beta$ exists but the sizes of the simulated trees are not large enough to estimate it, or there does not exist any such $\beta$ (a correction term like $(\log n)^\gamma$ may be needed). However, the curves \eref{fig:dladqdqtf} show that the empirical cumulative function of $ d_{ t_n}({\bf r},{\bf v})/n^\beta$ are really close for the simulated $n$ for some $\beta$, so that it can be guessed that $d_{t_n}({\bf r},{\bf v})/a_n$ should converge for some $(a_n)$ to a non-trivial limit.
\color{black}

\NewModel{The minimal weighted subtree}{MWS} 
{Consider the subtree tree ${\bf t}_n(\mu)$ of $G$ with $n$ edges and minimal weight among those with $n$ edges.}

The problem consisting in finding the minimal weighted subtree of size $n$ is called the $k$-cardinality tree problem (see e.g. Chimani et al. \cite{CKLM} and reference therein): it is a NP-complete problem, and we gave up on the idea of providing pictures for this model.

\begin{rem}
  Here, the distribution of ${\bf t}_n(\mu)$ depends on $\mu$ not only on the relative order of edges.
\end{rem}

Other optimisation problems like this one exist in the literature, for example, the Steiner tree problem (which amounts to finding the tree with minimal weight connecting a subset of nodes $U\subset V$ in a graph) and its numerous variants, for which the nodes are also weighted, for example, the node-weighted Steiner tree problem (Buchanan et al. \cite{BWB}), the edge capacitated Steiner tree problem (see Bentz et al. \cite{BCH} in which additional constraints on the tree are added), the minimum routing cost spanning-trees, which amounts to optimizing the mean distance between pairs of uniform random nodes (Wu et al. \cite{Wu}); a similar type of problem ``Optimum Communication Spanning-Trees'', introduced by Hu \cite{Hu} (see recent developments in Zetina et al. \cite{Zetina}, Luna-Mota \cite{CLM}).

\subsubsection{First passage percolation}

Again consider the same model of weighted graph $(G,\bC)$ and a distinguished vertex $r$.
Now, with each node $w\neq r$ associate the (a.s. well defined) path $\bL_w$ from $r$ to $w$ with minimal weight $\bC(w)$ (sum of the weights of the edges belonging to the path). The union of the paths $\bT(\mu):=\cup_{w\neq r} \bL_w$  forms a.s. a tree (it is connected and acyclic with probability 1, since a cycle implies that two different paths have the same weight).

\NewModel{The first passage percolation tree}{FPP}{Denote by $\bT_n(\mu)$ the tree formed by the union of the paths from $r$ to the $n$ nodes (including $r$) with the smallest weights. }

It it quite simple to find graphs and distributions $\mu$ for which $\bT_n(\mu)$ is not uniform in $\TTr{r}{G}{n}$. In $\Z^2$, there exists a limit shape theorem (Cox-Durrett shape theorem, see Auffinger et al. \cite[Section 2]{ADH} for this theorem, and an overview of last passage percolation problems).

\color{black}
\begin{figure}[h!]
  \centerline{\includegraphics[width = 8cm]{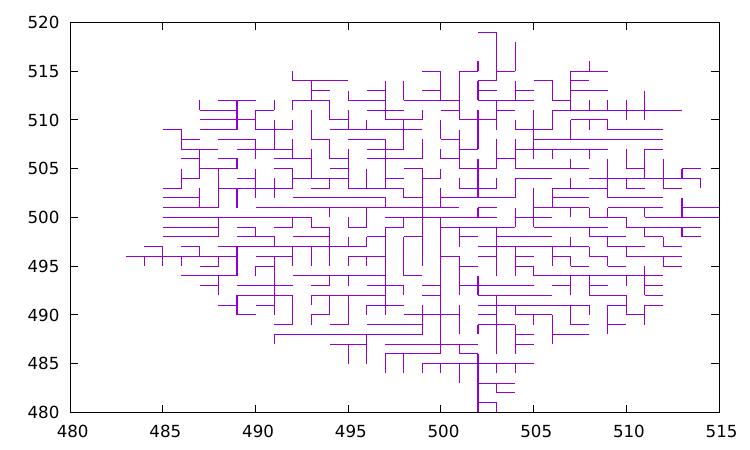}\includegraphics[width = 8cm]{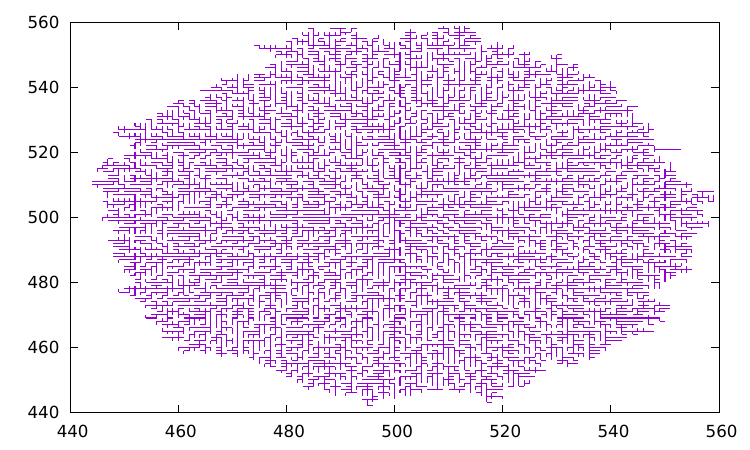}\\}
  \centerline{\includegraphics[width = 8cm]{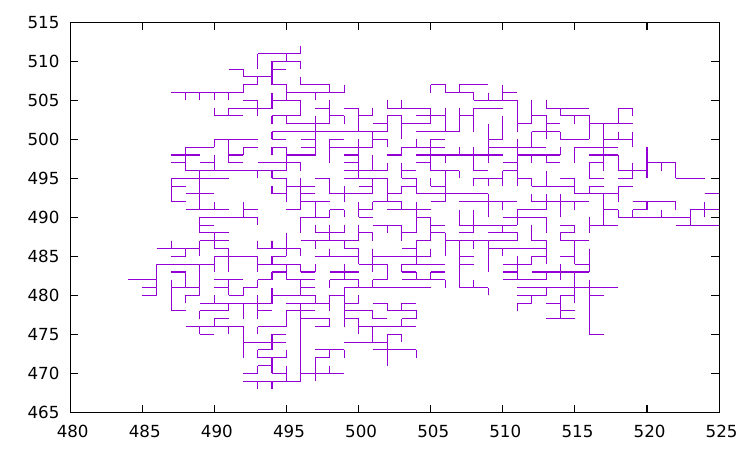}\includegraphics[width = 8cm]{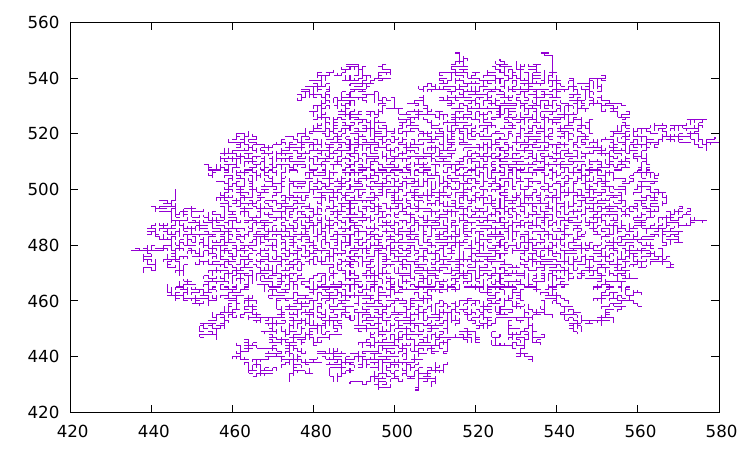}}
  \captionn{First passage percolation on the $\Torus{1000}$. On the first line, the ${\bf C}_e$ are i.i.d. uniform on $[0,1]$. Both trees are done in the same environment and have sizes 1000 and 10000.  On the second line, the weights are distributed as $1/E^5$ where $E$ has the exponential distribution with parameter 1.
  	\label{FIG:simufpp}} 
\end{figure}

\section{Random subtrees of a tree $T$ }\label{subtreetree}
\label{sec:exact_samp}

The case where $T$ is an infinite $d$-ary tree is discussed few lines above \Cref{sec:STC}. Here we focus on the case where  $T$ is a finite tree.
\subsection{Combinatorial considerations}
\label{eq:grfsdf}

For a given finite tree $T$, the polynomial $\Phi_T(x)=\sum_{k\geq 1} x^k |\TT{T}{k}|$ is called the subtree polynomial of $T$. Due to the decomposition of trees at their root, the computation of $\Phi_T$ is much less expensive than in the case of graphs, even in the weighted case (see  Yan \&  Yeh \cite{YAN2006256}). This implies that uniform sampling of subtrees of a given size of a tree can be done exactly, in principle, even on trees of big size, just by counting the number of subtrees with size $k$ containing a given vertex, and making some decomposition (see also Brown \& Mol \cite{BrMo} and reference therein).

\subsection{Exact generation of uniform or conditionally uniform subtrees} \label{sec:egu}

The absence of cycles in $G$ simplifies the implementation of the Markov chains we introduced in \Cref{sec:twoergo}. Moreover, it is easy to define monotone transition matrices for the inclusion order, so that coupling from the past techniques will be possible. In this section,  $G=(T,r)$ is a rooted tree (and we keep the notation $T=(V,E)$).\par
Recall the (rooted) kernel $K^{\Kerref{labelf}}_{r}$ in $\TTTr{r}{G}$ introduced in  Section \ref{sec:RV}, defined using a sequence of parameters $[(\pp_i,\qq_i,\rr_i), 1\leq i \leq |V|]$. 
Here the graph $G=(T,r)$ so that the attempt of addition of a new edge ${\bf e}$ to the current tree $X_0$ will never create any cycle.\\

\begin{pro}\label{pro:dqgyjr}
Consider a sequence $[(\pp_i,\qq_i,\rr_i), 1\leq i \leq |V|]$ such that $\rr_i>0$ for all $i$. For a tree $T$, consider $\bt$ taken under the invariant distribution of $K_{r}^{\Kerref{labelf}}$, for a fixed $r\in T$.  We then have 
\[
	\P(\bt=t) =\nu_1(T)\prod_{i=2}^{|t|} \left( \frac{\pp_{i-1}}{\rr_{i}} \right) 1_{t \in \TTTr{r}{T}}
\]
where $\nu_1(T)$ denotes $\nu_1$ in \Cref{pro:qsdz2} (and Rem. \ref{eq:nu}) applied to  $T$. \end{pro}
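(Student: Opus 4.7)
The proposition is essentially a specialisation of Proposition~\ref{pro:qsdz2} to the case where the ambient graph is the tree $T$. My plan is to make this reduction explicit in two short steps.

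The first observation is that on a tree the kernel $K^{\Kerref{labelf}}_r$ simplifies dramatically. When $X_0\in\TTTr{r}{T}$ and one attempts to add an edge ${\bf e}\in E(T)$, the graph $\Add(X_0,{\bf e})$ is always either $X_0$ itself (if ${\bf e}$ is already in $X_0$ or is not adjacent to it) or the subtree $X_0\cup\{{\bf e}\}$ obtained by adjoining a new leaf (if exactly one endpoint of ${\bf e}$ lies in $X_0$). The case where $\Add$ creates a cycle never occurs: such a cycle would have to be contained in $T$, contradicting the fact that $T$ is acyclic. Consequently the $\BC(\cdot)$ cycle-breaking branch of the kernel is never invoked, the reversibility hypothesis on $\BC$ plays no role, and the dynamics reduce to pure leaf-addition and leaf-removal moves (together with the ``do nothing'' choice).

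The second step is to invoke Proposition~\ref{pro:qsdz2} directly. The positivity conditions $\pp_i>0$ for $i\leq|V(T)|-1$ and $\rr_i>0$ for $i\geq 2$ guarantee irreducibility on $\TTTr{r}{T}$: any $t\in\TTTr{r}{T}$ can be reached from $\{r\}$ by a sequence of leaf additions and, conversely, can be stripped back to $\{r\}$ by leaf removals. Aperiodicity follows from the positive ``do nothing'' probability $\qq_{|X_0|}$ (or from any attempted move that is rejected). The detailed balance calculation between $t$ and $t\setminus\{e\}$ carried out in the proof of Proposition~\ref{pro:qsdz2} then goes through verbatim, and even slightly more cleanly since only the leaf-removal/leaf-addition reversal is in play. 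It yields the unique invariant distribution $\rhor{r}_t=\nu_{|t|}$ on $\TTTr{r}{T}$, with $\nu_m=\nu_1(T)\prod_{i=2}^m(\pp_{i-1}/\rr_i)$ for $2\leq m\leq|V(T)|$, the normalising constant $\nu_1(T)$ being fixed by $\sum_{n=1}^{|V(T)|}\nu_n|\TTr{r}{T}{n}|=1$. Substituting gives the announced formula (the indicator simply restricts the support to subtrees of $T$ containing $r$).

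There is no genuine obstacle in the argument. It is worth pointing out that Hypothesis~M, although assumed in the statement, plays no role in the distributional identity itself: reversibility follows from the positivity of $(\pp_i)$ and $(\rr_i)$ alone. Its purpose is to make the kernel monotone with respect to inclusion of subtrees, which is the property exploited in the coupling-from-the-past construction developed in the rest of the section, not in the formula for $\P(\bt=t)$.
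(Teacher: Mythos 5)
Your proof is correct and matches the paper's (implicit) approach: the paper states this result as a direct specialisation of Proposition~\ref{pro:qsdz2} to $G=T$, after noting---exactly as you do---that adding an edge to a subtree of a tree can never create a cycle, so the $\BC(\cdot)$ branch of the kernel is inert. Your closing remark that Hypothesis~M is not actually needed for the invariant-distribution formula itself (only positivity of the $\pp_i,\rr_i$ is, Hypothesis~M being used only for the monotone coupling from the past developed afterwards) is accurate and a useful observation.
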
 
Recall that given $T$ and $n$, ${\cal L}({\bf t}~|~|\bt|=n)$ is uniform in $ \TTr{r}{T}{n}$. This is independent of $(\pp_i)$ and $(\rr_i)$, which leaves a degree of freedom to bias the size of ${\bf t}$.
\Cref{FIG:simu2} shows some simulations obtained using this procedure on a uniform spanning tree of the torus.
\begin{rem}
  A particular case is obtained when $\pp_{i-1}=\rr_i$ for all $i\in \cro{2,|V|}$, since this reduces to the sampling of a uniform subtree of $T$.
\end{rem}

\paragraph{A coupling from the past for $K^{\Kerref{labelf}}_{r}$.}
Consider the following condition:
\be
{\sf Hypothesis~M}:
&&\pp_1\leq \pp_2\leq \cdots\leq \pp_{|V(T)|-1},\\
&& \rr_2\geq \cdots \geq \rr_{|V(T)|}.
\ee
Since $\pp_i+\rr_i+\qq_i=1$, it is also required that $\rr_k+\pp_k\leq 1$.
In other words, the bigger the tree is, the faster it grows, and the smaller the tree is, the faster it shrinks.

We will show that under the {\sf Hypothesis~M}, it is possible to couple the Markov chain under the transition matrix  $K^{\Kerref{labelf}}_{r}$ so that it is monotone for the inclusion partial order, where for $t,t'\in \TTTr{r}{T}$ we say that $t\preceq t'$ if $E(t)\subset E(t')$.  This partial order possesses as least element the tree $\underline{t}=\{r\}$ (reduced to its root), and as greatest element,  the complete tree $\overline{t}=T$. 

For more information on the coupling from the past when the space state possesses a partial order with a unique minimal and a unique maximal element, we refer to \cite{PW96,PW98b}.\\
The realization of the coupling is done according to the following lines.
First, define a function
\[\app{f}{\TTTr{r}{T}\times E(T)\times [0,1]}{\TTTr{r}{T}}{(t,e,{\sf v})}{\begin{cases}
		\Add(t,e)& \text{ if } {\sf v}\leq \pp_{|t|},\\
		\Rem_r(t,e)& \text{ if } {\sf v}\geq 1-\rr_{|t|},\\
		t & \text{ if }\pp_{t}<{\sf v}< 1-\rr_{|t|}.
	\end{cases}}	
    \]
Consider a realization of a sequence of i.i.d. vectors $((\textbf{e}_k,\textbf{v}_k): k\in \Z)$ where $\textbf{e}_k\sim \uniform{(E(T))}$ is independent of $\textbf{v}_k\sim \uniform{[0,1]}$. Now, set  \[f_k(\cdot)= f(\cdot,\textbf{e}_k,\textbf{v}_k)\]
and for every pair of integers $(k_1,k_2)$ such that $k_1<k_2$ we consider \[F_{k_1}^{k_2}(t) = f_{k_2}\circ f_{k_2-1}\circ \dots \circ f_{k_1}(t).\]
For a reader not familiar with this kind of considerations, there are two key points:\\
$\bullet$ firstly, for any $t\in \TTTr{r}{T}$, the process $(F_{0}^k(t),k\geq 0)$ has the distribution of a Markov chain with kernel $K^{\Kerref{labelf}}_{r}$ with initial state, the tree $t$,\\
$\bullet$  and secondly,  a natural coupling is provided since the complete family $[(F_{0}^k(t),k\geq 0),t\in \TTTr{r}{T}]$ can be constructed altogether simultaneously since they are built using the same source of randomness. \\
The ${\sf Hypothesis~M}$ ensures the monotonicity of the chain: a direct consequence of this hypothesis and of the definition of $f$, is that, for every $(e,v)\in E(G) \times [0,1]$ and any $t,t'\in \TTTr{r}{T}$
\[ t\preceq t' \imp f(t,e,v) \preceq f(t',e,v),\] 
and therefore, for every $k_1\leq k_2$, $F_{k_1}^{k_2}(t) \preceq  F_{k_1}^{k_2}(t')$ too. In particular, for each tree $t \in \TTTr{r}{G}$,
\[F_{k_1}^{k_2}(\underline{t}) \preceq  F_{k_1}^{k_2}(t) \preceq F_{k_1}^{k_2}(\bar{t}).\] Hence $F_{t_1}^{t_2} (\underline{t}) = F_{t_1}^{t_2} (\overline{t})$ iff $ F_{t_1}^{t_2} (t)$ is the same for all $t\in \TT{r}{T}$.

We recall the monotone coupling from the past algorithm.\\
\ligne
\underline{{\sf Monotone coupling from the past:}}\\
$\bullet$ $iter := 2$ (or another free parameter $>1$)\\
$\bullet$ $s:=1$\\
$\bullet$ {\bf While $F_{-s}^0(\underline{t}) \not= F_{-s}^0(\overline{t})$ do}\\ 
\phantom{qsdsdqzs}\quad~~~$s := s  \times iter$\\
$\bullet$ {\bf End while}\\
$\bullet$ {\bf Return $F_{-s}^0(\underline{t})$}\\
\ligne

The backward chain $(F_{-s}^{0}(\cdot): s\in \N)$ is  indirectly related to the forward chain $(F_0^{s}(\cdot): s\in \N)$;  set
\[\overrightarrow{\tau} = \inf\{ s\geq  0: F_0^s(\underline{t})=F_0^s(\overline{t}) \} ~~\textrm{ and }~~\overleftarrow{\tau} = \max\{ s\leq 0 : F_{-s}^0(\underline{t})=F_{-s}^0(\overline{t}) \}\]
the so-called forward and backward coupling time, respectively. 
As stated in  \cite[P. 21]{PW96}, $\overrightarrow{\tau}$ and $\overleftarrow{\tau}$ have the same distribution.\\
We will give some bounds on the forward coupling time. 

  For each  ``time'' $s\geq 0$, define a colouring $C_s=(C_s(w), w\in V(T))$ of the vertex set of $T$ as follows: \\
  -- if $w \in V(F_0^s(\underline{t}))$, set $C_s(w)= {\sf red}$,\\
  -- if $w  \in V(F_{0}^s(\overline{t}))\setminus V(F_{0}^s(\underline{t}))$, set $C_s(w)= {\sf white}$,\\
  -- otherwise set $C_s(w)={\sf black}$.\\
  At time 0, $C_0(w)={\sf white}$ for all nodes of $T$, except the root which is ${\sf red}$. For any $s$, the set of {\sf red} vertices are those of the ``minimal tree'', $F_{0}^s(\underline{t})$, while  those of the ``maximal tree'' $F_{0}^s(\overline{t})$ are in the union of the {\sf red} and {\sf white} nodes. 
The coupling time $\overrightarrow{\tau}$ coincides with the time where there is no {\sf white} vertex left. 
\begin{figure}[h]
	\centering
	\begin{subfigure}{0.47\textwidth}
		\begin{tikzpicture}[scale= 0.5,>=stealth']
			
			\tikzstyle{level 1}=[level distance=10mm,sibling distance=80mm]
			\tikzstyle{level 2}=[level distance=10mm,sibling distance=40mm]
			\tikzstyle{level 3}=[level distance=10mm,sibling distance=20mm]
			\tikzstyle{level 4}=[level distance=10mm,sibling distance=10mm]
			\tikzstyle{level 5}=[level distance=10mm,sibling distance=5mm]
			
			\node [arn_r] {}
			child{ node [arn_r] {} 
				child{ node [arn_r] {} 
					child{ node [arn_r] {}  
						child{ node [arn_x] {} 
							child{ node [arn_x] {}} 
							child{ node [arn_n] {}}
						}
						child{ node [arn_x] {}
							child{ node [arn_n] {}}
							child{ node [arn_n] {}}
						}  
					}                          
					child{ node [arn_x] {}
						child{ node [arn_x] {} 
							child{ node [arn_n] {}} 
							child{ node [arn_n] {}}
						}
						child{ node [arn_n] {}
							child{ node [arn_n] {}}
							child{ node [arn_n] {}}
						}                 
					}           
				}
				child{ node [arn_x] {}
					child{ node [arn_x] {}  
						child{ node [arn_n] {} 
							child{ node [arn_n] {}} 
							child{ node [arn_n] {}}
						}
						child{ node [arn_n] {}
							child{ node [arn_n] {}}
							child{ node [arn_n] {}}
						}  
					}                          
					child{ node [arn_x] {}
						child{ node [arn_n] {} 
							child{ node [arn_n] {}} 
							child{ node [arn_n] {}}
						}
						child{ node [arn_n] {}
							child{ node [arn_n] {}}
							child{ node [arn_n] {}}
						}                 
					}           
				}                            
			}
			child{ node [arn_r] {} 
				child{ node [arn_r] {} 
					child{ node [arn_r] {}  
						child{ node [arn_x] {} 
							child{ node [arn_x] {}} 
							child{ node [arn_n] {}}
						}
						child{ node [arn_n] {}
							child{ node [arn_n] {}}
							child{ node [arn_n] {}}
						}  
					}                          
					child{ node [arn_x] {}
						child{ node [arn_n] {} 
							child{ node [arn_n] {}} 
							child{ node [arn_n] {}}
						}
						child{ node [arn_n] {}
							child{ node [arn_n] {}}
							child{ node [arn_n] {}}
						}                 
					}           
				}
				child{ node [arn_r] {}
					child{ node [arn_r] {}  
						child{ node [arn_r] {} 
							child{ node [arn_r] {}} 
							child{ node [arn_r] {}}
						}
						child{ node [arn_r] {}
							child{ node [arn_n] {}}
							child{ node [arn_n] {}}
						}  
					}                          
					child{ node [arn_x] {}
						child{ node [arn_n] {} 
							child{ node [arn_n] {}} 
							child{ node [arn_n] {}}
						}
						child{ node [arn_n] {}
							child{ node [arn_n] {}}
							child{ node [arn_n] {}}
						}                 
					}           
				}                            
			}
			; 
		\end{tikzpicture}
		\caption{Intermediate phase}
	\end{subfigure}
	\begin{subfigure}{0.47\textwidth}
		\centering
		\begin{tikzpicture}[scale= 0.5,>=stealth']
			
			\tikzstyle{level 1}=[level distance=10mm,sibling distance=80mm]
			\tikzstyle{level 2}=[level distance=10mm,sibling distance=40mm]
			\tikzstyle{level 3}=[level distance=10mm,sibling distance=20mm]
			\tikzstyle{level 4}=[level distance=10mm,sibling distance=10mm]
			\tikzstyle{level 5}=[level distance=10mm,sibling distance=5mm]
			
			\node [arn_r] {}
			child{ node [arn_r] {} 
				child{ node [arn_r] {} 
					child{ node [arn_n] {}  
						child{ node [arn_n] {} 
							child{ node [arn_n] {}} 
							child{ node [arn_n] {}}
						}
						child{ node [arn_n] {}
							child{ node [arn_n] {}}
							child{ node [arn_n] {}}
						}  
					}                          
					child{ node [arn_r] {}
						child{ node [arn_r] {} 
							child{ node [arn_r] {}} 
							child{ node [arn_n] {}}
						}
						child{ node [arn_r] {}
							child{ node [arn_n] {}}
							child{ node [arn_n] {}}
						}                 
					}           
				}
				child{ node [arn_r] {}
					child{ node [arn_r] {}  
						child{ node [arn_n] {} 
							child{ node [arn_n] {}} 
							child{ node [arn_n] {}}
						}
						child{ node [arn_n] {}
							child{ node [arn_n] {}}
							child{ node [arn_n] {}}
						}  
					}                          
					child{ node [arn_n] {}
						child{ node [arn_n] {} 
							child{ node [arn_n] {}} 
							child{ node [arn_n] {}}
						}
						child{ node [arn_n] {}
							child{ node [arn_n] {}}
							child{ node [arn_n] {}}
						}                 
					}           
				}                            
			}
			child{ node [arn_r] {} 
				child{ node [arn_r] {} 
					child{ node [arn_r] {}  
						child{ node [arn_r] {} 
							child{ node [arn_n] {}} 
							child{ node [arn_n] {}}
						}
						child{ node [arn_n] {}
							child{ node [arn_n] {}}
							child{ node [arn_n] {}}
						}  
					}                          
					child{ node [arn_n] {}
						child{ node [arn_n] {} 
							child{ node [arn_n] {}} 
							child{ node [arn_n] {}}
						}
						child{ node [arn_n] {}
							child{ node [arn_n] {}}
							child{ node [arn_n] {}}
						}                 
					}           
				}
				child{ node [arn_r] {}
					child{ node [arn_r] {}  
						child{ node [arn_r] {} 
							child{ node [arn_r] {}} 
							child{ node [arn_n] {}}
						}
						child{ node [arn_n] {}
							child{ node [arn_n] {}}
							child{ node [arn_n] {}}
						}  
					}                          
					child{ node [arn_r] {}
						child{ node [arn_r] {} 
							child{ node [arn_n] {}} 
							child{ node [arn_n] {}}
						}
						child{ node [arn_n] {}
							child{ node [arn_n] {}}
							child{ node [arn_n] {}}
						}                 
					}           
				}                            
			}
			; 
		\end{tikzpicture}
		\subcaption{Merged state}
	\end{subfigure}
	\captionn{\label{fig:sqd} A possible evolution of the coupled forward Markov chain, on a binary tree $T$.}
\end{figure}\\
For $t\in \TTTr{r}{T}$ define the set of perimeter sites of $t$ as $V_p(t)=\{ v\in V: d_T(v,t)=1\}$.
The set of leaves of $t$ as $V_\ell(t)=\{ v\in V: v \text{ leaf of } t \}$. Also, define the maximal sizes of the perimeter and leaves sets for a tree with $k$ nodes as
\begin{align}
		\overline{V_p(k)} &= \max\{ |V_p(t)|:  t\in \TTr{r}{T}{k}\}\\
		\underline{V_\ell(k)} &= \min\{|V_\ell(t)|: t\in \TTr{r}{T}{k}\}
\end{align}
\begin{pro} \label{pro:tyher}
  Suppose {\sf Hypothesis~M} holds. If for all $i\in \cro{2,|V(T)|-1}$,
    $\pp_i/\rr_i\leq c \underline{V_\ell(i)}/\overline{V_p(i)}$  then 
	\[
		`E(\overrightarrow{\tau}) = `E(\overleftarrow{\tau}) \leq 
                  (N-1)\sum_{j=2}^{N}\frac{(j-1)}{\rr_j\underline{V_\ell(j)}}\frac{c^{j-1}-1}{c-1}.\]
In particular if $T$ is a complete $d$-ary tree with height $h$, then with $N = (d^{h+1}-1)/(d-1)$ vertices, 
\ben\label{eq:djyqsd}
`E(\overrightarrow{\tau}) \leq \frac{(N-1)c}{\pp_2(1-c)(d-1)} \left((N-1)-\frac{c-c^N}{1-c}\right)
\een
\end{pro}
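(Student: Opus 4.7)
The plan is to (i) verify that the coupling is monotone under Hypothesis M, (ii) reduce the coupling-time bound to a hitting-time estimate for the size process $\cv{\overline{B}(s)}$, and (iii) apply a standard birth--death hitting-time computation.

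\textbf{Monotonicity.} For $t \preceq t'$ in $\TTTr{r}{T}$, one has $\cv{t} \leq \cv{t'}$, so Hypothesis M gives $\pp_{\cv{t}} \leq \pp_{\cv{t'}}$ and $1-\rr_{\cv{t}} \leq 1-\rr_{\cv{t'}}$. A case analysis on $v \in [0,1]$ yields $f(t,e,v) \preceq f(t',e,v)$: if both trees trigger an addition, the same edge is added or none; if both trigger a removal, the same leaf is removed or none; if only $t'$ triggers an addition, the added vertex lies outside $t' \supseteq t$; if only $t$ triggers a removal, the removed leaf lies inside $t \subseteq t'$. Consequently $\underline{A}(s) := F_0^s(\underline{t}) \subseteq F_0^s(\overline{t}) =: \overline{B}(s)$ for all $s \geq 0$, and the coupling occurs at latest when $\cv{\overline{B}(s)} = 1$ (which forces $\overline{B}=\{r\}=\underline{A}$). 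The identity $`E(\overrightarrow{\tau}) = `E(\overleftarrow{\tau})$ is the classical forward--backward identity recalled before the proposition.

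\textbf{Birth--death domination.} Conditional on $\overline{B}(s) = t$ with $\cv{t} = j$, the probability that $\cv{\overline{B}}$ increases at the next step equals $\pp_j |V_p(t)|/(N-1)$, hence is at most $p_j := \pp_j \overline{V_p(j)}/(N-1)$; the probability of decrease equals $\rr_j |V_\ell(t)|/(N-1)$, hence is at least $q_j := \rr_j \underline{V_\ell(j)}/(N-1)$. The hypothesis $\pp_i \overline{V_p(i)} \leq c\,\rr_i \underline{V_\ell(i)}$ gives $p_j/q_j \leq c$ for $j \in \cro{2,N-1}$. Using the shared driving noise, I construct a monotone coupling embedding $\cv{\overline{B}(s)}$ into a birth--death chain $Y(s)$ on $\{1,\ldots,N\}$ (reflecting at $N$) with birth probabilities $p_j$ and death probabilities $q_j$, and satisfying $Y(s) \geq \cv{\overline{B}(s)}$ for all $s$. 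Hence $`E(\overrightarrow{\tau})$ is bounded above by the expected hitting time of $1$ by $Y$ started at $N$.

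\textbf{Hitting-time computation.} Let $u_j := `E[\tau_{j-1} \mid Y_0 = j]$. A standard one-step decomposition gives $q_j u_j = 1 + p_j u_{j+1}$ with reflecting boundary $u_N = 1/q_N$, whose solution is
\[
u_j \;=\; \sum_{k=j}^{N} \frac{1}{q_k}\,\prod_{\ell=j}^{k-1}\frac{p_\ell}{q_\ell} \;\leq\; \sum_{k=j}^{N}\frac{c^{k-j}}{q_k}.
\]
Summing $\sum_{j=2}^{N} u_j$ and inserting $q_k = \rr_k \underline{V_\ell(k)}/(N-1)$ yields the claimed inequality, after a further coarse estimate on the inner geometric sum that absorbs the $(j-1)$ factor in the stated form. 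The specialization to the complete $d$-ary tree in \eqref{eq:djyqsd} then follows by substituting the explicit combinatorial estimates for $\overline{V_p(k)}$ and $\underline{V_\ell(k)}$ in that tree, using the monotonicity of $\rr$ to replace each $\rr_k$ by $\rr_N$, and evaluating the resulting geometric sums explicitly.

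\textbf{Main obstacle.} The principal technical hurdle is rigorously constructing the dominating chain $Y$: because $\cv{\overline{B}}$'s transition probabilities depend on the entire shape of $\overline{B}$ and not merely on its size, the size process is not itself Markovian. The coupling must therefore be designed so that $Y \geq \cv{\overline{B}}$ is maintained at every step, including those where the two have separated and their dynamics are driven by the rates at different sizes. The uniform-in-shape bounds $\overline{V_p(j)}$ and $\underline{V_\ell(j)}$ are precisely what make this global domination possible.
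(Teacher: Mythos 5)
Your overall plan is close in spirit to the paper's, and you correctly identify the monotonicity of the coupling, the inequality $\ar{\tau}\leq \tau(T)$, and the right recursion and final summation. But there is a genuine gap in the middle step, and you flag it yourself without filling it: the construction of a coupling keeping $Y(s)\geq \cv{\overline{B}(s)}$ at all times. The difficulty is not merely that $\cv{\overline{B}}$ is non-Markovian; it is that even granting the pointwise transition bounds $\pp_j|V_p(t)|/(N-1)\leq p_j$ and $\rr_j|V_\ell(t)|/(N-1)\geq q_j$, a step-by-step order-preserving coupling requires a stochastic-monotonicity condition that the hypotheses do not supply. Concretely, when $Y(s)=j+1$ and $\cv{\overline{B}(s)}=j$ with shape $t$, one must rule out the event that $Y$ decreases while $\cv{\overline{B}}$ increases on the same step, which forces $q_{j+1}+\pp_j|V_p(t)|/(N-1)\leq 1$. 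Neither Hypothesis M nor the bound $\pp_i/\rr_i\leq c\,\underline{V_\ell(i)}/\overline{V_p(i)}$ gives this: the quantities $\rr_{j+1}\underline{V_\ell(j+1)}$ and $\pp_j\overline{V_p(j)}$ can jointly exceed $N-1$ (both scale linearly in $j$, and there is no laziness assumption making $\pp_j,\rr_j$ small). So the dominating chain $Y$, as described, may not be constructible.

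The paper's proof avoids introducing any auxiliary chain. It sets $E_k=\max\{\mathbb{E}[\tau(t)]:t\in\TTr{r}{T}{k}\}$, picks a maximizer $t_k$, and applies the one-step Markov decomposition of $\mathbb{E}[\tau(t_k)]$ \emph{directly on the tree-valued chain}. After bounding $\mathbb{E}[\tau(t_k\cup\{e\})]\leq E_{k+1}$ and $\mathbb{E}[\tau(t_k\setminus\{e\})]\leq E_{k-1}$ and rearranging, one gets $\rr_k|V_\ell(t_k)|\,\Delta_k\leq (N-1)+\pp_k|V_p(t_k)|\,\Delta_{k+1}$, and then (using $\Delta_{k+1}\geq 0$, which does follow from the tree-level monotone coupling) one replaces $|V_p(t_k)|$ by $\overline{V_p(k)}$ and $|V_\ell(t_k)|$ by $\underline{V_\ell(k)}$ to land on $\Delta_k\leq c\Delta_{k+1}+(N-1)/(\rr_k\underline{V_\ell(k)})$. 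This is precisely the recursion your $u_j$'s satisfy, so your terminal computation (including your correct observation that the inner geometric sum can be evaluated exactly, making the extra $(j-1)$ factor in the stated bound superfluous) is recoverable. In short: right recursion, right endgame, but the route through a dominating birth--death chain needs a stochastic-monotonicity hypothesis the paper does not have, and the paper's argument shows you can get the recursion without that chain.
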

\begin{proof}
For every $t \in \TTTr{r}{T}$ denote by 
$\tau(t) = \inf\{ s\geq 0: F_0^s(t)= \underline{t} \}$
the hitting time of the tree $\underline{t}$.
From the coupled forward chain we have,  
\be
\max\{\tau(t),~~t \in \TTTr{r}{T}\} =:\tau(T) \geq \ar{\tau},~~\textrm{ a.s.}
\ee

Throughout the proof, we write $N$ instead  of $|V(T)|$. Using the Markov property we get
\begin{align}\label{rec:eq}
		&\frac{|V_\ell(T)| }{N-1} \,\rr_{N}\,`E(\tau(T))= 1 +    \frac{\rr_{N}}{N-1}\sum_{e\in V_\ell(T)}  `E(\tau(T\setminus\{e\})),
\end{align}
and for $t\in \TTr{r}{T}{k}$ for $k\in \cro{2,N-1}$
\ben \label{rec:eq2}
\left(\frac{\pp_{k}|V_p(t)|}{N-1} +\frac{\rr_{k}|V_\ell(t)|}{N-1} \right) `E(\tau(t))= 1 +  \frac{\pp_{k}}{N-1}\sum_{e\in V_p(t)}  `E(\tau(t\cup\{e\})) +  \frac{\rr_{k}}{N-1}\sum_{e\in V_\ell(t)} `E(\tau(t\setminus\{e\})).
\een
Call $E_k = \max\{ `E(\tau(t)): t\in \TTr{r}{T}{k} \}$ and $\Delta_k = E_k-E_{k-1}$. Notice that $E_{1} = 0$. Bounding each term $`E(\tau(T\setminus\{e\}))$ in the right hand side of \eqref{rec:eq} by $E_{N-1}$ and by noticing that $E_N=`E(\tau(T))$ we obtain
\ben\label{eq:egzr}
\Delta_{N} \leq  (N-1)/(\rr_N \underline{V_\ell(N)}).\een 
For $k\in \cro{2,N-1}$ fix $t_k$ one of the trees attaining $E_k$. Now, consider \eqref{rec:eq2} applied to $t_k$ and bound each $`E(\tau(t\cup\{e\}))$ and $`E(\tau(t\setminus\{e\}))$ in the right hand side respectively by $E_{k+1}$ and $E_{k-1}$. 
		\begin{align}\label{rec:eq3}
			\left( \pp_{k}\frac{|V_p(t_k)|}{N-1}+ \rr_{k}\frac{|V_\ell(t_k)|}{N-1}\right) E_{k}&\leq  1 +  \frac{\pp_{k}|V_p(t_k)|}{N-1} E_{k+1} +  \frac{\rr_{k}|V_\ell(t_k)|}{N-1}  E_{k-1}\\
	\label{rec:eqqsdqd3}		\implies\qquad \frac{\rr_{k}|V_\ell(t_k)|}{N-1}\Delta_{k} &\leq \frac{\pp_{k}|V_p(t_k)|}{N-1}  \Delta_{k+1} + 1
                \end{align}
Therefore for $k\in \cro{2,N-1}$, using the definition of $\overline{V_p(k)}$, $\underline{V_\ell(k)}$ and the hypothesis that $\pp_k/\rr_k \leq \underline{V_\ell(k)}/\overline{V_p(k)} $ one obtains
      	\begin{align}
	\Delta_{k} &\leq \frac{\pp_{k}|V_p(t_k)|}{\rr_{k}|V_\ell(t_k)|}  \Delta_{k+1} + \frac{N-1}{\rr_{k} |V_\ell(t_k)|}
	\leq \frac{\pp_{k}\overline{V_p(k)}}{\rr_{k}\underline{V_\ell(k)}}  \Delta_{k+1} + \frac{N-1}{\rr_{k} \underline{V_\ell(k)}}
	\leq c\Delta_{k+1} + \frac{N-1}{\rr_{k} \underline{V_\ell(k)}}\label{rec:k}.
      \end{align}
  By repeatedly applying \eqref{rec:k} and finally \eqref{eq:egzr} one obtains that for all $k\in \cro{2,N}$ one has
  \[\Delta_k \leq \sum_{j=k}^{N}c^{j-k}\frac{N-1}{\rr_j \underline{V_\ell(j)}}. \]
To conclude notice that $`E(\tau(T))= E_{N} = \sum_{k=2}^{N} \Delta_k$ and therefore this gives
\be
`E(\overrightarrow{\tau}) \leq E_N = \sum_{k=2}^{N} \sum_{j=k}^{N}c^{j-k}\frac{N-1}{\rr_j \underline{V_\ell(j)}}= \sum_{j=2}^{N}\left( \sum_{k=2}^{j}c^{j-k}\right)\frac{N-1}{\rr_j \underline{V_\ell(j)}}\leq (N-1)\sum_{j=2}^{N}\frac{1}{\rr_j\underline{V_\ell(j)}}\frac{c^{j-1}-1}{c-1}.
\ee
To conclude the second part on the $d$-regular tree we use that by {\sf Hypothesis~M}, $\pp_j$ is non-decreasing, that $\pp_i/\rr_i\leq c \underline{V_\ell(i)}/\overline{V_p(i)}$ and that the infinite $d$-regular tree satisfies $\overline{V_p(i)} = (i+1)(d-1)-1$ which is bigger than $i(d-1)$ for $d>1$.
\end{proof}

\subsection{Leaf evaporation}
\label{sec:LEV}

\NewSModel{Uniform Leaf evaporation}{ULE}{
Take a tree $T$ with $N$ nodes, and define $({\sf LeafEvaporation} (T , k),0\leq k \leq N-1)$ as follows: ${\sf LeafEvaporation} (T , 0)=T$, and for $k>0$, ${\sf LeafEvaporation} (T , k)$ is obtained by the removal of a uniform leaf of ${\sf LeafEvaporation} (T , k-1)$ (so that $k$ counts the number of evaporated edges).}
\begin{rem}[rooted versus unrooted case]
  There are two natural variants of this algorithm depending on whether we work with unrooted tree $T$, in which case, all nodes of degree 1 are leaves, or if $T=(T,r)$ is a rooted tree, in which the root $r$ is never considered as a leaf (this is the standard convention).
\end{rem}
We consider the rooted case here: the root $r$ is never considered as a leaf. 
Any history of leaf evaporation can be encoded by labelling the edges of the initial tree by the date of evaporation of the leaves from 1 to $|T|-1$. 
For $t \in \TTr{r}{G}{n}$, consider the set $H[T,t]$ of labelling of the edges of $T\setminus t$ by the integers between $1$ and $|T\setminus t|$, such that, the labels of the edges on any injective path from any leave of $T$ to $t$ are increasing. The following result describes the law of the remaining tree after $N-n$ leaf evaporations:
\begin{pro} For $t \in \TTr{r}{G}{n}$,
\[\P\l[ {\sf LeafEvaporation} (T , N-n)=t\r]=\sum_{h \in H[T,t]} \prod_{ x =0}^{|T\setminus t|-1}\frac{1}{|\partial (T \setminus \{e: h(e) \leq x \}|} \]
\end{pro}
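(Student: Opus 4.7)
The plan is to set up a bijection between possible leaf-evaporation histories whose outcome is $t$ and labelings $h\in H[T,t]$, to compute the probability of each history, and finally to sum.

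First, I would encode histories as labelings. A history ending at $t$ after $N-n$ steps is a sequence $(\ell_1,\ldots,\ell_{N-n})$ of leaves removed at each step. To such a history I associate the labeling $h$ of $E(T\setminus t)$ given by $h(e_i)=i$, where $e_i$ is the unique edge of $T\setminus t$ incident to $\ell_i$ in the tree present just before step $i$ (this is well defined, since $\ell_i$ is a leaf at that moment and the unique incident edge is removed together with it). Next, I would check $h\in H[T,t]$: along any injective path $\ell = v_0,v_1,\ldots,v_k$ from a leaf $\ell$ of $T$ to a node $v_k\in t$, with edges $f_j=\{v_{j-1},v_j\}$, the node $v_j$ can only become a leaf of the current tree after its sub-branch below has been stripped, so necessarily $h(f_1)<h(f_2)<\cdots<h(f_k)$. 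Conversely, given any $h\in H[T,t]$, reading the edges of $T\setminus t$ in increasing order of $h$ produces a valid history: at time $i$, the edge $e=h^{-1}(i)$ has its outer endpoint as a leaf in the remaining tree, precisely because the increasing condition along every path from a leaf of $T$ through $e$ forces all edges below $e$ to already be removed.

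Second, I would compute the probability of a single history associated with $h$. At the beginning of step $i$, the current tree is exactly $T\setminus\{e:h(e)\leq i-1\}$ (with the corresponding outer nodes removed, but $r\in t$ is never involved). Choosing $\ell_i$ uniformly among the leaves of this tree contributes a factor $1/|\partial(T\setminus\{e:h(e)\leq i-1\})|$, and by the product rule the probability of the whole history is
\[
\prod_{i=1}^{|T\setminus t|}\frac{1}{|\partial(T\setminus\{e:h(e)\leq i-1\})|}
\;=\;\prod_{x=0}^{|T\setminus t|-1}\frac{1}{|\partial(T\setminus\{e:h(e)\leq x\})|}.
\]
Finally, the event $\{\mathsf{LeafEvaporation}(T,N-n)=t\}$ decomposes as a disjoint union of the events corresponding to each history, hence (by the bijection) over $h\in H[T,t]$, and summing the per-history probabilities yields the claimed formula.

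The main obstacle is the bijection in the first step: one must argue carefully that the path-monotonicity condition is precisely the combinatorial translation of ``every removed edge is incident to a leaf of the current tree''. A clean way is to identify valid labelings with linear extensions of the poset on $E(T\setminus t)$ where $e \prec e'$ iff $e$ lies strictly below $e'$ on the path from some leaf of $T$ to $t$, and to observe that $\{e:h(e)\leq x\}$ is then a down-closed set, which is exactly what is needed so that the complement in $T$ of the already-removed part is still a tree containing $r$ with the right leaves.
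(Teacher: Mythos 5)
Your proof is correct and follows essentially the same route as the paper: identify histories of leaf evaporation with labelings in $H[T,t]$, compute the probability of each history as the product of reciprocals of the current number of leaves, and sum. The paper states this in one sentence (``For each history, at each step, the probability to remove a given leaf is the inverse of the current number of leaves''), while you spell out the bijection and the path-monotonicity argument in more detail; both are the same argument.
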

\begin{proof}For each history, at each step, the probability to remove a given leaf is the inverse of the current number of leaves.\end{proof}
  
\begin{figure}[h!]
  \centerline{\includegraphics[width=8cm]{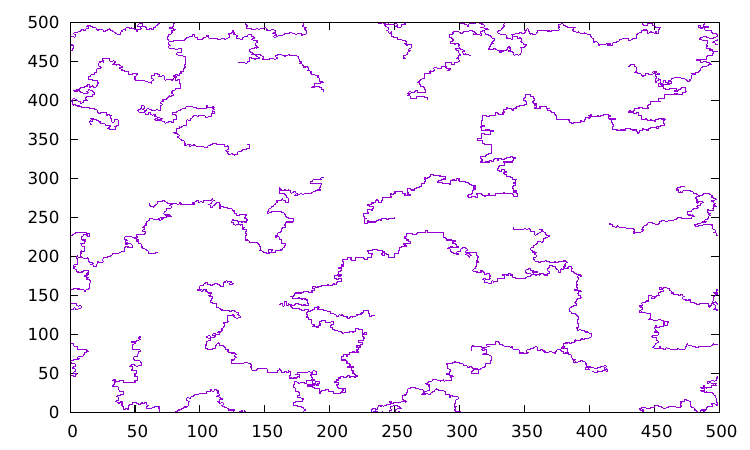}\includegraphics[width=8cm]{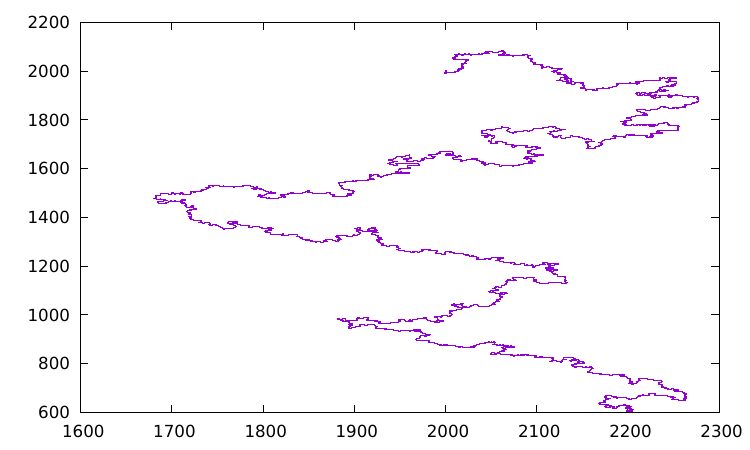}}
  \captionn{\label{ULR} A tree extracted by the leaf evaporation algorithm with $n=10000$ nodes, on a UST of $\Torus{500}$ (left) and on $\Torus{4000}$ (right). }
  \end{figure}

  \begin{rem}The successive removal of leaves induces an order on the set of edges, and this gives a total order if no two edges are removed simultaneously\footnote{Neville code \cite{neville_1953} uses ``evaporation by layers'', and does not provide this ``total order property''}. If one labels the edges by their chronological rank in the evaporation process, the induced labelling of the edges is increasing on any injective path starting at a leaf and ending at the root (or at the terminal node, in the non rooted case): globally, labeling the nodes by their rank provides a decreasing tree; decreasing trees (the classical terminology is increasing trees) have been studied independently, but in general, ``the labels are not added at the end, when the tree is made, but rather, is produced along the construction of edges'' (often, the tree is constructed by successive addition of edges, and the rank of appearance of the new edge, is its label in the tree): see eg. Bergeron et al. \cite{BFS}, Broutin et al. \cite{BDMS}); however, in  Marckert \& Wang \cite{MR3912100}, some processes similar to leave evaporation appear on a uniform Cayley trees in link with the additive coalescent).\end{rem}

\NewSModel{Evaporation of the smallest leaf}{ESL}
{Consider a rooted tree $T$ with $N$ nodes in which the edges are equipped with i.i.d. weights taken under $\mu\sim \uniform[0,1]$.
Successively, remove the leaf adjacent to the edge with the smallest weight among those adjacent to leaves. The set of leaves evolves, as leaves are removed: this forms a sequence of tree $T_N=T,\cdots,T_1=r$ where $T_i$ has $i$ nodes. Return $T_n$ if the target size is $n$ (see some simulation in Fig. \ref{EME}). }
Notice that instead of $\mu$, since only the relative order of the matter of the weight, any atomless measure  $\mu$  gives the same model.

\begin{figure}[h!]
  \centerline{\includegraphics[width=8cm]{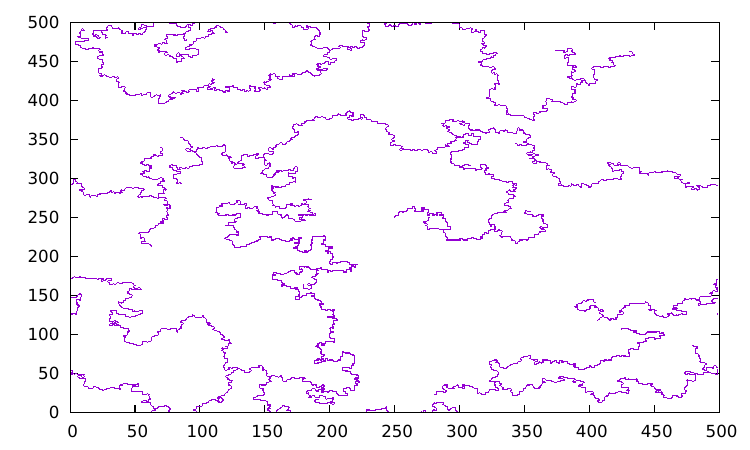}
 \includegraphics[width=8cm]{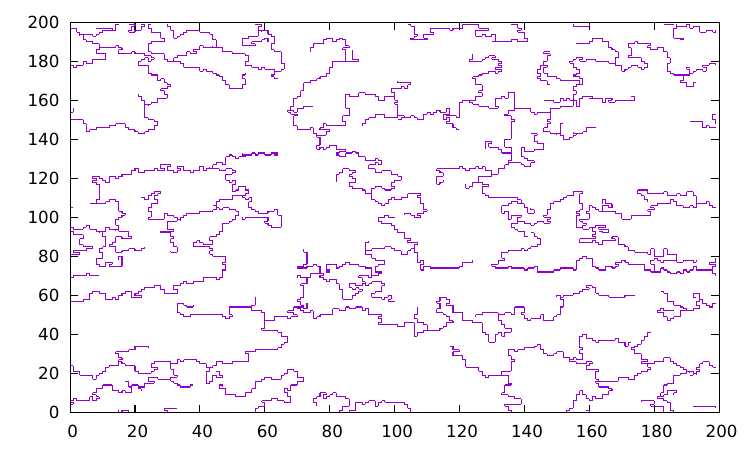}}
  \captionn{\label{EME}  A tree extracted by the removal of the edges with smallest label with $n=5000$ nodes, on a UST of the $\Torus{500}$ (left) and on $\Torus{200}$ (right). As the time passes by, the general tendency is that the edges with minimal weights become leaves, and stay leaves a lot of time.} 
\end{figure}

Denote by $(T,\sigma)$ a labelling of the edges of $T$ by a uniform permutation of $\{1,\cdots,N\}$ where $N=|E(T)|$.
\begin{Ques} Give a nice description of the distribution of the tree remaining when all nodes , except $n$, have evaporated. 
 \end{Ques}
 We have a description of the remaining tree distribution but we feel that something deeper is hidden: the leaf evaporation depends only on the induced random order $\sigma$ of the weighted edges, as defined in \eref{eq:sig}.  Let us put a second label $\ell$ on each edge, corresponding to the date of disappearance of this edge (ranked from 1 to $|T|$); after elimination, the label are increasing on each simple path leading from a leaf of $T$ toward the single remaining node (or the subtree $t$ obtained, if the leaf evaporation is stopped when a certain size is reached): call such a labelling, a valid labelling. Now, the evaporation leads to $t$, if the $N-n$ smallest $\ell$-labels are on the edges of $T\setminus t$.

Consider the map $\pi$ which sends $(T,\sigma)$ onto $(T,\ell)$, that is which gives the elimination order on the edges of $T$. To describe the distribution of the remaining tree $t$, it suffices to be able to compute $ \l|\pi^{-1}(T,\ell)\r|$ for any valid $\ell$. We will see that this is somehow explicit: 
\begin{lem} For $\ell$ valid, the elements of $\pi^{-1}(T,\ell)$ are the $\sigma$ that satisfies,  $\ell(e_1)<\ell(e_2)$ implies
\[\sigma(e_1)<\sigma(e_2), \textrm{ if $e_2$ is a leaf at time $\ell(e_1)-1$}.\]
\end{lem}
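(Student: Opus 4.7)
Denote by $R_k = \{e \in E(T) : \ell(e) \leq k\}$ the set of edges removed during the first $k$ steps of the evaporation driven by $\ell$, and by $\Lambda_k = \{e \in E(T) \setminus R_k : e \text{ is adjacent to a leaf of the tree } T \setminus R_k\}$ the set of leaf-edges of the current tree just after step $k$, with $\Lambda_0$ equal to the set of leaf-edges of $T$ itself (the root $r$ never counted as a leaf). The key preliminary remark is that validity of $\ell$ forces $e_k^\star := \ell^{-1}(k) \in \Lambda_{k-1}$ for every $k$: indeed, all edges lying ``below'' $e_k^\star$ in the rooted tree (on the subtree cut off on the non-root side of $e_k^\star$) carry strictly smaller $\ell$-labels by validity, hence have been removed by time $k-1$, so the non-root endpoint of $e_k^\star$ is a leaf of $T \setminus R_{k-1}$.

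\emph{Necessity.} Assume $\pi(T,\sigma) = (T,\ell)$ and fix any $e_1$, setting $k := \ell(e_1)$. By definition of the evaporation dynamics, $e_1$ is the element of $\Lambda_{k-1}$ minimising $\sigma$, so $\sigma(e_1) < \sigma(e)$ for every $e \in \Lambda_{k-1} \setminus \{e_1\}$. In particular, whenever $e_2$ satisfies $\ell(e_1) < \ell(e_2)$ \emph{and} $e_2$ is a leaf-edge of the tree at time $k-1$ (i.e.\ $e_2 \in \Lambda_{k-1}$), then $\sigma(e_1) < \sigma(e_2)$, which is the advertised implication.

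\emph{Sufficiency.} Conversely, suppose $\sigma$ satisfies the implication of the lemma, and let $\ell'$ denote the labelling produced by running the evaporation algorithm under $\sigma$. I prove by induction on $k$ that the first $k$ edges removed under $\sigma$ are exactly $e_1^\star, \dots, e_k^\star$ (in this order). The case $k=0$ is vacuous. Assuming the claim holds up to step $k-1$, both the $\ell$- and the $\sigma$-driven processes see the same current leaf-edge set $\Lambda_{k-1}$ at step $k$. By the preliminary remark $e_k^\star \in \Lambda_{k-1}$, and for any other $e \in \Lambda_{k-1}$ one has $\ell(e) \geq k$ and $\ell(e) \neq k$, hence $\ell(e_k^\star) = k < \ell(e)$ together with $e \in \Lambda_{\ell(e_k^\star) - 1}$; the hypothesis then yields $\sigma(e_k^\star) < \sigma(e)$. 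Consequently the evaporation algorithm selects $e_k^\star$ at step $k$, so $\ell'(e_k^\star) = k = \ell(e_k^\star)$, completing the induction and giving $\ell' = \ell$.

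\emph{Main obstacle.} The only subtle point is the preliminary remark, namely the structural fact that validity of $\ell$ automatically guarantees that each $e_k^\star$ is already a leaf-edge at the moment it is supposed to evaporate; once this is recorded, the two directions follow from the very definition of the evaporation dynamics and from the fact that the algorithm depends on $\sigma$ only through the relative order of the weights currently carried by leaf-edges.
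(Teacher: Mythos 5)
Your proof is correct and follows essentially the same idea as the paper: the characterisation comes from observing that when $e_1$ is about to evaporate, it must carry the smallest $\sigma$-value among the current leaf-edges, with no constraint imposed by non-leaf edges. You are in fact somewhat more complete than the paper, which only argues necessity of the constraints and leaves sufficiency implicit, whereas you establish both directions (and isolate the needed structural fact that a valid $\ell$ automatically makes $\ell^{-1}(k)$ a leaf-edge at time $k-1$).
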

\begin{proof} Take two edges $e_1$ and $e_2$, such that $\ell(e_1)<\ell(e_2)$ so that $e_1$ is eliminated before $e_2$. Consider $\sigma(e_1)$ and $\sigma(e_2)$ the corresponding edge values.
Now, consider $T^\star=T\setminus\{ e : \ell(e) <\ell(e_1)\}$ the state of $T$ just before the elimination of $e_1$ (that is, when all the edges with smaller label than $\ell(e_1)$ are removed). In $T^\star$ the edge $e_2$ is still present, so there are two cases:\\
-- If $e_2$ is a leaf, then we must have $\sigma(e_1)< \sigma(e_2)$,\\
-- If $e_2$ is not a leaf, then $\sigma(e_2)$  may be larger or smaller than $\sigma(e_1)$. \end{proof}
For a given $(T,\ell)$, the cardinality of $\#\pi^{-1}(T,\ell)$ can be explicitly computed, but it produces an intricate formula, which needs to be summed over valid $\ell$ to compute $\P(T_n=t)$. 
\medbreak
 
The next model looks similar, but it is different; it defines a tree value process with non-increasing size~: it can reach or not the target size $n$. Up to a change of time, it is independent of $\mu$. 

\NewSModel{Evaporation of the leaves with weight $\leq w$}{EWL}
{Consider a rooted tree $(T,r)$ with $N$ nodes in which the edges are equipped with i.i.d. weights taken under $\mu\sim \uniform[0,1]$. At time $w$, consider the subtree $T(w)$ of $T$ obtained by removing the leaves with weight $\leq w$ (removing these leaves may create new leaves, at which the same procedure applies recursively). When $r$ has degree 1, it is not considered as a leaf.  }
Of course, this model is a percolation model on the weighted graph. One has,
\begin{pro} For any tree $t\in \TTTr{r}{G}$,
  \[\P_\mu(T(w)=t) = \mu(w,+\infty)^{|\partial t|} \mu[0,w)^{|T\setminus t|},\]
  where $|T\setminus t|$ is the number of nodes in $T$ that are not in $t$ (this is also the number of edges).
\end{pro}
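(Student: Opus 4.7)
The plan is to characterize the event $\{T(w)=t\}$ as the intersection of two conditions on disjoint sets of edges, so that independence of the weights delivers the product formula immediately.

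First I would establish, by induction on the height, the following claim: for a non-root node $v$, the evaporation process ultimately removes $v$ if and only if every edge in the subtree of $T$ rooted at $v$, including the parent-edge $e_v=(v,\text{parent}(v))$, has weight $\leq w$. The induction is driven by the fact that $v$ can only be removed once all its descendants have been removed, and then only if $e_v\leq w$; the leaves of $T$ form the base case.

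Next I would translate $T(w)=t$ into the two conditions
\begin{itemize}
\item[(A)] every edge of $E(T)\setminus E(t)$ has weight $\leq w$;
\item[(B)] for every $\ell \in \partial t$, the parent-edge $e_\ell$ has weight $>w$.
\end{itemize}
Because $t$ is a rooted subtree containing $r$, the map $v\mapsto e_v$ sends $V(T)\setminus\{r\}$ bijectively onto $E(T)$, with $V(T)\setminus V(t)$ mapped exactly onto $E(T)\setminus E(t)$. Hence (A) is equivalent, via the first claim, to saying that every $v \notin V(t)$ is ultimately evaporated. For the opposite side, any $u\in V(t)\setminus\{r\}$ has at least one child still present in the stable state exactly when $u\notin \partial t$, in which case $u$ never becomes a leaf and automatically survives; when $u\in\partial t$, all of $u$'s $T$-children lie outside $V(t)$ and get evaporated by (A), so $u$ turns into a leaf and survives iff $e_u>w$, which is (B). The convention that $r$ is never a leaf handles the root cleanly, so that the root is always safe and $r\notin\partial t$.

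Finally, (A) and (B) concern disjoint edge sets: (A) involves $|T\setminus t|=|V(T)|-|V(t)|$ edges and (B) involves $|\partial t|$ edges, all in $E(t)$. Since the edge weights are i.i.d.\ under $\mu$, independence gives
\[
\P_\mu(T(w)=t)=\mu[0,w)^{|T\setminus t|}\,\mu(w,+\infty)^{|\partial t|},
\]
as claimed. The only mildly delicate step is the bookkeeping at the boundary between $t$ and its exterior, where one must carefully check that a node $u\in\partial t$ really does become a leaf of the evaporating tree (using (A) on all the subtrees hanging off $u$) before being tested against $e_u>w$.
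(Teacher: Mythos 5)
Your proof is correct. The paper states this proposition without any proof, so there is nothing to compare against; your argument — induction on height to characterize which nodes evaporate, translation of $\{T(w)=t\}$ into the two conditions (A) and (B) on the disjoint edge sets $E(T)\setminus E(t)$ and $\{e_\ell:\ell\in\partial t\}$, then independence — is precisely the natural justification and is sound. (A minor pedantry: removal requires weight $\leq w$, so strictly (A) contributes $\mu[0,w]$; this matches the paper's $\mu[0,w)$ only because $\mu$ is non-atomic, which you may want to flag explicitly.)
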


Reducing progressively the tree size one by one so that a target size is reached for sure is natural, and \textbf{Models \Modref{ULE}} and \textbf{\Modref{ESL}} are of this type. In the literature, one finds some works \cite{MSZ05} and \cite{MSZ09}, related to distributed algorithms, aiming to ``elect'' a node in a tree, using leaf evaporation (this name does not appear there, however). We present here the general evaporation scheme defined in \cite{MSZ09} which can be used to extract a subtree of a given size by stopping the process when this size is reached (this is not discussed in  \cite{MSZ05,MSZ09}).

In the sequel, we denote by $(T,w)$ an unrooted tree in which nodes are weighted by $w=(w_u,u\in V)$ by some non-negative (possibly random) real numbers, the weight of the leaves being positive; some examples will be given afterwards. The algorithm uses a family of distribution $\mu(q,.)$ on $(0,+\infty)$, for any $q>0$~: this is the lifetime distribution of an active node $u$ with parameter $q$.

\NewModel{Election type  evaporation}{}
{ At time 0 the leaves of $T$ are active and the internal nodes are not.
  A leaf $u$ with weight $w_u$ evaporates after a random time with distribution $\mu(q_u,.)$, independently of the others, where $q_u=w_u$. \par
  \bls Upon evaporation, leaf $u$ transmits its parameter $q_u$ to its single neighbour $v$ in the tree.\par
  \bls A node $v$ with degree $d$, which becomes a leaf after complete evaporation of $d-1$ of the subtrees hanging from it, becomes active (say at time $\tau$). The node $v$ has received the parameters $(q_{v_1},\cdots,q_{v_{d-1}})$ of its neighbours. It then computes its own parameter
  \[q_v = f(w_v, q_{v_1},\cdots,q_{v_{d-1}}),\]
  then generate a random variable $\tau(v)$ with distribution $\mu(q_v,.)$; the node $v$ will evaporate at global time $\tau + \tau(v)$ (hence, $\tau(v)$ is its remaining lifetime, when it becomes active).
}

The function $f$ is a parameter of the algorithm, as well as the initial weights $(w_u,u\in V)$, the family of distributions $\mu(.,.)$, and even some additional parameters can be used to store additional information, as the complete geometry of the evaporated subtrees, as well as their lifetimes, for example.

In \cite{MSZ05}, the model is as follows: the initial weight of all $u\in V$ are $w_u=1$, for all $u\in V$. The map $f$ is given by
\be f(w_v, q_{v_1},\cdots,q_{v_{d-1}})=w_v+q_{v_1}+\cdots+q_{v_{d-1}}=1+q_{v_1}+\cdots+q_{v_{d-1}}\ee
meaning that a node adds to its weight the weights transmitted from its eliminated neighbours (hence, becoming active, its weight is the size of the tree formed by $v$, and the eliminated subtrees which were hanging from it); finally, the remaining lifetime of a node with parameter $q$ is distributed as $m_q\sim {\sf Expo}(q)$,
the exponential distribution with parameter $q$. \\
The main result in \cite{MSZ05} is the following: if one  continues the elimination procedure till a single node ${\bf u}$ remains, then ${\bf u}$ is a uniform node of $V$.\par
 Denote by ${\sf Evaporation}(T,n)$ the random tree obtained from this particular election type evaporation process when only $n$ nodes remain (for $n\leq |V(T)|$).
For a given $t\in \TT{T}{n}$, the graph $T-t$ induced by the removal of edges of $t$ in $T$ is a forest composed of $n$ trees. For any $v\in t$, denote by $\Delta_v$ the tree of $T-t$ attached to $v$. We have 
\begin{theo}
	For any subtree $t\in \TT{G}{n}$,
	\[
	\mathbb{P}\l( {\sf Evaporation}(T,n) = t \r) = \frac{(|\partial t|-1)!(N-n)!}{(|\partial t|+N-n)!}\sum_{v\in \partial t}|\Delta_v|.
	\]
      \end{theo}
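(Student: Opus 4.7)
The plan is to prove the formula by downward induction on $n$, from the base case $n = N$ (where $t = T$, every $|\Delta_v^t| = 1$, and the right-hand side collapses to $1$) down to $n = 2$, using the Markov property of the tree-valued evaporation chain. The key observation is that whenever the current tree is some subtree $t'$ of $T$, the active leaves of $t'$ are exactly $\partial t'$, and the parameter of each $v \in \partial t'$ at this state equals $|\Delta_v^{t'}|$, the size of $v$'s component in the forest $T - E(t')$. This follows because each of the $|V(T)|-|V(t')|$ already-evaporated vertices has successively transferred its unit weight up the unique $T$-path toward $V(t')$, arriving at the unique alive $V(t')$-representative of its component, so that $v$ ends up with $|\Delta_v^{t'}| - 1$ inherited units on top of its own initial unit. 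Hence the one-step transition from $t'$ to $t = t' \setminus \{u\}$ has probability $|\Delta_u^{t'}|/\sum_{v \in \partial t'}|\Delta_v^{t'}|$, and the Markov property yields
\[
\P({\sf Evaporation}(T, n) = t) = \sum_{t' \supset t,\, |t'| = n+1} \P({\sf Evaporation}(T, n+1) = t') \cdot \frac{|\Delta_u^{t'}|}{\sum_{v \in \partial t'}|\Delta_v^{t'}|},
\]
where $t'$ ranges over subtrees of the form $t \cup \{u\}$ with $u \notin V(t)$ a $T$-neighbour of $V(t)$ (such a $u$ has a unique $V(t)$-neighbour $w = w(u)$ since $T$ is a tree).

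For the inductive step, I split this sum by whether $w \in \partial t$ (case (a), in which $|\partial t'| = |\partial t|$) or $w$ is internal to $t$ (case (b), in which $|\partial t'| = |\partial t| + 1$). Writing $k := |\partial t|$, $\Sigma := \sum_{v \in \partial t} |\Delta_v^t|$, and $M := k + N - n$, the tree structure gives the identifications $|\Delta_u^{t'}| = $ size of the subtree of $\Delta_w^t$ (rooted at $w$) at $u$, $|\Delta_w^{t'}| = |\Delta_w^t| - |\Delta_u^{t'}|$, and $|\Delta_v^{t'}| = |\Delta_v^t|$ for $v \notin \{u, w\}$. Summing the elementary identity $\sum_{u \text{ direct child of } w \text{ in } \Delta_w^t}|\Delta_u^{t'}| = |\Delta_w^t|-1$ first over $w \in \partial t$ and then over $w$ internal to $t$, and combining with $\sum_{w \in V(t)}(|\Delta_w^t|-1) = N-n$, gives $\sum_{(a)}|\Delta_u^{t'}| = \Sigma - k$ and $\sum_{(b)}|\Delta_u^{t'}| = (N-n)-(\Sigma-k)$. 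Plugging in the inductive hypothesis (the factor $\sum_{v \in \partial t'}|\Delta_v^{t'}|$ in it cancels the denominator above) reduces the sum to
\[
\frac{(k-1)!(N-n-1)!}{M!}\big[M(\Sigma - k) + k(N - n - \Sigma + k)\big].
\]

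The last step is a short algebraic identity: using $M - k = N - n$, one expands
\[
M(\Sigma - k) + k(N - n - \Sigma + k) = (M - k)\Sigma + k(N - n) - k(M - k) = (N - n)\Sigma.
\]
Combined with the prefactor, this gives $\frac{(k-1)!(N-n)!}{M!}\Sigma$, as claimed. The main obstacle is not the final cancellation but the bookkeeping in the case split on $w$ and the verification of the three relations between $|\Delta^{t'}_\cdot|$ and $|\Delta^t_\cdot|$, all of which rest on the fact that $T$ is a tree; once these are in place, the combinatorial identity (subtree sizes of the direct children of $w$ in a rooted tree sum to the size of the whole minus one) together with $M - k = N - n$ finishes the proof.
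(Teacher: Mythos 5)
Your proof is correct, and it takes a genuinely different route from the paper's. The paper argues in continuous time: it first shows by recurrence that the time for a hanging subtree of size $m$ to evaporate down to its root is distributed as the maximum $M_{m-1}$ of $m-1$ i.i.d.\ unit exponentials, and then certifies the event $\{{\sf Evaporation}(T,n)=t\}$ by the instant $x$ at which some $\Delta_v$ (with $v\in\partial t$) disappears entirely while every other $\Delta_u$ ($u\in V(t)$) has already shrunk to its root; integrating over $x$ produces $\sum_{v\in\partial t}|\Delta_v|\int_0^\infty e^{-|\partial t|x}(1-e^{-x})^{N-n}\,dx$, and the Beta integral gives the factorial prefactor in one stroke. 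Your argument instead works entirely with the embedded discrete-time jump chain: you identify the parameter carried by an active leaf $v$ in state $t'$ as $|\Delta_v^{t'}|$ (the same combinatorial fact that underlies the paper's $M_{m-1}$ law), deduce from competing exponentials and memorylessness that the one-step jump probability from $t'$ to $t'\setminus\{u\}$ is $|\Delta_u^{t'}|/\sum_{v\in\partial t'}|\Delta_v^{t'}|$, and then run a downward induction on the size, splitting the backward Chapman--Kolmogorov sum according to whether the attachment vertex $w(u)$ is a leaf of $t$ or internal to $t$. The trade-off: your route avoids all continuous-time computation and stays purely combinatorial (the algebraic collapse $M(\Sigma-k)+k(N-n-\Sigma+k)=(N-n)\Sigma$ does the work of the Beta integral), at the cost of the bookkeeping in the two-case split and the three subtree-size identities; the paper's route is shorter and isolates the probabilistic meaning of the prefactor, but needs the auxiliary $M_{m-1}$ lemma and the time integral. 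Both are valid, and neither trivially subsumes the other.
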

      \begin{rem} Notice that the last edge  is then uniform, as well as the last node (this case is stated in \cite{MSZ05}); in the case where the minimum degree of the internal nodes of $T$ is $m$, then the uniformity holds also for all $n\leq m$.
       \end{rem}
       \begin{proof} The evaporation process passes through $t$, if at a given moment all the trees $\Delta_v$ have disappeared, but their root (since their roots belong to $t$). It may be shown, by recurrence that, for a given tree $t'$ (whose root is never considered as a leaf) that the time $A_{t'}$ for the root to be active is distributed as $M_{|t'|-1}$, where for all $k$, $M_k$ is the maximum of $k$ independent exponential random variables with parameter 1. Once the root of such tree becomes active, it has the additional lifetime $a_{t'}\sim{\sf Expo}(|t'|)$, which it is independent of $A_{t'}$. The complete evaporation time $E_{t'}$ of $t'$, which includes the erasure of the root, is distributed as $M_{|t'|}$ since $E_{t'}:=A_{t'}+a_{t'}$. Hence  $(A_{t'},E_{t'})$ is distributed as $(M_{|t'|-1},M_{|t'|-1}+a_{|t'|})$ where the delay $a_{|t'|}$ is independent of $M_{|t'|-1}$
\ben
\label{eq:q1}\P(A_{t'}<y<E_{t'})&=&\P( M_{|t'|-1}\leq y \leq M_{|t'|})  = e ^{-y}(1-e^{y})^{|t'|-1}\\
\label{eq:q2}\P(A_{t'}<y)&=&\mathbb{P}( M_{|t'|-1} \leq y)  =  (1-e^{-y})^{|t'|-1}.
\een
Now, a certificate that the evaporation process passes through $t$ is as follows: a root of one of the $\Delta_v$ disappeared at some time $x$ at which all the other $\Delta_w$ have disappeared, but their root. This gives\\
$\mathbb{P}(  {\sf Evaporation}(T,n) = t )$
\begin{align*}
  &= \sum_{v\in \partial t}\int_0^\infty \l[\prod_{u\in \partial t\setminus\{v\}} \mathbb{P}(M_{|\Delta_{u}|-1}\leq x \leq M_{|\Delta_{u}|})\r]\l[\prod_{u\in t\setminus \partial t}\mathbb{P}( M_{|\Delta_{u}|-1} \leq x )\r]\mathbb{P}(M_{|\Delta_v|}\in dx)\\
  &= \sum_{v\in \partial t}|\Delta_v|\int_0^\infty e^{-|\partial t|x}(1-e^{-x})^{N-n}dx
\end{align*}
which suffices to conclude (the third equality comes from \eref{eq:q1} and \eref{eq:q2}).
\end{proof}

\begin{rem} In \cite{MSZ09}, much more general models of evaporation processes are designed, for which the law of the remaining tree can be computed; they can be turned into evaporation procedure and stopped when a given size is obtained. We don't pursue the description of these results here since it is not clear for the moment that they are useful to target any important distributions.\par
The configurations represented in Fig. \ref{Fig:BG} allow us to reject a lot of algorithms relying on leaf evaporation on the UST to sample $\uniform(\TT{G}{n})$; on this picture, both blue subtrees induce the same subgraph on $G$: to get them after leaf evaporation, the leaf evaporation procedure needs to destroy every green subtree before destroying any blue edge. But, blue edges do not appear simultaneously in both cases: 2 blue edges adjacent to leaves are present in the right-hand side at the beginning, and in the right-hand side, progressively, up to 5 blue leaves may be present at some (random) time during the evaporation. 
\begin{figure}[h!]
  \centerline{\includegraphics[width = 12cm]{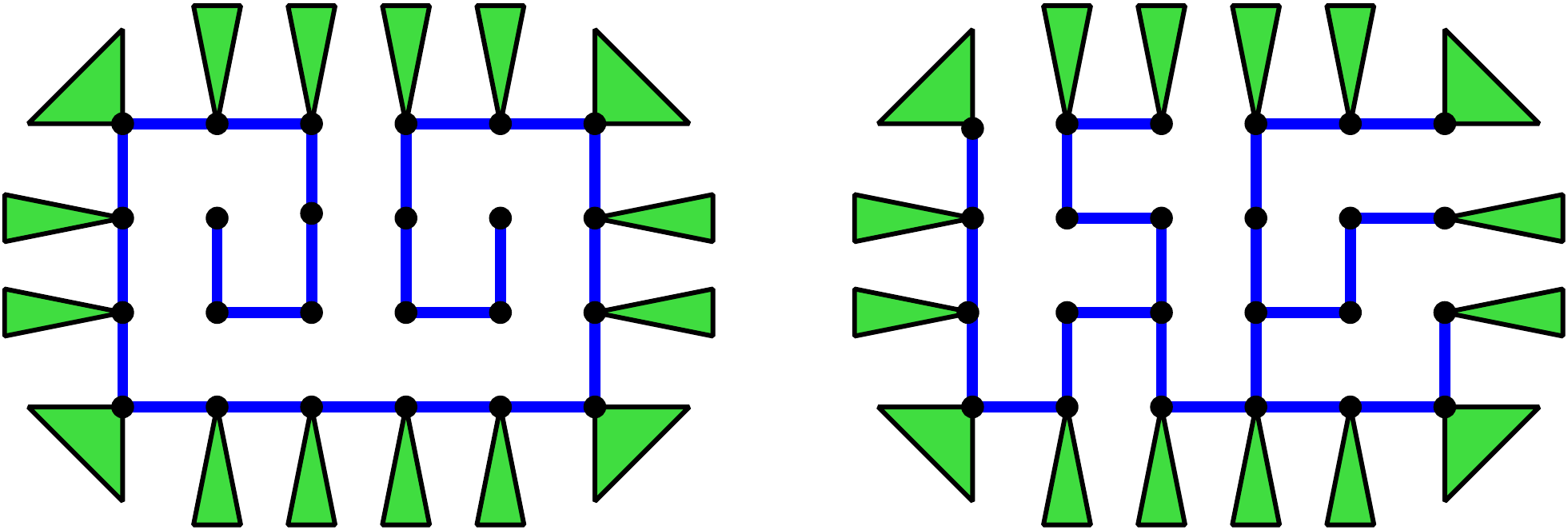}}
  \captionn{\label{Fig:BG} Two trees, in blue, than can be obtained from the elimination of the same ``green subtrees''}
\end{figure}
\end{rem}

\begin{figure}[h!]
  \centerline{\includegraphics[width=8cm]{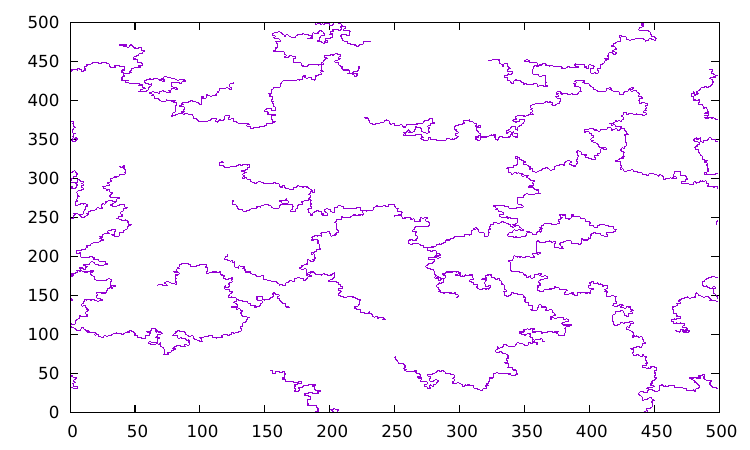}\includegraphics[width=8cm]{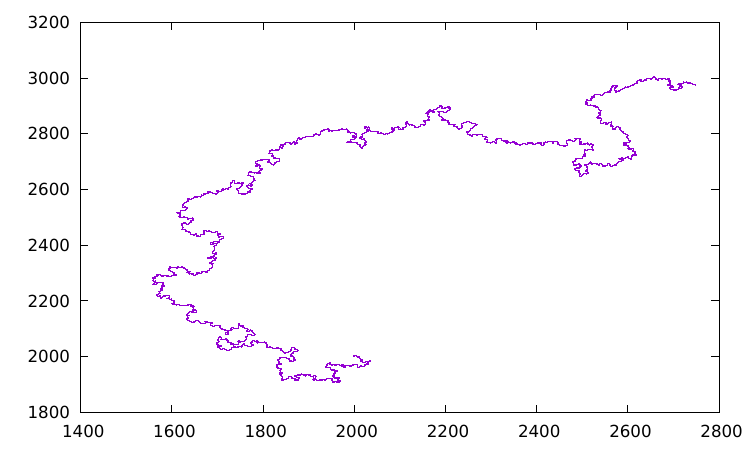}}
  \captionn{\label{Akka} A tree extracted by election type evaporation halted when $n=10000$ nodes remains executed on an UST of $\Torus{500}$ (left) and on a UST of $\Torus{4000}$ (right) }
  \end{figure}

\NewSModel{Removal of uniform edge}{RUE}
{This model is often called ``tree cutting'' in the literature; take a tree $T$ rooted at some node $r$, and remove successively a uniform edge chosen uniformly among the remaining edges of $T$.
  Denote by $T(k)$ the tree obtained from $T$ by the removal of $k$ edges, and $T_r(k)$ the connected component of the origin. The sequence $(T_r(k), 0 \leq k \leq |T|-1)$ coincides with the process $(T_r^{\star}(w), 0\leq w\leq 1)$: the connected components of $r$ by keeping the edges with weight $\geq w$, at its jump time.}
\begin{rem}Uniform edge evaporations of some classical families of trees (of non-embedded trees) have been thoroughly studied following an idea of Meir \& Moon \cite{Memo} in 1970. Many recent developments under the name of ``cut-tree'' have been published which aims at describing the tree structure of the fragmentation history (see e.g. Aldous \& Pitman \cite{MR1675063}, Janson \cite{SJ}, Addario-Berry et al. \cite{ABH}, Bertoin \& Miermont \cite{JBGM}, Broutin \& Wang \cite{Br-Wa} for recent developments).
\end{rem}

\begin{rem} This model is discussed also in Section \ref{edgeremoval} and applied there in the case of a uniform spanning tree (which provides a second level of randomness).  \end{rem}
\begin{pro}\label{pro:sd} For any $t$ subtree of $T$, any $w\in [0,1]$, denote by $B(t)$ the edges of $T\setminus t$ adjacent to $t$ (the boundary of $t$ in $T$).
  \[\P\l(T_r^{\star}(w)= t\r)= w^{|B(t)|}(1-w)^{|E(t)|}\]
  and
  \[\P\l(T_r(k)= t\r)=1_{k\geq |B(t)|} \frac{\binom{ |E(T)|-|E(t)|-|B(t)|)}{k-|B(t)|}}{\binom{|E(T)|}{k}}. \]
  \end{pro}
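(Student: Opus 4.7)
The plan is to compute the two probabilities directly, as this is a clean combinatorial/independence argument.

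For the first identity, I would work with the weight description. Since the edge weights $(w_e)_{e \in E(T)}$ are i.i.d. uniform on $[0,1]$, the event $\{w_e \geq w\}$ (i.e. ``$e$ is kept'') happens independently across edges, each with probability $1-w$. Observe that $T_r^{\star}(w) = t$ if and only if three conditions hold: every edge $e \in E(t)$ is kept (so that $t$ is still connected and contained in the component of $r$); every edge $e \in B(t)$ is deleted (so the component of $r$ does not grow beyond $V(t)$); and the edges outside $E(t) \cup B(t)$ are unconstrained, because none of them is incident to $V(t)$, so their presence or absence cannot affect the connected component of $r$. By independence, this yields
\[
\P\big(T_r^{\star}(w) = t\big) = (1-w)^{|E(t)|}\, w^{|B(t)|}.
\]

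For the second identity, the key point is the classical observation that a family of i.i.d.\ uniforms induces a uniformly random linear order. Therefore, the process $w \mapsto T^{\star}(w)$ removes the edges of $T$ in a uniformly random order, and at the $k$-th jump time the set of removed edges is a uniform random subset of $E(T)$ of size $k$; this is precisely the distribution of the set of edges removed in the $k$-step discrete cutting $T(k)$. So computing $\P(T_r(k)=t)$ reduces to counting: the removed set $R$ must contain $B(t)$ entirely and must be disjoint from $E(t)$, and apart from that the remaining $k - |B(t)|$ removed edges are chosen arbitrarily among the $|E(T)| - |E(t)| - |B(t)|$ edges in $E(T) \setminus (E(t) \cup B(t))$. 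The condition $k \geq |B(t)|$ is necessary and is imposed by the indicator. Dividing the number of favorable $k$-subsets by the total $\binom{|E(T)|}{k}$ gives the claimed formula.

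There is no genuine obstacle here; the only care needed is justifying that edges outside $E(t) \cup B(t)$ play no role, which follows from the fact that $t$ is already a full connected subtree, hence no path from $r$ can leave $V(t)$ without using an edge of $B(t)$. One could alternatively derive the second formula from the first by integrating $(1-w)^{|E(t)|} w^{|B(t)|}$ against the Beta$(k, |E(T)|-k+1)$ density of the $k$-th order statistic of $|E(T)|$ uniforms and then using a Beta integral, but the direct combinatorial count above is both shorter and more transparent.
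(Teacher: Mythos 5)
Your proof is correct and follows essentially the same route as the paper's: for the first formula, require the edges of $E(t)$ to be kept and those of $B(t)$ to be removed, with the remaining edges playing no role; for the second, note that the removed set is a uniform $k$-subset of $E(T)$ and count the favorable subsets. You merely spell out the independence and the ``outside edges are irrelevant'' point a bit more explicitly than the paper does.
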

The same formula are valid for a graph instead.
  \begin{proof} The first formula is easy: the edges in $E(t)$ must be still here, and those of $B(t)$ must have disappeared. For the second formula: since $k$ edges have been suppressed, and by symmetry, they form a uniform subset of $E(T)$; the favourable cases are those for which this subset is $B(t)$ union a subset of size $k-B(t)$ of $E(T)\setminus (B(t)\cup E(t)$; these number of subsets are given by the numerator of the second formula.
    \end{proof}

\small
 
\renewcommand{\baselinestretch}{0.8}
\bibliographystyle{abbrv}

~\newpage
\tableofcontents
\end{document}